\newtheoremstyle{upright}   %
  {10pt}                    %
  {10pt}                    %
  {}                        %
  {}                        %
  {\bfseries}               %
  {.}                       %
  { }                       %
  {}                        %
\theoremstyle{upright}
\definecolor{theoremblue}{HTML}{2196F3} %
\definecolor{proofpurple}{HTML}{9013FE} %
\definecolor{defyellow}{HTML}{FFEB3B} %
\definecolor{assumptionred}{HTML}{ff3b3b} %
\definecolor{conventionred}{HTML}{ff3b3b} %
\definecolor{remarkorange}{HTML}{dc5800} %
\definecolor{examplegreen}{HTML}{00695c} %
\newtcolorbox{myproof}{
  enhanced jigsaw,
  sharp corners,
  colframe=proofpurple,
  colback=proofpurple!0,
  borderline west={1pt}{0pt}{proofpurple},
  borderline south={1pt}{0pt}{proofpurple},
  before skip=10pt,
  after skip=10pt,
  boxrule=0pt,
  left=10pt,
  right=10pt,
  breakable,
}
\renewenvironment{proof}[1][\proofname]{
  \begin{myproof}
    \textbf{#1.}
}{\hfill$\qed$\end{myproof}
}
\DeclareSymbolFont{bbold}{U}{bbold}{m}{n}
\DeclareSymbolFontAlphabet{\mathbbold}{bbold}
\DeclareSymbolFontAlphabet{\mathbb}{AMSb}%
\newtheorem{theorem}{Theorem}%
\newtheorem*{theorem*}{Theorem}
\newtheorem{corollary}{Corollary}%
\newtheorem{lemma}{Lemma}%
\newtheorem{remark}{Remark}%
\newtheorem{prop}{Proposition}%
\newtheorem{assumption}{Assumption}
\newtheorem{definition}{Definition}
\newtheorem{example}{Example}
\newcommand{\N}[0]{\mathbb{N}}
\newcommand{\R}[0]{\mathbb{R}}
\newcommand{\K}[0]{\mathcal{K}}
\newcommand{\U}[0]{\mathbb{U}}
\renewcommand{\P}[0]{\mathbb{P}}
\newcommand{\M}{\mathrm{M}}
\newcommand{\Int}[2]{\displaystyle\int_{#1}^{#2}}
\newcommand{\Sum}[2]{\displaystyle\sum\limits_{#1}^{#2}}
\newcommand{\dd}[0]{\mathrm{d}}
\renewcommand{\epsilon}{\varepsilon}
\newcommand{\nt}[1]{{\left\vert\kern-0.25ex\left\vert\kern-0.25ex\left\vert #1
    \right\vert\kern-0.25ex\right\vert\kern-0.25ex\right\vert}}
\DeclareMathOperator{\argmin}{argmin}
\DeclareMathOperator{\supp}{supp}
\newcommand{\W}[0]{\mathrm{W}}
\renewcommand{\SS}{\mathbb{S}}
\newcommand{\app}[4]{\left\lbrace\begin{array}{ccc}
   #1 & \longrightarrow & #2 \\
   #3 & \longmapsto & #4 \\
\end{array} \right.}
\newcommand{\Perm}[0]{\mathfrak{S}}
\newcommand{\oll}[1]{\overline{#1}}
\renewcommand{\O}{\mathcal{O}}
\newcommand\eqquestion{\stackrel{\mathclap{\normalfont\mbox{?}}}{=}}
\newcommand{\step}[2]{\textrm{---} \textit{Step #1}: #2}
\newcommand{\SW}{\mathrm{SW}}
\newcommand{\SWGG}{\mathrm{SWGG}}
\newcommand{\SWGGfix}{\widetilde{\mathrm{SWGG}}}
\newcommand{\Leb}{\mathscr{L}}
\newcommand{\X}{\mathcal{X}}
\newcommand{\Y}{\mathcal{Y}}
\newcommand{\LStheta}{\mathrm{LS}_\theta}
\newcommand{\LSthetaL}{\mathrm{LS}_{\theta_\ell}}
\newcommand{\ES}{\mathrm{ES}}
\newcommand{\Law}{\mathrm{Law}}
\newcommand{\half}{{\text{\tiny$\tfrac{1}{2}$}}}
\newcommand{\minS}{\min \mathrm{PS}}
\newcommand{\PStheta}[1][\theta]{\mathrm{PS}_{#1}}
\newcommand{\CWtheta}[1][\theta]{\mathrm{CW}_{#1}}
\newcommand{\PthetaXY}{\mathcal{P}_\theta(X, Y)}
\newcommand{\CPerm}[1]{\mathcal{P}^{#1}}
\DeclareMathOperator{\Extr}{Extr}
\title{Sliced Transport Plans}
\author[1]{Eloi Tanguy}
\author[2]{Laetitia Chapel}
\author[1]{Julie Delon}
\affil[1]{Universit\'e Paris Cit\'e, CNRS, MAP5, F-75006 Paris, France}
\affil[2]{Institut Agro Rennes-Angers, IRISA}
\date{2nd August 2025}
\begin{document}
\maketitle

\begin{abstract}
	Since the introduction of the Sliced Wasserstein distance in the literature, its simplicity and efficiency have made it one of the most interesting surrogates for the Wasserstein distance in image processing and machine learning. 
However, its inability to produce transport plans limits its practical use to applications where only a distance is necessary. Several heuristics have been proposed in recent years to address this limitation when the probability measures are discrete. In this paper, we propose to study these different propositions by redefining and analysing them rigorously for generic probability
measures. 
Leveraging the $\nu$-based Wasserstein distance and generalised geodesics, we introduce and study the Pivot Sliced Discrepancy, inspired by a recent work by Mahey et al.. We demonstrate its semi-metric properties and its relation to a constrained Kantorovich formulation.
In the same way, we generalise and study the recent Expected Sliced plans introduced by Liu et al. for completely generic measures. Our theoretical contributions are supported by numerical experiments on synthetic and real datasets, including colour transfer and shape registration, evaluating the practical relevance of these different solutions.

\end{abstract}

\tableofcontents
\newpage

\section{Introduction}

Known for its ability to capture geometric structure in probability
distributions, optimal transport has attracted considerable attention in both
theoretical and applied fields. Several studies have developed its mathematical
foundations in great detail~\cite{santambrogio2015optimal,villani}, and its
practical impact has been demonstrated across a broad spectrum of applications.
Originally developed for applications in logistics,
economics~\cite{galichon2017survey} and fluid mechanics, computational optimal
transport has also emerged in the last fifteen years as a central tool in data
science. It is used nowadays for a large variety of applications, ranging from
image processing, computer vision and computer
graphics~\cite{rabin2009statistical,hertrich2022wasserstein,feydy2017optimal,bonneel2023survey,pont2021wasserstein},
to domain
adaptation~\cite{courty2016optimal,montesuma2021wasserstein,Fatras2021}, natural
language processing~\cite{chenplot}, generative
modelling~\cite{arjovsky2017wasserstein,gulrajani2017improved,salimans2018improving,tong2024improving,hertrich2025relation},
quantum chemistry~\cite{buttazzo2012optimal} or
biology~\cite{bunne2024optimal,naderializadeh2025aggregating}, to cite just a
few. 

In these applications, optimal transport is  used to define meaningful
discrepancies between probability distributions, taking into account the
underlying geometry of the data, but also as a way to define optimal plans or
maps between such data, in order to transform a given distribution into another
in an optimal way. In the continuous setting, we recall that the 2-Wasserstein
distance $\W_2$ between two probability measures $\mu$ and $\nu$ on
$\mathbb{R}^d$ is defined as:
$$
\W_2^2(\mu, \nu) = \inf_{\pi \in \Pi(\mu, \nu)} 
\int_{\mathbb{R}^d \times \mathbb{R}^d} \|x - y\|_2^2 \, \dd\pi(x, y),
$$
where $\Pi(\mu, \nu)$ is the set of couplings with marginals $\mu$ and $\nu$. In
the discrete case, with empirical measures supported on finite point clouds,
this problem becomes a linear program over a polytope. Computing Wasserstein
distances between discrete datasets comes with significant computational
expense. Classical linear programming solvers used to evaluate the transport
cost between two discrete measures of size $n$ typically have a complexity of
$\O(n^3 \log n)$~\cite{computational_ot}. This limitation has motivated the
development of computationally lighter surrogates or approximations that
preserve key characteristics of optimal transport metrics.

One of these popular and efficient surrogates is the Sliced Wasserstein distance
($\mathrm{SW}$)~\cite{rabin2012wasserstein,bonneel2015sliced}. This approach
leverages the fact that in one dimension, the Wasserstein distance has a
closed-form solution. The Sliced Wasserstein distance is derived by averaging 1D
Wasserstein distances over all directions on the unit sphere, offering a simple
alternative to $\W_2$: 
$$
\mathrm{SW}_2^2(\mu, \nu) = \int_{\mathbb{S}^{d-1}} \W_2^2(P_{\theta}\#\mu, P_{\theta}\#\nu) \, \dd\theta,
$$
where $P_\theta$ denotes the projection onto direction $\theta$. Since
evaluating the full integral is intractable in practice, it is approximated by
Monte Carlo sampling. One draws $L$ random directions, computes the 1D
Wasserstein distance for each, and averages the results. The 1D Wasserstein
distance between empirical distributions of $n$ points can be obtained in $\O(n
\log n)$, so the approximate $\mathrm{SW}_2$ distance can be computed in $\O(L n
\log n)$. This efficiency makes it especially appealing for large values of $n$.

The SW distance remains a true distance in the space of probability measures and
retains several fundamental features of Wasserstein distances. For probability
measures with compact support, it has been shown to be equivalent to the
Wasserstein distance~\cite{bonnotte}. It also has desirable statistical
properties, such as sample complexity bounds and
robustness~\cite{nadjahi_statistical_properties_sliced}. Its efficiency has been
confirmed in numerous use cases, including domain
adaptation~\cite{lee2019sliced}, texture generation, colour and style
transfer~\cite{heitz2021sliced,bonneel2015sliced,elnekave2022generating},
statistical inference~\cite{kolouri2018CVPR}, generative
modelling~\cite{deshpande2018generative,wu2019sliced,chapel2025differentiable},
auto-encoder regularisation~\cite{kolouri2018sliced}, topological data
analysis~\cite{sisouk2025user} or shape analysis~\cite{le2024integrating,
nguyen2023self}. Extensions to Riemannian settings have also been
investigated~\cite{bonet2024sliced}. Nevertheless, a key limitation of SW is
that it does not provide a transport plan or a map between distributions, which
limits its use in applications that require  correspondences between datasets.

To circumvent this issue, several heuristics have been proposed to extract
approximate transport plans from SW. A notable example is the use of stochastic
gradient descent (SGD) to minimise the objective $X \longmapsto
\mathrm{SW}(\delta_X, \delta_Y)$, as a way to gradually move points from a
source point cloud $X$ to a target point cloud\footnote{For a point cloud $X =
(x_i)_{i=1}^N$, we write $\delta_X = \frac 1 N \sum_{i=1}^N\delta_{x_i}$} $Y$.
This strategy has been first explored for colour transfer and image matching
tasks in~\cite{Rabin_texture_mixing_sw,bonneel2015sliced}, and can provide
plausible pointwise correspondences in practice, although theoretical guarantees
remain partial~\cite{tanguy2023discrete_sw_losses,cozzi2025long,li2025measure}.

More recently, two alternative strategies have been introduced to build
transport plans grounded in Sliced Wasserstein distances. The first one, called
Sliced Wasserstein Generalised Geodesics
(SWGG)~\cite{mahey23fast,chapel2025differentiable}, defines a map between two
discrete distributions $\delta_X = \tfrac{1}{n}\sum_i \delta_{x_i}$ and
$\delta_Y = \tfrac{1}{n}\sum_i \delta_{y_i}$ as $\tau_\theta \circ
\sigma_\theta^{-1}$, where $\sigma_\theta$ is a permutation which sorts
$(\theta^\top x_i)_{i=1}^n$ and $\tau_\theta$ a permutation sorting
$(\theta^\top y_i)_{i=1}^n$. The Sliced Wasserstein Generalised Geodesic
distance (\cite[Equation 8]{mahey23fast}) is then defined as: (see also
\cref{fig:ex_swgg})
\begin{equation}\label{eqn:swgg_discrete}
    \SWGG_2^2(\mu_1, \mu_2, \theta) := \cfrac{1}{n}\Sum{i=1}{n}\|x_{\sigma_\theta(i)} - y_{\tau_\theta(i)}\|_2^2.
\end{equation}
The second one, called Expected Sliced Transport Plans, was introduced
in~\cite{liu2024expected} (inspired by~\cite{rowland2019orthogonal}), also for
discrete measures. It aims to construct couplings by averaging the 1D optimal
transport plans obtained from projections. Given $\bbsigma$ a probability
measure on the hypersphere, with the same notations as above, the Expected
Sliced Transport distance is defined as:
\begin{equation}\label{eqn:expected_discrete}
    \mathbb{E}_{\theta \sim \bbsigma } \left[\cfrac{1}{n}\Sum{i=1}{n}\|x_{\sigma_\theta(i)} - y_{\tau_\theta(i)}\|_2^2\right]
\end{equation}
and the average transport plan as $\mathbb{E}_{\theta \sim \bbsigma
}[\tau_\theta \circ \sigma_\theta^{-1}]$. This yields a plan between the two
$d$-dimensional measures that reflects the averaged behaviour along slices.

These approaches provide practical and interpretable ways to define approximate
transport maps. However, they are currently defined only for discrete measures
and lack a rigorous theoretical grounding in more general measure spaces.
Moreover, even in the discrete setting, it can easily be shown that the RHS
quantity in \cref{eqn:swgg_discrete} depends on the choice of the permutations,
rendering the quantity ill-defined, as showcased in
\cref{sec:pathological_cases_swgg}. 

The goal of this paper is to rigorously define and analyse these different
Sliced Optimal Transport Plans for completely generic probability measures. We
introduce the Pivot Sliced Discrepancy $\PStheta$, a  discrepancy measure based
on the $\nu$-based generalised geodesics~\cite{nenna2023transport}, and
generalising the Sliced Wasserstein Generalised Geodesic
distance~\cite{mahey23fast}. In doing so, we also provide new theoretical
insights on the $\nu$-based Wasserstein distance~\cite{nenna2023transport}. We
prove that $\PStheta$ is well-defined, symmetric and separates points. We then
establish an equivalence between $\PStheta$ and a constrained version of the
Wasserstein distance, showing that $\PStheta$ coincides with the minimal
transport cost among plans that preserve the projected coupling. For empirical
measures, we provide Monge and Kantorovich formulations of $\PStheta$, proving a
constrained version of the Birkhoff-von Neumann theorem
\cite{birkhoff1946three}. Additionally, we study the Min-Pivot Sliced
Discrepancy, a variant that matches the true Wasserstein distance for discrete
measures when the space dimension is large enough with respect to the number of
points. We then study the Expected Sliced Wasserstein
Plan~\cite{liu2024expected}, which averages 1D sliced transport plans to obtain
high-dimensional (non sparse) couplings. This theoretical study is followed by
numerical experiments, illustrating the behaviour of the proposed transport
plans on synthetic datasets and shape registration tasks.

The paper is organised as follows. In \cref{sec:preliminary_nu_based_wass}, we
recall the necessary background on $\nu$-based Wasserstein geodesics, along with
some new theoretical results that will serve as building blocks for the rest of
the work. \cref{sec:pivot_sliced} presents and analyses the Pivot Sliced
Discrepancy. In \cref{sec:equality_CWtheta}, we establish a precise connection
between $\PStheta$ and a constrained Wasserstein discrepancy, showing that both
quantities coincide. This correspondence is further developed in
\cref{sec:pivot_sliced_discrete}, where we explore the Monge and Kantorovich
formulations of $\PStheta$ for discrete measures. We then study in
\cref{sec:min_PS} the Min-Pivot Sliced Discrepancy, and show that it recovers
the exact Wasserstein distance in certain discrete settings.
\cref{sec:expected_sliced} introduces and analyses the concept of Expected
Sliced Wasserstein Plans. Finally, \cref{sec:numerics} is dedicated to numerical
experiments.
\section{Reminders and New Results on the \texorpdfstring{$\nu$}{nu}-based
Wasserstein Distance}\label{sec:preliminary_nu_based_wass}

In this section, we lay some pre-requisites for the objects at play in the
paper. We begin by recalling the concept of generalised geodesics in
\cref{sec:gen_geod}, which allows us to introduce the $\nu$-based Wasserstein
distance in \cref{sec:nu_based_wass}. This (semi-)metric was first defined in
\cite{ambrosio2005gradient,nenna2023transport}, and we will sometimes also refer
to it as ``Pivot Wasserstein'', and prove new technical properties that will be
useful later. Later in this work, we will consider the Pivot Wasserstein
distance using a ``Wasserstein Mean'' pivot, and to this end we propose some
reminders on Wasserstein means in \cref{sec:wass_means}. Finally, in
\cref{sec:nu_based_wass_disintegration}, we revisit a disintegration formulation
of the $\nu$-based Wasserstein distance (first proved in
\cite{nenna2023transport}), which will sometimes be convenient for computations.

\subsection{Wasserstein Geodesics and Generalised Geodesics}\label{sec:gen_geod}

Given two measures $\mu_1, \mu_2 \in \mathcal{P}_2(\R^d)$, we denote by
$\Pi^*(\mu_1, \mu_2)$ the set of Optimal Transport plans between $\mu_1$ and
$\mu_2$ for the cost $\|x-y\|_2^2$. Using such plans, we can define a notion of
shortest path (i.e. geodesic) between $\mu_1$ and $\mu_2$ in the space
$(\mathcal{P}_2(\R^d), \W_2)$.

\begin{definition}
    A constant-speed geodesic between $\mu_1, \mu_2 \in \mathcal{P}_2(\R^d)$ is
    a curve $[0, 1] \longrightarrow \mathcal{P}_2(\R^d)$ constructed using an
    optimal transport plan $\gamma\in \Pi^*(\mu_1, \mu_2)$ as follows:
    \begin{equation}\label{eqn:W2_geodesic}
        \mu^{1\rightarrow 2}_\gamma(t) := \left((1-t)P_1+tP_2\right)\#\gamma,
    \end{equation}
\end{definition}
where $P_1: (x,y) \longmapsto x$ and $P_2: (x,y) \longmapsto y$ are the marginal
projection operators. Not only is $\mu_\gamma^{1\rightarrow 2}$ a geodesic for
the $\W_2$ metric, but all (constant-speed) geodesics between $\mu_1$ and
$\mu_2$ are of the form $\mu_\gamma^{1\rightarrow 2}$ for a suitable $\gamma\in
\Pi^*(\mu_1, \mu_2)$ (this is \cite[Theorem 7.2.2]{ambrosio2005gradient}).

If the chosen optimal transport plan $\gamma$ is induced by a transport map $T$
(which is to say that $\gamma = (I, T)\#\mu_1$), then the geodesic takes the
intuitive ``displacement'' formulation:
\begin{equation}\label{eqn:W2_geodesic_map}
    \mu^{1\rightarrow 2}_\gamma(t) := \left((1-t)I+tT\right)\#\mu_1,
\end{equation}
with $I$ denoting the identity map of $\R^d$.

A remarkable property of the 2-Wasserstein space is that it is a Positively
Curved (according to Alexandrov's metric definition of curvature) space, as
proved in \cite[Theorem 7.3.2, Equation 7.3.12]{ambrosio2005gradient}: for
$\mu_1, \mu_2, \nu \in \mathcal{P}_2(\R^d)$, $\gamma \in \Pi^*(\mu_1, \mu_2)$
and $t\in [0,1]$, we have
\begin{equation}\label{eqn:W2_PC}
    \W_2^2(\mu_\gamma^{1\rightarrow 2}(t), \nu) \geq (1-t)\W_2^2(\mu_1, \nu) + t\W_2^2(\mu_2, \nu) - (1-t)t\W_2^2(\mu_1, \mu_2).
\end{equation}
For $t := \tfrac{1}{2}$, this can be re-written as
\begin{equation}
    \W_2^2(\mu_1, \mu_2) \geq 2\W_2^2(\mu_1, \nu) + 2\W_2^2(\mu_2, \nu) - 4\W_2^2(\mu_\gamma^{1\rightarrow 2}(t), \nu).
\end{equation}
Unfortunately, the squared distance $\W_2^2$ is not $\lambda$-convex along these
Wasserstein geodesics \cite[Example 9.1.5]{ambrosio2005gradient}, which
motivated \cite{ambrosio2005gradient} to introduce other curves, coined
``generalised geodesics'', that satisfy this desirable property. First, we
consider two optimal plans $\gamma_{1}\in \Pi^*(\nu, \mu_1)$ and $\gamma_2\in
\Pi^*(\nu, \mu_2)$. To introduce the notion of generalised geodesics, we will
require a 3-plan $\rho \in \Pi(\nu, \mu_1, \mu_2) \in \mathcal{P}_2(\R^{3d})$
(i.e. with marginals $\rho_0 = \nu, \rho_1 = \mu_1, \rho_2 = \mu_2$), such that
its bi-marginals coincide with the plans $\gamma_1$ and $\gamma_2$: we require
$\rho_{0,1}:= P_{0,1}\#\rho=\gamma_1$ and $\rho_{0,2}:=P_{0,2}\#\rho=\gamma_2$,
where $P_{0,i}:= (y, x_1, x_2) \longmapsto (y, x_i)$. We introduce the following
notation for such 3-plans:
\begin{equation}\label{eqn:def_Gamma3} 
    \Gamma(\nu, \mu_1, \mu_2) := \left\{\rho
    \in \mathcal{P}_2(\R^{3d}) : \rho_{0,1}\in \Pi^*(\nu, \mu_1)\
    \text{and}\ \rho_{0,2}\in \Pi^*(\nu, \mu_2)\right\}.
\end{equation}
\begin{definition}
    A generalised geodesic based on $\nu$ between $\mu_1$ and $\mu_2$ is then
    defined as (\cite[Definition 9.2.2]{ambrosio2005gradient}), given a $\rho
    \in \Gamma(\nu, \mu_1, \mu_2)$:
    \begin{equation}\label{eqn:generalised_geodesic}
        \mu_\rho^{1\rightarrow 2}(t) := \left((1-t)P_1 + tP_2\right)\#\rho.
    \end{equation}
\end{definition}
Note that this curve depends on the choice of the 3-plan $\rho$, which itself
depends on the optimal plans $\gamma_1$ and $\gamma_2$. The existence of such a
$\rho$ can be shown using the gluing lemma (as presented in \cite[Lemma
5.5]{santambrogio2015optimal}, for example). As desired, the curvature induced
by these curves makes $\W_2^2$ convex along these geodesics (in a certain sense,
see \cite[Definition 9.2.4]{ambrosio2005gradient}), namely we have the following
inequality (\cite[Equation 9.2.7c]{ambrosio2005gradient}), which is reversed
compared to \cref{eqn:W2_PC}:
\begin{equation}\label{eqn:W2_convex_along_generalised_geodesics}
    \W_2^2(\mu_\rho^{1\rightarrow 2}(t), \nu) \leq (1-t)\W_2^2(\mu_1, \nu) 
    + t\W_2^2(\mu_2, \nu) - (1-t)t\W_2^2(\mu_1, \mu_2).
\end{equation}
Like before, setting $t:=\tfrac{1}{2}$ yields the following inequality:
\begin{equation}
    \W_2^2(\mu_1, \mu_2) \leq 2\W_2^2(\mu_1, \nu) + 2\W_2^2(\mu_2, \nu) 
    - 4\W_2^2(\mu_\rho^{1\rightarrow 2}(t), \nu).
\end{equation}
If the optimal transport plans $\gamma_1$ and $\gamma_2$ are induced
respectively by transport maps $T_1$ and $T_2$, then the choice of $\rho$ is
unique, with $\rho = (I, T_1, T_2)\#\nu$ (\cite[Remark
9.2.3]{ambrosio2005gradient}, see also \cite[Lemma 5.3.2]{ambrosio2005gradient}
for a formal proof). This yields the following expression of the generalised
geodesic, which is substantially more intuitive:
\begin{equation}\label{eqn:generalised_geodesic_map}
    \mu_\rho^{1\rightarrow 2}(t) = ((1-t)T_1 + tT_2)\#\nu.
\end{equation}

\subsection{The \texorpdfstring{$\nu$}{nu}-based Wasserstein
Distance}\label{sec:nu_based_wass}

A closely related concept is the $\nu$-based Wasserstein (semi)-distance,
introduced by Nenna and Pass in \cite{nenna2023transport}. This time we use a
pivot measure $\nu$ to introduce a variant of the Wasserstein distance, yielding
the following definition by \cite[Definition
3]{nenna2023transport}\footnote{Their definition seems to have a typo, with
$\Pi^*(\mu_i, \nu)$ instead of $\Pi^*(\nu, \mu_i)$. Furthermore, they work with
measures supported on a bounded and convex domain of $\R^d$, but as they remark
(\cite[footnote 4]{nenna2023transport}), and given \cite[Chapter
9]{ambrosio2005gradient}, generalisation to measures on $\R^d$ with a finite
moment of order 2 is perfectly natural.}:
\begin{definition} For $\nu \in \mathcal{P}_2(\R^d)$, the $\nu$-based
    Wasserstein (semi)-metric between $\mu_1, \mu_2 \in \mathcal{P}_2(\R^d)$ is
    defined as:
    \begin{equation}\label{eqn:nu_based_Wass} \W_\nu^2(\mu_1, \mu_2) :=
        \underset{\rho \in \Gamma(\nu, \mu_1, \mu_2)}{\min}\ 
        \int_{\R^{3d}}\|x_1 - x_2\|_2^2\dd\rho(y,x_1,x_2).
    \end{equation}
\end{definition}

We illustrate the $\nu$-based Wasserstein distance on a simple example in
\cref{fig:ex_W_nu}.

\begin{figure}[ht]
    \begin{center}
        \includegraphics[width=0.6\linewidth]{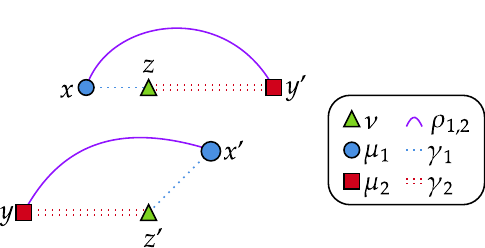}
    \end{center}
    \caption{Example of the couplings behind $\W_\nu(\mu_1, \mu_2)$ for discrete
    measures on $\R^2$. The measure $\nu$ is drawn with green triangles, $\mu_1$
    with blue circles and $\mu_2$ with red squares. The (unique) OT plan
    $\gamma_1$ between $\nu$ and $\mu_1$ is drawn with dotted blue lines, the
    (also unique) OT plan $\gamma_2$ between $\nu$ and $\mu_2$ with red double
    dotted lines. The plans induce a unique valid 3-plan $\rho \in \Gamma(\nu,
    \mu_1, \mu_2)$, we represent the coupling $\rho_{1, 2}$ between $\mu_1$ and
    $\mu_2$ with curved purple lines. Notice that the coupling $\rho_{1, 2}$
    differs from the (unique) OT coupling between $\mu_1$ and $\mu_2$.}
    \label{fig:ex_W_nu}
\end{figure}

The question of whether the infimum defining $\W_\nu$ is attained was not
addressed by \cite{nenna2023transport}, we show that it is indeed the case in
\cref{prop:W_nu_inf_attained}, using a technical property of the 3-plan set
$\Gamma$ defined in \cref{eqn:def_Gamma3}. We remind that by Prokhorov's
theorem, a subset of $\mathcal{P}_2(\R^d)$ is a tight set if and only if it is
pre-compact, which means that any sequence of measures in the set has a weakly
converging subsequence.
\begin{lemma}\label{lemma:tightness_Gamma3}    
    \begin{enumerate}
        \item For tight sets $P, Q_1, Q_2 \subset \mathcal{P}_2(\R^d)$, the set
        $$\Gamma(P, Q_1, Q_2) := \{\rho \in \Gamma(\nu, \mu_1, \mu_2): (\nu,
        \mu_1, \mu_2) \in P\times Q_1 \times Q_2\}$$ is tight in
        $\mathcal{P}_2(\R^{3d})$.
        \item Consider sequences $\nu^{(n)}, \mu_1^{(n)}, \mu_2^{(n)} \in
        \mathcal{P}_2(\R^d)^\N$ respectively converging to $\nu, \mu_1, \mu_2
        \in \mathcal{P}_2(\R^d)$ for the weak convergence of measures, and a
        sequence $(\rho_n) \in \mathcal{P}_2(\R^{3d})^\N$ such that $\forall n
        \in \N, \;\rho_n \in \Gamma(\nu^{(n)}, \mu_1^{(n)}, \mu_2^{(n)})$ with
        $\rho_n \xrightarrow[n\longrightarrow +\infty]{w} \rho \in
        \mathcal{P}_2(\R^{3d})$. Then $\rho \in \Gamma(\nu, \mu_1, \mu_2)$.
        \item For $\nu, \mu_1, \mu_2 \in \mathcal{P}_2(\R^d)$, the set
        $\Gamma(\nu, \mu_1, \mu_2)$ is compact in $\mathcal{P}_2(\R^{3d})$.
    \end{enumerate}
\end{lemma}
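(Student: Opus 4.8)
The plan is to prove the three assertions in the order given, deducing (3) from (1) and (2). Parts (1) and (3) are soft consequences of Prokhorov's theorem; the real work is in (2), where one must show that optimality of the two bi-marginals is preserved under a weak limit.

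For (1) I would use the union bound on complements of compact sets. Given $\varepsilon>0$, tightness of $P,Q_1,Q_2$ provides compacts $K_0,K_1,K_2\subset\R^d$ with $\nu(\R^d\setminus K_0)\le\varepsilon/3$ for all $\nu\in P$, and analogously for $Q_1,Q_2$. Any $\rho\in\Gamma(P,Q_1,Q_2)$ has its three $d$-dimensional marginals in $P,Q_1,Q_2$, so $\rho\big(\R^{3d}\setminus(K_0\times K_1\times K_2)\big)\le\rho_0(\R^d\setminus K_0)+\rho_1(\R^d\setminus K_1)+\rho_2(\R^d\setminus K_2)\le\varepsilon$, and $K_0\times K_1\times K_2$ is compact; this is the claimed tightness.

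For (2) I would first handle the marginal constraints: the coordinate and bi-marginal projections are continuous, hence weakly continuous on measures, so the marginals of $\rho$ are the weak limits of those of $\rho_n$, namely $\nu,\mu_1,\mu_2$; in particular $\gamma_i:=P_{0,i}\#\rho\in\Pi(\nu,\mu_i)$, and since $\nu,\mu_i\in\mathcal{P}_2(\R^d)$ this plan has finite quadratic cost. The substantial point is that $\gamma_i$ is optimal, and I would obtain this via cyclical monotonicity rather than through the transport functional. Each $\gamma_i^{(n)}:=P_{0,i}\#\rho_n$ is optimal, hence has cyclically monotone support; cyclical monotonicity of supports is stable under weak limits (approximate finitely many points of $\supp\gamma_i$ by points of $\supp\gamma_i^{(n)}$ --- possible because, by the portmanteau theorem, a weak limit cannot put mass on an open set avoided by the supports of the converging sequence --- write the cyclical monotonicity inequality along these points, and let $n\to\infty$ using continuity of $(x,y)\mapsto\|x-y\|_2^2$); so $\supp\gamma_i$ is cyclically monotone, and a coupling with cyclically monotone support and marginals in $\mathcal{P}_2(\R^d)$ is optimal for the quadratic cost (Knott--Smith / Rockafellar, see \cite{ambrosio2005gradient,santambrogio2015optimal}). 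Hence $\gamma_i\in\Pi^*(\nu,\mu_i)$ and $\rho\in\Gamma(\nu,\mu_1,\mu_2)$. I expect this optimality step to be the main obstacle: we are given only weak (not $\W_2$) convergence of the marginals, so the second moments of $\nu^{(n)},\mu_i^{(n)}$ --- and hence the costs $\int\|x_1-x_2\|_2^2\,\dd\rho_n$ --- may be unbounded, and a stability argument through the transport functional or through Kantorovich potentials on noncompact supports would require a uniform-integrability hypothesis that is not available here; routing through cyclical monotonicity avoids this, since it is purely topological and the limiting plan $\gamma_i$ has finite cost automatically.

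For (3), since the singletons $\{\nu\},\{\mu_1\},\{\mu_2\}$ are tight, part (1) gives that $\Gamma(\nu,\mu_1,\mu_2)=\Gamma(\{\nu\},\{\mu_1\},\{\mu_2\})$ is tight, hence weakly sequentially precompact. Moreover every $\rho\in\Gamma(\nu,\mu_1,\mu_2)$ has the same second moment $\int_{\R^{3d}}(\|y\|_2^2+\|x_1\|_2^2+\|x_2\|_2^2)\,\dd\rho=\int\|x\|_2^2\,\dd\nu+\int\|x\|_2^2\,\dd\mu_1+\int\|x\|_2^2\,\dd\mu_2<+\infty$. Given a sequence in $\Gamma(\nu,\mu_1,\mu_2)$, I would extract a weakly convergent subsequence; its limit $\rho$ has finite second moment by lower semicontinuity, lies in $\Gamma(\nu,\mu_1,\mu_2)$ by (2) applied to constant sequences, and therefore has exactly the same second moment as all terms of the sequence, so the weak convergence improves to convergence in $\W_2$. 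Thus $\Gamma(\nu,\mu_1,\mu_2)$ is sequentially $\W_2$-compact, i.e. compact in $\mathcal{P}_2(\R^{3d})$.
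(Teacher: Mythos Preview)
Your proof is correct. Part~(1) is essentially identical to the paper's argument. For part~(2), the paper simply invokes the stability of optimal plans under weak convergence of marginals (Villani, Theorem~5.20) as a black box, while you unpack that result: you argue directly that cyclical monotonicity of supports is preserved under weak limits and then use that, for marginals in $\mathcal{P}_2(\R^d)$, a cyclically monotone coupling is optimal for the quadratic cost. This is precisely what underlies Villani's theorem, so your route is more self-contained but longer; your explicit identification of the difficulty (only weak convergence is assumed, so second moments along the sequence may be unbounded and a naive argument through the value of the transport functional is unavailable) is a nice addition that the paper leaves implicit in the citation. For part~(3), the paper only shows weak sequential compactness (tightness from~(1) plus closedness from~(2)), whereas you go further and obtain $\W_2$-compactness by observing that every $\rho\in\Gamma(\nu,\mu_1,\mu_2)$ has the same second moment---fixed by the three marginals---so weak convergence within this set automatically entails convergence of second moments and hence $\W_2$-convergence. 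This is a strictly stronger conclusion and arguably the more natural reading of ``compact in $\mathcal{P}_2(\R^{3d})$''; for the paper's purposes (\cref{prop:W_nu_inf_attained}) weak compactness suffices, but your upgrade comes essentially for free.
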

\begin{proof}
    For 1. we set $\varepsilon > 0$. By tightness of $P, Q_1, Q_2$ and
    Prokhorov's theorem, there exists a compact set $\K \subset \R^d$ such that
    for any $\mu \in P \cup Q_1 \cup Q_2,\; \mu(\R^d \setminus \K) < \varepsilon
    / 3$. It follows that for any $\rho \in \Gamma(P, Q_1, Q_2)$,
    \begin{align*}
        \rho(\R^{3d}\setminus \K^3) &\leq 
        \rho\left((\R^d\setminus \K) \times \R^d \times \R^d\right) \\
        &\quad + \rho\left(\R^d \times (\R^d\setminus \K) \times \R^d\right) \\ 
        &\quad + \rho \left(\R^d \times \R^d \times (\R^d\setminus \K)\right) \\
        &= \nu(\R^d\setminus \K) + \mu_1(\R^d\setminus \K) 
        + \mu_2(\R^d\setminus \K) \\
        &< \varepsilon,
    \end{align*}
    and thus $\Gamma(P, Q_1, Q_2)$ is tight.

    For 2. we observe that for $i \in \{1, 2\},\; [\rho_n]_{0,i} \in
    \Pi^*(\nu^{(n)}, \mu_i^{(n)})$. Given that $\nu^{(n)}
    \xrightarrow[n\longrightarrow+\infty]{w} \nu$ and $\mu_i^{(n)}
    \xrightarrow[n\longrightarrow+\infty]{w} \mu_i$, and that $[\rho_n]_{0,i}
    \xrightarrow[n\longrightarrow+\infty]{w} \rho_{0, i}$, \cite[Theorem
    5.20]{villani} shows that $\rho_{0, i} \in \Pi^*(\nu, \mu_i)$ (the result
    provides the existence of a subsequence converging to an element of
    $\Pi^*(\nu, \mu_i)$, then uniqueness of the limit shows $\rho_{0, i} \in
    \Pi^*(\nu, \mu_i)$), and we conclude that $\rho \in \Gamma(\nu, \mu_1,
    \mu_2)$ by definition.

    For 3., take $(\rho_n) \in \Gamma(\nu, \mu_1, \mu_2)^\N$. By 1) and
    tightness of $\{\nu\}, \{\mu_1\}, \{\mu_2\}$, there exists an extraction
    $\alpha$ such that $\rho_{\alpha(n)}
    \xrightarrow[n\longrightarrow+\infty]{w} \rho \in \mathcal{P}_2(\R^{3d})$,
    then we show that $\rho \in \Gamma(\nu, \mu_1, \mu_2)$ using 2) with
    $\forall n \in \N,\; \nu^{(n)} := \nu,\; \mu_i^{(n)} := \mu_i$ for $i\in
    \{1, 2\}$.
\end{proof}

\begin{prop}\label{prop:W_nu_inf_attained} For $\nu, \mu_1, \mu_2 \in
    \mathcal{P}_2(\R^d)$, it holds
    $$\underset{\rho \in \Gamma(\nu, \mu_1, \mu_2)}{\inf}\ \Int{\R^{3d}}{}\|x_1
    - x_2\|_2^2\dd\rho(y,x_1,x_2) = \underset{\rho \in \Gamma(\nu, \mu_1,
    \mu_2)}{\min}\ \Int{\R^{3d}}{}\|x_1 - x_2\|_2^2\dd\rho(y,x_1,x_2). $$
\end{prop}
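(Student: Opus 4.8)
The plan is to run the direct method of the calculus of variations on the constraint set $\Gamma(\nu,\mu_1,\mu_2)$, whose compactness is already available from \cref{lemma:tightness_Gamma3}.

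First I would record two preliminary facts. The set $\Gamma(\nu,\mu_1,\mu_2)$ is nonempty: picking $\gamma_1 \in \Pi^*(\nu,\mu_1)$ and $\gamma_2 \in \Pi^*(\nu,\mu_2)$ (optimal plans exist in $\mathcal{P}_2(\R^d)$) and applying the gluing lemma produces a $3$-plan with the required bi-marginals. And the objective $J(\rho) := \int_{\R^{3d}} \|x_1 - x_2\|_2^2 \, \dd\rho(y,x_1,x_2)$ is finite on $\Gamma(\nu,\mu_1,\mu_2)$, since $\|x_1-x_2\|_2^2 \le 2\|x_1\|_2^2 + 2\|x_2\|_2^2$ and $\mu_1,\mu_2 \in \mathcal{P}_2(\R^d)$. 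Hence $m := \inf_{\rho \in \Gamma(\nu,\mu_1,\mu_2)} J(\rho)$ is a finite nonnegative number.

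Next I would take a minimising sequence $(\rho_n)$ in $\Gamma(\nu,\mu_1,\mu_2)$, so that $J(\rho_n) \to m$. By \cref{lemma:tightness_Gamma3} (item 3), $\Gamma(\nu,\mu_1,\mu_2)$ is sequentially compact for the weak convergence of measures, so some subsequence $\rho_{\alpha(n)}$ converges weakly to a limit $\rho \in \Gamma(\nu,\mu_1,\mu_2)$. It then suffices to show $J$ is weakly lower semicontinuous: writing $\|x_1-x_2\|_2^2 = \sup_{M \in \N} \min(\|x_1 - x_2\|_2^2, M)$ as an increasing supremum of bounded continuous functions, monotone convergence gives $J(\rho) = \sup_{M} \int_{\R^{3d}} \min(\|x_1-x_2\|_2^2, M) \, \dd\rho$, and each functional $\rho \mapsto \int_{\R^{3d}} \min(\|x_1-x_2\|_2^2, M)\,\dd\rho$ is weakly continuous, so $J$ is a supremum of weakly continuous functionals and hence weakly lsc. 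Therefore $J(\rho) \le \liminf_n J(\rho_{\alpha(n)}) = m$, and since $\rho \in \Gamma(\nu,\mu_1,\mu_2)$ we conclude $J(\rho) = m$, i.e. the infimum is attained and equals the minimum.

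The only genuine ingredient is the compactness of $\Gamma(\nu,\mu_1,\mu_2)$, which is precisely \cref{lemma:tightness_Gamma3}; the remainder is the routine lower-semicontinuity argument, so I do not expect a serious obstacle here. The one subtlety to keep in mind is that this compactness is established only for the weak topology (not for the stronger $\W_2$ topology on $\mathcal{P}_2(\R^{3d})$), so the lower semicontinuity of $J$ must be invoked for the weak topology — which is harmless since the cost $\|x_1-x_2\|_2^2$ is nonnegative and continuous.
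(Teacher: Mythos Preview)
Your proof is correct and follows essentially the same route as the paper: compactness of $\Gamma(\nu,\mu_1,\mu_2)$ from \cref{lemma:tightness_Gamma3} together with weak lower semicontinuity of $J$. The paper simply cites \cite{santambrogio2015optimal} Lemma~1.6 for the lsc step where you spell it out via truncation, and omits the nonemptiness remark, but the arguments are otherwise identical.
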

\begin{proof}    
    By \cref{lemma:tightness_Gamma3} item 3), $\Gamma(\nu, \mu_1, \mu_2)$ is a
    compact subset of $\mathcal{P}_2(\R^{3d})$. Then the map $J :\rho\in
    \mathcal{P}_2(\R^{3d}) \longmapsto \int_{\R^{3d}}\|x_1 - x_2\|_2^2\dd\rho(y,
    x_1, x_2)$ is lower semi-continuous with respect to the weak convergence of
    measures (\cite[Lemma 1.6]{santambrogio2015optimal}), hence the infimum is
    attained.
\end{proof}

Another consequence of \cref{lemma:tightness_Gamma3} is that the $\nu$-based
Wasserstein distance is lower semi-continuous with respect to the weak
convergence of measures, which is a property that was not studied in
\cite{nenna2023transport}.
\begin{prop}\label{prop:W_nu_lsc} The map $(\nu, \mu_1, \mu_2) \in
    \mathcal{P}_2(\R^d)^3 \longmapsto \W_\nu(\mu_1, \mu_2)$ is lower
    semi-continuous with respect to the weak convergence of measures: for any
    $\nu^{(n)} \xrightarrow[n\longrightarrow + \infty]{w} \nu\in
    \mathcal{P}_2(\R^d),\; \mu_i^{(n)}\xrightarrow[n\longrightarrow + \infty]{w}
    \mu_i\in \mathcal{P}_2(\R^d),\; i\in \{1, 2\}$, we have:
    \begin{equation}\label{eqn:W_nu_lsc}
        \W_\nu(\mu_1, \mu_2) \leq \underset{n\longrightarrow+\infty}{\liminf}
        \W_{\nu^{(n)}}(\mu_1^{(n)}, \mu_2^{(n)}).
    \end{equation}
\end{prop}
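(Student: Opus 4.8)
The plan is a standard compactness-plus-lower-semicontinuity argument, recycling exactly the two ingredients already assembled above: the stability/compactness of the 3-plan set from \cref{lemma:tightness_Gamma3}, and the lower semicontinuity of the transport functional used in the proof of \cref{prop:W_nu_inf_attained}. Since $\W_\nu$ is always finite, we may assume the right-hand side of \cref{eqn:W_nu_lsc} is finite, and it suffices to bound $\W_\nu(\mu_1, \mu_2)$ by any subsequential limit of $\W_{\nu^{(n)}}(\mu_1^{(n)}, \mu_2^{(n)})$; so I would first pass to a subsequence (kept with the same indices) along which $\W_{\nu^{(n)}}(\mu_1^{(n)}, \mu_2^{(n)})$ converges to $\liminf_n \W_{\nu^{(n)}}(\mu_1^{(n)}, \mu_2^{(n)}) =: \ell$. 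For each $n$, \cref{prop:W_nu_inf_attained} furnishes an \emph{optimal} 3-plan $\rho_n \in \Gamma(\nu^{(n)}, \mu_1^{(n)}, \mu_2^{(n)})$ realising $\int_{\R^{3d}}\|x_1 - x_2\|_2^2\dd\rho_n = \W_{\nu^{(n)}}^2(\mu_1^{(n)}, \mu_2^{(n)})$.

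Next I would extract a weak limit of $(\rho_n)$. Because the sequences $(\nu^{(n)})$, $(\mu_1^{(n)})$, $(\mu_2^{(n)})$ converge weakly, the sets $P := \{\nu\}\cup\{\nu^{(n)} : n\in\N\}$ and $Q_i := \{\mu_i\}\cup\{\mu_i^{(n)} : n\in\N\}$ are precompact, hence tight by Prokhorov's theorem; \cref{lemma:tightness_Gamma3} item 1 then shows that $\{\rho_n : n\in\N\} \subseteq \Gamma(P, Q_1, Q_2)$ is tight in $\mathcal{P}_2(\R^{3d})$. Prokhorov's theorem yields a further subsequence $\rho_{\alpha(n)} \xrightarrow[n\to+\infty]{w} \rho$ for some probability measure $\rho$ on $\R^{3d}$. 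Continuity of the marginal projections forces the one-dimensional marginals of $\rho$ to be $\nu, \mu_1, \mu_2 \in \mathcal{P}_2(\R^d)$, so $\int_{\R^{3d}}\|(y,x_1,x_2)\|_2^2\dd\rho = \int\|y\|_2^2\dd\nu + \int\|x_1\|_2^2\dd\mu_1 + \int\|x_2\|_2^2\dd\mu_2 < \infty$, i.e. $\rho \in \mathcal{P}_2(\R^{3d})$; \cref{lemma:tightness_Gamma3} item 2 then certifies $\rho \in \Gamma(\nu, \mu_1, \mu_2)$, so $\rho$ is an admissible competitor in the definition of $\W_\nu$.

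Finally I would conclude via lower semicontinuity of $J : \rho \mapsto \int_{\R^{3d}}\|x_1 - x_2\|_2^2\dd\rho$ for the weak convergence of measures (\cite{santambrogio2015optimal}, Lemma 1.6 — the same fact invoked in \cref{prop:W_nu_inf_attained}), which gives
\[
\W_\nu^2(\mu_1, \mu_2) \leq J(\rho) \leq \liminf_{n\to\infty} J(\rho_{\alpha(n)}) = \liminf_{n\to\infty} \W_{\nu^{(\alpha(n))}}^2(\mu_1^{(\alpha(n))}, \mu_2^{(\alpha(n))}) = \ell^2,
\]
and taking square roots yields \cref{eqn:W_nu_lsc}. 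The argument is essentially bookkeeping once \cref{lemma:tightness_Gamma3} is in place; the one point that needs genuine care is the tightness step, where it is essential to feed the lemma the \emph{tails} of the converging sequences together with their limits (rather than a single triple), and to rely on item 2 being robust under \emph{moving} base measures $\nu^{(n)}$ — this is precisely what allows the limiting 3-plan $\rho$ to remain admissible for the fixed pivot $\nu$.
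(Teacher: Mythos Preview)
Your proof is correct and follows essentially the same approach as the paper's: pass to a subsequence realising the $\liminf$, select optimal 3-plans via \cref{prop:W_nu_inf_attained}, use \cref{lemma:tightness_Gamma3} item 1 for tightness and item 2 for admissibility of the limit, then conclude by lower semicontinuity of $J$. The only cosmetic differences are that you explicitly include the limits $\nu,\mu_i$ in the tight sets $P,Q_i$ and verify $\rho\in\mathcal{P}_2(\R^{3d})$ via its marginals, neither of which changes the substance.
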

\begin{proof}
    Without loss of generality, we can assume that \[\W_{\nu^{(n)}}(\mu_1^{(n)},
    \mu_2^{(n)}) \xrightarrow[n \longrightarrow +
    \infty]{}\underset{n\longrightarrow+\infty}{\liminf}
    \W_{\nu^{(n)}}(\mu_1^{(n)}, \mu_2^{(n)})\] (up to considering an extraction
    of all sequences). For $n\in \N$, we can choose $\rho_n \in
    \Gamma(\nu^{(n)}, \mu_1^{(n)}, \mu_2^{(n)})$ optimal by
    \cref{prop:W_nu_inf_attained}. By \cref{lemma:tightness_Gamma3} item 1) and
    tightness of the sets $\{\nu^{(n)}\}, \{\mu_1^{(n)}\}, \{\mu_2^{(n)}\}$,
    there exists an extraction $\alpha$ such that $\rho_{\alpha(n)}
    \xrightarrow[n\longrightarrow +\infty]{w} \rho \in \mathcal{P}_2(\R^{3d})$
    and by \cref{lemma:tightness_Gamma3} item 2) we have $\rho \in \Gamma(\nu,
    \mu_1, \mu_2)$. By lower semi-continuity of the map $$J :\rho\in
    \mathcal{P}_2(\R^{3d}) \longmapsto \int_{\R^{3d}}\|x_1 - x_2\|_2^2\dd\rho(y,
    x_1, x_2),$$ (see \cite[Lemma 1.6]{santambrogio2015optimal}) we have:
    \begin{align*}
        \W_\nu^2(\mu_1, \mu_2) 
        &\leq J(\rho) \\
        &\leq \underset{n\longrightarrow +\infty}{\liminf} 
        J(\rho_{\alpha(n)}) \\ 
        &= \underset{n\longrightarrow +\infty}{\liminf}
        \W_{\nu^{(\alpha(n))}}^2(\mu_1^{(\alpha(n))}, \mu_2^{(\alpha(n))}) \\
        &= \underset{n\longrightarrow+\infty}{\liminf}
        \W_{\nu^{(n)}}^2(\mu_1^{(n)}, \mu_2^{(n)}),
    \end{align*}
    where the first inequality follows from the definition of $\W_\nu$, since
    $\rho \in \Gamma(\nu, \mu_1, \mu_2)$ is admissible, and the second
    inequality follows from the lower semi-continuity of $J$. The first equality
    is due to the optimality of $\rho_{\alpha(n)}$, and the second equality
    follows from our reduction to the case where $$\W_{\nu^{(n)}}(\mu_1^{(n)},
    \mu_2^{(n)}) \xrightarrow[n \longrightarrow +
    \infty]{}\underset{n\longrightarrow+\infty}{\liminf}
    \W_{\nu^{(n)}}(\mu_1^{(n)}, \mu_2^{(n)}).$$
    \vskip -20pt
\end{proof}

Full continuity with respect to the weak convergence of measures is not
guaranteed, as shown in \cref{ex:ce_W_nu_not_continuous}.

\begin{example}[$\W_\nu(\cdot, \mu_2)$ is not
    continuous]\label{ex:ce_W_nu_not_continuous} Consider the following
    empirical measures in $\R^2$:
    \begin{align*}
        \nu &:= \tfrac{1}{2}(\delta_{z} + \delta_{z'}),\; 
        z := (0, 1),\; z':= (0, -1);\\
        \mu_1^{(n)} &:= \tfrac{1}{2}(\delta_{x_n} + \delta_{x'}),\;
        x_n := (-1, 2^{-n}),\; x' := (1, 0); \\
        \mu_2 &= \tfrac{1}{2}(\delta_{y} + \delta_{y'}),\; 
        y := (-2, -1),\; y := (2, 1).
    \end{align*}
    For each $n\in \N$, we have $\Pi^*(\nu, \mu_1^{(n)}) = \{\gamma_1^{(n)}\}$
    with $\gamma_1^{(n)} := \frac{1}{2}(\delta_{(z, x_n)} + \delta_{(z', x')})$.
    We also have $\Pi^*(\nu, \mu_2) = \{\gamma_2\}$ with $\gamma_2 :=
    \frac{1}{2}(\delta_{(z, y')} + \delta_{(z', y)})$. This shows that
    $\Gamma(\nu, \mu_1^{(n)}, \mu_2) = \{\rho_n\}$ where $\rho_n :=
    \frac{1}{2}(\delta_{(z, x_n, y')} + \delta_{(z', x', y)})$, yielding the
    cost
    \begin{align*}
        \W_\nu^2(\mu_1^{(n)}, \mu_2) &= \tfrac{1}{2}\|x_n - y'\|_2^2 +
        \tfrac{1}{2}\|x' - y\|_2^2 \\
        &= \tfrac{1}{2}\left(3^2 + (1-2^{-n})^2\right) + \tfrac{1}{2}\left(3^2 + 1^2\right) \xrightarrow[n\longrightarrow +\infty]{} 10.
    \end{align*}
    However, we have $\mu_1^{(n)}\xrightarrow[n\longrightarrow +\infty]{w}
    \mu_1 =\frac{1}{2}(\delta_{x} + \delta_{x'})$ with $x := (-1, 0)$. We see
    that $\Pi^*(\nu, \mu_1) = \Pi(\nu, \mu_2)$, and clearly the choice $\gamma_1
    := \frac{1}{2}(\delta_{(z, x')} + \delta_{(z', x)})$ will be optimal, such
    that $\rho := \frac{1}{2}(\delta_{(z, x', y')} + \delta_{(z', x, y)})$ is
    optimal for $\W_\nu^2(\mu_1, \mu_2) = 2 < \underset{n\longrightarrow
    +\infty}{\lim}\W_\nu^2(\mu_1^{(n)}, \mu_2) = 10.$ We illustrate the setting
    of this example in \cref{fig:ce_W_nu_not_continuous}.
\end{example}

\begin{figure}[ht]
    \begin{center}
        \includegraphics[width=0.6\linewidth]{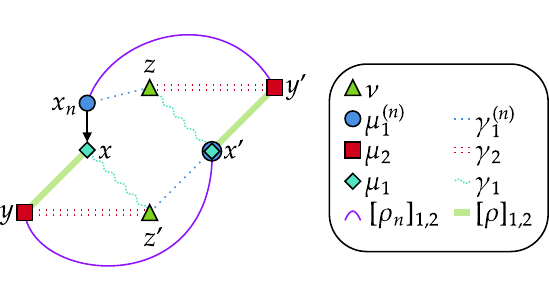}
    \end{center}
    \caption{Representation of \cref{ex:ce_W_nu_not_continuous}. The measure
    $\nu$ is drawn with green triangles, $\mu_1^{(n)}$ with blue circles,
    $\mu_2$ with red squares, and the limit $\mu$ with light blue diamonds. The
    OT plan $\gamma_1^{(n)}$ between $\nu$ and $\mu_1^{(n)}$ is drawn with
    dotted blue lines, the OT plan $\gamma_2$ between $\nu$ and $\mu_2$ with red
    double dotted lines, and the induced plan $[\rho_n]_{1, 2}$ between
    $\mu_1^{(n)}$ and $\mu_2$ with curved purple lines. As for the limit, an OT
    plan $\gamma_1$ between $\nu$ and $\mu_1$ is drawn with curved dashed light
    blue lines, and the induced plan $[\rho]_{1, 2}$ between $\mu_1$ and $\mu_2$
    using $\gamma_1$ and $\gamma_2$ is drawn with thick green lines.}
    \label{fig:ce_W_nu_not_continuous}
\end{figure}

As remarked earlier (again, \cite[Remark 9.2.3]{ambrosio2005gradient}), if each
$\Pi^*(\nu, \mu_i)$ is reduced to a single plan $\gamma_i$ induced by $T_i$ (for
$i\in \{1, 2\}$), then the only element $\rho \in \Gamma(\nu, \mu_1, \mu_2)$ is
$\rho=(I, T_1, T_2)\#\nu$, yielding the following formulation for the
$\nu$-based Wasserstein distance (see also \cite[Example 9]{nenna2023transport}
and the Linear OT framework \cite{wang2013linear}):
\begin{equation}\label{eqn:nu_based_Wass_maps}
    \W_\nu^2(\mu_1, \mu_2) = \Int{\R^d}{} \|T_1(y)-T_2(y)\|_2^2\dd\nu(y).
\end{equation}
A result of interest is \cite[Lemma 9.2.1, Equation
9.2.7b]{ambrosio2005gradient}, which states that for any $\rho \in \Gamma(\nu,
\mu_1, \mu_2)$ (see \cref{eqn:def_Gamma3})
\begin{equation}\label{eqn:ags_eq_9.2.7b}
    \W_2^2(\mu_1, \mu_\rho^{1\rightarrow 2}(t)) = (1-t)\W_2^2(\mu_1, \nu) + 
    t\W_2^2(\mu_2, \nu) - (1-t)t\int_{\R^{3d}}\|x_1 - x_2\|_2^2
    \dd\rho(y,x_1,x_2).
\end{equation}
Taking in particular a 3-plan $\rho^*\in \Gamma(\nu, \mu_1, \mu_2)$ that is
optimal for the $\nu$-based Wasserstein distance (\cref{eqn:nu_based_Wass}), we
obtain
\begin{equation}\label{eqn:nu_W_equality}
    \W_2^2(\mu_1, \mu_{\rho^*}^{1\rightarrow 2}(t)) = (1-t)\W_2^2(\mu_1, \nu) + 
    t\W_2^2(\mu_2, \nu) - (1-t)t\W_\nu^2(\mu_1, \mu_2).
\end{equation}

\subsection{Reminders on Wasserstein Means}\label{sec:wass_means}

A natural application of Wasserstein geodesics is the concept of Wasserstein
means, which we will require in \cref{sec:pivot_sliced}. The following result
states that Wasserstein means are exactly the middles of Wasserstein geodesics.
For the sake of completeness, we provide some reminders on geodesic middles in
\cref{sec:midpoints_are_geodesic_middles}, wherein we recall and prove an
analogous result for geodesic spaces.
\begin{prop}\label{prop:wass_mean_geodesic} For $\mu_1, \mu_2 \in
    \mathcal{P}_2(\R^d)$, the set of Wasserstein Means between $\mu_1$ and
    $\mu_2$
    \begin{equation}\label{eqn:def_W_mean}
        \M(\mu_1, \mu_2) := \underset{\mu \in \mathcal{P}_2(\R^d)}{\argmin}\ 
        \W_2^2(\mu_1, \mu) + \W_2^2(\mu, \mu_2)
    \end{equation}
    can be expressed using Wasserstein geodesics \cref{eqn:W2_geodesic}:
    \begin{equation}
        \M(\mu_1, \mu_2) = 
        \left\{\mu_{\gamma}^{1\rightarrow 2}(\tfrac{1}{2}) : 
        \gamma \in \Pi^*(\mu_1, \mu_2)\right\} = 
        \left\{\left(\tfrac{1}{2}P_1 + \tfrac{1}{2}P_2\right)
        \#\gamma : \gamma \in \Pi^*(\mu_1, \mu_2)\right\}.
    \end{equation}
\end{prop}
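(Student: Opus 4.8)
The plan is to identify the minimal value of the functional in \cref{eqn:def_W_mean} and then characterise its minimisers. For a candidate $\mu \in \mathcal{P}_2(\R^d)$, write $a := \W_2(\mu_1, \mu)$ and $b := \W_2(\mu, \mu_2)$. Combining the elementary inequality $a^2 + b^2 \geq \tfrac12 (a+b)^2$ (with equality iff $a = b$) with the triangle inequality $a + b \geq \W_2(\mu_1, \mu_2)$ yields
$$\W_2^2(\mu_1, \mu) + \W_2^2(\mu, \mu_2) \geq \tfrac12 \W_2^2(\mu_1, \mu_2) \qquad \text{for every } \mu \in \mathcal{P}_2(\R^d).$$
On the other hand, for $\gamma \in \Pi^*(\mu_1, \mu_2)$ the curve $t \mapsto \mu_\gamma^{1\rightarrow 2}(t)$ is a constant-speed geodesic of $(\mathcal{P}_2(\R^d), \W_2)$, so $\W_2(\mu_1, \mu_\gamma^{1\rightarrow 2}(\tfrac12)) = \W_2(\mu_\gamma^{1\rightarrow 2}(\tfrac12), \mu_2) = \tfrac12 \W_2(\mu_1, \mu_2)$ and the functional attains the value $\tfrac12 \W_2^2(\mu_1, \mu_2)$ at $\mu_\gamma^{1\rightarrow 2}(\tfrac12)$. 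Hence the minimum equals $\tfrac12 \W_2^2(\mu_1, \mu_2)$ and every geodesic midpoint $\mu_\gamma^{1\rightarrow 2}(\tfrac12)$ belongs to $\M(\mu_1, \mu_2)$, giving the inclusion ``$\supseteq$''.

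For ``$\subseteq$'', let $\mu \in \M(\mu_1, \mu_2)$. Then both inequalities above must be equalities, which forces $a = b$ and $a + b = \W_2(\mu_1, \mu_2)$, i.e. $\mu$ is a metric midpoint: $\W_2(\mu_1, \mu) = \W_2(\mu, \mu_2) = \tfrac12 \W_2(\mu_1, \mu_2)$. If $\W_2(\mu_1, \mu_2) = 0$ then $\mu_1 = \mu = \mu_2$ and the statement is trivial, so assume $\W_2(\mu_1, \mu_2) > 0$. Pick optimal plans $\alpha \in \Pi^*(\mu_1, \mu)$ and $\beta \in \Pi^*(\mu, \mu_2)$, and glue them along their common marginal $\mu$ (gluing lemma, \cite{santambrogio2015optimal}, Lemma 5.5) into a three-plan $\sigma \in \mathcal{P}_2(\R^{3d})$ whose $(0,1)$-marginal is $\alpha$ and whose $(1,2)$-marginal is $\beta$; let $\gamma$ be its $(0,2)$-marginal, so $\gamma \in \Pi(\mu_1, \mu_2)$. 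Applying Minkowski's inequality in the Hilbert space $L^2(\sigma; \R^d)$ to the maps $(x_0, x_1, x_2) \mapsto x_1 - x_0$ and $(x_0, x_1, x_2) \mapsto x_2 - x_1$ gives
$$\W_2(\mu_1, \mu_2) \leq \big(\textstyle\int \|x_0 - x_2\|_2^2 \dd\sigma\big)^{1/2} \leq \big(\textstyle\int \|x_0 - x_1\|_2^2 \dd\sigma\big)^{1/2} + \big(\textstyle\int \|x_1 - x_2\|_2^2 \dd\sigma\big)^{1/2} = a + b = \W_2(\mu_1, \mu_2),$$
using that $\alpha, \beta$ are optimal for the two middle terms and that $\gamma \in \Pi(\mu_1, \mu_2)$ for the first. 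All inequalities are therefore equalities: $\gamma$ is optimal for $\W_2(\mu_1, \mu_2)$, and the equality case of Minkowski's inequality (equivalently, of Cauchy--Schwarz in $L^2(\sigma; \R^d)$), together with $\|x_1 - x_0\|_{L^2(\sigma)} = \|x_1 - x_2\|_{L^2(\sigma)} = a > 0$, forces $x_1 - x_0 = x_2 - x_1$ for $\sigma$-a.e. $(x_0, x_1, x_2)$, i.e. $x_1 = \tfrac12(x_0 + x_2)$ $\sigma$-a.e. Pushing forward through the coordinates then gives $\mu = \big(\tfrac12 P_1 + \tfrac12 P_2\big)\#\gamma = \mu_\gamma^{1\rightarrow 2}(\tfrac12)$ with $\gamma \in \Pi^*(\mu_1, \mu_2)$, as desired. (Alternatively, the concatenation of a $\W_2$-geodesic from $\mu_1$ to $\mu$ with one from $\mu$ to $\mu_2$ is a constant-speed $\W_2$-geodesic through $\mu$ at time $\tfrac12$ --- the metric-space fact recalled in the appendix --- and \cite{ambrosio2005gradient}, Theorem 7.2.2 then represents it as $\mu_\gamma^{1\rightarrow 2}$ for some $\gamma \in \Pi^*(\mu_1, \mu_2)$.)

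The only delicate point is this last step: because $\W_2$-geodesics branch, a metric midpoint is not automatically the midpoint of a displacement interpolation, so one must either run the gluing/Minkowski-equality argument above or carefully justify the concatenation and invoke the structure theorem for $\W_2$-geodesics. Everything else --- the two scalar inequalities, the constant-speed property, the gluing lemma, and the vector-valued equality case of Minkowski's inequality --- is routine.
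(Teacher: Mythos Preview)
Your proof is correct and follows essentially the same outline as the paper: both establish the lower bound $\tfrac12\W_2^2(\mu_1,\mu_2)$ via the scalar inequality $a^2+b^2\geq \tfrac12(a+b)^2$ combined with the triangle inequality, and both show geodesic midpoints attain it. The paper packages this as an abstract metric-space lemma (\cref{lemma:midpoints}) and then invokes the structure theorem for $\W_2$-geodesics (\cite{ambrosio2005gradient}, Theorem 7.2.2) to identify each geodesic middle as a displacement-interpolation midpoint. Your reverse inclusion is slightly more direct: by gluing optimal plans and using the equality case of Minkowski in $L^2(\sigma;\R^d)$, you produce the optimal $\gamma$ explicitly and read off $\mu=(\tfrac12P_1+\tfrac12P_2)\#\gamma$ without passing through the geodesic characterisation. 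This is a clean shortcut specific to the $L^2$ structure of $\W_2$; the paper's route has the (minor) advantage of isolating a reusable metric-space statement.
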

\begin{proof}
    The result is an application of \cref{lemma:midpoints} in the geodesic space
    $(\mathcal{P}_2(\R^d), \W_2)$.
\end{proof}

\subsection{Another Formulation of \texorpdfstring{$\W_\nu$}{Wnu} with Measure
Disintegration}\label{sec:nu_based_wass_disintegration}

In this work, we will need a convenient formulation of the $\nu$-based
Wasserstein distance which uses the notion of disintegration of measures. We
recall this notion in \cref{sec:disintegration}, and provide a proof in
\cref{sec:proof_W_nu_disintegration} of \cite[Theorem 12 item
1)]{nenna2023transport}, adapted to measures in $\mathcal{P}_2(\R^d)$. In
\cref{ex:W_nu_disintegration}, we illustrate the result on a simple example with
discrete measures.
\begin{example}\label{ex:W_nu_disintegration} We consider measures $\nu, \mu_1,
    \mu_2 \in \mathcal{P}_2(\R^d)$ as in \cref{fig:W_nu_disintegration}. We
    consider two optimal plans $\gamma_1 \in \Pi^*(\nu, \mu_1)$ and $\gamma_2
    \in \Pi^*(\nu, \mu_2)$, represented in \cref{fig:W_nu_disintegration}.
    Writing the disintegrations as $\gamma_i(\dd y, \dd x_i) = \nu(\dd y)
    \gamma_i^{y}(\dd x_i)$, we can apply \cref{thm:W_nu_disintegration} to
    compute $\W_\nu^2(\mu_1, \mu_2)$:
    \begin{align*}
        \W_\nu^2(\mu_1, \mu_2) &= 
        \tfrac{1}{2}\W_2^2\left(\gamma_1^{z_1}, \gamma_2^{z_1}\right)
        + \tfrac{1}{2}\W_2^2\left(\gamma_1^{z_2}, \gamma_2^{z_2}\right) \\
        &= \tfrac{1}{2}\W_2^2\left(\tfrac{2}{3}\delta_{x_1} 
        + \tfrac{1}{3}\delta_{x_2},
        \tfrac{2}{3}\delta_{y_1} + \tfrac{1}{3}\delta_{y_2}\right) +
        \tfrac{1}{2}\W_2^2\left(\tfrac{1}{3}\delta_{x_2} 
        + \tfrac{2}{3}\delta_{x_3},
        \tfrac{1}{3}\delta_{y_2} + \tfrac{2}{3}\delta_{y_3}\right).
    \end{align*}
\end{example}
\begin{figure}[H]
    \centering
    \includegraphics[width=0.7\textwidth]{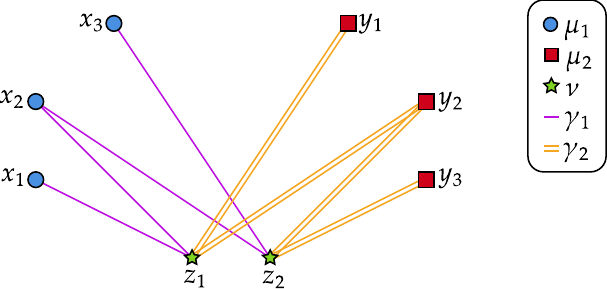}
    \caption{In this example, there is a unique optimal transport plan
    $\gamma_1$ (purple lines) between $\mu_1$ (blue circles) and the pivot $\nu$
    (green stars), and likewise for $\gamma_2$ (orange double lines) between
    $\mu_2$ (red squares) and $\nu$. The disintegration kernel $\gamma_1^{z_1}$
    in the disintegration $\gamma_1(\dd z, \dd x) = \nu(\dd z)\gamma_1^z(\dd x)$
    is the probability measure $\gamma_1^{z_1} = \tfrac{2}{3}\delta_{x_1} +
    \tfrac{1}{3}\delta_{x_2}$, and likewise for $\gamma_1^{z_2}, \gamma_2^{z_1},
    \gamma_2^{z_2}$. }
    \label{fig:W_nu_disintegration}
\end{figure}
\begin{theorem}[\cite{nenna2023transport} Theorem 12 item
1]\label{thm:W_nu_disintegration} Let $\nu, \mu_1, \mu_2 \in
\mathcal{P}_2(\R^d)$. The following equality holds:
    \begin{equation}\label{eqn:W_nu_disintegration}
        \W_\nu^2(\mu_1, \mu_2) = 
        \underset{\gamma_i \in \Pi^*(\nu, \mu_i),\: i\in \{1,2\}}{\min}\ 
        \int_{\R^d}\W_2^2(\gamma_1^y, \gamma_2^y)\dd\nu(y),
    \end{equation}
    where for $i\in \{1, 2\}$, $\gamma_i^y\in \mathcal{P}_2(\R^d)$ is defined
    using the disintegration $\gamma_i(\dd y, \dd x) = \nu(\dd y)\gamma_i^y(\dd
    x)$.
\end{theorem}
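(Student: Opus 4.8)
The plan is to pass between $3$-plans $\rho\in\Gamma(\nu,\mu_1,\mu_2)$ and fibrewise couplings of the disintegration kernels $\gamma_1^y,\gamma_2^y$ by disintegrating everything with respect to the pivot marginal $\nu$. Concretely, any $\rho\in\mathcal{P}_2(\R^{3d})$ whose first marginal is $\nu$ can be written $\rho(\dd y,\dd x_1,\dd x_2)=\nu(\dd y)\,\rho^y(\dd x_1,\dd x_2)$ with $\rho^y\in\mathcal{P}(\R^{2d})$; then $\rho_{0,i}(\dd y,\dd x_i)=\nu(\dd y)\,[\rho^y]_i(\dd x_i)$, where $[\rho^y]_i$ denotes the $i$-th marginal of $\rho^y$, and Fubini gives $\int_{\R^{3d}}\|x_1-x_2\|_2^2\,\dd\rho=\int_{\R^d}\big(\int_{\R^{2d}}\|x_1-x_2\|_2^2\,\dd\rho^y\big)\,\dd\nu(y)$. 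So the cost of $\rho$ splits as an integral over $y$ of costs of couplings of $[\rho^y]_1$ and $[\rho^y]_2$.

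For the inequality $\W_\nu^2(\mu_1,\mu_2)\ge$ (right-hand side), I would take $\rho^*\in\Gamma(\nu,\mu_1,\mu_2)$ optimal for $\W_\nu^2$ (such a plan exists by \cref{prop:W_nu_inf_attained}) and set $\gamma_i:=\rho^*_{0,i}\in\Pi^*(\nu,\mu_i)$. By uniqueness of disintegration, $\gamma_i^y=[(\rho^*)^y]_i$ for $\nu$-a.e. $y$, so $(\rho^*)^y\in\Pi(\gamma_1^y,\gamma_2^y)$ and hence $\int_{\R^{2d}}\|x_1-x_2\|_2^2\,\dd(\rho^*)^y\ge\W_2^2(\gamma_1^y,\gamma_2^y)$ for $\nu$-a.e. $y$; integrating against $\nu$ yields $\W_\nu^2(\mu_1,\mu_2)\ge\int_{\R^d}\W_2^2(\gamma_1^y,\gamma_2^y)\,\dd\nu(y)$, and in particular the right-hand infimum is $\le\W_\nu^2(\mu_1,\mu_2)$.

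For the reverse inequality I would fix arbitrary $\gamma_i\in\Pi^*(\nu,\mu_i)$ and build an admissible $3$-plan. The key technical point is a measurable selection: the maps $y\mapsto\gamma_i^y$ are $\nu$-measurable by the disintegration theorem, the optimal transport cost is lower semicontinuous, and the set-valued map $y\mapsto\Pi^*(\gamma_1^y,\gamma_2^y)$ has nonempty compact values, so it admits a $\nu$-measurable selection $y\mapsto\pi^y\in\Pi^*(\gamma_1^y,\gamma_2^y)$ (via the Kuratowski--Ryll-Nardzewski theorem, or by invoking the known measurable dependence of optimal plans on their marginals). Glue these into $\rho(\dd y,\dd x_1,\dd x_2):=\nu(\dd y)\,\pi^y(\dd x_1,\dd x_2)$. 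Its marginals are $\nu,\mu_1,\mu_2\in\mathcal{P}_2(\R^d)$, so $\rho\in\mathcal{P}_2(\R^{3d})$, and since $[\pi^y]_i=\gamma_i^y$ we get $\rho_{0,i}=\nu\otimes\gamma_i^y=\gamma_i\in\Pi^*(\nu,\mu_i)$, hence $\rho\in\Gamma(\nu,\mu_1,\mu_2)$. Therefore $\W_\nu^2(\mu_1,\mu_2)\le\int_{\R^{3d}}\|x_1-x_2\|_2^2\,\dd\rho=\int_{\R^d}\W_2^2(\gamma_1^y,\gamma_2^y)\,\dd\nu(y)$, and taking the infimum over $\gamma_1,\gamma_2$ gives $\W_\nu^2(\mu_1,\mu_2)\le$ (right-hand side).

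Combining the two inequalities gives equality with an infimum on the right, and the first step exhibits the minimisers $\gamma_i=\rho^*_{0,i}$, so the infimum is attained and is a minimum, as stated. The main obstacle is the measurable selection of fibrewise optimal couplings; the rest is routine bookkeeping with disintegration, Prokhorov compactness of fixed-marginal coupling sets, and Fubini. Along the way one should also record that $y\mapsto\W_2^2(\gamma_1^y,\gamma_2^y)$ is $\nu$-measurable (from measurability of $y\mapsto(\gamma_1^y,\gamma_2^y)$ and lower semicontinuity of $\W_2^2$) so that the right-hand side is well-defined.
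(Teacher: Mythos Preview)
Your proposal is correct and follows essentially the same approach as the paper: disintegrate with respect to the pivot marginal, bound each fibre cost by $\W_2^2(\gamma_1^y,\gamma_2^y)$ for one inequality, and use a measurable selection of fibrewise optimal plans to glue an admissible $3$-plan for the other. The only cosmetic differences are that the paper starts from an arbitrary $\rho\in\Gamma(\nu,\mu_1,\mu_2)$ before taking the infimum (rather than directly from an optimal $\rho^*$), and for the measurable selection it cites \cite{ambrosio2005gradient} Lemma 12.4.7 instead of Kuratowski--Ryll-Nardzewski; the paper also cites the same lemma for the Borel measurability of $y\mapsto\W_2^2(\gamma_1^y,\gamma_2^y)$.
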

\begin{proof}
    We provide a proof in \cref{sec:proof_W_nu_disintegration}, which
    generalises that in \cite{nenna2023transport} to measures in
    $\mathcal{P}_2(\R^d)$, following similar ideas.
\end{proof}
\section{The Pivot Sliced Discrepancy}\label{sec:pivot_sliced}

\subsection{Definition with the \texorpdfstring{$\nu$}{nu}-based Wasserstein
Distance} 

We introduce a generalised version of SWGG introduced in \cite{mahey23fast} for
general measures in $\mathcal{P}_2(\R^d)$ (and fixing the ambiguity issues that
will be discussed in \cref{ex:swgg_ambiguity}), using the $\nu$-based
Wasserstein distance (\cref{eqn:nu_based_Wass}, and see
\cite{nenna2023transport}), where the base measure $\nu$ is taken as a middle of
projected versions of the measures:
\begin{definition}\label{def:S_theta} Let $\mu_1, \mu_2$ of
    $\mathcal{P}_2(\R^d)$, take $\mu_\theta \in \M(Q_\theta\#\mu_1,
    Q_\theta\#\mu_2)$, where $Q_\theta: x\longmapsto (\theta^\top x)\theta$.
    Then, we define
    \begin{equation}\label{eqn:S_theta}
        \PStheta(\mu_1, \mu_2) := \W_{\mu_\theta}(\mu_1, \mu_2).
    \end{equation}
\end{definition}
We consider two related projection operations: $P_\theta: x \longmapsto
\theta^\top x$ and $Q_\theta: x \longmapsto (\theta^\top x)\theta$. The first
one is valued in $\R$, while the second is valued in $\R\theta \subset \R^d$. To
fix ideas, we illustrate the definition of $\PStheta$ in the case of discrete
measures without projection ambiguity in \cref{fig:S_theta_simple}.
\begin{figure}[H]
    \centering
    \includegraphics[width=0.8\textwidth]{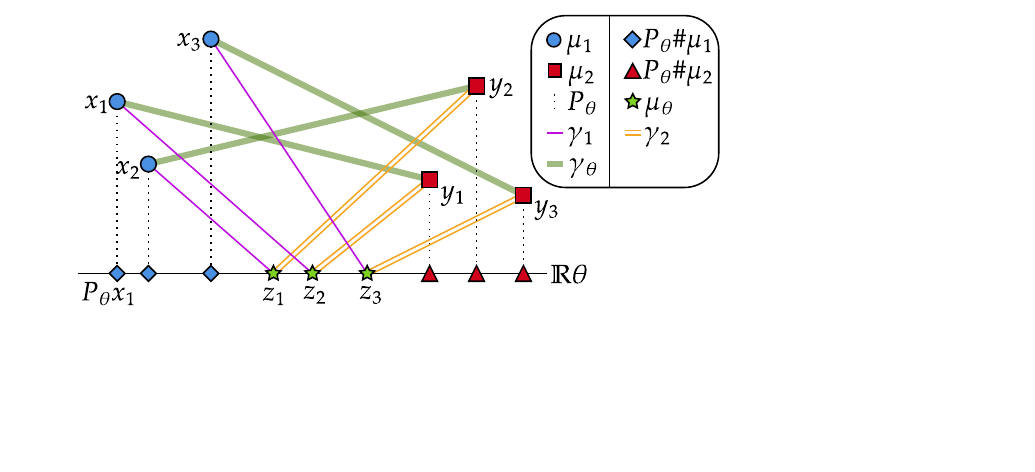}
    \caption{Illustration of the definition of $\PStheta$ in the case of
    discrete measures without projection ambiguity. The measure $\mu_1$ is
    represented by blue circles, and $\mu_2$ by red squares. The projected
    measures $Q_\theta\#\mu_1$ and $Q_\theta\#\mu_2$ are represented by blue
    diamonds and red triangles respectively. The middle $\mu_\theta$ of the
    projections is represented by green stars. Once this middle is determined,
    we compute optimal transport plans $\gamma_1,\gamma_2$ between $\mu_\theta$
    and $\mu_1,\mu_2$ respectively (in this case, they are unique). We represent
    $\gamma_1$ by purple lines and $\gamma_2$ by orange double lines. To obtain
    the coupling corresponding to the cost $\PStheta(\mu_1, \mu_2)$, we look at
    the targets of each point $(z_i)$ of the projected middle $\mu_\theta$:
    since $z_1$ is mapped to $x_1$ in $\mu_1$ and to $y_1$ in $\mu_2$, the
    coupling $\gamma_\theta$ maps $x_1$ to $y_1$, and so on. The coupling
    $\gamma_\theta$ is represented with thick green lines.}
    \label{fig:S_theta_simple}
\end{figure}

The idea of using a pivot measure is to find an optimal manner of correcting
projection ambiguities. To illustrate this, we consider a simple pathological
example in \cref{fig:S_theta_ambiguity_trivial}, where the projections of the
points of the support of $\mu_1$ and $\mu_2$ are not distinct.
\begin{figure}[H]
    \centering
    \includegraphics[width=0.5\textwidth]{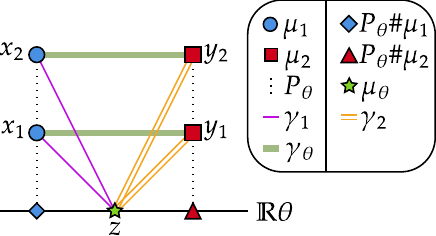}
    \caption{In this example, we notice that $P_\theta\#\mu_1$ and
    $P_\theta\#\mu_2$ are reduced to Dirac masses, thus their middle
    $\mu_\theta$ is the middle Dirac mass. The optimal couplings $\gamma_1$ and
    $\gamma_2$ between $\mu_\theta, \mu_1$ and $\mu_\theta, \mu_2$ are then
    unique. It is then easy to see that the optimal $\rho \in \Gamma(\mu_\theta,
    \mu_1, \mu_2)$ is such that $\rho_{1, 2} =: \gamma_\theta$ is the OT
    coupling between $\mu_1$ and $\mu_2$. In this example, $\PStheta(\mu_1,
    \mu_2) = \W_2(\mu_1, \mu_2)$.}
    \label{fig:S_theta_ambiguity_trivial}
\end{figure}
In \cref{fig:S_theta_ambiguity_medium}, we illustrate another simple example
where the projections of the points of the support of $\mu_1$ are not distinct,
but where they are distinct for $\mu_2$.
\begin{figure}[H]
    \centering
    \includegraphics[width=0.7\textwidth]{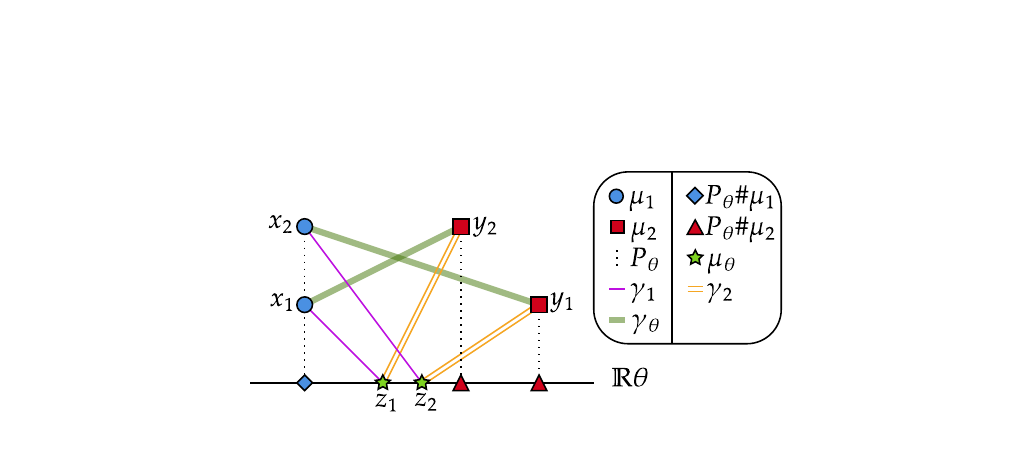}
    \caption{In this illustration, $P_\theta\#\mu_1$ is a Dirac mass but not
    $P_\theta\#\mu_2$. Since we compare the middle $\mu_\theta$ with $\mu_1$ and
    not $P_\theta\#\mu_1$, there is in this case a unique optimal plan
    $\gamma_1$ between $\mu_\theta$ and $\mu_1$. The optimal plan $\gamma_2$
    between $\mu_\theta$ and $\mu_2$ is also unique. The constraint $\rho_{0, 1}
    = \gamma_1,\; \rho_{0, 2} = \gamma_2$ imposes that $\rho_{1, 2} =
    \tfrac{1}{2}\delta_{x_1\otimes y_2} + \tfrac{1}{2}\delta_{x_2\otimes y_1}$
    for any $\rho \in \Gamma(\mu_\theta, \mu_1, \mu_2)$, hence there is no
    choice in the optimisation over $\rho$.}
    \label{fig:S_theta_ambiguity_medium}
\end{figure}

\begin{remark}
    As remarked by \cite[Proposition 16]{nenna2023transport}, when $\nu$ is
    absolutely continuous with respect to the one-dimensional Hausdorff on a
    line, then the $\nu$-based Wasserstein distance equates the
    \textit{layer-wise Wasserstein metric} introduced by \cite{kim2020optimal}.
    We will see in \cref{sec:equality_CWtheta} that $\PStheta$ equals another
    discrepancy that we call $\CWtheta$, and this equality allows us to show
    that $\PStheta$ satisfies the triangle inequality (and thus is a distance)
    on the set of measures with atomless projections, which is a stronger result
    than assuming absolute continuity of the pivot.
\end{remark}
Note that this is a generalisation of SWGG introduced in \cite{mahey23fast} in
the sense that they show that their definition of SWGG coincides with the
expression of \cref{eqn:S_theta} in \cite[Proposition 4.2]{mahey23fast}. To
prove that the quantity $\PStheta^2$ is well-defined, which is to say that it
does not depend on the choice of $\mu_\theta \in \M(Q_\theta\#\mu_1,
Q_\theta\#\mu_2)$, we will show that in fact $\M(Q_\theta\#\mu_1,
Q_\theta\#\mu_2)$ has only one element.
\begin{lemma}\label{lemma:S_theta_well_def} Let $\mu_1, \mu_2 \in
    \mathcal{P}_2(\R^d)$, and $\theta \in \SS^{d-1}$. Then 
    \begin{equation}\label{eqn:means_1D}
        \M(Q_\theta\#\mu_1, Q_\theta\#\mu_2) = 
        \left\{\mu_\theta[\mu_1, \mu_2]\right\},\quad 
        \mu_\theta[\mu_1, \mu_2] := 
        \left[\left(\tfrac{1}{2}F_{\nu_{1}}^{[-1]} 
        + \tfrac{1}{2}F_{\nu_2}^{[-1]}\right)\theta\right]\#\Leb_{[0, 1]},
    \end{equation}
    where for $i=1,2$, the measure $\nu_i$ is defined as $\nu_i =
    P_\theta\#\mu_i$, with $P_\theta: x \longmapsto \theta^\top x$, $\Leb_{[0,
    1]}$ the Lebesgue measure on $[0, 1]$, and where $F_\nu^{[-1]}$ for $\nu \in
    \mathcal{P}(\R)$ denotes the pseudo-inverse of its cumulative distribution:
    \begin{equation}\label{eqn:quantile_function}
        \forall t \in [0, 1],\quad F_\nu^{[-1]}(t) := 
        \inf \{s \in \R : \nu\left((-\infty, s]\right)\geq t\}.
    \end{equation}
\end{lemma}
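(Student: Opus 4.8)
The plan is to push everything down to the line $\R\theta$, where the $2$-Wasserstein geometry becomes one-dimensional, and then to combine two ingredients: \cref{prop:wass_mean_geodesic}, which realises Wasserstein means as midpoints of optimal plans, and the uniqueness of the optimal transport plan in dimension one. Concretely, write $\iota_\theta: s\in\R\longmapsto s\theta\in\R^d$; since $\|\theta\|_2=1$ this is an isometric embedding with image the line $\R\theta$, and $Q_\theta=\iota_\theta\circ P_\theta$, so that $Q_\theta\#\mu_i=\iota_\theta\#\nu_i$ is supported on $\R\theta$ for $i=1,2$.

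First I would record the reduction. Any coupling of two measures supported on the linear subspace $\R\theta$ is itself supported on $\R\theta\times\R\theta$, and there the quadratic cost $\|x_1-x_2\|_2^2$ equals $|P_\theta x_1-P_\theta x_2|^2$; hence pushing forward by $(s,r)\longmapsto(\iota_\theta(s),\iota_\theta(r))$ is a cost-preserving bijection between $\Pi(\nu_1,\nu_2)$ and $\Pi(Q_\theta\#\mu_1,Q_\theta\#\mu_2)$, which therefore identifies $\Pi^*(\nu_1,\nu_2)$ (quadratic-cost optimal plans on $\R$) with $\Pi^*(Q_\theta\#\mu_1,Q_\theta\#\mu_2)$. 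By \cref{prop:wass_mean_geodesic} applied in $\R^d$, $\M(Q_\theta\#\mu_1,Q_\theta\#\mu_2)=\{(\tfrac12 P_1+\tfrac12 P_2)\#\gamma:\gamma\in\Pi^*(Q_\theta\#\mu_1,Q_\theta\#\mu_2)\}$, so it suffices to (i) show $\Pi^*(\nu_1,\nu_2)$ is the singleton $\{\gamma^*\}$ with $\gamma^*:=(F_{\nu_1}^{[-1]},F_{\nu_2}^{[-1]})\#\Leb_{[0,1]}$, and (ii) compute the corresponding midpoint.

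For (i), that $\gamma^*$ is an optimal plan is classical: the identity $F_\nu^{[-1]}\#\Leb_{[0,1]}=\nu$ gives $\gamma^*\in\Pi(\nu_1,\nu_2)$, and since $\int\|x_1-x_2\|_2^2\,\dd\gamma=\int x_1^2\,\dd\nu_1+\int x_2^2\,\dd\nu_2-2\int x_1 x_2\,\dd\gamma$, optimality reduces to the rearrangement inequality $\int x_1 x_2\,\dd\gamma\le\int_0^1 F_{\nu_1}^{[-1]}(t)F_{\nu_2}^{[-1]}(t)\,\dd t$, which is saturated by $\gamma^*$ (see \cite{santambrogio2015optimal}). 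The hard part is \emph{uniqueness}, because the usual uniqueness statement is about transport \emph{maps} and presupposes $\nu_1$ atomless, whereas $\nu_1,\nu_2$ here may have atoms; I would therefore argue at the level of plans. Any $\gamma\in\Pi^*(\nu_1,\nu_2)$ has cyclically monotone support, which for the quadratic cost on $\R$ means $\supp\gamma$ is nondecreasing, i.e. $(x_1-x_1')(x_2-x_2')\ge0$ whenever $(x_1,x_2),(x_1',x_2')\in\supp\gamma$. From this, for every $a,b\in\R$ the two ``cross-cells'' $(-\infty,a]\times(b,+\infty)$ and $(a,+\infty)\times(-\infty,b]$ cannot simultaneously carry positive $\gamma$-mass, which forces $\gamma\big((-\infty,a]\times(-\infty,b]\big)=\min\big(F_{\nu_1}(a),F_{\nu_2}(b)\big)$; as $\gamma^*$ has the same cumulative distribution function and a probability measure on $\R^2$ is determined by it, $\gamma=\gamma^*$.

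It then remains to carry out (ii), which is routine. The unique Wasserstein mean is $(\tfrac12 P_1+\tfrac12 P_2)\#\big[(s,r)\mapsto(\iota_\theta(s),\iota_\theta(r))\big]\#\gamma^*$; since $\tfrac12\iota_\theta(s)+\tfrac12\iota_\theta(r)=\iota_\theta\big(\tfrac{s+r}{2}\big)$, this equals $\iota_\theta\#\big[\big(\tfrac12 F_{\nu_1}^{[-1]}+\tfrac12 F_{\nu_2}^{[-1]}\big)\#\Leb_{[0,1]}\big]=\big[\big(\tfrac12 F_{\nu_1}^{[-1]}+\tfrac12 F_{\nu_2}^{[-1]}\big)\theta\big]\#\Leb_{[0,1]}=\mu_\theta[\mu_1,\mu_2]$, which yields both that $\M(Q_\theta\#\mu_1,Q_\theta\#\mu_2)$ is a singleton and the announced formula.
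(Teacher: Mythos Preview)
Your proof is correct and follows essentially the same route as the paper: reduce to the line $\R\theta$ via the isometry $\iota_\theta$, apply \cref{prop:wass_mean_geodesic}, and use that $\Pi^*(\nu_1,\nu_2)$ is a singleton in dimension one to identify the unique midpoint. The only difference is that the paper simply cites \cite{santambrogio2015optimal}, Theorem 2.9, for the uniqueness of the one-dimensional optimal plan (which already covers the atomic case), whereas you unpack a self-contained cyclical-monotonicity and joint-CDF argument for that step.
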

\begin{proof}
    First, since the $Q_\theta\#\mu_i, i\in \{1,2\}$ are supported on
    $\R\theta$, we have 
    \begin{equation}\label{eqn:means_1d_embedding}
        \M(Q_\theta\#\mu_1, Q_\theta\#\mu_2) =
    \left\{\theta\#\mu : \mu \in \M(P_\theta\#\mu_1,
    P_\theta\#\mu_2)\right\},
    \end{equation}
    which amounts to reducing a problem on a line of direction $\theta$ to a
    problem on $\R$, then embedding the result onto the line $\R\theta$. We
    introduce $\nu_i := P_\theta\#\mu_i$ for $i\in \{1,2\}$ and leverage
    \cref{prop:wass_mean_geodesic}:
    \begin{equation}\label{eqn:apply_wass_mean_geodesic}
        \M(\nu_1, \nu_2) = 
        \left\{\left(\tfrac{1}{2}P_1+\tfrac{1}{2}P_2\right)\#\gamma 
        : \gamma \in \Pi^*(\nu_1, \nu_2)\right\}.
    \end{equation}
    Since the $\nu_i$ are measures on $\R$, by \cite[Theorem
    2.9]{santambrogio2015optimal}, the set of optimal plans $\Pi^*(\nu_1,
    \nu_2)$ is reduced to the plan $(F_{\nu_1}^{[-1]},
    F_{\nu_2}^{[-1]})\#\Leb_{[0,1]}$. Using \cref{eqn:apply_wass_mean_geodesic}
    above and the projection embedding from \cref{eqn:means_1d_embedding}, we
    obtain the result stated in \cref{eqn:means_1D}.
\end{proof}
\begin{remark}\label{remark:discrete_middles}Consider $\mu_1 =
    \tfrac{1}{n}\sum_i\delta_{x_i},\; \mu_2 = \tfrac{1}{n}\sum_i\delta_{y_i}$,
    $\theta\in \SS^{d-1}$ and $\sigma_\theta,\tau_\theta$ two permutations
    sorting respectively $(\theta^\top x_i)_i$ and $(\theta^\top y_i)_i$ (they
    may not be unique if the families $(\theta^\top x_i)_i$ and $(\theta^\top
    y_i)_i$ are not injective). Then the projected middle (computed using
    \cref{eqn:means_1D}) is explicit:
    \begin{equation}\label{eqn:projected_middle_discrete}
        \mu_\theta[\mu_1, \mu_2] = \cfrac{1}{n}\Sum{i=1}{n}\delta\left(\cfrac{\theta^\top \left(x_{\sigma_\theta(i)}+y_{\tau_\theta(i)}\right)}{2} \theta \right).
    \end{equation}
    Note that measure above does not depend on the choice of the sorting
    permutations $(\sigma_\theta, \tau_\theta)$, since the families
    $(\theta^\top x_{\sigma_\theta(i)})_i$ and $(\theta^\top
    y_{\tau_\theta(i)})_i$ do not. This expression is specific to the case where
    $\mu_1$ and $\mu_2$ are uniform discrete measures with the same amount of
    atoms.
\end{remark}
An interesting property of optimal transport between a measure $\mu\in
\mathcal{P}_2(\R^d)$ and another measure $\nu$ supported on a line $\R\theta$ is
that the set of optimal plans and the cost can be related to the one-dimensional
projections of $\mu$ and $\nu$ onto the line $\R\theta$. We remind $Q_\theta:
x\longmapsto (\theta^\top x)\theta$, and introduce $Q_{\theta^\perp} := I -
Q_\theta$. The following result is a generalisation of \cite[Lemma
4.6]{mahey23fast}, which was written in the case of uniform discrete measures.
Note that the exponent 2 in the cost is paramount and allows the separation of
orthogonal terms.
\begin{prop}\label{prop:semi_1D_ot} Let $\mu,\nu \in \mathcal{P}_2(\R^d)$ such
    that $\nu$ is supported on $\R\theta$, where $\theta\in \SS^{d-1}$. Then for
    any plan $\gamma \in \Pi(\nu, \mu)$, we have
    \begin{equation}\label{eqn:semi_1D_ot_split}
        \int_{\R^{2d}}\|x-y\|_2^2\dd\gamma(y,x) = 
        \int_{\R^{2d}}(\theta^\top (x-y))^2\dd\gamma(y,x) 
        + \int_{\R^d}\|Q_{\theta^\perp} x\|_2^2\dd\mu(x),
    \end{equation}
    with the alternate expression
    \begin{equation}\label{eqn:semi_1D_ot_split_push_forward}
        \int_{\R^{2d}}(\theta^\top (x-y))^2\dd\gamma(y,x) = 
        \int_{\R^2}(s-t)^2\dd (P_\theta, P_\theta)\#\gamma(s,t).
    \end{equation}
    This yields the following expression for the OT cost:
    \begin{equation}\label{eqn:semi_1D_ot_cost}
        \W_2^2(\nu ,\mu) = \W_2^2(P_\theta\#\nu, P_\theta\#\mu) +
        \int_{\R^d}\|Q_{\theta^\perp} x\|_2^2\dd\mu(x),
    \end{equation}
    and the following characterisation of the optimal plans:
    \begin{equation}\label{eqn:semi_1D_ot_plan}
        \Pi^*(\nu, \mu) = \left\{\gamma \in \Pi(\nu, \mu): (P_\theta,
        P_\theta)\#\gamma = \left(F_{P_\theta\#\nu}^{[-1]},
        F_{P_\theta\#\mu}^{[-1]}\right)\#\Leb_{[0, 1]}\right\}.
    \end{equation}
\end{prop}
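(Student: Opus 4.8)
The plan is to exploit the orthogonal decomposition $x = Q_\theta x + Q_{\theta^\perp} x$ for every $x \in \R^d$, together with the fact that $\nu$ lives on $\R\theta$, so that $Q_{\theta^\perp} y = 0$ for $\gamma$-a.e. $(y,x)$. First I would expand the integrand: writing $x - y = Q_\theta(x-y) + Q_{\theta^\perp}(x-y)$ and using orthogonality of $\R\theta$ and $\theta^\perp$, we get $\|x-y\|_2^2 = \|Q_\theta(x-y)\|_2^2 + \|Q_{\theta^\perp}(x-y)\|_2^2$. Since $\|Q_\theta z\|_2^2 = (\theta^\top z)^2$ for any $z$ (as $\theta$ is a unit vector), the first term is $(\theta^\top(x-y))^2$. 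For the second term, because $y \in \R\theta$ we have $Q_{\theta^\perp} y = 0$, hence $Q_{\theta^\perp}(x-y) = Q_{\theta^\perp} x$; integrating against $\gamma$ and using that the second marginal of $\gamma$ is $\mu$ gives $\int_{\R^{2d}} \|Q_{\theta^\perp} x\|_2^2 \dd\gamma(y,x) = \int_{\R^d} \|Q_{\theta^\perp} x\|_2^2 \dd\mu(x)$. This establishes \eqref{eqn:semi_1D_ot_split}. The alternate expression \eqref{eqn:semi_1D_ot_split_push_forward} is then just the change-of-variables / pushforward identity: $(\theta^\top(x-y))^2 = (P_\theta x - P_\theta y)^2$, and integrating a function of $(P_\theta y, P_\theta x)$ against $\gamma$ equals integrating $(s-t)^2$ against $(P_\theta, P_\theta)\#\gamma$.

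For \eqref{eqn:semi_1D_ot_cost}, I would take the infimum over $\gamma \in \Pi(\nu,\mu)$ in \eqref{eqn:semi_1D_ot_split}: the second summand $\int \|Q_{\theta^\perp} x\|_2^2 \dd\mu(x)$ is a constant independent of $\gamma$, so $\W_2^2(\nu,\mu) = \int \|Q_{\theta^\perp}x\|_2^2\dd\mu(x) + \inf_{\gamma \in \Pi(\nu,\mu)} \int (\theta^\top(x-y))^2\dd\gamma(y,x)$. It remains to identify the remaining infimum with $\W_2^2(P_\theta\#\nu, P_\theta\#\mu)$. One inequality is immediate: for any $\gamma \in \Pi(\nu,\mu)$, the plan $(P_\theta,P_\theta)\#\gamma$ lies in $\Pi(P_\theta\#\nu, P_\theta\#\mu)$, so by \eqref{eqn:semi_1D_ot_split_push_forward} the $\gamma$-cost is at least $\W_2^2(P_\theta\#\nu, P_\theta\#\mu)$. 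For the reverse inequality, I would start from the unique 1D optimal plan $\bar\gamma = (F_{P_\theta\#\nu}^{[-1]}, F_{P_\theta\#\mu}^{[-1]})\#\Leb_{[0,1]}$ between $P_\theta\#\nu$ and $P_\theta\#\mu$ and lift it to a plan on $\R^d \times \R^d$ with marginals $\nu$ and $\mu$ whose $(P_\theta,P_\theta)$-pushforward is $\bar\gamma$; concretely, disintegrate $\mu$ with respect to $P_\theta$ (and note $\nu$ is already determined by its projection since it sits on $\R\theta$, via the bijection $t \mapsto t\theta$) and glue. This uses the gluing lemma exactly as invoked earlier in the excerpt for $\Gamma(\nu,\mu_1,\mu_2)$.

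Finally, for the characterisation of optimal plans \eqref{eqn:semi_1D_ot_plan}: from \eqref{eqn:semi_1D_ot_split} and \eqref{eqn:semi_1D_ot_split_push_forward}, a plan $\gamma \in \Pi(\nu,\mu)$ is optimal iff $\int (s-t)^2 \dd(P_\theta,P_\theta)\#\gamma(s,t) = \W_2^2(P_\theta\#\nu, P_\theta\#\mu)$, i.e. iff $(P_\theta,P_\theta)\#\gamma$ is an optimal plan between the 1D projections; by \cite{santambrogio2015optimal} Theorem 2.9 that optimal plan is unique and equals $(F_{P_\theta\#\nu}^{[-1]}, F_{P_\theta\#\mu}^{[-1]})\#\Leb_{[0,1]}$, which is precisely the stated condition. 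The main obstacle I anticipate is the reverse inequality in \eqref{eqn:semi_1D_ot_cost}, i.e. producing a valid lift of the 1D optimal plan back up to $\R^d$: one must be careful that the constructed plan genuinely has first marginal $\nu$ (using that $\nu$ is concentrated on the line and recovered from $P_\theta\#\nu$ through $t\mapsto t\theta$) and second marginal $\mu$ (using the disintegration of $\mu$ along $P_\theta$), and that these glue consistently — everything else is routine orthogonal bookkeeping.
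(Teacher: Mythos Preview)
Your proposal is correct and follows essentially the same approach as the paper: the orthogonal decomposition for \eqref{eqn:semi_1D_ot_split}, the pushforward identity for \eqref{eqn:semi_1D_ot_split_push_forward}, and the use of \cite{santambrogio2015optimal} Theorem 2.9 for the characterisation \eqref{eqn:semi_1D_ot_plan} all match the paper verbatim. The one minor difference is in establishing $\inf_{\gamma\in\Pi(\nu,\mu)}\int(\theta^\top(x-y))^2\dd\gamma = \W_2^2(P_\theta\#\nu,P_\theta\#\mu)$: the paper dispatches this by citing \cite{dumont22gromovmap} Lemma 2, whereas you propose to prove the non-trivial inequality directly by lifting the 1D optimal plan via disintegration of $\mu$ along $P_\theta$ (and using that $\nu$ is recovered from $P_\theta\#\nu$ through $t\mapsto t\theta$). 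Your construction is exactly right and makes the argument self-contained; the paper's citation is shorter but hides the same mechanism.
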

\begin{proof}
    Let $\gamma\in \Pi(\nu, \mu)$. We have
    \begin{align}\label{eqn:semi_1D_ot_split_computation}
        \int_{\R^{2d}}\|x-y\|_2^2\dd\gamma(y,x) 
        &= \int_{\R^{2d}}\left(\|Q_\theta(x-y)\|_2^2 
        + \|Q_{\theta^\perp}(x-y)\|_2^2\right)\dd\gamma(y,x)\nonumber\\
        &= \int_{\R^{2d}}(\theta^\top (x-y))^2\dd\gamma(y,x) 
        + \int_{\R^d}\|Q_{\theta^\perp} x\|_2^2\dd\mu(x),
    \end{align}
    where the last equality comes from the fact that $\nu$ is supported on
    $\R\theta$ and that the second marginal of $\gamma$ is $\mu$. Since the
    second term does not depend on $\gamma$, by taking the infimum in
    \cref{eqn:semi_1D_ot_split}, we obtain \cref{eqn:semi_1D_ot_cost}, where the
    equality
    $$\underset{\gamma\in \Pi(\nu, \mu)}{\inf}\ \int_{\R^{2d}}(\theta^\top
    (x-y))^2\dd\gamma(y,x) = \W_2^2(P_\theta\#\nu, P_\theta\#\mu) $$ is
    justified by \cite[Lemma 2]{dumont22gromovmap}. Furthermore,
    \cref{eqn:semi_1D_ot_split} shows that $$(P_\theta, P_\theta)\#\Pi^*(\nu,
    \mu) \subset \Pi^*(P_\theta\#\nu, P_\theta\#\mu).$$ Indeed, take $\gamma\in
    \Pi^*(\nu, \mu)$, then since $$\pi_\theta := (P_\theta, P_\theta)\#\gamma
    \in \Pi(P_\theta\#\nu, P_\theta\#\mu),$$ \cref{eqn:semi_1D_ot_split} yields
    the optimality of $\pi_\theta$ for the problem $\W_2^2(P_\theta\#\nu,
    P_\theta\#\mu)$. By \cite[Theorem 2.9]{santambrogio2015optimal},
    $$\Pi^*(P_\theta\#\nu, P_\theta\#\mu) =
    \left\{\left(F_{P_\theta\#\nu}^{[-1]},
    F_{P_\theta\#\mu}^{[-1]}\right)\#\Leb_{[0, 1]}\right\} =:
    \{\pi_\theta^*\},$$ hence we have shown that $(P_\theta,
    P_\theta)\#\Pi^*(\nu, \mu) = \{\pi_\theta^*\}$. Conversely, take $\gamma \in
    \Pi(\nu, \mu)$ such that $(P_\theta, P_\theta)\#\gamma = \pi_\theta^*$,
    where $\pi_\theta^*$ is the unique element of $\Pi^*(P_\theta\#\nu,
    P_\theta\#\mu)$. Then by plugging $\gamma$ into \cref{eqn:semi_1D_ot_split},
    we obtain $\gamma \in \Pi^*(\nu, \mu)$ using \cref{eqn:semi_1D_ot_cost}. We
    conclude that
    $$\Pi^*(\nu, \mu) = \left\{\gamma \in \Pi(\nu, \mu): (P_\theta,
    P_\theta)\#\gamma = \pi_\theta^*\right\}.$$
    \vskip -15pt
\end{proof}

\subsection{Semi-Metric Properties of \texorpdfstring{$\PStheta$}{S-theta}}

We begin by stating straightforward properties of the discrepancy $\PStheta$:
\begin{prop}\label{prop:S_theta_semi_metric} Let $\mu_1, \mu_2\in
    \mathcal{P}_2(\R^d)$, and $\theta \in \SS^{d-1}$. Then the following
    properties hold:
    \begin{itemize}
        \item (Separation) $\PStheta(\mu_1, \mu_2) = 0$ if and only if $\mu_1 =
        \mu_2$.
        \item (Symmetry) $\PStheta(\mu_1, \mu_2) = \PStheta(\mu_2, \mu_1)$.
        \item (Upper-bound of $\W_2$) $\PStheta(\mu_1, \mu_2) \geq \W_2(\mu_1,
        \mu_2)$.
    \end{itemize}
\end{prop}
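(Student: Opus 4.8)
The plan is to prove the three properties in turn, exploiting the definition $\PStheta(\mu_1,\mu_2) = \W_{\mu_\theta}(\mu_1,\mu_2)$ where $\mu_\theta = \mu_\theta[\mu_1,\mu_2]$ is the unique middle guaranteed by \cref{lemma:S_theta_well_def}, together with the geometric inequalities for generalised geodesics from \cref{sec:gen_geod} and the general structure of $\W_\nu$.

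First I would treat the \textbf{upper bound on $\W_2$}. Let $\rho^* \in \Gamma(\mu_\theta,\mu_1,\mu_2)$ be optimal for $\W_{\mu_\theta}(\mu_1,\mu_2)$, so that $\W_{\mu_\theta}^2(\mu_1,\mu_2) = \int_{\R^{3d}}\|x_1-x_2\|_2^2\,\dd\rho^*$. Then $\rho^*_{1,2} := P_{1,2}\#\rho^* \in \Pi(\mu_1,\mu_2)$ is an admissible coupling, so by definition of the Wasserstein distance $\W_2^2(\mu_1,\mu_2) \le \int\|x_1-x_2\|_2^2\,\dd\rho^*_{1,2} = \W_{\mu_\theta}^2(\mu_1,\mu_2) = \PStheta^2(\mu_1,\mu_2)$. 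Taking square roots gives the claim. (Alternatively, one can invoke \cref{eqn:W2_convex_along_generalised_geodesics} with $\nu := \mu_\theta$ and $t := \tfrac12$, but the direct marginalisation argument is cleaner.)

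Next, \textbf{symmetry}: this follows because $\mu_\theta[\mu_1,\mu_2] = \mu_\theta[\mu_2,\mu_1]$ — indeed the formula in \cref{eqn:means_1D} is symmetric in $\nu_1,\nu_2$ — and because $\W_\nu$ is itself symmetric in its two arguments, since the set $\Gamma(\nu,\mu_1,\mu_2)$ and the cost $\|x_1-x_2\|_2^2$ are invariant under swapping the roles of $\mu_1$ and $\mu_2$ (composing any $\rho$ with the coordinate flip $(y,x_1,x_2)\mapsto(y,x_2,x_1)$ is a bijection $\Gamma(\nu,\mu_1,\mu_2)\to\Gamma(\nu,\mu_2,\mu_1)$ preserving the cost).

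Finally, \textbf{separation}. One direction is immediate: if $\mu_1 = \mu_2$, then $\mu_\theta = \mu_1$ and the choice $\rho = (I,I)\#\mu_1$ pushed appropriately, i.e. the diagonal 3-plan supported on $\{(y,y,y)\}$, lies in $\Gamma(\mu_1,\mu_1,\mu_1)$ and has zero cost, so $\PStheta(\mu_1,\mu_2)=0$. For the converse, suppose $\PStheta(\mu_1,\mu_2)=0$. By the upper bound just established, $\W_2(\mu_1,\mu_2) \le \PStheta(\mu_1,\mu_2) = 0$, and since $\W_2$ separates points on $\mathcal{P}_2(\R^d)$, we conclude $\mu_1=\mu_2$. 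This last step is where the upper bound pays off, so the three items should be proved in the order: upper bound, then symmetry, then separation. I do not expect any genuine obstacle here — the only mild subtlety is checking that the diagonal 3-plan is admissible, which reduces to noting that $(I,I)\#\mu_1 \in \Pi^*(\mu_1,\mu_1)$ since the identity is the optimal (cost-zero) transport map from a measure to itself.
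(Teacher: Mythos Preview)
Your upper bound and symmetry arguments are correct and match the paper's (the paper is terser on symmetry but the content is the same). Your converse direction of separation --- deducing $\mu_1=\mu_2$ from $\PStheta=0$ via the upper bound $\W_2\le\PStheta$ --- is in fact cleaner than the paper's approach, which instead takes an optimal $\rho$, shows $x_1=x_2$ holds $\rho$-a.e., and then tests against continuous functions.

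There is, however, a genuine error in your forward direction of separation. When $\mu_1=\mu_2=:\mu$, you claim $\mu_\theta=\mu_1$, but this is false: by definition $\mu_\theta[\mu,\mu]$ is the Wasserstein mean of $Q_\theta\#\mu$ with itself, hence $\mu_\theta=Q_\theta\#\mu$, a measure supported on the line $\R\theta$ and not equal to $\mu$ in general. Consequently the diagonal 3-plan $(I,I,I)\#\mu$ does \emph{not} lie in $\Gamma(\mu_\theta,\mu,\mu)$ --- its first marginal is $\mu$, not $Q_\theta\#\mu$. The fix is to pick any $\gamma\in\Pi^*(Q_\theta\#\mu,\mu)$ and set $\rho:=(P_0,P_1,P_1)\#\gamma$ (equivalently, via disintegration, $\rho(\dd y,\dd x_1,\dd x_2)=\mu_\theta(\dd y)\,\gamma^y(\dd x_1)\,\delta_{x_1}(\dd x_2)$), which has the correct bi-marginals $\rho_{0,1}=\rho_{0,2}=\gamma\in\Pi^*(\mu_\theta,\mu)$ and zero cost since $x_1=x_2$ $\rho$-a.e. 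This is what the paper does.
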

\begin{proof}
    If $\PStheta(\mu_1, \mu_2) = 0$, then by \cref{prop:W_nu_inf_attained},
    there exists $\rho\in \Gamma(\mu_\theta[\mu_1, \mu_2], \mu_1, \mu_2)$ such
    that $\int_{\R^{3d}}\|x_1-x_2\|_2^2\dd\rho(y, x_1, x_2) = 0$, then in
    particular, for $\gamma := \rho_{1, 2} \in \Pi(\mu_1, \mu_2)$, we have
    $\int_{\R^{2d}}\|x_1-x_2\|_2^2\dd\gamma(x_1, x_2) = 0$ and thus $x_1 = x_2$
    for $\gamma$-almost-every $(x_1, x_2) \in \R^{2d}$. For a test function
    $\phi\in \mathcal{C}_b^0$, we compute:
    $$\int_{\R^d} \phi(x_1)\dd\mu_1(x_1) = \int_{\R^d} \phi(x_1)\dd\gamma(x_1,
    x_2) = \int_{\R^d} \phi(x_2)\dd\gamma(x_1, x_2) = \int_{\R^d}
    \phi(x_2)\dd\mu_2(x_2) ,$$ and thus $\mu_1=\mu_2$. The converse is clear,
    but we detail for completeness. We show that $\PStheta(\mu, \mu) = 0$ for
    $\mu\in \mathcal{P}_2(\R^d)$: notice that $\mu_\theta[\mu, \mu] =
    Q_\theta\#\mu =: \mu_\theta$, take any $\gamma \in \Pi^*(\mu_\theta, \mu)$
    and introduce by disintegration $$\rho(\dd y, \dd x_1, \dd x_2) :=
    \mu_\theta(\dd y) \gamma^y(\dd x_1) \delta_{x_1 = x_2}(\dd x_1, \dd x_2).$$
    We have $\rho \in \Gamma(\mu_\theta, \mu, \mu)$, and for $\rho$-almost-every
    $(y, x_1, x_2) \in \R^{3d}$, we have $\|x_1-x_2\|_2^2 = 0$, hence
    $\PStheta(\mu, \mu) = 0$.
    
    Symmetry is immediate from the definition (\cref{eqn:S_theta}). As for the
    upper-bound, by \cref{prop:W_nu_inf_attained} we can take $\rho\in
    \Gamma(\mu_\theta[\mu_1, \mu_2], \mu_1, \mu_2)$ optimal for $\PStheta(\mu_1,
    \mu_2)$, and we have $\rho_{1, 2} \in \Pi(\mu_1, \mu_2)$ and thus:
    $$\PStheta^2(\mu_1, \mu_2) = \int_{\R^{2d}}\|x_1-x_2\|_2^2\dd\rho_{1, 2}(\dd
    x_1, \dd x_2) \geq \W_2^2(\mu_1, \mu_2). $$
    \vskip -20pt
\end{proof}
The triangle inequality is not satisfied in general, as shown in
\cref{ex:ce_S_theta_triangle}.
\begin{example}[$\PStheta$ does not verify the triangle inequality]
    \label{ex:ce_S_theta_triangle} 
    We represent the counter-example in \cref{fig:ce_S_theta_triangle}. Consider
    $\theta := (1, 0)$ and:
    $$x_1 := (-1, 0),\; x_2 := (1, 5),\;\mu_1 := \tfrac{1}{2}\delta_{x_1} +
    \tfrac{1}{2}\delta_{x_2},$$
    $$y_1 := (-1, 5),\; y_2 := (1, 0),\;\mu_2 := \tfrac{1}{2}\delta_{y_1} +
    \tfrac{1}{2}\delta_{y_2},$$
    $$z_1 := (0, 0),\; z_2 := (0, 5),\;\mu_3 := \tfrac{1}{2}\delta_{z_1} +
    \tfrac{1}{2}\delta_{z_2}.$$ First, we compute $\PStheta(\mu_1, \mu_2)$: we
    have
    $$u_1 := (-1, 0),\; u_2 := (1, 0),\; \mu_\theta[\mu_1, \mu_2] =
    \tfrac{1}{2}\delta_{u_1} + \tfrac{1}{2}\delta_{u_2},$$ and then we see that
    there are unique optimal plans between $\mu_\theta[\mu_1, \mu_2]$ and each
    $\mu_1, \mu_2$:
    \begin{align*}
        \Pi^*(\mu_\theta[\mu_1, \mu_2], \mu_1) &= \left\{\gamma_{121} :=
        \tfrac{1}{2}\delta_{u_1\otimes x_1} + \tfrac{1}{2}\delta_{u_2\otimes
        x_2}\right\},\\
        \Pi^*(\mu_\theta[\mu_1, \mu_2], \mu_2) &= \left\{\gamma_{122}
        := \tfrac{1}{2}\delta_{u_1\otimes y_1} + \tfrac{1}{2}\delta_{u_2\otimes
        y_2}\right\}.
    \end{align*}
    Using \cref{thm:W_nu_disintegration}, we compute:
    $$\PStheta(\mu_1, \mu_2) = \sqrt{\tfrac{1}{2}\|x_1-y_1\|_2^2 
    + \tfrac{1}{2}\|x_2-y_2\|_2^2} = 5. $$ We now turn to $\PStheta(\mu_1,
    \mu_3)$. This time, we have 
    $$v_1 := (-\tfrac{1}{2}, 0),\; v_2 := (\tfrac{1}{2}, 0),\; \mu_\theta[\mu_1,
    \mu_3] = \tfrac{1}{2}\delta_{v_1} + \tfrac{1}{2}\delta_{v_2}.$$ There is a
    unique optimal transport plan between $\mu_\theta[\mu_1, \mu_3]$ and
    $\mu_1$:
    $$\Pi^*(\mu_\theta[\mu_1, \mu_3], \mu_1) = \left\{\gamma_{131} :=
    \tfrac{1}{2}\delta_{v_1\otimes x_1} + \tfrac{1}{2}\delta_{v_2\otimes
    x_2}\right\}. $$ On the other hand, there are an infinite number of OT
    between $\mu_\theta[\mu_1, \mu_3]$ and $\mu_3$, which are convex
    combinations of two extremal plans (which correspond to the two permutations
    of $\{1, 2\}$):
    \begin{align*}
        \Pi^*(\mu_\theta[\mu_1, \mu_3], \mu_3) = \Bigl\{\gamma_{133}(t) :=
        &(1-t)\left(\tfrac{1}{2}\delta_{v_1\otimes z_1} +
        \tfrac{1}{2}\delta_{v_2\otimes z_2}\right) \\
        &+ t\left(\tfrac{1}{2}\delta_{v_2\otimes z_1} 
        + \tfrac{1}{2}\delta_{v_1\otimes z_2}\right),\; t\in [0, 1]\Bigr\}.
    \end{align*}
    Following \cref{thm:W_nu_disintegration}, we have
    $$\PStheta(\mu_1, \mu_3) = \underset{t\in[0, 1]}{\min}\
    \tfrac{1}{2}\W_2^2\left(\delta_{x_1}, \tfrac{1-t}{2}\delta_{z_1} +
    \tfrac{t}{2}\delta_{z_2}\right) + \tfrac{1}{2}\W_2^2\left(\delta_{x_2},
    \tfrac{1-t}{2}\delta_{z_2} + \tfrac{t}{2}\delta_{z_1}\right), $$ which is
    clearly minimal at $t=0$, yielding
    $$\PStheta(\mu_1, \mu_3) = \sqrt{\tfrac{1}{2}\|x_1 - z_1\|_2^2 +
    \tfrac{1}{2}\|x_2 - z_2\|_2^2} = 1, $$ and by symmetry, $\PStheta(\mu_2,
    \mu_3) = \PStheta(\mu_1, \mu_3) = 1$. We conclude that the triangle
    inequality does not hold:
    $$\PStheta(\mu_1, \mu_2) = 5 > \PStheta(\mu_2, \mu_3) + \PStheta(\mu_1,
    \mu_3) = 2. $$
\end{example}
\begin{figure}[H]
    \centering
    \includegraphics[width=0.8\textwidth]{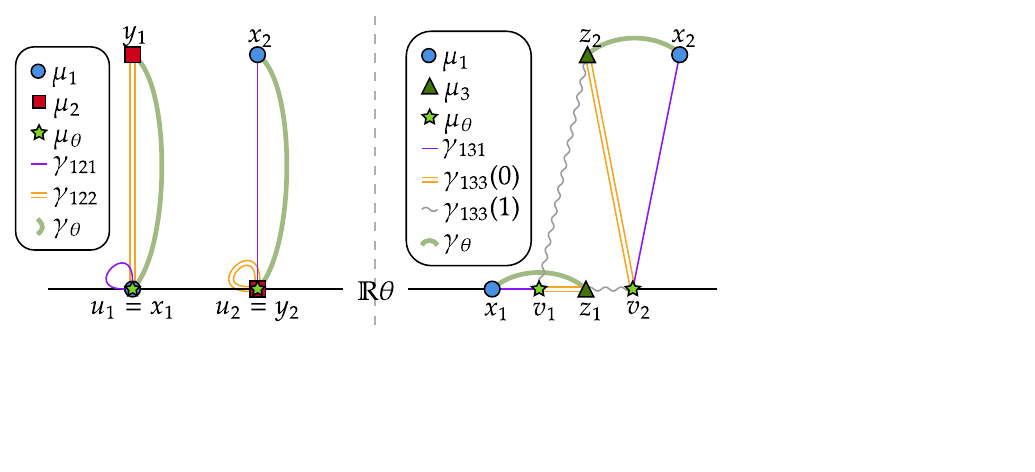}
    \caption{Counter-example from \cref{ex:ce_S_theta_triangle} to the triangle
    inequality for $\PStheta$. Left: illustration of the couplings for
    $\PStheta(\mu_1, \mu_2)$, with the optimal coupling $\gamma_\theta$ between
    $\mu_1$ and $\mu_2$ for $\PStheta(\mu_1, \mu_2)$ represented with thick
    green lines. Right: illustration of the couplings for $\PStheta(\mu_1,
    \mu_3)$. The optimal coupling $\gamma_\theta$ for $\PStheta(\mu_1, \mu_3)$
    corresponds to gluing $\gamma_{131}$ and $\gamma_{133}(0)$.}
    \label{fig:ce_S_theta_triangle}
\end{figure}

In the following, we show that $\PStheta$ is lower semi-continuous with respect
to the weak convergence of measures, along with a result on continuity of the
middle $\mu_\theta[\mu_1, \mu_2]$. We speak of continuity with respect to the
Euclidean topology on $\SS^{d-1}$, and the weak topology on
$\mathcal{P}_2(\R^d)$.

\begin{prop}\label{prop:PStheta_lsc} The map $(\theta, \mu_1, \mu_2) \longmapsto
    \mu_\theta[\mu_1, \mu_2]$ is continuous, and $(\theta, \mu_1,
    \mu_2)\longmapsto \PStheta(\mu_1, \mu_2)$ is lower semi-continuous.
\end{prop}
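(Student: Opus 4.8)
The plan is to combine the explicit quantile description of the middle from \cref{lemma:S_theta_well_def} with the lower semi-continuity of the $\nu$-based Wasserstein distance already obtained in \cref{prop:W_nu_lsc}. I would argue sequentially: fix $\theta^{(n)} \to \theta$ in $\SS^{d-1}$ and $\mu_1^{(n)} \xrightarrow{w} \mu_1$, $\mu_2^{(n)} \xrightarrow{w} \mu_2$ in $\mathcal{P}_2(\R^d)$, and set $\nu_i^{(n)} := P_{\theta^{(n)}}\#\mu_i^{(n)}$, $\nu_i := P_\theta\#\mu_i$; the goal of the first part is to show $\mu_{\theta^{(n)}}[\mu_1^{(n)},\mu_2^{(n)}] \xrightarrow{w} \mu_\theta[\mu_1,\mu_2]$.

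For the continuity of the middle, I would first note that $\nu_i^{(n)} \xrightarrow{w} \nu_i$ on $\R$: one has $\delta_{\theta^{(n)}} \otimes \mu_i^{(n)} \xrightarrow{w} \delta_\theta \otimes \mu_i$ on $\SS^{d-1} \times \R^d$ (a product of weakly convergent sequences), and pushing forward by the continuous map $(\vartheta, x) \mapsto \vartheta^\top x$ preserves weak convergence; this is the step that handles the simultaneous movement of the projection direction and of the measures. Then I would invoke the standard fact that weak convergence of probability measures on $\R$ entails convergence of the quantile functions $F_{\nu_i^{(n)}}^{[-1]} \to F_{\nu_i}^{[-1]}$ at every continuity point of $F_{\nu_i}^{[-1]}$, hence $\Leb_{[0,1]}$-almost everywhere since these functions are monotone. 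Consequently the maps $T_n := \big(\tfrac{1}{2} F_{\nu_1^{(n)}}^{[-1]} + \tfrac{1}{2} F_{\nu_2^{(n)}}^{[-1]}\big)\theta^{(n)}$ converge $\Leb_{[0,1]}$-a.e. to $T := \big(\tfrac{1}{2} F_{\nu_1}^{[-1]} + \tfrac{1}{2} F_{\nu_2}^{[-1]}\big)\theta$; since by \cref{lemma:S_theta_well_def} the middles are $T_n\#\Leb_{[0,1]}$ and $T\#\Leb_{[0,1]}$, testing against an arbitrary $\phi \in \mathcal{C}_b^0(\R^d)$ and applying dominated convergence on $[0,1]$ (the integrands are bounded by $\|\phi\|_\infty$) gives the claimed weak convergence. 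I would also observe that $\nu_i \in \mathcal{P}_2(\R)$ (as $P_\theta$ is $1$-Lipschitz and $\mu_i \in \mathcal{P}_2(\R^d)$) forces $T \in L^2(\Leb_{[0,1]};\R^d)$, so that $\mu_\theta[\mu_1,\mu_2] \in \mathcal{P}_2(\R^d)$, which is needed to feed it into \cref{prop:W_nu_lsc}.

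For the lower semi-continuity of $\PStheta$, I would then just compose the two ingredients. Writing $\mu_\theta^{(n)} := \mu_{\theta^{(n)}}[\mu_1^{(n)},\mu_2^{(n)}]$ and $\mu_\theta := \mu_\theta[\mu_1,\mu_2]$, the first part gives $\mu_\theta^{(n)} \xrightarrow{w} \mu_\theta$ in $\mathcal{P}_2(\R^d)$, and we already have $\mu_i^{(n)} \xrightarrow{w} \mu_i$; applying \cref{prop:W_nu_lsc} to the triples $(\mu_\theta^{(n)}, \mu_1^{(n)}, \mu_2^{(n)}) \to (\mu_\theta, \mu_1, \mu_2)$ yields
\[
\PStheta(\mu_1,\mu_2) = \W_{\mu_\theta}(\mu_1,\mu_2) \leq \liminf_{n \to \infty} \W_{\mu_\theta^{(n)}}(\mu_1^{(n)},\mu_2^{(n)}) = \liminf_{n \to \infty} \PStheta[\theta^{(n)}](\mu_1^{(n)},\mu_2^{(n)}),
\]
which is the desired statement. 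I do not expect a genuinely hard estimate anywhere; the delicate point is the bookkeeping in the middle-continuity step, specifically handling the joint variation of $\theta^{(n)}$ with the measures and making sure plain weak convergence (rather than $\W_2$-convergence) of the $\mu_i^{(n)}$ already propagates to the middles — which is exactly why I would route the argument through almost-everywhere convergence of quantiles together with dominated convergence rather than through $L^2$-convergence of quantiles.
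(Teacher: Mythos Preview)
Your proposal is correct and follows essentially the same route as the paper: both arguments use the quantile formula from \cref{lemma:S_theta_well_def}, establish weak convergence of the projected measures, pass to almost-everywhere convergence of the quantile functions (via monotonicity), deduce weak convergence of the middles, and then invoke \cref{prop:W_nu_lsc}. Your write-up is in fact slightly more explicit than the paper's on two points the latter leaves implicit: the justification that $P_{\theta^{(n)}}\#\mu_i^{(n)} \xrightarrow{w} P_\theta\#\mu_i$ when both the direction and the measure vary, and the dominated-convergence step that upgrades a.e.\ convergence of the $T_n$ to weak convergence of $T_n\#\Leb_{[0,1]}$.
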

\begin{proof}
    Take measure sequences $\mu_1^{(n)} \xrightarrow[n\longrightarrow +
    \infty]{w} \mu_1 \in \mathcal{P}_2(\R^d)$ and $\mu_2^{(n)}
    \xrightarrow[n\longrightarrow]{w} \mu_2\in \mathcal{P}_2(\R^d)$ and a
    sequence of projections $\theta_n \xrightarrow[n\longrightarrow + \infty]{}
    \theta \in \SS^{d-1}$. By \cref{lemma:midpoints}, we have
    $$\mu_{\theta_n}[\mu_1^{(n)}, \mu_2^{(n)}] =
    \left[\left(\tfrac{1}{2}F_{P_{\theta_n}\#\mu_1^{(n)}}^{[-1]}+\tfrac{1}{2}F_
    {P_{\theta_n}\#\mu_2^{(n)}}^{[-1]}\right)\theta_n\right]\#\Leb_{[0, 1]} =:
    f_n\#\Leb_{[0, 1]},$$ and:
    $$\mu_\theta[\mu_1, \mu_2] =
    \left[\left(\tfrac{1}{2}F_{P_\theta\#\mu_1}^{[-1]}+\tfrac{1}{2}F_
    {P_\theta\#\mu_2}^{[-1]}\right)\theta\right]\#\Leb_{[0, 1]} =:
    f\#\Leb_{[0,1]}.$$ Let $i\in \{1, 2\}$ and $g_n :=
    F_{P_{\theta_n}\#\mu_i^{(n)}}^{[-1]}$, we show that $(g_n)$ converges
    pointwise $\Leb_{[0, 1]}$-almost-everywhere to $g :=
    F_{P_\theta\#\mu_i}^{[-1]}$. Since
    $P_{\theta_n}\#\mu_i^{(n)}\xrightarrow[n\longrightarrow
    +\infty]{w}P_\theta\#\mu_i$, we have by \cite[Lemma 21.2]{van2000asymptotic}
    that for all $p\in [0, 1]$ such that $g$ is continuous at $p$, $g_n(p)
    \xrightarrow[n\longrightarrow +\infty]{} g(p).$ Since $g$ is non-decreasing,
    it is continuous $\Leb_{[0, 1]}$-almost-everywhere, and thus the convergence
    happens $\Leb_{[0, 1]}$-almost-everywhere. Having shown that $f_n$ converges
    pointwise to $f$ $\Leb_{[0, 1]}$-almost-everywhere, we deduce that 
    $$\mu_{\theta_n}[\mu_1^{(n)}, \mu_2^{(n)}] \xrightarrow[n\longrightarrow
    +\infty]{w} \mu_\theta[\mu_1, \mu_2].$$ By \cref{prop:W_nu_lsc}, we deduce
    that $(\theta, \mu_1, \mu_2)\longmapsto \PStheta(\mu_1, \mu_2)$ is lower
    semi-continuous.
\end{proof}

We show in \cref{ex:ce_swgg_weak_convergence} that full continuity does not
hold.

\begin{example}[$\PStheta$ is not continuous with respect to the weak
    convergence]\label{ex:ce_swgg_weak_convergence} Consider
    $$x_n := (-1-2^{-n}, 5),\; x' := (-1, 0),\;\mu_n := \tfrac{1}{2}\delta_{x_n}
    + \tfrac{1}{2}\delta_{x'},$$
    $$y := (1, 0),\; y' := (2, 5),\;\nu := \tfrac{1}{2}\delta_{y} +
    \tfrac{1}{2}\delta_{y'}.$$ Obviously, $\mu_n \xrightarrow[n\longrightarrow
    +\infty]{w} \mu$, with $\mu = \tfrac{1}{2}\delta_{(-1, 5)} +
    \tfrac{1}{2}\delta_{x'}$. For $n\in \N$, we compute easily that:
    $$\PStheta^2(\mu_n, \nu) = \tfrac{1}{2}\|x_n - y'\|_2^2 +
    \tfrac{1}{2}\|x'-y\|_2^2 = 36 + 3.2^{-n} + \tfrac{4^{-n}}{2}
    \xrightarrow[n\longrightarrow +\infty]{} 36.$$ The limit does not coincide
    with $\PStheta^2(\mu, \nu) = \tfrac{13}{2}$. We summarise this
    counter-example in \cref{fig:ce_swgg_weak_convergence}.
\end{example}

\begin{figure}[H]
    \centering
    \includegraphics[width=0.4\textwidth]{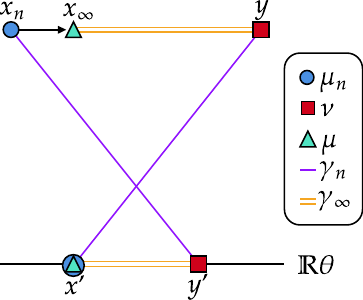}
    \caption{Representation of \cref{ex:ce_swgg_weak_convergence}, showing a
    counter-example to the continuity of $\PStheta^2(\cdot, \nu)$ with respect
    to the weak convergence of measures. At each $n\in \N$, the coupling
    $\gamma_n$ associated to $\PStheta^2(\mu_n, \nu)$ between $\mu_n$ and $\nu$
    (represented by purple lines) is imposed to assign $x_n$ to $y'$ and $x'$ to
    $y$. However, the coupling $\gamma_\infty$ associated to $\PStheta^2(\mu,
    \nu)$ represented by orange double lines has more freedom due to the fact
    that $P_\theta x_\infty = P_\theta x'$, and therefore can perform the less
    costly assignment of $x_\infty$ to $y$ and $x'$ to $y'$.}
    \label{fig:ce_swgg_weak_convergence}
\end{figure}
\section{Correspondence of Pivot-Sliced and a Constrained Wasserstein Discrepancy}\label{sec:equality_CWtheta}

In this section, we will compare the quantity $\PStheta(\mu_1, \mu_2)$ defined
in \cref{eqn:S_theta} with a particular lifting of the 1D sliced plan between
$\mu_1$ and $\mu_2$. Namely, we will compare the two quantities:
\begin{equation}\label{eqn:S_theta_vs_lift}
    \begin{split}
        \PStheta^2(\mu_1, \mu_2) &:= 
        \underset{\rho \in \Gamma(\mu_\theta[\mu_1, \mu_2], \mu_1, \mu_2)}
        {\min}\ \int_{\R^{3d}}\|x_1-x_2\|_2^2\dd\rho(y, x_1, x_2) \\
        \eqquestion \CWtheta^2(\mu_1, \mu_2) &:= 
        \underset{\substack{\omega \in \Pi(\mu_1, \mu_2) \\ 
        (P_\theta, P_\theta)\#\omega = \pi_\theta[\mu_1, \mu_2]}}{\min}\ 
        \int_{\R^{2d}}\|x_1-x_2\|_2^2\dd\omega(x_1, x_2),
    \end{split}
\end{equation}
where $\pi_\theta[\mu_1, \mu_2]$ is the unique optimal plan between
$P_\theta\#\mu_1$ and $P_\theta\#\mu_2$. We introduce the following notation for
the set of admissible plans $\omega$ for $\CWtheta(\mu_1, \mu_2)$:
\begin{equation}\label{eqn:def_Omega_theta}
    \Omega_\theta(\mu_1, \mu_2) := \left\{\omega \in \Pi(\mu_1, \mu_2) :
    (P_\theta, P_\theta)\#\omega = \pi_\theta[\mu_1, \mu_2]\right\}.    
\end{equation}
To see that $\Omega_\theta(\mu_1, \mu_2)$ is non-empty, we construct an element
by disintegration. Let $\pi_\theta := \pi_\theta[\mu_1, \mu_2] \in
\mathcal{P}_2(\R^2)$, and consider the coupling:
\[
\omega(\dd x_1, \dd x_2) := \pi_\theta(P_\theta \dd x_1, P_\theta \dd x_2)\mu_1^{P_\theta x_1}(\dd x_2)\mu_2^{P_\theta x_2}(\dd x_1).
\] 
It is clear that $\omega \in \Omega_\theta(\mu_1, \mu_2)$ (note that this
coupling will be studied in further detail in \cref{sec:expected_sliced} within
another context). Note that, by compactness of $\Pi(\mu_1, \mu_2)$, continuity of
$\omega \longmapsto (P_\theta, P_\theta)\#\omega$ and lower semi-continuity of
$J := \omega \longmapsto \int_{\R^{2d}}\|\cdot-\cdot\|_2^2\dd\omega$, the
infimum in $\CWtheta^2$ is attained.

To draw a correspondence between $\PStheta$ and $\CWtheta$, we will compare
their optimisation sets, and to this end, we introduce the set $\Gamma_{\theta,
1, 2}\subset \Pi(\mu_1, \mu_2)$ defined as:
\begin{equation}\label{def:Gamma_theta_12}
    \Gamma_{\theta, 1, 2}(\mu_1, \mu_2) := 
    \left\{\rho_{1, 2} : \rho \in \Gamma(\mu_\theta[\mu_1, \mu_2], 
    \mu_1, \mu_2)\right\}.
\end{equation}
We have, by definition:
\begin{equation}\label{eqn:opt_sets_W_theta_S_theta}
    \CWtheta^2(\mu_1, \mu_2) = \underset{\omega \in \Omega_\theta(\mu_1,
    \mu_2)}{\min}\ J(\omega);\quad \PStheta^2(\mu_1, \mu_2) = 
    \underset{\gamma \in \Gamma_{\theta, 1, 2}(\mu_1, \mu_2)}{\min}\ J(\gamma).
\end{equation}

\subsection{First Inequality: \texorpdfstring{$\PStheta \leq \CWtheta$}{PS leq CW}}

To prove a first inequality between $\PStheta$ and $\CWtheta$, we will show that
$\Omega_\theta(\mu_1, \mu_2) \subset \Gamma_{\theta, 1, 2}(\mu_1, \mu_2)$ (these
sets are defined in \cref{eqn:def_Omega_theta} and \cref{def:Gamma_theta_12}).
We start with two Lemmas on Wasserstein means. The first result provides an
explicit optimal coupling between a Wasserstein mean and the two measures. Note
that for legibility, we chose to present and prove the results using random
variables rather than measures.
\begin{lemma}\label{lemma:plan_middle} Let $\mu_1, \mu_2 \in
    \mathcal{P}_2(\R^d)$ and an optimal coupling $\gamma \in \Pi^*(\mu_1,
    \mu_2)$. 
    
    Then $\mu_\half := (\tfrac{1}{2}P_1 + \tfrac{1}{2}P_2)\#\gamma = \Law_{(X_1,
    X_2)\sim\gamma}\left[\tfrac{X_1+X_2}{2}\right]$ belongs to $\M(\mu_1,
    \mu_2)$, and furthermore the coupling $\gamma_\half := \Law_{(X_1, X_2)\sim
    \gamma}\left[\left(\tfrac{X_1+X_2}{2}, X_1\right)\right]$ belongs to
    $\Pi^*(\mu_\half, \mu_1)$.
\end{lemma}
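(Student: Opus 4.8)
The first assertion is an immediate instance of \cref{prop:wass_mean_geodesic}: with $t=\tfrac12$ one has $\mu_\half = \mu_\gamma^{1\rightarrow 2}(\tfrac12) = \left(\tfrac12 P_1 + \tfrac12 P_2\right)\#\gamma$, and since $\gamma\in\Pi^*(\mu_1,\mu_2)$ that proposition gives $\mu_\half\in\M(\mu_1,\mu_2)$. For the second assertion the plan is a sandwich argument: exhibit $\gamma_\half$ as an admissible coupling between $\mu_\half$ and $\mu_1$, compute its cost to get an upper bound on $\W_2^2(\mu_\half,\mu_1)$, and then obtain a matching lower bound from the positively-curved inequality \eqref{eqn:W2_PC}.

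First I would check $\gamma_\half \in \Pi(\mu_\half, \mu_1)$. By construction $\gamma_\half$ is the pushforward of $\gamma$ by $(x_1,x_2)\longmapsto \left(\tfrac{x_1+x_2}{2}, x_1\right)$; its first marginal is the pushforward of $\gamma$ by $(x_1,x_2)\longmapsto\tfrac{x_1+x_2}{2}$, which is $\mu_\half$ by definition, and its second marginal is $P_1\#\gamma = \mu_1$. Its transport cost is then
$$
\int_{\R^{2d}}\left\|\tfrac{x_1+x_2}{2}-x_1\right\|_2^2\dd\gamma(x_1,x_2) = \tfrac14\int_{\R^{2d}}\|x_1-x_2\|_2^2\dd\gamma(x_1,x_2) = \tfrac14\W_2^2(\mu_1,\mu_2),
$$
where the last equality uses the optimality of $\gamma$. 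Hence $\W_2^2(\mu_\half,\mu_1)\leq\tfrac14\W_2^2(\mu_1,\mu_2)$.

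For the reverse inequality I would apply \eqref{eqn:W2_PC} with pivot $\nu := \mu_1$, the plan $\gamma\in\Pi^*(\mu_1,\mu_2)$, and $t:=\tfrac12$: since $\W_2(\mu_1,\mu_1)=0$, this yields $\W_2^2(\mu_\half,\mu_1)\geq\tfrac12\W_2^2(\mu_2,\mu_1)-\tfrac14\W_2^2(\mu_1,\mu_2) = \tfrac14\W_2^2(\mu_1,\mu_2)$. Combining the two bounds forces $\W_2^2(\mu_\half,\mu_1) = \tfrac14\W_2^2(\mu_1,\mu_2)$, which is exactly the cost of $\gamma_\half$; therefore $\gamma_\half$ attains the infimum defining $\W_2(\mu_\half,\mu_1)$ and lies in $\Pi^*(\mu_\half,\mu_1)$.

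There is no real obstacle here — the proof is a short two-sided estimate. The only points requiring a little care are the bookkeeping of the two marginals of $\gamma_\half$ under the pushforward, and making sure the inequality \eqref{eqn:W2_PC} is invoked with the pivot chosen to be $\mu_1$ itself so that one of the three terms vanishes. (If one prefers to avoid \eqref{eqn:W2_PC}, the same lower bound follows from the standard fact that a midpoint of a constant-speed $\W_2$-geodesic is at distance $\tfrac12\W_2(\mu_1,\mu_2)$ from each endpoint.)
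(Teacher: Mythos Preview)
Your proof is correct and follows essentially the same strategy as the paper: show $\gamma_\half$ is admissible with cost $\tfrac14\W_2^2(\mu_1,\mu_2)$, then show this equals $\W_2^2(\mu_\half,\mu_1)$. The only minor difference is that the paper obtains the equality $\W_2^2(\mu_\half,\mu_1)=\tfrac14\W_2^2(\mu_1,\mu_2)$ directly from \cref{prop:wass_mean_geodesic} (the geodesic-midpoint property you mention parenthetically at the end), whereas you derive the lower bound via the positive-curvature inequality \eqref{eqn:W2_PC}; both routes are short and equivalent here.
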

\begin{proof}
    By \cref{prop:wass_mean_geodesic} we have $\mu_\half \in M(\mu_1, \mu_2)$,
    and $\W_2^2(\mu_\half, \mu_1) = \tfrac{1}{4}\W_2^2(\mu_1, \mu_2)$. We
    compute:
    \begin{align*}
        \W_2^2(\mu_\half, \mu_1) 
        &\leq \underset{(X_1, X_2)\sim\gamma}{\mathbb{E}}
        \left[\left\|\tfrac{X_1+X_2}{2} - X_1\right\|_2^2\right] \\
        &= \tfrac{1}{4}\underset{(X_1, X_2)\sim\gamma}{\mathbb{E}}
        \left[\left\|X_1-X_2\right\|_2^2\right] \\
        &=\tfrac{1}{4}\W_2^2(\mu_1, \mu_2),
    \end{align*}
    showing optimality of the coupling $\gamma_\half$, since by
    \cref{prop:wass_mean_geodesic}, $\W_2^2(\mu_\half, \mu_1) =
    \tfrac{1}{4}\W_2^2(\mu_1, \mu_2)$.
\end{proof}
Note that \cref{lemma:plan_middle} is also a consequence of \cite[Lemma
7.2.1]{ambrosio2005gradient} (which states a stronger result with more abstract
language). The following second lemma relates an admissible plan $\omega\in
\Omega_\theta(\mu_1, \mu_2)$ for $\CWtheta(\mu_1, \mu_2)$ to an explicit optimal
coupling between the projected middle $\mu_\theta[\mu_1, \mu_2]$ and the
measures $\mu_1, \mu_2$, which will be useful to construct an admissible 3-plan
for $\PStheta(\mu_1, \mu_2)$.
\begin{lemma}\label{lemma:plan_projected_middle} Let $\omega \in \Pi(\mu_1,
    \mu_2)$ such that $(P_\theta, P_\theta)\#\omega = \Pi^*(P_\theta\#\mu_1,
    P_\theta\#\mu_2)$. Let $(X_1, X_2)\sim \omega$ and $Y := \tfrac{P_\theta X_1
    + P_\theta X_2}{2}\theta$. Then $\Law[Y] = \mu_\theta[\mu_1, \mu_2]$, and
    $\Law[(Y, X_i)] \in \Pi^*(\mu_\theta[\mu_1, \mu_2], \mu_i),\; i\in \{1,
    2\}$.
\end{lemma}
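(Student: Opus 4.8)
The plan is to reduce everything to the one-dimensional picture via the projection $P_\theta$, and then to invoke \cref{lemma:S_theta_well_def} together with the characterisation of optimal plans in \cref{prop:semi_1D_ot}. First I would set $\nu_i := P_\theta\#\mu_i$ and $\pi_\theta := (P_\theta, P_\theta)\#\omega$; by assumption $\pi_\theta$ is \emph{the} optimal plan between $\nu_1$ and $\nu_2$, which by \cite{santambrogio2015optimal} Theorem 2.9 equals the monotone rearrangement $(F_{\nu_1}^{[-1]}, F_{\nu_2}^{[-1]})\#\Leb_{[0,1]}$. Since $\|\theta\|_2 = 1$ we have $P_\theta Y = \tfrac12(P_\theta X_1 + P_\theta X_2)$ and $Q_\theta Y = Y$, so from $(P_\theta X_1, P_\theta X_2)\sim\pi_\theta$ I obtain $\Law[(P_\theta Y, P_\theta X_i)] = \bigl(\tfrac12 F_{\nu_1}^{[-1]} + \tfrac12 F_{\nu_2}^{[-1]},\, F_{\nu_i}^{[-1]}\bigr)\#\Leb_{[0,1]}$. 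Taking the first marginal and re-embedding into $\R\theta$ by $t\mapsto t\theta$ identifies $\Law[Y]$ with $\bigl[\bigl(\tfrac12 F_{\nu_1}^{[-1]} + \tfrac12 F_{\nu_2}^{[-1]}\bigr)\theta\bigr]\#\Leb_{[0,1]}$, which is precisely $\mu_\theta[\mu_1,\mu_2]$ by \cref{lemma:S_theta_well_def}; this proves the first assertion.

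For the optimality statements, I would first observe that $\Law[(Y, X_i)]$ lies in $\Pi(\mu_\theta[\mu_1,\mu_2], \mu_i)$ (its second marginal is $\Law[X_i] = \mu_i$). Since $\mu_\theta[\mu_1,\mu_2]$ is supported on the line $\R\theta$, I can apply \cref{prop:semi_1D_ot} with $\nu := \mu_\theta[\mu_1,\mu_2]$ and $\mu := \mu_i$: the characterisation \cref{eqn:semi_1D_ot_plan} reduces the claim $\Law[(Y,X_i)]\in\Pi^*(\mu_\theta[\mu_1,\mu_2], \mu_i)$ to showing that $(P_\theta, P_\theta)\#\Law[(Y,X_i)] = \Law[(P_\theta Y, P_\theta X_i)]$ is the optimal plan between $P_\theta\#\mu_\theta[\mu_1,\mu_2]$ and $P_\theta\#\mu_i = \nu_i$. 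By the computation above, this plan is $(g, h)\#\Leb_{[0,1]}$ with $g := \tfrac12 F_{\nu_1}^{[-1]} + \tfrac12 F_{\nu_2}^{[-1]}$ and $h := F_{\nu_i}^{[-1]}$, and its marginals are exactly $g\#\Leb_{[0,1]} = P_\theta\#\mu_\theta[\mu_1,\mu_2]$ and $h\#\Leb_{[0,1]} = \nu_i$; since $g$ and $h$ are both non-decreasing, $(g,h)\#\Leb_{[0,1]}$ is co-monotone and hence is the (unique) optimal plan between these two marginals, again by \cite{santambrogio2015optimal} Theorem 2.9. This is exactly what \cref{eqn:semi_1D_ot_plan} requires.

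The main thing to be careful about is the quantile-function bookkeeping: one must recognise that pushing $\Leb_{[0,1]}$ forward by the non-decreasing map $g$ yields a measure whose monotone coupling with $\nu_i$ is $(g,h)\#\Leb_{[0,1]}$ itself --- equivalently, that $F_{g\#\Leb_{[0,1]}}^{[-1]} = g$ up to a $\Leb_{[0,1]}$-null set --- and to keep straight which of the measures plays the role of ``$\nu$'' and which of ``$\mu$'' in \cref{prop:semi_1D_ot}. Beyond that, every step is a direct substitution and no genuine obstacle is expected. In fact the lemma is the $P_\theta$-projected counterpart of \cref{lemma:plan_middle} (applied to $\nu_1, \nu_2$ with the monotone coupling $\pi_\theta$), so an alternative proof would combine \cref{lemma:plan_middle} on $\R$ with the embedding $\R\hookrightarrow\R\theta$ and the reduction in \cref{eqn:semi_1D_ot_plan}.
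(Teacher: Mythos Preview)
Your proposal is correct and follows essentially the same route as the paper: reduce to the line via $P_\theta$, identify $\Law[Y]$ with the projected middle, establish that $(P_\theta Y, P_\theta X_i)$ is the monotone (hence optimal) coupling, and then lift back using the characterisation \cref{eqn:semi_1D_ot_plan} from \cref{prop:semi_1D_ot}. The only difference is cosmetic: where the paper invokes \cref{lemma:plan_middle} applied to $(P_\theta X_1, P_\theta X_2)$ to get both the law of $P_\theta Y$ and the optimality of $(P_\theta Y, P_\theta X_i)$ in one stroke, you unpack this step explicitly with quantile functions and a co-monotonicity check --- indeed, the ``alternative proof'' you sketch at the end \emph{is} the paper's proof.
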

\begin{proof}
    First, we apply \cref{lemma:plan_middle} to the optimal coupling $(P_\theta
    X_1, P_\theta X_2)$, which shows that $\Law[P_\theta Y] =
    P_\theta\#\mu_\theta[\mu_1, \mu_2]$, thus that $\Law[Y] = \mu_\theta[\mu_1,
    \mu_2]$. \cref{lemma:plan_middle} also shows that $(P_\theta Y, P_\theta
    X_1)$ is the optimal coupling between $P_\theta\#\mu_\theta[\mu_1, \mu_2]$
    and $P_\theta\#\mu_1$. Then by \cref{eqn:semi_1D_ot_plan} in
    \cref{prop:semi_1D_ot}, it follows that $\Law[(Y, X_1)] \in
    \Pi^*(\mu_\theta[\mu_1, \mu_2], \mu_1)$, and the same reasoning applies to
    $\Law[(Y, X_2)] \in \Pi^*(\mu_\theta[\mu_1, \mu_2], \mu_2)$.
\end{proof}
Using \cref{lemma:plan_middle} and \cref{lemma:plan_projected_middle}, we can
now show an inequality between $\PStheta$ and $\CWtheta$:

\begin{prop}\label{prop:S_theta_smaller_lift} Let $\mu_1, \mu_2 \in
    \mathcal{P}_2(\R^d)$, and $\theta \in \SS^{d-1}$. The two sets defined in
    \cref{eqn:def_Omega_theta} and \cref{def:Gamma_theta_12} verify
    $\Omega_\theta(\mu_1, \mu_2) \subset \Gamma_{\theta, 1, 2}(\mu_1, \mu_2)$,
    and the two quantities defined in \cref{eqn:S_theta_vs_lift} verify
    $\PStheta(\mu_1, \mu_2) \leq \CWtheta(\mu_1, \mu_2)$.
\end{prop}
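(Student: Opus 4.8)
The plan is to establish the set inclusion $\Omega_\theta(\mu_1, \mu_2) \subset \Gamma_{\theta, 1, 2}(\mu_1, \mu_2)$ by an explicit gluing construction, and then deduce the stated inequality by comparing the minima of the same functional over these nested sets.

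First I would fix $\omega \in \Omega_\theta(\mu_1, \mu_2)$, let $(X_1, X_2) \sim \omega$, and define $Y := \tfrac{P_\theta X_1 + P_\theta X_2}{2}\theta$ together with the three-plan $\rho := \Law[(Y, X_1, X_2)]$. The defining condition of $\Omega_\theta(\mu_1, \mu_2)$ (see \cref{eqn:def_Omega_theta}) is precisely that $(P_\theta, P_\theta)\#\omega = \pi_\theta[\mu_1, \mu_2]$, which is the unique element of $\Pi^*(P_\theta\#\mu_1, P_\theta\#\mu_2)$ and hence matches the hypothesis of \cref{lemma:plan_projected_middle}. That lemma then yields $\Law[Y] = \mu_\theta[\mu_1, \mu_2]$ and $\Law[(Y, X_i)] \in \Pi^*(\mu_\theta[\mu_1, \mu_2], \mu_i)$ for $i \in \{1,2\}$; equivalently $\rho_0 = \mu_\theta[\mu_1, \mu_2]$, $\rho_{0,1} \in \Pi^*(\mu_\theta[\mu_1,\mu_2], \mu_1)$ and $\rho_{0,2} \in \Pi^*(\mu_\theta[\mu_1,\mu_2], \mu_2)$, with $\rho_1 = \mu_1$, $\rho_2 = \mu_2$ because $(X_1,X_2)\sim\omega\in\Pi(\mu_1,\mu_2)$. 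By the definition of $\Gamma$ in \cref{eqn:def_Gamma3} this means $\rho \in \Gamma(\mu_\theta[\mu_1, \mu_2], \mu_1, \mu_2)$, and since $\rho_{1,2} = \Law[(X_1, X_2)] = \omega$, we obtain $\omega \in \Gamma_{\theta, 1, 2}(\mu_1, \mu_2)$ from \cref{def:Gamma_theta_12}, which proves the inclusion.

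Finally, recalling from \cref{eqn:opt_sets_W_theta_S_theta} that $\PStheta^2(\mu_1, \mu_2) = \min_{\gamma \in \Gamma_{\theta,1,2}(\mu_1,\mu_2)} J(\gamma)$ and $\CWtheta^2(\mu_1, \mu_2) = \min_{\omega \in \Omega_\theta(\mu_1, \mu_2)} J(\omega)$ (both minima being attained, by \cref{prop:W_nu_inf_attained} and the compactness argument following \cref{eqn:def_Omega_theta}), minimising the single functional $J$ over the larger set $\Gamma_{\theta,1,2}(\mu_1,\mu_2) \supset \Omega_\theta(\mu_1, \mu_2)$ can only decrease the value; hence $\PStheta^2(\mu_1,\mu_2) \leq \CWtheta^2(\mu_1,\mu_2)$ and therefore $\PStheta(\mu_1, \mu_2) \leq \CWtheta(\mu_1, \mu_2)$. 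I do not expect a genuine obstacle here: the substantive content has already been isolated in \cref{lemma:plan_middle} and \cref{lemma:plan_projected_middle}, and the only points requiring care are matching the singleton-set notation in the hypothesis of \cref{lemma:plan_projected_middle} with the constraint defining $\Omega_\theta$, and checking that the deterministic gluing through $Y$ produces a bona fide element of $\Gamma(\mu_\theta[\mu_1,\mu_2], \mu_1, \mu_2)$ — both immediate.
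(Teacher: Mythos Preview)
Your proposal is correct and follows essentially the same approach as the paper: define $Y := \tfrac{P_\theta X_1 + P_\theta X_2}{2}\theta$ from $(X_1,X_2)\sim\omega$, invoke \cref{lemma:plan_projected_middle} to show $\rho := \Law[(Y,X_1,X_2)] \in \Gamma(\mu_\theta[\mu_1,\mu_2],\mu_1,\mu_2)$, and deduce both the inclusion and the inequality. The only cosmetic difference is that the paper starts directly with an optimal $\omega$ and reads off the inequality from the cost of the constructed $\rho$, whereas you first prove the inclusion for a general $\omega\in\Omega_\theta(\mu_1,\mu_2)$ and then compare minima over nested sets --- the content is identical.
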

\begin{proof}
    Let $\omega \in \Pi(\mu_1, \mu_2)$ such that $(P_\theta, P_\theta)\#\omega =
    \Pi^*(P_\theta\#\mu_1, P_\theta\#\mu_2)$ optimal for $\CWtheta(\mu_1,
    \mu_2)$. Consider $(X_1, X_2)\sim\omega$ and $Y := \tfrac{P_\theta X_1 +
    P_\theta X_2}{2}\theta$. By \cref{lemma:plan_projected_middle}, we have
    $\Law[Y] = \mu_\theta[\mu_1, \mu_2]$, and $\Law[(Y, X_i)] \in
    \Pi^*(\mu_\theta[\mu_1, \mu_2], \mu_i),\; i\in \{1, 2\}$. By definition, the
    3-plan $\rho$ defined by $\rho := \Law[(Y, X_1, X_2)]$ belongs to
    $\Gamma(\mu_\theta[\mu_1, \mu_2], \mu_1, \mu_2)$, thus $\omega \in
    \Gamma_{\theta, 1, 2}(\mu_1, \mu_2)$. We compute:
    $$\PStheta^2(\mu_1, \mu_2) \leq \int_{\R^{3d}}\|x_1-x_2\|_2^2\dd\rho(y, x_1,
    x_2) = \int_{\R^{2d}}\|x_2-x_2\|_2^2\dd\omega(x_1, x_2) = \CWtheta^2(\mu_1,
    \mu_2).$$
\end{proof}

\subsection{Converse Inequality: \texorpdfstring{$\PStheta \geq \CWtheta$}{PS leq CW}}

To show the converse inequality \[\CWtheta(\mu_1, \mu_2)~\leq~\PStheta(\mu_1,
\mu_2),\] we will use more technical arguments from \cite[Lemma
7.2.1]{ambrosio2005gradient}, which will show that (denoting $\mu_\theta :=
\mu_\theta[\mu_1, \mu_2],$) for $i\in \{1, 2\}$, the unique optimal plan $\pi_i$
between $P_\theta\#\mu_\theta$ and $P_\theta\#\mu_i$ is induced by a transport
map $T_i$, i.e. $\pi_i = (I, T_i)\#P_\theta\#\mu_\theta$. This is a consequence
of the fact that $\mu_\theta$ is chosen as the middle of the geodesic between
$P_\theta\#\mu_1$ and $P_\theta\#\mu_2$, and remarkably holds without atomless
assumptions on the $P_\theta\#\mu_i$.

\begin{theorem}\label{thm:S_theta_equals_W_theta} Let $\mu_1, \mu_2 \in
    \mathcal{P}_2(\R^d)$ and $\theta \in \SS^{d-1}$. Then the two sets defined
    in \cref{eqn:def_Omega_theta} and \cref{def:Gamma_theta_12} verify
    $\Gamma_{\theta, 1, 2}(\mu_1, \mu_2)= \Omega_\theta(\mu_1, \mu_2)$,
    and the two quantities defined in \cref{eqn:S_theta_vs_lift} verify
    $\PStheta(\mu_1, \mu_2) = \CWtheta(\mu_1, \mu_2)$.    
\end{theorem}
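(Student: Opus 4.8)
The plan is to combine \cref{prop:S_theta_smaller_lift}, which already gives $\Omega_\theta(\mu_1,\mu_2)\subset\Gamma_{\theta,1,2}(\mu_1,\mu_2)$ (hence $\PStheta\le\CWtheta$), with the reverse inclusion $\Gamma_{\theta,1,2}(\mu_1,\mu_2)\subset\Omega_\theta(\mu_1,\mu_2)$. Writing $\mu_\theta:=\mu_\theta[\mu_1,\mu_2]$, $\nu_i:=P_\theta\#\mu_i$, this amounts to showing: for every $\rho\in\Gamma(\mu_\theta,\mu_1,\mu_2)$, the plan $\rho_{1,2}$ satisfies $(P_\theta,P_\theta)\#\rho_{1,2}=\pi_\theta[\mu_1,\mu_2]$, the unique optimal plan between $\nu_1$ and $\nu_2$. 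Once both inclusions hold, $\Gamma_{\theta,1,2}(\mu_1,\mu_2)=\Omega_\theta(\mu_1,\mu_2)$, and since the two quantities in \cref{eqn:opt_sets_W_theta_S_theta} are infima of the same functional $J$ over the same set, $\PStheta=\CWtheta$.

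The crux is a map property: since, by \cref{lemma:S_theta_well_def}, $P_\theta\#\mu_\theta=\big(\tfrac12 F_{\nu_1}^{[-1]}+\tfrac12 F_{\nu_2}^{[-1]}\big)\#\Leb_{[0,1]}$ is the midpoint of the $\mathcal{P}_2(\R)$-geodesic from $\nu_1$ to $\nu_2$, the unique optimal plan $\pi_i^*$ between $P_\theta\#\mu_\theta$ and $\nu_i$ is induced by a transport map $T_i$, i.e.\ $\pi_i^*=(I,T_i)\#(P_\theta\#\mu_\theta)$. This is \cite{ambrosio2005gradient}, Lemma 7.2.1, but in the present one-dimensional situation it is elementary: writing $m:=\tfrac12 F_{\nu_1}^{[-1]}+\tfrac12 F_{\nu_2}^{[-1]}$, if $m(s)=m(s')$ with $s<s'$ then, both quantile functions being non-decreasing, equality of the averages forces $F_{\nu_i}^{[-1]}(s)=F_{\nu_i}^{[-1]}(s')$ for $i=1,2$; hence $T_i(m(s)):=F_{\nu_i}^{[-1]}(s)$ is a well-defined Borel map with $T_i\circ m=F_{\nu_i}^{[-1]}$, and since $F_{P_\theta\#\mu_\theta}^{[-1]}=m$ holds $\Leb_{[0,1]}$-almost everywhere, $\pi_i^*=(m,F_{\nu_i}^{[-1]})\#\Leb_{[0,1]}=(I,T_i)\#(P_\theta\#\mu_\theta)$. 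This is precisely where the specific choice of pivot (a \emph{midpoint}, not an arbitrary measure) is used, and no atomlessness of the $\nu_i$ is needed.

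The gluing step is then routine. Fix $\rho\in\Gamma(\mu_\theta,\mu_1,\mu_2)$ and $(Y,X_1,X_2)\sim\rho$. Since $\mu_\theta$ is supported on $\R\theta$, \cref{prop:semi_1D_ot} (with $\nu=\mu_\theta$, $\mu=\mu_i$, see \cref{eqn:semi_1D_ot_plan}) shows that $\rho_{0,i}\in\Pi^*(\mu_\theta,\mu_i)$ forces $(P_\theta,P_\theta)\#\rho_{0,i}=\pi_i^*=(I,T_i)\#(P_\theta\#\mu_\theta)$, hence $P_\theta X_i=T_i(P_\theta Y)$ $\rho$-almost surely, for $i=1,2$. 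Therefore $(P_\theta X_1,P_\theta X_2)=\big(T_1(P_\theta Y),T_2(P_\theta Y)\big)$ $\rho$-a.s., so
\begin{align*}
(P_\theta,P_\theta)\#\rho_{1,2}&=\Law\big(T_1(P_\theta Y),T_2(P_\theta Y)\big)=(T_1,T_2)\#(P_\theta\#\mu_\theta)\\
&=(T_1\circ m,T_2\circ m)\#\Leb_{[0,1]}=\big(F_{\nu_1}^{[-1]},F_{\nu_2}^{[-1]}\big)\#\Leb_{[0,1]},
\end{align*}
which is $\pi_\theta[\mu_1,\mu_2]$ by \cite{santambrogio2015optimal} Theorem 2.9. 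Hence $\rho_{1,2}\in\Omega_\theta(\mu_1,\mu_2)$, giving the missing inclusion and, with \cref{prop:S_theta_smaller_lift}, the equality of the two sets and of $\PStheta$ and $\CWtheta$. The main obstacle is the map property of $\pi_i^*$; once it is in hand, the rest follows directly from \cref{prop:semi_1D_ot} and the quantile description of one-dimensional optimal transport.
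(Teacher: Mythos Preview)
Your proof is correct and follows essentially the same route as the paper: both use \cref{prop:S_theta_smaller_lift} for one inclusion, and for the reverse inclusion both project $\rho$ to one dimension, invoke the midpoint-to-endpoint map property (the paper cites \cite{ambrosio2005gradient} Lemma 7.2.1, you prove it elementarily from the quantile representation), and then show that the $(1,2)$-marginal projects to $\pi_\theta[\mu_1,\mu_2]$. The only cosmetic difference is in this last step: the paper verifies monotonicity of $\supp\eta_{1,2}$ via the non-decreasing maps $T_1,T_2$ and appeals to \cite{santambrogio2015optimal} Lemma 2.8, whereas you directly compute the law using $T_i\circ m=F_{\nu_i}^{[-1]}$, which is slightly more direct.
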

\begin{proof}
    We have already shown that $\PStheta(\mu_1, \mu_2) \leq \CWtheta(\mu_1,
    \mu_2)$ in \cref{prop:S_theta_smaller_lift}. We now show that
    $\Gamma_{\theta, 1, 2}(\mu_1, \mu_2)\subset \Omega_\theta(\mu_1, \mu_2)$ (we
    write $\mu_\theta := \mu_\theta[\mu_1, \mu_2],$ and $\pi_\theta$ the unique
    element of $\Pi^*(P_\theta\#\mu_1, P_\theta\#\mu_2)$). Let $\rho \in
    \Gamma(\mu_\theta, \mu_1, \mu_2)$. We introduce $\eta := (P_\theta,
    P_\theta, P_\theta)\#\rho \in \Pi(\nu_\half, \nu_1, \nu_2)$, where for
    convenience we write $\nu_i := P_\theta\#\mu_i$ for $i\in \{1, 2\}$ and
    $\nu_\half := P_\theta\#\mu_\theta$. By \cref{eqn:semi_1D_ot_plan} in
    \cref{prop:semi_1D_ot}, we have $\eta_{0, i} \in \Pi^*(\nu_\half, \nu_i)$. 
    
    We now write $\eta$ using OT maps. By \cite[Lemma
    7.2.1]{ambrosio2005gradient}, since $\nu_\half = \M(\nu_1, \nu_2)$ (i.e. it
    is the middle of the constant-speed geodesic between $\nu_1$ and $\nu_2$,
    which is unique since the measures are one-dimensional), for $i\in \{1, 2\}$
    the transport plan $\eta_{0, i} \in \Pi^*(\nu_\half, \nu_i)$ is induced by a
    non-decreasing transport map $T_i$, which is to say that $\eta_{0, i} = (I,
    T_i)\#\nu_\half$. It follows that for $\eta$-almost-every $(t, s_1, s_2) \in
    \R^3$, we have $s_1 = T_1(t)$ and $s_2 = T_2(t)$.

    We now verify that $\eta_{1, 2} \in \Pi^*(\nu_1, \nu_2)$ using the cyclical
    monotonicity criterion: Let $(s_1, s_2), (s_1', s_2') \in \supp \eta_{1, 2}$
    such that $s_1 < s_1'$. Our earlier considerations on $\eta$ show that there
    exists $t, t' \in \R$ verifying $s_1 = T_1(t),\; s_2 = T_2(t)$ and $s_1' =
    T_1(t'),\; s_2' = T_2(t')$. Since $s_1 = T_1(t) < T_1(t') = s_1'$ and $T_1$
    is non-decreasing, we deduce $t<t'$. Now since $T_2$ is non-decreasing,
    $t<t'$ implies that $s_2 = T_2(t) \leq T_2(t') = s_2'$. We have shown the
    following property of $\eta_{1, 2}$:
    \begin{equation}\label{eqn:eta_12_comonotone}
        \forall (s_1, s_2), (s_1', s_2') \in \supp \eta_{1, 2},\; 
        s_1 < s_1' \implies s_2 \leq s_2'.
    \end{equation}
    By \cite[Lemma 2.8]{santambrogio2015optimal}, \cref{eqn:eta_12_comonotone}
    implies that $\eta_{1, 2}$ is the co-monotone plan between $\nu_1$ and
    $\nu_2$, and by \cite[Theorem 2.9]{santambrogio2015optimal}, we conclude
    that $\eta_{1, 2} \in \Pi^*(\nu_1, \nu_2)$.

    Having shown that $\eta_{1, 2} \in \Pi^*(\nu_1, \nu_2)$, we conclude that
    \[(P_\theta, P_\theta)\#\rho_{1, 2} \in \Pi^*(P_\theta\#\mu_1,
    P_\theta\#\mu_2),\] and by definition we conclude $\rho_{1, 2} \in
    \Omega_\theta(\mu_1, \mu_2)$, which shows the inclusion \[\Gamma_{\theta, 1,
    2}(\mu_1, \mu_2)\subset \Omega_\theta(\mu_1, \mu_2),\] and equality is
    obtained by combining with \cref{prop:S_theta_smaller_lift}. By
    \cref{eqn:opt_sets_W_theta_S_theta} we conclude that $\CWtheta(\mu_1, \mu_2)
    = \PStheta(\mu_1, \mu_2)$.
\end{proof}

\subsection{Triangle Inequality for \texorpdfstring{$\PStheta$}{PS} for Projection-Atomless Measures}

Using \cref{thm:S_theta_equals_W_theta} and the following technical lemma on
one-dimensional 3-plans, we will show that $\PStheta$ is a metric on the set of
probability measures whose projections on $\R\theta$ are atomless. We show that
in the one-dimensional atomless case, 3-plans with two optimal bi-marginals
automatically verify that \textit{all} their bi-marginals are optimal. In terms
of random variables, \cref{lemma:1d_3plan_bimarginals} states that if $(X_1,
X_2)$ and $(X_1, X_3)$ are optimal couplings, then so is $(X_2, X_3)$.
\begin{lemma}\label{lemma:1d_3plan_bimarginals} Let $\mu_1, \mu_2, \mu_3 \in
    \mathcal{P}_2(\R)$ such that $\mu_1$ and $\mu_2$ are atomless, and let $\rho
    \in \Pi(\mu_1, \mu_2, \mu_3)$ be a 3-plan such that $\rho_{1, 2} \in
    \Pi^*(\mu_1, \mu_2)$ and $\rho_{1, 3} \in \Pi^*(\mu_1, \mu_3)$. Then
    $\rho_{2, 3} \in \Pi^*(\mu_2, \mu_3)$.
\end{lemma}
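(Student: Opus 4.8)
The plan is to show that $\rho_{2,3}$ is forced to coincide with the comonotone coupling of $\mu_2$ and $\mu_3$, which is the unique element of $\Pi^*(\mu_2,\mu_3)$ by \cite{santambrogio2015optimal} Theorem 2.9 --- the same one-dimensional fact already used in the proof of \cref{lemma:S_theta_well_def}. The mechanism is that atomlessness of $\mu_1$ removes all freedom in the two given bimarginals $\rho_{1,2}$ and $\rho_{1,3}$, and a single scalar reparametrisation of the $\mu_1$-coordinate then shows that $X_2$ and $X_3$ are coupled monotonically.

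First I would recall that, since $\mu_1\in\mathcal{P}_2(\R)$ is atomless, the optimal transport problem from $\mu_1$ to $\mu_i$ has a unique solution for each $i\in\{2,3\}$, induced by the non-decreasing map $T_i:=F_{\mu_i}^{[-1]}\circ F_{\mu_1}$; that is, $\rho_{1,i}=(I,T_i)\#\mu_1$. This is \cite{santambrogio2015optimal} Theorem 2.9 in the atomless-source case: $T_i$ is non-decreasing as a composition of non-decreasing maps, and it pushes $\mu_1$ onto $\mu_i$ because $F_{\mu_1}\#\mu_1=\Leb_{[0,1]}$ (as $\mu_1$ has no atoms) and $F_{\mu_i}^{[-1]}\#\Leb_{[0,1]}=\mu_i$, so it realises the comonotone (hence optimal) coupling and must agree $\mu_1$-almost everywhere with the unique optimal map.

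Next I would introduce, for $(X_1,X_2,X_3)\sim\rho$, the scalar $U:=F_{\mu_1}(X_1)$, whose law is $F_{\mu_1}\#\mu_1=\Leb_{[0,1]}$. The two constraints $\rho_{1,i}=(I,T_i)\#\mu_1$ force $X_i=T_i(X_1)=F_{\mu_i}^{[-1]}(U)$ for $\rho$-almost every $(X_1,X_2,X_3)$ and $i\in\{2,3\}$. Hence $\rho_{2,3}=\Law\bigl[(F_{\mu_2}^{[-1]}(U),F_{\mu_3}^{[-1]}(U))\bigr]=\bigl(F_{\mu_2}^{[-1]},F_{\mu_3}^{[-1]}\bigr)\#\Leb_{[0,1]}$, which is precisely the comonotone coupling of $\mu_2$ and $\mu_3$ and therefore belongs to $\Pi^*(\mu_2,\mu_3)$ by \cite{santambrogio2015optimal} Theorem 2.9. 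This is the claim.

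I do not expect a serious obstacle here; the only mildly technical point is the pseudo-inverse bookkeeping in the second paragraph, namely checking that $F_{\mu_i}^{[-1]}\circ F_{\mu_1}$ genuinely transports $\mu_1$ to $\mu_i$ and coincides $\mu_1$-almost everywhere with the optimal map --- both standard, but worth stating with care since $\mu_2$ and $\mu_3$ may carry atoms and $F_{\mu_1}$ need not be strictly increasing. A more pedestrian alternative would imitate the co-monotonicity-on-the-support argument from the proof of \cref{thm:S_theta_equals_W_theta}: from $X_2=T_2(X_1)$ and $X_3=T_3(X_1)$ with $T_2,T_3$ non-decreasing, deduce $s_2<s_2'\implies s_3\leq s_3'$ for all $(s_2,s_3),(s_2',s_3')\in\supp\rho_{2,3}$, then conclude with \cite{santambrogio2015optimal} Lemma 2.8 and Theorem 2.9; but the explicit quantile identification is cleaner and avoids support-versus-almost-everywhere subtleties. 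Note that only atomlessness of $\mu_1$ is actually used.
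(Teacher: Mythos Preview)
Your proof is correct and follows essentially the same quantile-function strategy as the paper: both arguments exploit that optimal one-dimensional couplings from an atomless source are induced by the map $F_{\text{target}}^{[-1]}\circ F_{\text{source}}$, then reduce $(X_2,X_3)$ to the comonotone coupling. The paper, however, routes through $X_2\to X_1\to X_3$, writing $X_1=F_1^{[-1]}\circ F_2(X_2)$ (which uses atomlessness of $\mu_2$) and then $X_3=F_3^{[-1]}\circ F_1(X_1)$, and must invoke the identity $F_1\circ F_1^{[-1]}=I_{[0,1]}$ to simplify the composition. Your version goes $X_1\to X_2$ and $X_1\to X_3$ in parallel via the single uniform variable $U=F_1(X_1)$, which is cleaner and, as you observe, only requires $\mu_1$ to be atomless --- a genuine (if minor) sharpening of the hypothesis.
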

\begin{proof}
    For $i\in \{1, 2, 3\}$, introduce the c.d.f. $F_i$ of $\mu_i$. Take $(X_1,
    X_2, X_3)\sim \rho$. By \cite[Theorem 2.9]{santambrogio2015optimal}, since
    $\mu_1$ is atomless and $(X_1, X_3)$ is optimal, we have almost-surely $X_3
    = F_3^{[-1]}\circ F_1(X_1)$. Likewise, since $\mu_2$ is atomless and $(X_2,
    X_1)$ is optimal, we have almost-surely $X_1 = F_1^{[-1]}\circ F_2(X_2)$.
    Combining these equalities yields almost-surely:
    $$X_3 = F_3^{[-1]}\circ F_1(X_1) = F_3^{[-1]}\circ F_1 \circ F_1^{[-1]}\circ
    F_2(X_2). $$ By continuity of $F_1$ (since $\mu_1$ is atomless) and defining
    $F_1(-\infty) := 0$ and $F_1(+\infty):=1$, we have
    $F_1(\R)\cup\{F_1(-\infty)\}\cup\{F_1(+\infty)\} = [0, 1]$, allowing us to
    apply \cite[Proposition 2.3 item 4)]{embrechts2013note}, which yields $F_1
    \circ F_1^{[-1]} = I_{[0, 1]}$.

    We have shown that almost-surely $X_3 = F_3^{[-1]}\circ F_2(X_2)$, which
    shows by \cite[Theorem 2.9]{santambrogio2015optimal} that $(X_2, X_3)$ is
    the optimal coupling between $\mu_2$ and $\mu_3$.
\end{proof}
We can now show that $\PStheta$ verifies the triangle inequality on the set of
probability measures with atomless projections. Combining this statement with
\cref{prop:S_theta_semi_metric} shows that $\PStheta$ is a metric this subset of
$\mathcal{P}_2(\R^d)$.
\begin{prop}\label{prop:S_theta_metric_atomless_projections} Let $\theta\in
    \SS^{d-1}$ and $\mathcal{P}_{2, a}(\R^d, \theta)$ be the set of probability
    measures $\mu \in \mathcal{P}_2(\R^d)$ such that $P_\theta\#\mu$ is
    atomless. The quantity $\PStheta$ is a metric on $\mathcal{P}_{2, a}(\R^d,
    \theta)$.    
\end{prop}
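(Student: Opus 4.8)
The plan is to derive the triangle inequality from the identity $\PStheta = \CWtheta$ established in \cref{thm:S_theta_equals_W_theta}, since separation and symmetry were already recorded in \cref{prop:S_theta_semi_metric} (and finiteness is immediate from $\PStheta^2(\mu, \mu') = \W_{\mu_\theta}^2(\mu, \mu') \leq 2\int\|x\|_2^2\dd\mu + 2\int\|x'\|_2^2\dd\mu' < \infty$). Fix $\mu_1, \mu_2, \mu_3 \in \mathcal{P}_{2,a}(\R^d, \theta)$ and write $\nu_i := P_\theta\#\mu_i$, each atomless by assumption. Since the infimum defining $\CWtheta$ is attained, pick $\omega_{12} \in \Omega_\theta(\mu_1, \mu_2)$ and $\omega_{23} \in \Omega_\theta(\mu_2, \mu_3)$ optimal for $\CWtheta(\mu_1, \mu_2)$ and $\CWtheta(\mu_2, \mu_3)$ respectively. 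Using the gluing lemma (\cite{santambrogio2015optimal}, Lemma 5.5) along the common marginal $\mu_2$, build a 3-plan $\rho \in \Pi(\mu_1, \mu_2, \mu_3)$ with $\rho_{1,2} = \omega_{12}$ and $\rho_{2,3} = \omega_{23}$, and set $\omega_{13} := \rho_{1,3} \in \Pi(\mu_1, \mu_3)$.

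The crux is to show that $\omega_{13}$ is admissible for $\CWtheta(\mu_1, \mu_3)$, i.e.\ that $(P_\theta, P_\theta)\#\omega_{13} = \pi_\theta[\mu_1, \mu_3]$. Push $\rho$ forward by $(P_\theta, P_\theta, P_\theta)$ to obtain $\eta \in \Pi(\nu_1, \nu_2, \nu_3)$; then $\eta_{1,2} = (P_\theta, P_\theta)\#\omega_{12} = \pi_\theta[\mu_1, \mu_2] \in \Pi^*(\nu_1, \nu_2)$ and likewise $\eta_{2,3} \in \Pi^*(\nu_2, \nu_3)$, because $\omega_{12} \in \Omega_\theta(\mu_1, \mu_2)$ and $\omega_{23} \in \Omega_\theta(\mu_2, \mu_3)$. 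Now apply \cref{lemma:1d_3plan_bimarginals} with the shared measure $\nu_2$ in the role of the ``pivot'': after permuting coordinates so that $(\tilde\mu_1, \tilde\mu_2, \tilde\mu_3) := (\nu_2, \nu_1, \nu_3)$, the associated 3-plan $\tilde\rho$ satisfies $\tilde\rho_{1,2} \in \Pi^*(\nu_2, \nu_1)$ and $\tilde\rho_{1,3} \in \Pi^*(\nu_2, \nu_3)$, and $\tilde\mu_1 = \nu_2$, $\tilde\mu_2 = \nu_1$ are atomless, so the lemma gives $\tilde\rho_{2,3} = \eta_{1,3} \in \Pi^*(\nu_1, \nu_3)$. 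Since $\nu_1$ is atomless, the optimal plan between $\nu_1$ and $\nu_3$ is unique, hence $\eta_{1,3} = \pi_\theta[\mu_1, \mu_3]$; that is, $(P_\theta, P_\theta)\#\omega_{13} = \pi_\theta[\mu_1, \mu_3]$ and $\omega_{13} \in \Omega_\theta(\mu_1, \mu_3)$.

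It then remains a routine estimate: for $(X_1, X_2, X_3) \sim \rho$, the triangle inequality in $L^2(\rho; \R^d)$ gives
\[
\CWtheta(\mu_1, \mu_3) \leq \|X_1 - X_3\|_{L^2(\rho)} \leq \|X_1 - X_2\|_{L^2(\rho)} + \|X_2 - X_3\|_{L^2(\rho)} = \CWtheta(\mu_1, \mu_2) + \CWtheta(\mu_2, \mu_3),
\]
where the first inequality uses admissibility of $\omega_{13} = \rho_{1,3}$ and the final equality uses the optimality of $\omega_{12} = \rho_{1,2}$ and $\omega_{23} = \rho_{2,3}$. Translating back via $\PStheta = \CWtheta$ yields the triangle inequality for $\PStheta$ on $\mathcal{P}_{2,a}(\R^d, \theta)$; combined with \cref{prop:S_theta_semi_metric}, this shows $\PStheta$ is a metric there.

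I expect the main obstacle to be the bookkeeping when invoking \cref{lemma:1d_3plan_bimarginals}: its hypotheses concern one marginal optimally ``linked'' to the other two, so the measures required to be atomless are the \emph{shared} one (here $\nu_2$) together with one outer one, and one must also make sure that uniqueness of the one-dimensional optimal plan between $\nu_1$ and $\nu_3$ is available in order to identify $\eta_{1,3}$ with $\pi_\theta[\mu_1, \mu_3]$. Everything else is the standard gluing-plus-$L^2$-triangle-inequality scheme, and the atomless assumption is used exactly (and only) to feed the lemma.
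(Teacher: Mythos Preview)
Your proof is correct and follows the same route as the paper: glue optimal couplings along $\mu_2$, push forward by $(P_\theta,P_\theta,P_\theta)$, invoke \cref{lemma:1d_3plan_bimarginals} (after the coordinate permutation you spell out) to show the $(1,3)$ bi-marginal is optimal, then apply the $L^2$ triangle inequality. One minor remark: your worry about uniqueness of $\pi_\theta[\mu_1,\mu_3]$ is unnecessary, since the one-dimensional quadratic-cost optimal plan is always unique (\cite{santambrogio2015optimal}, Theorem 2.9), with or without atoms.
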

\begin{proof}
    First, by \cref{prop:S_theta_semi_metric}, it only remains to show the
    triangle inequality. Let $\mu_1, \mu_2, \mu_3 \in \mathcal{P}_{2, a}(\R^d,
    \theta)$ and let $\omega_{1, 2} \in \Omega_\theta(\mu_1, \mu_2)$ be optimal
    for $\CWtheta(\mu_1, \mu_2)$, and likewise let $\omega_{2, 3} \in
    \Omega_\theta(\mu_2, \mu_3)$ be optimal for $\CWtheta(\mu_2, \mu_3)$. We
    apply the standard gluing technique (see for example \cite[Lemma
    5.5]{santambrogio2015optimal}): the second marginal of $\omega_{1, 2}$ and
    the first marginal of $\omega_{2, 3}$ are both $\mu_2$, hence we can write
    their disintegrations with respect to $\mu_2$ as:
    $$\omega_{1, 2}(\dd x_1, \dd x_2) = \mu_2(\dd x_2)\omega_{1, 2}^{x_2}(\dd
    x_1),\quad \omega_{2, 3}(\dd x_2, \dd x_3) = \mu_2(\dd x_2)\omega_{2,
    3}^{x_2}(\dd x_3).$$ We now introduce the ``composition'' 3-plan $\rho \in
    \Pi(\mu_1, \mu_2, \mu_3)$ as:
    $$\rho(\dd x_1, \dd x_2, \dd x_3) := \mu_2(\dd x_2) \omega_{1, 2}^{x_2}(\dd
    x_1) \omega_{2, 3}^{x_2}(\dd x_3).$$ Writing $\rho_\theta := (P_\theta,
    P_\theta, P_\theta)\#\rho$, by definition of $\omega_{1, 2}$ and $\omega_{2,
    3}$, we have $[\rho_\theta]_{1, 2} = \pi_\theta[\mu_1, \mu_2]$ and
    $[\rho_\theta]_{2, 3} = \pi_\theta[\mu_2, \mu_3]$. By
    \cref{lemma:1d_3plan_bimarginals}, we deduce that $\rho_{1, 3} =
    \pi_\theta[\mu_1, \mu_3]$, since each $P_\theta\#\mu_i$ is atomless. This
    shows that $\rho_{1, 3} \in \Omega_\theta(\mu_1, \mu_2)$. Denoting $\phi_i
    := (x_1, x_2, x_3) \longmapsto x_i$ for $i \in \{1, 2, 3\}$, we have:
    \begin{align*}
        \CWtheta(\mu_1, \mu_3) &\leq \|\phi_1 - \phi_3\|_{L^2(\rho_{1, 3})} 
        = \|\phi_1 - \phi_3\|_{L^2(\rho)} \\
        &\leq \|\phi_1 -\phi_2\|_{L^2(\rho)} 
        + \|\phi_2 -\phi_3\|_{L^2(\rho)} \\ 
        &= \|\phi_1 -\phi_2\|_{L^2(\omega_{1,2})} 
        + \|\phi_2 -\phi_3\|_{L^2(\omega_{2,3})} \\
        &= \CWtheta(\mu_1, \mu_2) + \CWtheta(\mu_2, \mu_3).
    \end{align*}
    Using \cref{thm:S_theta_equals_W_theta}, we deduce the triangle inequality
    for $\PStheta$.
\end{proof}
\section{A Monge Formulation of \texorpdfstring{$\PStheta$}{Stheta} Between
Point Clouds}\label{sec:pivot_sliced_discrete}

\subsection{The Case of Non-Ambiguous Projections}

A direct consequence of \cref{thm:W_nu_disintegration} is that, in the case of
point clouds with non-ambiguous projections, the computation of $\PStheta$ can
be done simply by sorting the projections and taking the associated plan between
the projected measures.
\begin{corollary}\label{cor:S_theta_SWGG_as} Let $(x_1, \cdots, x_n) \in
    (\R^d)^n$ and $(y_1, \cdots, y_n) \in (\R^d)^n$, and $\theta \in \SS^{d-1}$
    such that the families $(P_\theta x_i)_i$ and $(P_\theta y_i)_i$ are
    injective. Then for the measures $\mu_1 := \tfrac{1}{n}\sum_{i=1}^n
    \delta_{x_i},\; \mu_2 := \tfrac{1}{n}\sum_{i=1}^n \delta_{y_i}, $ it holds
    $$\PStheta^2(\mu_1, \mu_2) = \cfrac{1}{n}\sum_{i=1}^n\|x_{\sigma_\theta(i)}
    - y_{\tau_\theta(i)}\|_2^2,$$ where $\PStheta$ is introduced in
    \cref{def:S_theta} and where $\sigma_\theta, \tau_\theta$ are the (unique)
    permutations sorting $(P_\theta x_i)$ and $(P_\theta y_i)$. For injective
    families $(x_i)$ and $(y_i)$, the injectivity assumptions holds for
    $\mathcal{U}(\SS^{d-1})$-almost-every $\theta \in \SS^{d-1}$.
\end{corollary}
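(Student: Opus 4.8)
The plan is to apply \cref{thm:W_nu_disintegration} directly, exploiting the fact that under the injectivity assumptions all the relevant optimal plans are unique and induced by permutations. First I would compute the projected middle $\mu_\theta := \mu_\theta[\mu_1,\mu_2]$ using \cref{remark:discrete_middles}: since the families $(P_\theta x_i)_i$ and $(P_\theta y_i)_i$ are injective, the sorting permutations $\sigma_\theta,\tau_\theta$ are unique, and $\mu_\theta = \tfrac1n\sum_{i=1}^n \delta_{w_i}$ where $w_i := \tfrac12(\theta^\top(x_{\sigma_\theta(i)} + y_{\tau_\theta(i)}))\theta$. The key point is that the values $\theta^\top w_i = \tfrac12(\theta^\top x_{\sigma_\theta(i)} + \theta^\top y_{\tau_\theta(i)})$ are \emph{strictly increasing} in $i$: indeed $\theta^\top x_{\sigma_\theta(i)}$ is strictly increasing (injectivity of $(P_\theta x_i)_i$ plus $\sigma_\theta$ sorting) and $\theta^\top y_{\tau_\theta(i)}$ is strictly increasing (same for $(P_\theta y_i)_i$), so their average is strictly increasing. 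Hence $\mu_\theta$ also has an injective projection, with $n$ distinct atoms.

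Next I would identify the optimal plans $\Pi^*(\mu_\theta,\mu_1)$ and $\Pi^*(\mu_\theta,\mu_2)$. By \cref{prop:semi_1D_ot}, $\Pi^*(\mu_\theta,\mu_1)$ consists of those $\gamma\in\Pi(\mu_\theta,\mu_1)$ whose projection $(P_\theta,P_\theta)\#\gamma$ equals the monotone one-dimensional plan between $P_\theta\#\mu_\theta$ and $P_\theta\#\mu_1$. Since both these one-dimensional measures are uniform with $n$ distinct atoms, the monotone plan is the permutation matching the $i$-th smallest atom of each; because the projections separate the support points, this forces $\gamma$ itself to be the permutation plan $\tfrac1n\sum_i \delta_{(w_i, x_{\sigma_\theta(i)})}$, so $\Pi^*(\mu_\theta,\mu_1)$ is a singleton, and likewise $\Pi^*(\mu_\theta,\mu_2) = \{\tfrac1n\sum_i \delta_{(w_i, y_{\tau_\theta(i)})}\}$. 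The disintegration kernels are then $\gamma_1^{w_i} = \delta_{x_{\sigma_\theta(i)}}$ and $\gamma_2^{w_i} = \delta_{y_{\tau_\theta(i)}}$.

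Then I would plug into \cref{thm:W_nu_disintegration}: since each $\Pi^*(\nu,\mu_i)$ is a singleton there is no minimisation left, and
$$\PStheta^2(\mu_1,\mu_2) = \int_{\R^d} \W_2^2(\gamma_1^y,\gamma_2^y)\,\dd\mu_\theta(y) = \frac1n\sum_{i=1}^n \W_2^2(\delta_{x_{\sigma_\theta(i)}}, \delta_{y_{\tau_\theta(i)}}) = \frac1n\sum_{i=1}^n \|x_{\sigma_\theta(i)} - y_{\tau_\theta(i)}\|_2^2,$$
which is the claimed formula. Finally, for the last sentence, I would invoke the standard fact that for a finite injective family $(x_i)$ in $\R^d$, the set of $\theta\in\SS^{d-1}$ for which $\theta^\top x_i = \theta^\top x_j$ for some $i\neq j$ is a finite union of great subspheres $\{\theta : \theta^\top(x_i - x_j) = 0\}$, each of which has $\mathcal{U}(\SS^{d-1})$-measure zero; the same applies to $(y_i)$, and the union of these finitely many null sets is null.

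The main obstacle is the careful bookkeeping in the second step: one must argue that the \emph{lift} of the one-dimensional monotone plan up to $\R^d$ is unique, which is exactly where the injectivity of the projections (not just of $(x_i)$, $(y_i)$ themselves) is essential, and where \cref{prop:semi_1D_ot} does the real work. Everything else is essentially reading off the formula from the disintegration theorem.
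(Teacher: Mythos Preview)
Your proof is correct and follows essentially the same approach as the paper: compute the projected middle explicitly via \cref{remark:discrete_middles}, observe that its atoms have strictly increasing projections, use \cref{prop:semi_1D_ot} together with the injectivity of the projections to show that each $\Pi^*(\mu_\theta,\mu_i)$ is a singleton with Dirac disintegration kernels, and then read off the formula from \cref{thm:W_nu_disintegration}; the almost-sure statement is handled identically via the finite union of measure-zero hyperplanes.
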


\begin{proof}
    We begin under the injectivity assumptions, which allows us to define
    $\sigma_\theta, \tau_\theta$ as the (unique) permutations sorting $(P_\theta
    x_i)$ and $(P_\theta y_i)$ respectively, and for $i\in \llbracket 1, n
    \rrbracket,$ let $z_i := \tfrac{1}{2}P_\theta (x_{\sigma_\theta(i)} +
    y_{\tau_\theta(i)})$. We remark that the family $(z_i)$ is increasing by
    construction (we provide further details at the end of the proof for
    almost-sure injectivity) and denote $\nu := \tfrac{1}{n}\sum_i\delta_{z_i}$.
    Let $\gamma_1 \in \Pi^*(\nu, \mu_1)$, and write
    $$\gamma_1 = \sum_{i,j}A_{i,j}\delta_{(z_i, x_{\sigma_\theta(j)})}. $$ By
    \cref{prop:semi_1D_ot} and the injectivity assumptions, we have \[(P_\theta,
    P_\theta)\#\gamma_1 = \tfrac{1}{n}\sum_i \delta_{(P_\theta z_i, P_\theta
    x_{\sigma_\theta(i)})},\] and thus injectivity allows us to identify the
    coefficients $A_{i,j}$, yielding \[\gamma_1 = \tfrac{1}{n} \sum_i
    \delta_{(z_i, x_{\sigma_\theta(i)})},\] and in particular, for any $i\in
    \llbracket 1, n \rrbracket,\; \gamma_1^{z_i} =
    \delta_{x_{\sigma_\theta(i)}}$. The same reasoning applies to $\gamma_2 \in
    \Pi^*(\nu, \mu_2)$, and thus \cref{thm:W_nu_disintegration} yields
    $$\PStheta^2(\mu_1, \mu_2) = \W_{\nu}^2(\mu_1, \mu_2) =
    \cfrac{1}{n}\sum_{i=1}^n\|x_{\sigma_\theta(i)} - y_{\tau_\theta(i)}\|_2^2$$
    Regarding the almost-sure injectivity claim, assume now that the families
    $(x_i)$ and $(y_i)$ are injective, and take $\theta \sim
    \mathcal{U}(\SS^{d-1})$. Then $(P_\theta x_i)$ is almost-surely injective,
    since $\P(P_\theta x_i = P_\theta x_j) = \P(\theta \in (x_i - x_j)^\top)$.
    The same reasoning applies to $(P_\theta y_i)$, and the injectivity of
    $(z_i)$ comes from the fact that almost-surely, for $i<j$, we have $P_\theta
    x_{\sigma_\theta(i)} < P_\theta x_{\sigma_\theta(j)}$, and $P_\theta
    y_{\tau_\theta(i)} < P_\theta x_{\tau_\theta(j)}$, hence by sum $z_i < z_j$,
    almost-surely.
\end{proof}

\subsection{Problem Formulation and Reduction to Sorted Projections}

The natural question that arises is the impact of projection ambiguity, i.e.
non-injectivity of $(P_\theta x_i)$ or $(P_\theta y_j)$. In this section, we
will start from the equality $\PStheta = \CWtheta$ from
\cref{thm:S_theta_equals_W_theta}, to provide the following Monge formulation of
$\PStheta$ between point clouds (without injectivity assumptions), that we will
prove in \cref{thm:CW_theta_point_clouds}:
\begin{equation*}
    \CWtheta^2\left(\frac{1}{n}\sum_{i=1}^n \delta_{x_i}, 
    \frac{1}{n}\sum_{j=1}^n \delta_{y_j}\right) = 
    \underset{(\sigma, \tau) \in \Perm_\theta(X, Y)}{\min}\ 
    \frac{1}{n}\sum_{i=1}^n \|x_{\sigma(i)} - y_{\tau(i)}\|_2^2,
\end{equation*}
where $\Perm_\theta(X, Y)$ is the set of pairs of permutations $(\sigma, \tau)$
such that $\sigma$ sorts $(P_\theta x_i)_{i=1}^n$ and $\tau$ sorts $(P_\theta
y_i)_{i=1}^n$, for given $X := (x_1, \cdots, x_n) \in \R^{n\times d}$ and $Y :=
(y_1, \cdots, y_n) \in \R^{n\times d}$:
\begin{equation}\label{eqn:def_Sigma_theta}
    \begin{split}
        \Perm_\theta(X, Y) &:= \Perm_\theta(X)\times\Perm_\theta(Y), \\ 
        \Perm_\theta(X) &:= \left\{\sigma\in \Perm_n : 
        \forall i \in \llbracket 1, n-1 \rrbracket,\; 
        P_\theta x_{\sigma(i)} \leq P_\theta x_{\sigma(i+1)},\right\},
    \end{split}
\end{equation}
with an analogous definition for $\Perm_\theta(Y)$. We will reduce to the case
where the identity permutation sorts the projections $(P_\theta x_i)_{i=1}^n$
and $(P_\theta y_i)_{i=1}^n$, which will greatly simplify notation and proofs.
The following Lemma states that re-labelling the points does not change the
value of $\CWtheta$ and of the Monge formulation.

\begin{lemma}\label{lemma:CV_theta_monge_invariant_permutation} Let $X,Y\in
    \R^{n\times d}$ and $\sigma_0, \tau_0 \in \Perm_n$. Denote by $X\circ
    \sigma_0 := (x_{\sigma_0(1)}, \cdots, x_{\sigma_0(n)})_{i=1}^n$ and likewise
    $Y\circ \tau_0 := (y_{\tau_0(1)}, \cdots, y_{\tau_0(n)})_{i=1}^n$. Then we
    have:
    $$\CWtheta^2\left(\frac{1}{n}\sum_{i=1}^n \delta_{x_i},
    \frac{1}{n}\sum_{j=1}^n \delta_{y_j}\right) =
    \CWtheta^2\left(\frac{1}{n}\sum_{i=1}^n \delta_{x_{\sigma_0(i)}},
    \frac{1}{n}\sum_{j=1}^n \delta_{y_{\tau_0(j)}}\right), $$ and for the Monge
    formulation, we have the following cost equality:
    \begin{equation}\label{eqn:monge_permutation_invariance}
        \underset{(\sigma, \tau) \in \Perm_\theta(X, Y)}{\min}\
        \frac{1}{n}\sum_{i=1}^n \|x_{\sigma(i)} - y_{\tau(i)}\|_2^2 = 
        \underset{(\sigma, \tau) \in \Perm_\theta(X\circ\sigma_0, Y\circ\tau_0)}
        {\min}\ \frac{1}{n}\sum_{i=1}^n \|x_{\sigma_0\circ\sigma(i)} -
        y_{\tau_0\circ\tau(i)}\|_2^2.
    \end{equation}
\end{lemma}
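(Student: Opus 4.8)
The plan is to handle the two claimed equalities separately, since they are of quite different natures. For the first equality on $\CWtheta^2$, I would simply note that since $\sigma_0$ and $\tau_0$ are bijections of $\llbracket 1, n\rrbracket$, relabelling the summation index leaves the empirical measures unchanged:
$$\frac{1}{n}\sum_{i=1}^n\delta_{x_{\sigma_0(i)}} = \frac{1}{n}\sum_{i=1}^n\delta_{x_i}, \qquad \frac{1}{n}\sum_{j=1}^n\delta_{y_{\tau_0(j)}} = \frac{1}{n}\sum_{j=1}^n\delta_{y_j}.$$
Since $\CWtheta^2$ depends only on the pair of measures, the first equality is then immediate.

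For the Monge-formulation equality \eqref{eqn:monge_permutation_invariance}, the key structural fact is that post-composition with $\sigma_0$ is a bijection between the two sorting-permutation sets. First I would check that $\sigma \in \Perm_\theta(X\circ\sigma_0)$ if and only if $\sigma_0\circ\sigma \in \Perm_\theta(X)$: by \eqref{eqn:def_Sigma_theta}, membership $\sigma \in \Perm_\theta(X\circ\sigma_0)$ means that $i \longmapsto P_\theta (X\circ\sigma_0)_{\sigma(i)} = P_\theta x_{\sigma_0\circ\sigma(i)}$ is non-decreasing, which is exactly the statement $\sigma_0\circ\sigma \in \Perm_\theta(X)$. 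Hence $\sigma \longmapsto \sigma_0\circ\sigma$ is a well-defined injection $\Perm_\theta(X\circ\sigma_0) \to \Perm_\theta(X)$, and it is surjective because any $\sigma' \in \Perm_\theta(X)$ is the image of $\sigma_0^{-1}\circ\sigma'$. The same argument applied to $\tau_0$ and $Y$ gives a bijection $\tau \longmapsto \tau_0\circ\tau$ from $\Perm_\theta(Y\circ\tau_0)$ onto $\Perm_\theta(Y)$; taking products, $(\sigma,\tau) \longmapsto (\sigma_0\circ\sigma,\ \tau_0\circ\tau)$ is a bijection $\Perm_\theta(X\circ\sigma_0, Y\circ\tau_0) \to \Perm_\theta(X, Y)$.

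To conclude, I would perform the change of variables $\sigma' := \sigma_0\circ\sigma$, $\tau' := \tau_0\circ\tau$ in the right-hand side of \eqref{eqn:monge_permutation_invariance}: the summand $\|x_{\sigma_0\circ\sigma(i)} - y_{\tau_0\circ\tau(i)}\|_2^2$ becomes $\|x_{\sigma'(i)} - y_{\tau'(i)}\|_2^2$, and as $(\sigma,\tau)$ ranges over $\Perm_\theta(X\circ\sigma_0, Y\circ\tau_0)$ the pair $(\sigma',\tau')$ ranges bijectively over $\Perm_\theta(X, Y)$, so the two minima agree (both are attained, the index sets being finite).

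There is no genuine obstacle here; the only point requiring a little care is tracking the direction of composition correctly — that it is $\sigma_0\circ\sigma$, not $\sigma\circ\sigma_0$, which sorts $(P_\theta x_i)_i$ when $\sigma$ sorts $(P_\theta x_{\sigma_0(i)})_i$ — together with the bookkeeping identity $(X\circ\sigma_0)_{\sigma(i)} = x_{\sigma_0\circ\sigma(i)}$. Everything else is routine.
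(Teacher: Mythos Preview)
Your proposal is correct and follows essentially the same approach as the paper: the first equality is immediate from the fact that the empirical measures are unchanged by relabelling, and the second equality follows from the bijection $(\sigma,\tau)\longmapsto(\sigma_0\circ\sigma,\tau_0\circ\tau)$ between $\Perm_\theta(X\circ\sigma_0,Y\circ\tau_0)$ and $\Perm_\theta(X,Y)$, followed by a change of variables. Your write-up is in fact slightly more explicit than the paper's in verifying the bijection and noting that the minima are attained.
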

\begin{proof}
    The first equality is simply a consequence of the equality between measures:
    $$\frac{1}{n}\sum_{i=1}^n \delta_{x_i} = \frac{1}{n}\sum_{i=1}^n
    \delta_{x_{\sigma_0(i)}},\quad \frac{1}{n}\sum_{j=1}^n \delta_{y_j} =
    \frac{1}{n}\sum_{j=1}^n \delta_{y_{\tau_0(j)}}. $$ For the second equality,
    notice that a permutation $\sigma \in \Perm_n$ sorts $(P_\theta
    x_{\sigma_0(i)})_{i=1}^n$ if and only if $P_\theta
    x_{\sigma_0\circ\sigma(1)} \leq \cdots \leq P_\theta
    x_{\sigma_0\circ\sigma(n)}$ if and only if $\sigma_0\circ\sigma$ sorts
    $(P_\theta x_i)_{i=1}^n$, thus we obtain:
    $$ \Perm_\theta(X\circ\sigma_0, Y\circ\tau_0) =
    \left\{(\sigma_0^{-1}\circ\sigma, \tau_0^{-1}\circ\tau), \; (\sigma,
    \tau)\in \Perm_\theta(X, Y)\right\}. $$
    \cref{eqn:monge_permutation_invariance} follows by change of variables.
\end{proof}

Thanks to \cref{lemma:CV_theta_monge_invariant_permutation}, we can assume
without loss of generality (for the cost values) that the identity permutation
sorts the projections $(P_\theta x_i)_{i=1}^n$ and $(P_\theta y_i)_{i=1}^n$. We
formulate this assumption as follows:
\begin{assumption}\label{ass:identity_sorts_projections} The points $X,Y\in
    \R^{n\times d}$ and $\theta\in\SS^{d-1}$ are such that:
    $$P_\theta x_1 \leq \cdots \leq P_\theta x_n\; \text{and}\; P_\theta y_1
    \leq \cdots \leq P_\theta y_n.$$
\end{assumption}
\cref{ass:identity_sorts_projections} holds up to relabelling the points $(x_i)$
and $(y_j)$: taking $\sigma\in\Perm_n$ sorting $(P_\theta x_i)$ and $\tau \in
\Perm_n$ sorting $(P_\theta y_j)$, the relabelled points $\widetilde{X} :=
(x_{\sigma(i)}) =: X\circ\sigma$ and $\widetilde{Y} := (y_{\tau(j)}) =:
Y\circ\tau$ verify the condition.

\subsection{A Kantorovich Formulation of \texorpdfstring{$\CWtheta$}{CWtheta}
Between Point Clouds}

We begin by a characterisation of $\Perm_\theta(X, Y)$, which states that a pair
$(\sigma, \tau)$ belongs to $\Perm_\theta(X, Y)$ if and only if each
``ambiguity'' set $\{i : P_\theta x_i = t\}$ is stable by $\sigma$ and likewise
for $\tau$.

\begin{lemma}\label{lemma:characterisation_Sigma_theta} Let $X := (x_1, \cdots,
    x_n) \in \R^{n\times d},\; Y := (y_1, \cdots, y_n) \in \R^{n\times d}$ and
    $\theta\in \SS^{d-1}$ verifying \cref{ass:identity_sorts_projections}.
    Let $A := \#\{P_\theta x_i\}_{i=1}^n$ and $B := \#\{P_\theta y_j\}_{j=1}^n$.
    Write $\{P_\theta x_i\}_{i=1}^n = (s_a)_{a=1}^A$ where $s_1 < \cdots < s_A$
    and likewise $\{P_\theta y_j\}_{j=1}^n = (t_b)_{b=1}^B$ where $t_1 < \cdots
    < t_B$. Introduce the ``ambiguity group'' sets:
    \begin{equation}\label{eqn:def_Ia_Jb}
        \begin{split}
            \forall a \in \llbracket 1, A \rrbracket,\; 
            I_a &:= \left\{i \in \llbracket 1, n \rrbracket : 
            P_\theta x_i = s_a\right\},\\ 
            \forall b \in \llbracket 1, B \rrbracket,\; 
            J_b &:= \left\{j \in \llbracket 1, n \rrbracket : 
            P_\theta y_j = t_b\right\}.
        \end{split}
    \end{equation} 
    Then the set $\Perm_\theta(X, Y)$ can be re-written as follows:
    \begin{equation}\label{eqn:perm_theta_stability}
        \Perm_\theta(X, Y) = \left\{(\sigma, \tau) \in \Perm_n^2 : 
        \forall a \in \llbracket 1, A \rrbracket, \; \sigma(I_a) = I_a,\; 
        \forall b \in \llbracket 1, B \rrbracket,\; \tau(J_b) = J_b\right\}.
    \end{equation}
\end{lemma}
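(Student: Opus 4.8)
The plan is to prove the characterisation one family at a time and then take the product: since $\Perm_\theta(X,Y) = \Perm_\theta(X)\times\Perm_\theta(Y)$ by \cref{eqn:def_Sigma_theta}, it suffices to show that $\sigma \in \Perm_\theta(X)$ if and only if $\sigma(I_a) = I_a$ for every $a \in \llbracket 1, A \rrbracket$, the statement for $\tau$ and the $J_b$ being identical. Intersecting the two conditions then yields \cref{eqn:perm_theta_stability}.

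First I would record what \cref{ass:identity_sorts_projections} buys us: because $P_\theta x_1 \leq \cdots \leq P_\theta x_n$ and $s_1 < \cdots < s_A$ are the distinct values, each $I_a$ is a block of consecutive indices and $I_1, \ldots, I_A$ is an ordered partition of $\llbracket 1, n \rrbracket$, i.e. $\max I_a < \min I_{a+1}$. The inclusion ``$\sigma(I_a) = I_a$ for all $a$ $\Rightarrow$ $\sigma \in \Perm_\theta(X)$'' is then immediate: for consecutive indices $i, i+1$, either both lie in the same block $I_a$, in which case $P_\theta x_{\sigma(i)} = P_\theta x_{\sigma(i+1)} = s_a$, or $i \in I_a$ and $i+1 \in I_{a+1}$ since the blocks are consecutive, in which case $P_\theta x_{\sigma(i)} = s_a < s_{a+1} = P_\theta x_{\sigma(i+1)}$; either way the sorting inequality holds.

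For the converse, the key observation is that a finite multiset has a \emph{unique} non-decreasing rearrangement. Applying this to the multiset $\{P_\theta x_i\}_{i=1}^n$, both the identity (by \cref{ass:identity_sorts_projections}) and any $\sigma \in \Perm_\theta(X)$ produce this rearrangement, so $P_\theta x_{\sigma(i)} = P_\theta x_i$ for all $i$. Hence $i \in I_a$ forces $\sigma(i) \in I_a$, i.e. $\sigma(I_a) \subseteq I_a$ for every $a$; since the $I_a$ partition $\llbracket 1, n \rrbracket$ and $\sigma$ is a bijection, comparing cardinalities upgrades this to $\sigma(I_a) = I_a$. Running the same argument for $Y$, $\tau$, $(t_b)_{b=1}^B$ and $(J_b)_{b=1}^B$ completes the proof.

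I do not expect a genuine obstacle: the argument is elementary once the two facts above are in place — that \cref{ass:identity_sorts_projections} makes the ambiguity groups consecutive blocks, and that the non-decreasing rearrangement of a multiset is unique. The only points asking for a little care are the cardinality/bijection step turning $\sigma(I_a) \subseteq I_a$ into equality, and checking that the ``consecutive blocks'' structure is precisely what allows one to pass between the local condition (sorting successive pairs, as in \cref{eqn:def_Sigma_theta}) and the global one (stabilising each $I_a$).
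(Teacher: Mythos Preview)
Your proof is correct and follows essentially the same route as the paper: factor through $\Perm_\theta(X,Y) = \Perm_\theta(X)\times\Perm_\theta(Y)$, use the ordered-block structure of the $I_a$ for the easy inclusion, and for the converse deduce $P_\theta x_{\sigma(i)} = P_\theta x_i$ from the uniqueness of the non-decreasing rearrangement, whence $\sigma(I_a)\subseteq I_a$ and then equality by bijectivity. Your write-up is slightly more explicit about the cardinality step and the consecutive-block structure, but the argument is the same.
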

\begin{proof}
    We show the property $\Perm_\theta(X) = \widetilde{\Perm} := \{\sigma \in
    \Perm_n : \forall a \in \llbracket 1, A\rrbracket,\; \sigma(I_a) = I_a\}$ by
    double inclusion. First, since $s_1 < \cdots < s_A$, the inclusion
    $\widetilde{\Perm} \subset \Perm_\theta(X)$ is clear. For the converse
    inclusion, take $\sigma \in \Perm_\theta(X)$. By definition and by
    \cref{ass:identity_sorts_projections}, we have:
    \begin{align*}
        P_\theta x_1 \leq \cdots \leq P_\theta x_n;\quad 
        P_\theta x_{\sigma(1)} \leq \cdots \leq P_\theta x_{\sigma(n)},
    \end{align*}
    which implies that $\forall i \in \llbracket 1, n \rrbracket,\; P_\theta x_i
    = P_\theta x_{\sigma(i)}$. Take now $a$ the unique element of $\llbracket 1,
    A \rrbracket$ such that $i\in I_a$. We have $s_a = P_\theta x_i = P_\theta
    x_{\sigma(i)}$ and thus $\sigma(i) \in I_a$, and we conclude that $\sigma
    \in \widetilde{\Perm}$. The proof for $\Perm_\theta(Y)$ follows verbatim,
    and \cref{eqn:perm_theta_stability} follows from the definition (see
    \cref{eqn:def_Sigma_theta}).
\end{proof}
To illustrate \cref{lemma:characterisation_Sigma_theta}, we consider an example
with projection ambiguities in \cref{fig:ex_Sigma_theta}.
\begin{figure}[H]
    \centering
    \includegraphics[width=0.9\textwidth]{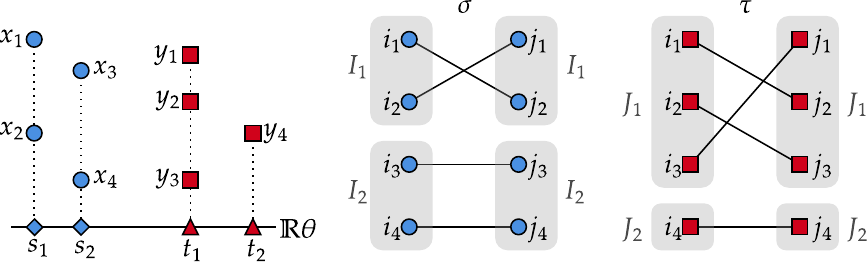}
    \caption{In this example we consider two discrete uniform measures with
    $n:=4$ points with projection ambiguity: $s_1 := P_\theta x_1 = P_\theta x_2
    < s_2 := P_\theta x_3 = P_\theta x_4$ and $t_1 := P_\theta y_1 = P_\theta
    y_2 = P_\theta y_3 < t_2 := P_\theta x_4$. In the notation of
    \cref{lemma:characterisation_Sigma_theta}, we have $A=B=2$ and $I_1 = \{i_1,
    i_2\},\; I_2 = \{i_3, i_4\},\; J_1 = \{j_1, j_2, j_3\}$ and $J_2 = \{j_4\}$.
    We consider a permutation pair $(\sigma, \tau) \in \Perm_\theta(X, Y)$,
    specifically $\sigma := (2, 1, 3, 4)$ and $\tau := (2, 3, 1, 4)$. We see
    that $\sigma$ sorts the sequence $(P_\theta x_i)_{i=1}^n$ and that $I_1$ and
    $I_2$ are stable by $\sigma$, and likewise for
    $\tau$.}\label{fig:ex_Sigma_theta}
\end{figure}

We now write a discrete Kantorovich formulation of $\CWtheta$ between point
clouds, whose expression we will be able to simplify later. The main idea is
that transport plans $P$ are constrained to exchange exactly as much mass
between $I_a$ and $J_b$ as the one-dimensional OT plan $\pi_\theta$ between
$P_\theta\#\mu$ and $P_\theta\#\nu$ sends from $s_a$ to $t_b$, as illustrated in
\cref{fig:ex_CWtheta_discrete_v1}.
\begin{prop}\label{prop:CWtheta_discrete_v1} Under
    \cref{ass:identity_sorts_projections}, let $\mu := \frac{1}{n}\sum_{i=1}^n
    \delta_{x_i}$ and $\nu := \frac{1}{n}\sum_{j=1}^n\delta_{y_j}$ be empirical
    measures. Then the $\CWtheta$ discrepancy introduced in
    \cref{eqn:S_theta_vs_lift} has the following expression:
    \begin{equation}\label{eqn:CWtheta_discrete_v1}
        \CWtheta^2\left(\frac{1}{n}\sum_{i=1}^n \delta_{x_i}, 
        \frac{1}{n}\sum_{j=1}^n\delta_{y_j}\right) = 
        \underset{P\in \U \cap \PthetaXY}{\min}\ 
        \sum_{i=1}^n\sum_{j=1}^n \|x_i-y_j\|_2^2P_{i,j},
    \end{equation}
    \begin{equation}\label{eqn:def_U}
        \U := \{P \in \R_+^{n\times n} : P\mathbf{1} =
        \tfrac{1}{n}\mathbf{1},\; P^\top\mathbf{1} = \tfrac{1}{n}\mathbf{1}\},
    \end{equation}
    \begin{equation}\label{eqn:def_PthetaXY}
        \PthetaXY := \left\{P \in \R^{n\times n} : 
        \forall (a, b) \in \llbracket 1, A \rrbracket \times 
        \llbracket 1, B\rrbracket,\; \sum_{(i,j)\in I_a\times J_b}P_{i,j}
        = \frac{\#(I_a\cap J_b)}{n}\right\}.
    \end{equation}
\end{prop}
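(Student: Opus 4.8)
The plan is to recast $\CWtheta^2(\mu,\nu) = \min_{\omega\in\Omega_\theta(\mu,\nu)} J(\omega)$ from \cref{eqn:opt_sets_W_theta_S_theta}, where $J(\omega) := \int_{\R^{2d}}\|x_1-x_2\|_2^2\dd\omega(x_1,x_2)$, entirely in terms of bistochastic matrices. I would first recall the standard fact that for uniform empirical measures with $n$ atoms, every $\omega\in\Pi(\mu,\nu)$ equals $\omega_P := \sum_{i,j}P_{i,j}\delta_{(x_i,y_j)}$ for some $P\in\U$ (the map $P\mapsto\omega_P$ being onto, though not one-to-one when points repeat), and that $J(\omega_P) = \sum_{i,j}\|x_i-y_j\|_2^2 P_{i,j}$, which is the objective in \cref{eqn:CWtheta_discrete_v1}. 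It then suffices to show that $\{\omega_P : P\in\U\cap\PthetaXY\} = \Omega_\theta(\mu,\nu)$, after which \cref{eqn:CWtheta_discrete_v1} follows by minimising $J$ over this set.

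The heart of the argument is to translate the constraint $(P_\theta,P_\theta)\#\omega = \pi_\theta[\mu,\nu]$, both sides being measures on the grid $\{s_a\}_{a=1}^A\times\{t_b\}_{b=1}^B$. Since $P_\theta x_i = s_a$ for $i\in I_a$ and $P_\theta y_j = t_b$ for $j\in J_b$, grouping the atoms of $\omega_P$ cell by cell yields
\[
(P_\theta,P_\theta)\#\omega_P = \sum_{a=1}^A\sum_{b=1}^B\Big(\sum_{(i,j)\in I_a\times J_b}P_{i,j}\Big)\delta_{(s_a,t_b)}.
\]
For the right-hand side, \cref{ass:identity_sorts_projections} makes $(P_\theta x_i)_i$ and $(P_\theta y_j)_j$ non-decreasing, so the quantile functions of $P_\theta\#\mu = \tfrac1n\sum_i\delta_{P_\theta x_i}$ and $P_\theta\#\nu = \tfrac1n\sum_j\delta_{P_\theta y_j}$ are the step functions taking the values $P_\theta x_k$ and $P_\theta y_k$ on the interval $((k-1)/n, k/n]$; hence by \cite{santambrogio2015optimal} Theorem 2.9,
\begin{align*}
\pi_\theta[\mu,\nu] &= \big(F_{P_\theta\#\mu}^{[-1]}, F_{P_\theta\#\nu}^{[-1]}\big)\#\Leb_{[0,1]} = \tfrac1n\sum_{k=1}^n\delta_{(P_\theta x_k,\, P_\theta y_k)} \\
&= \sum_{a=1}^A\sum_{b=1}^B\tfrac1n\#(I_a\cap J_b)\,\delta_{(s_a,t_b)},
\end{align*}
where the last step regroups atoms and uses $\{k : P_\theta x_k = s_a,\ P_\theta y_k = t_b\} = I_a\cap J_b$, which is immediate from \cref{eqn:def_Ia_Jb}.

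Matching the coefficients of $\delta_{(s_a,t_b)}$ in the two displays shows that $(P_\theta,P_\theta)\#\omega_P = \pi_\theta[\mu,\nu]$ if and only if $\sum_{(i,j)\in I_a\times J_b}P_{i,j} = \tfrac1n\#(I_a\cap J_b)$ for every $(a,b)$, i.e. if and only if $P\in\PthetaXY$. This condition depends on $\omega_P$ alone, since $\sum_{(i,j)\in I_a\times J_b}P_{i,j}$ is the $\omega_P$-mass of $(P_\theta,P_\theta)^{-1}(\{(s_a,t_b)\})$, so the non-uniqueness of $P$ causes no ambiguity. Therefore $\{\omega_P : P\in\U\cap\PthetaXY\} = \Omega_\theta(\mu,\nu)$; this set is nonempty (for instance $\tfrac1n\sum_i\delta_{(x_i,y_i)}$ lies in $\Omega_\theta(\mu,\nu)$ under \cref{ass:identity_sorts_projections}) and the minimum of $J$ over it is attained (as noted after \cref{eqn:def_Omega_theta}, or because $\U\cap\PthetaXY$ is a compact polytope and the objective is linear), which gives \cref{eqn:CWtheta_discrete_v1}. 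The only care required is the bookkeeping in the two pushforward computations and the remark that $\PthetaXY$-membership descends to couplings; I expect no genuine obstacle beyond that.
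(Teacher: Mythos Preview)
Your proposal is correct and follows essentially the same approach as the paper: both establish the correspondence $\{\omega_P : P\in\U\cap\PthetaXY\} = \Omega_\theta(\mu,\nu)$ by computing $\pi_\theta(\{(s_a,t_b)\}) = \tfrac{1}{n}\#(I_a\cap J_b)$ and matching it against the block sums of $P$. Your computation of $\pi_\theta$ via quantile functions under \cref{ass:identity_sorts_projections} is a direct specialisation of the paper's route through $\Perm_\theta(X,Y)$ (taking $\sigma=\tau=\mathrm{id}$), and your explicit remark that $\PthetaXY$-membership depends only on $\omega_P$ rather than the representing matrix $P$ is a small but welcome clarification that the paper leaves implicit.
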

\begin{proof}
    Fix $\omega \in \Omega_\theta(\mu, \nu)$ (see \cref{eqn:def_Omega_theta}).
    By the marginal constraints, we have $\supp \omega \subset \{(x_i,
    y_j)\}_{i,j}$, allowing us to define $P \in \R_+^{n\times n}$ by $\forall
    (i,j) \in \llbracket 1, n \rrbracket^2,\; P_{i,j} = \omega(\{(x_i, y_j)\})$.
    Since $\omega \in \Pi(\mu, \nu)$, we verify immediately that $P \in \U$. As
    for the constraint $(P_\theta, P_\theta)\#\omega = \pi_\theta[\mu, \nu] =:
    \pi_\theta$, notice that by construction (see the notations in
    \cref{lemma:characterisation_Sigma_theta}), we can write by \cite[Theorem
    2.9]{santambrogio2015optimal} for \textit{any} $(\sigma, \tau) \in
    \Perm_\theta(X, Y)$ that $\pi_\theta = \frac{1}{n}\sum_i \delta_{(P_\theta
    x_{\sigma(i)}, P_\theta y_{\tau(i)})}$. Since $\{P_\theta x_i\}_{i=1}^n =
    (s_a)_{a=1}^A$ and $\{P_\theta y_j\}_{j=1}^n = (t_b)_{b=1}^B$, it follows
    that for any $(a, b) \in \llbracket 1, A \rrbracket \times \llbracket 1, B
    \rrbracket$:
    \begin{align*}
        \pi_\theta(\{(s_a, t_b)\}) = \sum_{k=1}^n
        \frac{1}{n}\mathbbold{1}(\sigma(k) \in I_a) 
        \mathbbold{1}(\tau(k) \in J_b) =
        \sum_{k=1}^n \frac{1}{n}\mathbbold{1}(k \in I_a \cap J_b) =
        \frac{\#(I_a \cap J_b)}{n},
    \end{align*}
    where we have used that $\sigma(I_a) = I_a$ and $\tau(J_b)=J_b$, which holds
    by \cref{lemma:characterisation_Sigma_theta}. We can now show that $P \in
    \PthetaXY$ using the constraint $(P_\theta, P_\theta)\#\omega = \pi_\theta$:
    $$\frac{\#(I_a \cap J_b)}{n} = \pi_\theta(\{(s_a, t_b)\}) = (P_\theta,
    P_\theta)\#\omega(\{(s_a, t_b)\}) = \sum_{(i,j)\in I_a\times J_b}P_{i,j}. $$
    The cost $\int_{\R^{2d}}\|x-y\|_2^2\dd\omega(x,y)$ writes
    $\sum_{i,j}\|x_i-y_j\|_2^2P_{i,j}$ by definition of $P$. Conversely, it can
    readily be checked with the same computations that for any $P\in
    \U\cap\PthetaXY$, the coupling $\omega := \sum_{i,j}P_{i,j}\delta_{(x_i,
    y_j)}$ belongs to $\Omega_\theta(\mu, \nu)$, and yields the same
    transportation cost. We conclude that the equality in
    \cref{eqn:CWtheta_discrete_v1} holds.
\end{proof}
\begin{figure}[H]
    \centering
    \includegraphics[width=0.65\textwidth]{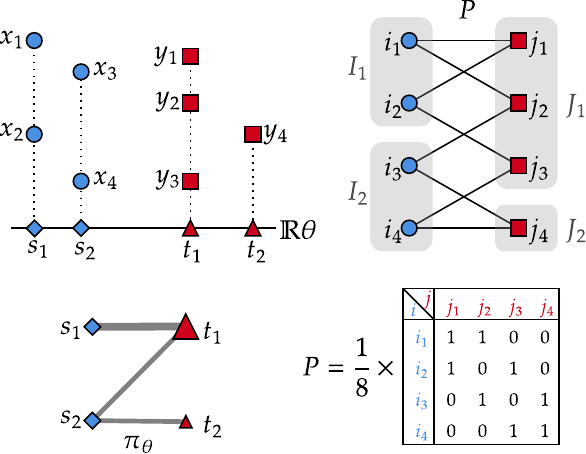}
    \caption{We continue with the example from \cref{fig:ex_Sigma_theta} and
    illustrate the unique optimal transport plan $\pi_\theta =
    \frac{1}{2}\delta_{(s_1, t_1)} + \frac{1}{4}\delta_{(s_2, t_1)} +
    \frac{1}{4}\delta_{(s_2, t_2)}$ between $P_\theta\#\mu =
    \frac{1}{2}\delta_{s_1}+\frac{1}{2}\delta_{s_2}$ and $P_\theta\#\nu =
    \frac{3}{4}\delta_{t_1}+\frac{1}{4}\delta_{t_2}$. We show a particular
    transport plan $P\in \U\cap\PthetaXY$ which is not a permutation matrix. For
    the constraints, notice for example that $\pi_\theta(\{(s_1, t_1)\}) =
    \frac{\#(I_1\cap J_1)}{4}= \frac{1}{2} = \sum_{i=1}^2\sum_{j=1}^3P_{i,j}$.
    }\label{fig:ex_CWtheta_discrete_v1}
\end{figure}
The discrete problem in \cref{eqn:CWtheta_discrete_v1} can be seen as a
constrained Kantorovich problem. Our goal is now to show that it admits a
constrained Monge formulation, which is to say a minimisation over the
constrained set of permutations $\Perm_\theta(X, Y)$. To show this, we will
adapt the proof of the Birkhoff Von Neumann Theorem \cite{birkhoff1946three}
(see also \cite[Theorem 2]{peyre2019course} and \cite{hurlbert2008short} for
other proofs which inspired our method). Our objective is now to build up
definitions and technical lemmas to adapt Birkhoff's Theorem, and prove the
generalisation stated in \cref{thm:extr_U_cap_PthetaXY}. We will consider
particular elements of $\U$ called permutation matrices:
\begin{equation}\label{eqn:def_permutation_matrix}
    \forall (\alpha, \beta) \in \Perm_n^2,\; P^{\alpha, \beta} := 
    \left[\frac{1}{n}\mathbbold{1}(\alpha(i)=\beta(j))\right]_{i,j}.
\end{equation}
This method of writing permutation matrices differs from the usual
$P^\sigma_{i,j} := \frac{1}{n}\mathbbold{1}(\sigma(i)=j)$, and will be more
convenient for our purposes. An elementary property of permutation matrices is
that:
\begin{equation}\label{eqn:invariance_P_alpha_beta}
    \forall (\alpha, \beta, \varphi) \in \Perm_n^3,\; 
    P^{\alpha, \beta} = P^{\varphi\circ\alpha, \varphi\circ\beta},
\end{equation}
since $\varphi\circ\alpha(i) = \varphi\circ\beta(j) \Longleftrightarrow
\alpha(i) = \beta(j)$. For $S \subset \Perm_n^2$, we will write $\CPerm{S} :=
\{P^{\alpha, \beta} : (\alpha, \beta) \in S\}$. The Birkhoff Von Neumann Theorem
\cite{birkhoff1946three} states that $\Extr \U = \CPerm{\Perm_n^2}$, where the
set of extreme points of a convex set is defined as:

\begin{definition}\label{def:extreme_point} The set of extreme points $\Extr C$
    of a convex set $C$ is the set of points $c\in C$ that cannot be written $c
    = \frac{1}{2}a + \frac{1}{2}b$ for some $a,b\in C$.
\end{definition}

Our objective is to show that $\Extr (\U \cap \PthetaXY) =
\CPerm{\Perm_\theta(X, Y)}$. We begin with a Lemma showing a condition for
$P^{\alpha, \beta}$ to belong to $\PthetaXY$.
\begin{lemma}\label{lemma:condition_Psigmatau_in_Ptheta} Under
    \cref{ass:identity_sorts_projections}, for any $(\alpha, \beta) \in
    \Perm_n^2$, we have 
    $$P^{\alpha, \beta} \in \PthetaXY \Longleftrightarrow \exists \varphi \in
    \Perm_n : (\varphi \circ \alpha, \varphi\circ \beta) \in \Perm_\theta(X,
    Y).$$ In other words, $P^{\alpha, \beta} \in \PthetaXY$ if and only if
    $P^{\alpha, \beta} = P^{\sigma, \tau}$ for some $(\sigma, \tau) \in
    \Perm_\theta(X, Y)$.
\end{lemma}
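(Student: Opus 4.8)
The plan is to prove the stated equivalence by translating the membership $P^{\alpha, \beta} \in \PthetaXY$ into a cardinality condition on the common refinements of the ambiguity partitions $\{I_a\}_a$ and $\{J_b\}_b$, after which both implications become short combinatorial gluing arguments; the final ``in other words'' reformulation will then follow at once from the invariance \cref{eqn:invariance_P_alpha_beta}. First I would record the elementary observation that $P^{\alpha, \beta}$ is the permutation matrix of $\sigma_0 := \beta^{-1}\circ\alpha$, that is $P^{\alpha,\beta}_{i,j} = \tfrac1n\mathbbold{1}(j = \sigma_0(i))$, since $\alpha(i) = \beta(j) \iff j = \beta^{-1}(\alpha(i))$. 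Plugging this into the defining constraint \cref{eqn:def_PthetaXY} gives, for each $(a,b)$, $\sum_{(i,j)\in I_a\times J_b}P^{\alpha,\beta}_{i,j} = \tfrac1n\#\{i\in I_a : \sigma_0(i)\in J_b\}$; using $\sigma_0(i)\in J_b \iff \alpha(i)\in\beta(J_b)$ together with the fact that $\alpha$ restricts to a bijection $I_a \to \alpha(I_a)$, this count equals $\#(\alpha(I_a)\cap\beta(J_b))$. Hence $P^{\alpha,\beta}\in\PthetaXY$ is \emph{equivalent} to the matching condition $\#(\alpha(I_a)\cap\beta(J_b)) = \#(I_a\cap J_b)$ for all $(a,b)$.

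For the implication $\Rightarrow$, I would exploit that $\{\alpha(I_a)\cap\beta(J_b)\}_{a,b}$ and $\{I_a\cap J_b\}_{a,b}$ are both partitions of $\llbracket 1,n\rrbracket$ (common refinements of two partitions, using that $\alpha,\beta$ are bijections), with equal cardinalities cell by cell by the equivalence just established. I can therefore choose, for each $(a,b)$, an arbitrary bijection $\varphi_{a,b}: \alpha(I_a)\cap\beta(J_b)\to I_a\cap J_b$ and glue them into a single permutation $\varphi\in\Perm_n$. Then $\varphi(\alpha(I_a)) = \bigcup_b\varphi_{a,b}(\alpha(I_a)\cap\beta(J_b)) = \bigcup_b(I_a\cap J_b) = I_a$, and symmetrically $\varphi(\beta(J_b)) = J_b$, so $(\varphi\circ\alpha)(I_a) = I_a$ and $(\varphi\circ\beta)(J_b) = J_b$ for all $a,b$; by the characterisation \cref{lemma:characterisation_Sigma_theta} this is precisely $(\varphi\circ\alpha, \varphi\circ\beta)\in\Perm_\theta(X,Y)$. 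For the implication $\Leftarrow$, given such a $\varphi$ I set $\sigma := \varphi\circ\alpha\in\Perm_\theta(X)$ and $\tau := \varphi\circ\beta\in\Perm_\theta(Y)$; by \cref{eqn:invariance_P_alpha_beta}, $P^{\alpha,\beta} = P^{\varphi\circ\alpha,\,\varphi\circ\beta} = P^{\sigma,\tau}$, and by \cref{lemma:characterisation_Sigma_theta} each $\sigma$ restricts to a bijection of $I_a$ and each $\tau$ to a bijection of $J_b$. Then $k\mapsto(\sigma^{-1}(k),\tau^{-1}(k))$ is a bijection from $I_a\cap J_b$ onto $\{(i,j)\in I_a\times J_b : \sigma(i)=\tau(j)\}$, which yields $\sum_{(i,j)\in I_a\times J_b}P^{\sigma,\tau}_{i,j} = \tfrac1n\#(I_a\cap J_b)$, i.e. $P^{\alpha,\beta}=P^{\sigma,\tau}\in\PthetaXY$. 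The ``in other words'' restatement is then immediate: one direction is the choice $\sigma=\varphi\circ\alpha$, $\tau=\varphi\circ\beta$ above, and for the converse, $P^{\alpha,\beta}=P^{\sigma,\tau}$ forces $\beta^{-1}\circ\alpha = \tau^{-1}\circ\sigma$, so $\varphi := \sigma\circ\alpha^{-1} = \tau\circ\beta^{-1}$ is a valid witness.

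I do not expect a genuine obstacle here. The one point requiring care is the bookkeeping across the three equivalent descriptions in play — the matrix $P^{\alpha,\beta}$, the permutation $\beta^{-1}\circ\alpha$, and the partitions $\{\alpha(I_a)\cap\beta(J_b)\}$ versus $\{I_a\cap J_b\}$ — and in particular applying the index substitution $k=\alpha(i)$ on the correct restricted domain so that $\#\{i\in I_a : \sigma_0(i)\in J_b\}$ really becomes $\#(\alpha(I_a)\cap\beta(J_b))$ and not some other set. Once this translation is pinned down, the cell-wise gluing of bijections in the $\Rightarrow$ direction and the two elementary counting arguments in the $\Leftarrow$ direction are routine, and no appeal to Hall-type results is needed: the equality of cell cardinalities is by itself exactly what is required to build $\varphi$.
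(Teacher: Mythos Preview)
Your proof is correct and follows essentially the same route as the paper: both translate $P^{\alpha,\beta}\in\PthetaXY$ into the cardinality condition $\#(\alpha(I_a)\cap\beta(J_b)) = \#(I_a\cap J_b)$, then glue cell-wise bijections $\varphi_{a,b}$ into a global $\varphi$ for the forward direction and use the stability $\sigma(I_a)=I_a$, $\tau(J_b)=J_b$ from \cref{lemma:characterisation_Sigma_theta} for the reverse. Your exposition is slightly more explicit in a couple of places (the auxiliary permutation $\sigma_0=\beta^{-1}\circ\alpha$, the counting bijection $k\mapsto(\sigma^{-1}(k),\tau^{-1}(k))$, and the verification of the ``in other words'' restatement), but the argument is the same.
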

\begin{proof}
    Suppose that $P^{\alpha, \beta} \in \PthetaXY$. Applying the definition of
    $\PthetaXY$ from \cref{eqn:def_PthetaXY}, we see that (using the notation of
    \cref{lemma:characterisation_Sigma_theta}), for any $(a,b) \in \llbracket 1,
    A \rrbracket \times \llbracket 1, B \rrbracket,$
    $$\sum_{(i,j)\in I_a\times J_b}\frac{\mathbbold{1}(\alpha(i)=\beta(j))}{n} =
    \frac{\#(I_a\cap J_b)}{n},\; \text{thus}\; \# (\alpha(I_a) \cap \beta(J_b))
    = \#(I_a\cap J_b).$$ Let $E := \{(a,b) : I_a\cap J_b \neq \varnothing\}$.
    For any $(a,b) \in E$, we have $\# (\alpha(I_a) \cap \beta(J_b)) =
    \#(I_a\cap J_b)$, and thus we can introduce a bijection
    $\varphi_{a,b}:\alpha(I_a) \cap \beta(J_b) \longrightarrow I_a\cap J_b$. We
    have the partition $\llbracket 1, n \rrbracket = \cup_{(a,b)\in E} I_a \cap
    J_b$ where the union is disjoint and the elements are non-empty. Since
    $\alpha, \beta$ are permutations and by the property $\# (\alpha(I_a) \cap
    \beta(J_b)) = \#(I_a\cap J_b)$, we have the partition $\llbracket 1, n
    \rrbracket = \cup_{(a,b)\in E} \alpha(I_a) \cap \beta(J_b)$, again with
    disjoint unions and non-empty terms. We can define $\psi: \llbracket 1, n
    \rrbracket \longrightarrow E$ a map such that $\forall i \in \llbracket 1, n
    \rrbracket,\; i \in \alpha(I_a)\cap \beta(J_b)$ where $\psi(i) = (a,b)$. The
    map $\varphi := i \longmapsto \varphi_{\psi(i)}(i)$ is therefore
    well-defined, we verify easily that it is a permutation of $\llbracket 1, n
    \rrbracket$ using the partition $\llbracket 1, n \rrbracket = \cup_{(a,b)\in
    E} \alpha(I_a) \cap \beta(J_b)$.

    We now fix $a \in \llbracket 1, A \rrbracket$ and show that
    $\varphi\circ\alpha(I_a) = I_a$. Let $i\in I_a$, we have $\alpha(i) \in
    \alpha(I_a)$, and there exists (a unique) $b \in \llbracket 1, B \rrbracket$
    such that $\alpha(i) \in \alpha(I_a)\cap\beta(J_b)$. By definition, we get
    $\psi(\alpha(i)) = (a, b)$, and thus $\varphi(\alpha(i)) =
    \varphi_{a,b}(\alpha(i)) \in I_a\cap J_b \subset I_a$. We conclude that
    $\varphi\circ\alpha(I_a)\subset I_a$ and similarly that
    $\varphi\circ\beta(J_b)\subset J_b$ for any $b \in \llbracket 1, B
    \rrbracket$. By \cref{lemma:characterisation_Sigma_theta}, we conclude that
    $(\varphi\circ\alpha, \varphi\circ\beta)\in \Perm_\theta(X, Y)$, concluding
    the ``left to right'' implication.
    
    Conversely, let $\varphi \in \Perm_n$ and $(\sigma, \tau) \in
    \Perm_\theta(X, Y)$. Notice that $P^{\sigma, \tau} = P^{\varphi \circ
    \sigma, \varphi\circ \tau}$ by \cref{eqn:invariance_P_alpha_beta}. We check
    that $P^{\sigma, \tau} \in \PthetaXY$ by applying the definition: let $(a,b)
    \in \llbracket 1, A \rrbracket \times \llbracket 1, B \rrbracket$, we have:
    $$\sum_{(i,j)\in I_a\times J_b}P_{i,j}^{\sigma, \tau} =
    \frac{\#(\sigma(I_a)\cap\tau(J_b))}{n} = \frac{\#(I_a\cap J_b)}{n}, $$ where
    we used that $\sigma(I_a) = I_a$ and $\tau(J_b)=J_b$, which is a consequence
    of \cref{lemma:characterisation_Sigma_theta}.
\end{proof}

\subsection{Technical Lemmas on Bipartite Graphs Associated to Couplings}

To study the extreme points of $\U\cap\PthetaXY$ we will adapt the techniques
from \cite{peyre2019course,hurlbert2008short} and consider the bipartite graph
associated to a matrix in $P\in\R_+^{n\times m}$, which we define in
\cref{def:bipartite_P}.
\begin{definition}\label{def:bipartite_P} The bipartite directed graph $G_P$
    associated to a matrix $P\in\R_+^{n\times m}$ is the graph with vertices
    $V_P := \{i_k,\; k\in\llbracket 1, n \rrbracket\} \cup \{j_l,\; \ell\in
    \llbracket 1, m \rrbracket\}$ and directed edges:
    \begin{align*}
        E_P &:= \left\{(i_k,j_l),\; (k,\ell)\in \llbracket 1, n \rrbracket \times
        \llbracket 1, m \rrbracket,\; : P_{i_k,j_l}>0\right\} \\
        &\quad \cup\;
        \left\{(j_l,i_k),\; (k,\ell)\in \llbracket 1, n \rrbracket \times \llbracket
        1, m \rrbracket,\; P_{i_k,j_l}>0\right\}.
    \end{align*}
\end{definition}
By slight abuse of notation, we will often write $\{i_k,\; k \in \llbracket 1, n
\rrbracket\}$ as simply $\llbracket 1, n \rrbracket$ and $\{j_l,\; \ell\in
\llbracket 1, m \rrbracket\}$ as $\llbracket 1, m \rrbracket$, seeing them as
disjoint sets of labels. The $i$'s will be called ``left'' vertices, and the
$j$'s ``right'' vertices. Edges $(i,j)$ being directed from left to right, we
will call them ``right'' edges, and likewise edges $(j,i)$ will be referred to
as ``left'' edges. We continue the example from
\cref{fig:ex_CWtheta_discrete_v1} in \cref{fig:ex_bipartite_P} showing the
bipartite graph associated to the matrix $P$.
\begin{figure}[H]
    \centering
    \includegraphics[width=0.6\textwidth]{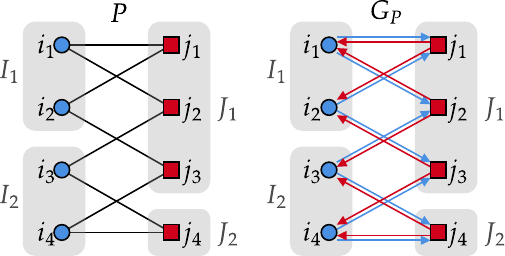}
    \caption{We consider the matrix $P$ from \cref{fig:ex_CWtheta_discrete_v1}
    and show the associated bipartite graph $G_P$. The ``right'' edges from an
    $i\in \llbracket 1, n \rrbracket$ on the left to a $j\in \llbracket 1, n
    \rrbracket$ on the right are represented in blue, and the ``left edges'' are
    represented in red. Note that by construction, for each $(i,j)$ such that
    $P_{i,j}>0$, there is both a left edge $(i,j)$ and a right edge $(j,i)$ in
    $G_P$.}\label{fig:ex_bipartite_P}
\end{figure}

In the following, we will extract a particular cycle $i_1\to j_1\to i_2\to\cdots
\to i_{p+1}=i_1$ from the graph $G_P$ of an element $P\in \U\setminus
\CPerm{\Perm_n^2}$. In proofs of Birkhoff's theorem, this is commonly used to
show that $P$ is not an extreme point of $\U$. In our setting, we will also make
use of this property, in addition to strategies specific to $\PthetaXY$.
\begin{lemma}\label{lemma:extract_cycle_P} Assume $n\geq 2$ and let $P\in
    \U\setminus \CPerm{\Perm_n^2}$. 
    
    Then there exists a cycle $(i_1, j_1,
    \cdots, i_p, j_p, i_{p+1}) \in \llbracket 1, n \rrbracket^{2p+1}$ in $G_P$
    with $p\geq 2$ verifying:
    \begin{equation}\label{eqn:exists_adequate_cycle}
        \begin{split}
            &i_{p+1} = i_1;\; (i_k)_{k=1}^p\ 
            \text{and}\ (j_k)_{k=1}^p\ \text{are\ injective};\\
            &\text{and}\  
            \forall k \in \llbracket 1, p \rrbracket,\; 
            P_{i_k, j_k} \in (0, \tfrac{1}{n}),\; 
            P_{i_{k+1}, j_k} \in (0, \tfrac{1}{n}). 
        \end{split}
    \end{equation}
\end{lemma}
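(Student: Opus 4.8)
The plan is to read off the cycle from the \emph{fractional support graph} of $P$. Let $H$ be the undirected bipartite graph on $\llbracket 1,n\rrbracket \sqcup \llbracket 1,n\rrbracket$ (left and right copies, as in \cref{def:bipartite_P}) whose edges are the pairs $\{i,j\}$ with $P_{i,j}\in(0,\tfrac1n)$. A simple cycle of $H$, traversed as an alternating walk between left and right vertices, is exactly an object of the form required by \cref{eqn:exists_adequate_cycle}: writing its vertices as $i_1, j_1, i_2, j_2, \dots, i_p, j_p$ (alternately left and right) and setting $i_{p+1}:=i_1$, the injectivity of $(i_k)_{k=1}^p$ and of $(j_k)_{k=1}^p$ is the statement that the cycle is simple, and the conditions $P_{i_k,j_k},P_{i_{k+1},j_k}\in(0,\tfrac1n)$ say precisely that its edges lie in $H$. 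The corresponding directed cycle in $G_P$ is obtained by orienting $\{i_k,j_k\}$ as $i_k\to j_k$ and $\{i_{k+1},j_k\}$ as $j_k\to i_{k+1}$, which are edges of $G_P$ since the entries involved are positive. So it suffices to produce a simple cycle in $H$.

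First I would note that $H$ is nonempty. If every entry of $P$ lay in $\{0,\tfrac1n\}$, then by the marginal constraints $P\mathbf{1}=P^\top\mathbf{1}=\tfrac1n\mathbf{1}$ each row and each column of $P$ would have exactly one nonzero entry, so $P$ would be a scaled permutation matrix and hence lie in $\CPerm{\Perm_n^2}$; since $P\notin\CPerm{\Perm_n^2}$, some entry lies in $(0,\tfrac1n)$ and $H$ has an edge. The key step --- and the only delicate point, though it is short --- is the claim that \emph{every vertex of $H$ has degree $0$ or $\geq 2$}. For a left vertex $i$ with a neighbour $j$ (so $P_{i,j}\in(0,\tfrac1n)$): since $\sum_{j'\neq j}P_{i,j'}=\tfrac1n-P_{i,j}>0$, there is $j'\neq j$ with $P_{i,j'}>0$, and moreover $P_{i,j'}\leq\tfrac1n-P_{i,j}<\tfrac1n$, so $\{i,j'\}$ is also an edge of $H$; the argument for right vertices is identical using columns. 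The thing to keep straight is that the strict inequality $P_{i,j}<\tfrac1n$ is used twice: once to force a second positive entry in the row, and once to force that entry to be $<\tfrac1n$ as well.

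It then remains to invoke the elementary fact that a nonempty finite simple graph in which every vertex has degree $0$ or $\geq 2$ contains a cycle: deleting the isolated vertices leaves a nonempty graph of minimum degree $\geq 2$, and in a longest path of that graph each endpoint has a neighbour other than its neighbour on the path, which by maximality of the path must already lie on the path, closing up a cycle. Because $H$ is bipartite and simple, this cycle has even length $2p$ with $p\geq 2$ (no loops, no repeated edges, no triangles), which is exactly the cycle we want. This completes the proof plan; the only ingredients are the Birkhoff-type observation that $H\neq\varnothing$, the degree claim above, and the standard longest-path cycle argument, and I do not anticipate any genuine obstacle beyond bookkeeping the strict versus non-strict inequalities.
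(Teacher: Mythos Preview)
Your proof is correct. Both your argument and the paper's rest on the same key observation---that any row or column of $P$ containing an entry in $(0,\tfrac1n)$ must contain at least two such entries---but the cycle extraction is packaged differently. The paper builds an alternating walk $i_1\to j_1\to i_2\to\cdots$ step by step, closes it via pigeonhole, and then passes to a minimal-length sub-walk to force injectivity of the $(i_k)$ and $(j_k)$; you instead phrase the key observation as ``every non-isolated vertex of the fractional support graph $H$ has degree $\geq 2$'' and invoke the standard longest-path argument to produce a simple cycle directly. Your route is a bit more economical and makes the even-length (hence $p\geq 2$) conclusion fall out of bipartiteness at once; the paper's explicit walk is perhaps more self-contained for readers without the graph-theoretic idiom at hand.
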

\begin{proof}
    First, we show a weaker result:
    \begin{equation}\label{eqn:exists_adequate_cycle_noninj}
        \begin{split}
            &\exists p \geq 2,\; \exists (i_1, j_1, \cdots, i_p, j_p, i_{p+1}) 
            \in \llbracket 1, n \rrbracket^{2p+1} \\
            &\text{such\ that}\ i_{p+1} = i_1;\; 
            \forall k \in \llbracket 1, p \rrbracket,\; i_k\neq i_{k+1},\;
            \forall k \in \llbracket 1, p-1 \rrbracket,\; j_k\neq j_{k+1};\\
            &\text{and}\  
            \forall k \in \llbracket 1, p \rrbracket,\; 
            P_{i_k, j_k} \in (0, \tfrac{1}{n}),\; 
            P_{i_{k+1}, j_k} \in (0, \tfrac{1}{n}). 
        \end{split}
    \end{equation}
    Since $P\in \U\setminus \CPerm{\Perm_n^2}$, there exists $(i_1, j_1) \in
    \llbracket 1, n \rrbracket^2$ such that $P_{i_1, j_1} \in (0, \frac{1}{n})$.
    Since $0<P_{i_1, j_1} < \sum_{i}P_{i,j_1} = \frac{1}{n}$, there exists
    $i_2\neq i_1$ such that $P_{i_2, j_1} \in (0, \frac{1}{n})$. Likewise, since
    $0<P_{i_2, j_1} < \sum_{j}P_{i_2,j} = \frac{1}{n}$, there exists $j_2 \neq
    j_1$ such that $P_{i_2, j_2}\in (0, \frac{1}{n})$. We continue and show the
    existence of $i_3\neq i_2$ such that $P_{i_3, j_2} \in (0, \frac{1}{n})$. So
    far, we have built a chain $i_1\to j_1\to i_2 \to j_2 \to i_3$. If $i_3=i_1$
    then we have shown \cref{eqn:exists_adequate_cycle_noninj}. Otherwise we
    continue the process up to $i_k,\; k\geq 4$ while $i_k\neq i_1$, and there
    are two exclusive possibilities:
    \begin{enumerate}
        \item The process terminates with $ (i_1, j_1, \cdots, i_p, j_p,
        i_{p+1})$ such that $i_{p+1}=i_1$, and by construction the cycle
        verifies the conditions of \cref{eqn:exists_adequate_cycle_noninj};
        \item The process continues at least up to $k=n+1$, yielding $(i_1, j_1,
        \cdots, i_n, j_n, i_{n+1})$ verifying the conditions of
        \cref{eqn:exists_adequate_cycle_noninj} except $i_{n+1}\neq i_1$. Then
        by the pigeonhole principle, there exists $k_1<k_2 \in \llbracket 1, n+1
        \rrbracket^2$ such that $i_{k_1}=i_{k_2}$. Consider the cycle $(i_{k_1},
        j_{k_1}, i_{k_1+1}, j_{k_1+1}, \cdots, i_{k_2})$, it verifies
        \cref{eqn:exists_adequate_cycle_noninj} (the length is sufficient since
        by construction $i_{k_1}\neq i_{k_1+1}$).
    \end{enumerate}
    Now that we have shown \cref{eqn:exists_adequate_cycle_noninj}, we deduce
    \cref{eqn:exists_adequate_cycle} by taking $p\geq 2$ minimal in
    \cref{eqn:exists_adequate_cycle_noninj}.
\end{proof}

As an illustration, in \cref{fig:ex_bipartite_P}, by following the edges of
$G_P$ starting from the edge $(i_1, j_1)$, we observe the cycle $(i_1, j_1, i_2,
j_3, i_4, j_4, i_3, j_2, i_1)$ which satisfies the criteria of
\cref{eqn:exists_adequate_cycle}. 

We will also require the following technical result about extracting injective
cycles from (possibly) redundant cycles in a graph. For a set $S$ and $n,m \in
\N$, we say that two families $(s_i)_{i=1}^n\in S^n$ and $(t_j)_{j=1}^n\in S^n$
are equipotent if $n=m$ and there exists a permutation $\varphi \in \Perm_n$
such that $\forall i \in \llbracket 1, n \rrbracket,\; s_{\varphi(i)}=t_i$. We
write this property $(s_i)\simeq (t_j)$. This concept is particularly useful
when the families are not injective, which will sometimes be the case in the
following.

\begin{lemma}\label{lemma:split_multicycle} Let $G :=
    (V:=\mathcal{A}\cup\mathcal{B}, E)$ be a directed bipartite graph, set
    $p\geq 1$ and consider a cycle written $(a_1, b_1, \cdots, a_p, b_p,
    a_{p+1}) \in V^{2p+1}$ with $\forall k \in \llbracket 1, p \rrbracket, \;
    (a_k, b_k) \in E,\; (b_k, a_{k+1})\in E$. Then there exists $L\geq 1$ cycles
    of $G$ of the form $(a_1^\ell, b_1^\ell, \cdots, a_{p_\ell}^\ell,
    b_{p_\ell}^\ell, a_{p_\ell+1}^\ell)$ (with $a_1^\ell = a_{p_\ell+1}^\ell$
    and each $(a_k^\ell, b_k^\ell), (b_k^\ell, a_{k+1}^\ell) \in E$) whose
    combined elements (without the last vertex) are exactly the elements of
    $(a_1, b_1, \cdots, a_p, b_p)$:
    \begin{equation}\label{eqn:split_multicycle_concatenation}
        (a_1, b_1, \cdots, a_p, b_p) \simeq (a_1^1, b_1^1, \cdots, a_{p_1}^1,
        b_{p_1}^1, \cdots \cdots, a_1^L, b_1^L, \cdots, a_{p_L}^L, b_{p_L}^L),
    \end{equation}
    and such that for each $\ell\in \llbracket 1, L \rrbracket,$ the families of
    edges $((a_k^\ell, b_k^\ell))_{k=1}^{p_\ell}$ and $((b_k^\ell,
    a_{k+1}^\ell))_{k=1}^p$ are injective. 
\end{lemma}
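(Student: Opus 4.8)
The plan is to view $G$ as a finite directed graph and run the classical argument that decomposes a closed walk into simple cycles, adapted to the bipartite alternating structure, by strong induction on $p$. If both families $(a_k)_{k=1}^p$ and $(b_k)_{k=1}^p$ are injective, then the given cycle already has injective forward‑edge family $((a_k,b_k))_k$ and backward‑edge family $((b_k,a_{k+1}))_k$ (distinct first coordinates in each case), so $L=1$ with the single cycle equal to the given one works. Otherwise I split the cycle at a repeated vertex. If $a_k=a_{k'}$ for some $1\le k<k'\le p$, I write the cycle as the concatenation of $(a_k,b_k,\dots,a_{k'-1},b_{k'-1},a_{k'})$ and $(a_{k'},b_{k'},\dots,a_p,b_p,a_1,b_1,\dots,a_{k-1},b_{k-1},a_k)$; if instead $(a_k)_k$ is injective but $b_k=b_{k'}$ for some $1\le k<k'\le p$, I write it as the concatenation of $(a_{k+1},b_{k+1},\dots,a_{k'},b_{k'},a_{k+1})$ and $(a_1,b_1,\dots,a_k,b_k,a_{k'+1},b_{k'+1},\dots,a_p,b_p,a_1)$. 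In both cases each piece is a genuine cycle of $G$ of the required alternating form, starting at an $\mathcal{A}$‑vertex: every edge it uses is one of the original edges $(a_i,b_i)$ or $(b_i,a_{i+1})$ — invoking $a_{p+1}=a_1$ for the wrap‑around edge $(b_p,a_1)$ — or becomes one after substituting $a_k=a_{k'}$ (resp. $b_k=b_{k'}$), and the two half‑lengths are $\ge 1$ and sum to $p$, hence both lie in $\llbracket 1, p-1 \rrbracket$. Applying the induction hypothesis to each piece and concatenating the resulting lists of cycles gives the desired family, and $L\ge 1$ since $p\ge1$.

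To obtain \eqref{eqn:split_multicycle_concatenation} I just track multisets of vertices. In each split, the index set of the $a$‑entries of the first piece and that of the second piece partition $\llbracket 1, p \rrbracket$, and likewise for the $b$‑entries, so the interior vertices of the two pieces together form exactly the multiset with members $a_1,\dots,a_p,b_1,\dots,b_p$; combining this with the induction hypothesis applied to each piece shows the full decomposition has the same vertex multiset as the original cycle. Since $\mathcal{A}\cap\mathcal{B}=\varnothing$, that multiset identity is precisely the equipotence $\simeq$ asserted in \eqref{eqn:split_multicycle_concatenation}. This is exactly where equipotence, rather than plain set equality, is the correct notion: in the $b_k=b_{k'}$ case the vertex $b_{k'}$ retained by the first piece and the vertex $b_k$ retained by the second are the same element of $V$, each counted once, matching the two list positions $k$ and $k'$ of the original cycle.

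I do not expect a real obstacle: the argument is elementary combinatorics. The only spots needing a little care are keeping the wrap‑around edge $(b_p,a_1)$ inside $E$, always rewriting each sub‑cycle so that it begins at an $\mathcal{A}$‑vertex (a harmless cyclic rotation), and the multiset bookkeeping for a repeated $b$‑vertex. An equivalent route, which I would mention but not carry out, is to regard the data as a closed walk $v_0 v_1 \cdots v_{2p}=v_0$ in $G$ and repeatedly peel off the first vertex‑simple loop; this produces vertex‑simple pieces, which are in particular edge‑simple, with the same multiset accounting.
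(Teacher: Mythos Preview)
Your proof is correct. The core idea matches the paper's: split the closed walk at a repetition into two shorter closed walks and recurse, tracking the vertex multiset at each step. The organization differs slightly. The paper defines two explicit splitting operators $\mathrm{Split}_R$ and $\mathrm{Split}_L$ that cut the cycle whenever a \emph{right edge} $(a_i,b_i)$ or a \emph{left edge} $(b_i,a_{i+1})$ is repeated, and iterates until no split is possible, arguing termination by finiteness. You instead split at a repeated \emph{vertex} (first an $a$-repeat, else a $b$-repeat) and package the argument as strong induction on $p$; your base case (both $(a_k)$ and $(b_k)$ injective) then gives vertex-simple cycles, which are a fortiori edge-simple as required. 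Your route is marginally cleaner (induction absorbs the termination argument) and yields a slightly stronger conclusion, while the paper's edge-based split is tailored exactly to the injectivity property stated in the lemma; substantively, the two arguments are the same.
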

\begin{proof}
    Given such a cycle $\mathcal{C}:=(a_1, b_1, \cdots, a_p, b_p, a_{p+1})$ we
    consider the two following ``splitting'' operators:
    \begin{itemize}
        \item $\mathrm{Split}_R$ takes the first pair $i<j \in \llbracket 1,
        p\rrbracket^2$ (for the lexicographic order) such that $(a_i, b_i) =
        (a_j, b_j)$ if such a pair $(i,j)$ exists (if not,
        $\mathrm{Split}_R(\mathcal{C})$ returns $\mathcal{C}$).
        $\mathrm{Split}_R(\mathcal{C})$ then returns the two following
        sub-cycles:
        $$\mathcal{C}_1 := (a_1, b_1, \cdots, a_{i-1}, b_{i-1}, a_j, b_j,
        \cdots, a_p, b_p, a_{p+1}),\; \mathcal{C}_2 := (a_i, b_i, \cdots, ,
        a_{j-1}, b_{j-1}, a_j). $$ Obviously, their concatenation without
        endpoints is exactly $\mathcal{C}$ without its endpoint: 
        $$(a_1, b_1, \cdots, a_{i-1}, b_{i-1}, a_j, b_j, \cdots, a_p, b_p, a_i,
        b_i, \cdots, , a_{j-1}, b_{j-1}) \simeq (a_1, b_1, \cdots, a_p, b_p)
        .$$
        \item $\mathrm{Split}_L$ takes the first pair $i<j \in \llbracket 1,
        p\rrbracket^2$ such that $(b_i, a_{i+1}) = (b_j, a_{j+1})$ if such a
        pair $(i,j)$ exists (if not, $\mathrm{Split}_L(\mathcal{C})$ returns
        $\mathcal{C}$). $\mathrm{Split}_L(\mathcal{C})$ then returns the two
        following sub-cycles, (which verify the equipotence condition):
        $$\mathcal{C}_1 := (a_1, b_1, \cdots, a_i, b_i, a_{j+1}, b_{j+1},
        \cdots, a_p, b_p, a_{p+1}),\; \mathcal{C}_2 := (a_{i+1}, b_{i+1},
        \cdots, a_j, b_j, a_{j+1}). $$
    \end{itemize}
    To split an initial $\mathcal{C}$, we construct a family
    $(\mathcal{C}_\ell)$ of cycles iteratively starting with $(\mathcal{C})$ by
    applying $\mathrm{Split}_R$ and $\mathrm{Split}_L$ to the cycles to the
    family $\mathcal{C}_\ell$ until no cycle can be split. This process
    terminates since each iteration increases the number of cycles (they are
    non-empty), which is bounded because $\mathcal{C}$ is finite and the
    concatenation of the cycles $(\mathcal{C}_\ell)$ without endpoints is
    exactly $\mathcal{C}$ without its endpoint. At the end of the process, the
    equipotence condition remains and each cycle $\mathcal{C}_\ell$ has
    injective edges $((a_k^\ell, b_k^\ell))_k, ((b_k^\ell, a_{k+1}^\ell))_k$
    since the splitting process could not continue.
\end{proof}
In \cref{fig:ex_extract_cycles} we illustrate the splitting process of
\cref{lemma:split_multicycle}.
\begin{figure}[H]
    \centering
    \includegraphics[width=.8\linewidth]{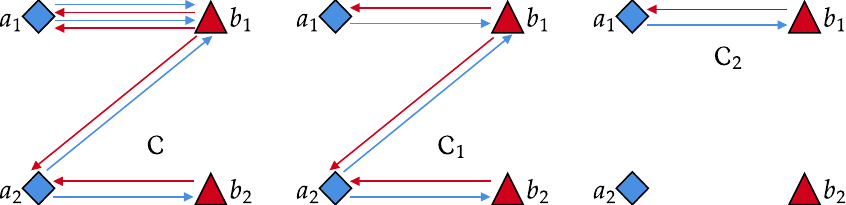}
    \caption{Extracting two cycles $\mathcal{C}_1,\mathcal{C}_2$ from
    the cycle $\mathcal{C}$ such that each cycle $\mathcal{C}_\ell$ has distinct
    (directed) edges.}
    \label{fig:ex_extract_cycles}
\end{figure}
The cycle from \cref{fig:ex_extract_cycles} is a cycle of $G_{\oll{P}}$ (where
$\oll{P}$ is the OT plan matrix between the measures $P_\theta\#\mu$ and
$P_\theta\#\nu$), constructed using $P$ from \cref{fig:ex_bipartite_P} with $a_1
\in \llbracket 1, A \rrbracket$ where $i\in I_{a_1}$, then $b_1 \in \llbracket
1, B \rrbracket$ such that $j_1 \in I_{b_1}$ and so on. This example case is
paramount since it will be the use case of \cref{lemma:split_multicycle} in the
proof of \cref{thm:extr_U_cap_PthetaXY}.

The following lemma is in essence a cyclical monotonicity property, and concerns
a property of cycles in the bipartite graph associated to the matrix $\oll{P}\in
\R_{+}^{A\times B}$ which is the unique optimal transport plan matrix between
the one-dimensional measures $P_\theta\#\mu = \sum_{a=1}^A\frac{\#
I_a}{n}\delta_{s_a}$ and $P_\theta\#\nu = \sum_{b=1}^B\frac{\#
J_b}{n}\delta_{t_b}$. The idea is that by monotonicity of $\oll{P}$, no edges of
$G_{\oll{P}}$ can cross one another, which constrains cycles to have a left edge
$(b,a)$ corresponding to each right edge $(a,b)$. We remind that by assumption
$s_1 < \cdots < s_A$ and $t_1 < \cdots < t_B$ (the notation was introduced in
\cref{lemma:characterisation_Sigma_theta}).

\begin{lemma}\label{lemma:cycle_ollP} Let $\oll{P}\in \R_{+}^{A\times B}$ be the
    OT matrix between $\sum_{a=1}^A\frac{\# I_a}{n}\delta_{s_a}$ and
    $\sum_{b=1}^B\frac{\# J_b}{n}\delta_{t_b}$. 
    
    If $\mathcal{C}:=(a_1, b_1, \cdots, a_p, b_p, a_{p+1})$ is a cycle in
    $G_{\oll{P}}$ (i.e. $(a_k)_{k=1}^{p+1} \in \llbracket 1, A
    \rrbracket^{p+1},\; (b_k)_{k=1}^p \in \llbracket 1, B \rrbracket^p,\;
    a_{p+1}=a_1$ and $\forall k \in \llbracket 1, p \rrbracket,\; \oll{P}_{a_k,
    b_k} >0,\; \oll{P}_{a_{k+1}, b_k} >0$) such that the families of edges
    $((a_k, b_k))_{k=1}^{p}$ and $((b_k, a_{k+1}))_{k=1}^p$ are injective, then
    $((a_k, b_k))_{k=1}^{p} \simeq ((a_{k+1}, b_k))_{k=1}^p$.
\end{lemma}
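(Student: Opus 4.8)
The plan is to exploit the comonotone structure of the one-dimensional optimal plan $\overline{P}$, which makes its support graph acyclic, and then to run a cut/parity argument on the cycle $\mathcal{C}$. First I would record that, since $\overline{P}$ is the unique optimal plan between $\sum_a \frac{\# I_a}{n}\delta_{s_a}$ and $\sum_b \frac{\# J_b}{n}\delta_{t_b}$ and the atoms are strictly ordered ($s_1 < \cdots < s_A$, $t_1 < \cdots < t_B$), cyclical monotonicity (\cite{santambrogio2015optimal}, Lemma 2.8) forces the ``non-crossing'' property: if $\overline{P}_{a,b} > 0$ and $\overline{P}_{a',b'} > 0$ with $a < a'$, then $b \le b'$.

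Next I would deduce that the undirected support graph of $\overline{P}$ (vertices $\llbracket 1, A \rrbracket \sqcup \llbracket 1, B \rrbracket$, with an edge $\{a,b\}$ whenever $\overline{P}_{a,b} > 0$) has no cycle. If it did, being bipartite it would contain a cycle $a_{i_1} - b_{j_1} - a_{i_2} - \cdots - a_{i_m} - b_{j_m} - a_{i_1}$ with the $i$'s pairwise distinct and the $j$'s pairwise distinct; taking $k^*$ with $a_{i_{k^*}}$ the largest left-label occurring, the non-crossing property applied to the edges incident to $a_{i_{k^*}}$ and to its two cycle-neighbours would force $j_{k^*-1} = j_{k^*}$, a contradiction. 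Hence the support graph is a forest, and in particular every support edge is a bridge.

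The core of the argument is then a parity/flow observation. Fix a pair $(a,b)$ with $\overline{P}_{a,b} > 0$; deleting the bridge $\{a,b\}$ from its connected component splits the vertex set of that component into $S \ni a$ and $T \ni b$, and the only directed edges of $G_{\overline{P}}$ joining $S$ and $T$ are $a \to b$ and $b \to a$. The cycle $\mathcal{C} = (a_1, b_1, \ldots, a_p, b_p, a_{p+1})$ is a closed walk lying inside that component, so it crosses from $S$ to $T$ as many times as from $T$ to $S$; that is, it traverses $a \to b$ exactly as often as $b \to a$, which reads
\[
\#\{k : (a_k, b_k) = (a, b)\} \;=\; \#\{k : (a_{k+1}, b_k) = (a, b)\}.
\]
Since this identity holds for every pair $(a,b)$ (both sides vanish off the support of $\overline{P}$), the families $((a_k, b_k))_{k=1}^p$ and $((a_{k+1}, b_k))_{k=1}^p$ have the same multiplicity function, hence are equipotent; the injectivity hypotheses on the two edge-families just say these multiplicities lie in $\{0,1\}$. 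This is exactly $((a_k, b_k))_{k=1}^{p} \simeq ((a_{k+1}, b_k))_{k=1}^p$.

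I expect the main obstacle to be the parity step: one must check carefully that $\mathcal{C}$, which may revisit vertices (only its edge-families are injective), is nevertheless a bona fide closed walk so that the bridge-cut argument applies verbatim, and that ``the only $S$–$T$ edges are $a\to b$ and $b\to a$'' genuinely uses the forest property from the previous step. The acyclicity of the comonotone support graph is the secondary delicate point, but the maximal-label trick dispatches it in a couple of lines.
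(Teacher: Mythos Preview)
Your proof is correct and takes a genuinely different route from the paper's. The paper argues directly: for each right-edge $(a_k,b_k)$ it follows the cycle from $b_k$ back to $a_k$ along a minimal sub-path $b_k \to a'_1 \to \cdots \to b'_q \to a_k$ and uses monotonicity to force the length $q$ to be zero, so that the left-edge $(b_k,a_k)$ itself appears in $\mathcal{C}$; injectivity of the edge families then yields equipotence. You instead extract the structural content of monotonicity once and for all---the undirected support of $\overline{P}$ is a forest---and then run a clean bridge-cut parity argument on the closed walk $\mathcal{C}$: each support edge is a bridge, so the walk must traverse $a\to b$ and $b\to a$ equally often. Your approach is more conceptual and, as you rightly observe, does not actually use the injectivity hypothesis (you obtain equality of multiplicities directly, whereas the paper's existence-of-$k'$ argument needs injectivity to upgrade to a bijection). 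The paper's chase is more self-contained and avoids introducing the forest abstraction, but your version makes the underlying geometry (the ``staircase'' shape of the 1D optimal plan) more transparent.
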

\begin{proof}
    First, by optimality of $\oll{P}$, by \cite[Lemma
    2.8]{santambrogio2015optimal}, $\oll{P}$ is monotone in the sense that:
    $$\forall (a,b), (a', b') \in \llbracket 1, A \rrbracket \times \llbracket
    1, B \rrbracket,\ \text{such\ that}\ \oll{P}_{a,b}>0, \oll{P}_{a',b'}>0,\;
    a<a' \implies b\leq b'. $$ Note that the contrapositive yields the
    symmetrical property that if $b<b'$ then $a\leq a'$. Furthermore, we remind
    that since each $(a_k, b_k)$ and $(b_k, a_{k+1})$ are edges of the graph
    $G_{\oll{P}}$, we have $\oll{P}_{a_k, b_k} >0$ and $\oll{P}_{a_{k+1}, b_k}
    >0$. We can understand the monotonicity property as the fact that the edges
    of the cycle cannot cross one another.
    
    By injectivity of the edge families, to show the equipotence result, it
    suffices to show that $\forall k \in \llbracket 1, p \rrbracket,\; \exists
    k' \in \llbracket 1, p \rrbracket$ such that $(a_k, b_k) = (a_{k'+1},
    b_{k'})$. Since the vertices $a_k$ and $b_k$ are part of the cycle $a_1 \to
    b_1 \to a_2 \to \cdots \to a_{p+1} = a_1$, there exists a sub-cycle $b_k \to
    a'_1 \to b'_1 \to \cdots \to b'_q \to a_k$, which is to say that there
    exists, for some $q\geq 0$, $(b_k, a'_1)\in \mathcal{C},\; \forall k' \in
    \llbracket 1, q \rrbracket,\; (a'_{k'}, b'_{k'}) \in \mathcal{C},\;
    (b'_{k'}, a'_{k'+1}) \in \mathcal{C}$ (writing $a'_{q+1} := a_k$), and we
    now take $q\geq 0$ minimal. We will show that $q=0$ by contradiction: assume
    $q\geq 1$, which implies that $a'_1\neq a_k$ by minimality. Assume that
    $a_1' < a_k$ (the case $a_1'>a_k$ is analogous). By monotonicity of
    $\oll{P}$, we deduce $b_1'\leq b_k$, and even $b_1'< b_k$ since $b_1'=b_k$
    would violate the minimality of $q$. By monotonicity, we deduce that
    $a_2'\leq a_1'$ and again, even $a_2' < a_1'$ by minimality of $a$.
    Continuing this process we find that $a_{q+1}'<a_k$ contradicting
    $a_{q+1}'=a_k$. We conclude that the edge $(b_k, a_k)$ belongs to the cycle,
    which is to say that there exists $k'\in \llbracket 1, p \rrbracket$ such
    that $(a_k, b_k) = (a_{k'+1}, b_{k'})$, finishing the proof.
\end{proof}
In \cref{fig:ex_cycle_ollP} we illustrate the result of \cref{lemma:cycle_ollP}
in the use case of the proof of \cref{thm:extr_U_cap_PthetaXY}, regrouping the
continued example from
\cref{fig:ex_Sigma_theta,fig:ex_CWtheta_discrete_v1,fig:ex_bipartite_P,fig:ex_extract_cycles}.
\begin{figure}[H]
    \centering
    \includegraphics[width=.8\linewidth]{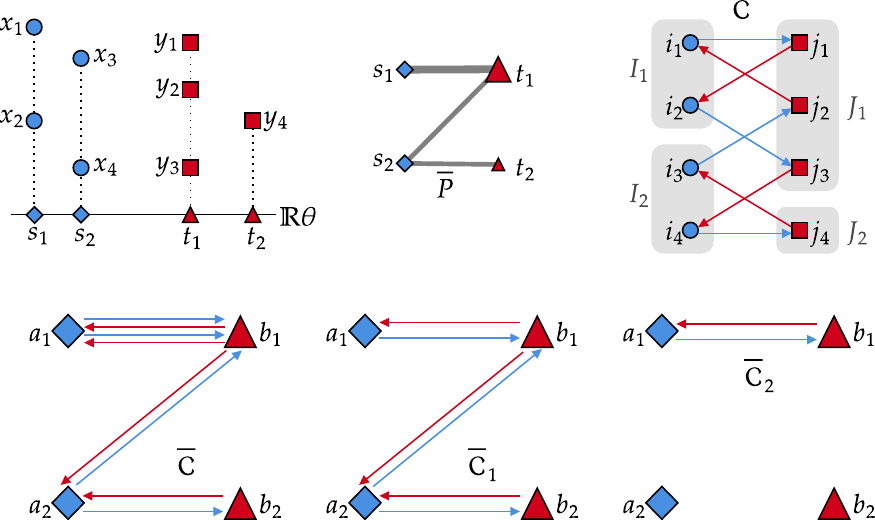}
    \caption{We take two discrete uniform measures $\mu := \frac{1}{n}\sum_i
    \delta_{x_i}$ and $\nu := \frac{1}{n}\sum_j\delta_{y_j}$ such that $s_1 :=
    P_\theta x_1 = P_\theta x_2 < s_2 := P_\theta x_3 = P_\theta x_4$ and $t_1
    := P_\theta y_1 = P_\theta y_2 = P_\theta y_3 < t_2 := P_\theta y_4$. We
    represent the OT matrix $\oll{P}$ between the measures $P_\theta\#\mu$ and
    $P_\theta\#\nu$ and consider the bipartite graph $G_P$ associated to a
    coupling $P \in \mathbb{U}\cap \PthetaXY$ (presented in
    \cref{fig:ex_bipartite_P}). In this case the graph $G_P$ (top-right)
    contains the cycle $\mathcal{C} := (i_1, j_1, i_2, j_3, i_4, j_4, i_3, j_2,
    i_1)$. We consider for each $k$ the ``right'' edge $(a_k, b_k)$ in
    $G_{\oll{P}}$ such that $i_k \in I_{a_k}$ and $j_k \in J_{b_k}$, and the
    ``left'' edge $(b_k, a_{k+1})$ such that $j_k \in J_{b_k}$ and $i_{k+1} \in
    I_{a_{k+1}}$. This defines the cycle $\oll{\mathcal{C}}$ in $G_{\oll{P}}$,
    that we decompose into cycles with distinct edges $(\oll{\mathcal{C}}_\ell)$
    using \cref{lemma:split_multicycle}. \cref{lemma:cycle_ollP} then applies to
    each $\oll{\mathcal{C}}_\ell$ and we observe indeed that in
    $\oll{\mathcal{C}}_\ell$, each ``left'' edge $(b, a)$ has a corresponding
    ``right'' edge $(a, b)$ in the cycle.}
    \label{fig:ex_cycle_ollP}
\end{figure}

\subsection{A Constrained Version of the Birkhoff von Neumann Theorem}

We are now ready to prove a constrained version of the Birkhoff von Neumann
Theorem \cite{birkhoff1946three}. We remind that $\PthetaXY$ is defined in
\cref{eqn:def_PthetaXY}, $\U$ in \cref{eqn:def_U} and $\CPerm{\Perm_\theta(X,
Y)} = \{P^{\sigma, \tau},\; (\sigma, \tau)\in \Perm_\theta(X, Y)\}$, with
$P^{\sigma, \tau}$ the permutation matrix introduced in
\cref{eqn:def_permutation_matrix} and $\Perm_\theta(X, Y)$ defined in
\cref{eqn:def_Sigma_theta}. Finally, the notion of extreme points is defined in
\cref{def:extreme_point}.
\begin{theorem}\label{thm:extr_U_cap_PthetaXY} Let $(X, Y) \in \R^{n\times d}$
    and $\theta \in \SS^{d-1}$ verifying \cref{ass:identity_sorts_projections}.
    Then $$\Extr(\U\cap\PthetaXY) = \CPerm{\Perm_\theta(X, Y)}.$$
\end{theorem}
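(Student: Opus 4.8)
The plan is to establish the equality $\Extr(\U\cap\PthetaXY)=\CPerm{\Perm_\theta(X,Y)}$ by two inclusions. The inclusion $\CPerm{\Perm_\theta(X,Y)}\subseteq\Extr(\U\cap\PthetaXY)$ is the easy one: given $(\sigma,\tau)\in\Perm_\theta(X,Y)$, the matrix $P^{\sigma,\tau}$ lies in $\PthetaXY$ by the ``converse'' direction of \cref{lemma:condition_Psigmatau_in_Ptheta}, and obviously in $\U$; moreover $nP^{\sigma,\tau}$ is a genuine permutation matrix, hence $P^{\sigma,\tau}$ is an extreme point of $\U$ by the classical Birkhoff--von Neumann theorem \cite{birkhoff1946three}. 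Since $\U\cap\PthetaXY$ is a convex subset of $\U$, a point of this subset that is extreme in $\U$ is a fortiori extreme in the subset, which gives this direction with essentially no work.

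For the reverse inclusion I would argue by contraposition: starting from $P\in(\U\cap\PthetaXY)\setminus\CPerm{\Perm_\theta(X,Y)}$ I construct $P^+\neq P^-$ in $\U\cap\PthetaXY$ with $P=\tfrac12(P^++P^-)$. First, \cref{lemma:condition_Psigmatau_in_Ptheta} shows $P$ cannot be any permutation matrix $P^{\alpha,\beta}$ (otherwise it would coincide with some $P^{\sigma,\tau}$ with $(\sigma,\tau)\in\Perm_\theta(X,Y)$), so $P\in\U\setminus\CPerm{\Perm_n^2}$ and necessarily $n\geq2$. Applying \cref{lemma:extract_cycle_P} I obtain a cycle $(i_1,j_1,\dots,i_p,j_p,i_{p+1}=i_1)$ in $G_P$ with $p\geq2$, the families $(i_k)_{k=1}^p$ and $(j_k)_{k=1}^p$ injective, and $P_{i_k,j_k},P_{i_{k+1},j_k}\in(0,\tfrac1n)$ for all $k$. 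I then define $P^\pm$ to agree with $P$ off the cycle and to satisfy $P^\pm_{i_k,j_k}=P_{i_k,j_k}\pm\varepsilon$ and $P^\pm_{i_{k+1},j_k}=P_{i_{k+1},j_k}\mp\varepsilon$ for a small $\varepsilon>0$. Injectivity of $(j_k)$ and the fact that $i_k\neq i_{k+1}$ (a consequence of injectivity of $(i_k)$ and $p\geq2$) ensure the $2p$ modified entries are pairwise distinct, each changed exactly once; a one-line check shows every row and column sum of $P^\pm$ is unchanged, so $P^\pm\in\U$ for $\varepsilon$ small enough to keep entries nonnegative, with $P^+\neq P^-$ and $P=\tfrac12(P^++P^-)$.

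The main obstacle is verifying that $P^\pm$ also lies in $\PthetaXY$, i.e. that inside each block $I_a\times J_b$ the added and removed masses cancel. Writing $a_k$ (resp. $b_k$) for the index with $i_k\in I_{a_k}$ (resp. $j_k\in J_{b_k}$), and recalling from the proof of \cref{prop:CWtheta_discrete_v1} that $\oll{P}_{a,b}:=\sum_{(i,j)\in I_a\times J_b}P_{i,j}=\tfrac1n\#(I_a\cap J_b)$ is precisely the monotone optimal transport matrix between $P_\theta\#\mu$ and $P_\theta\#\nu$, the required cancellation amounts to the equipotence of the multiset of ``$+\varepsilon$'' blocks $\{(a_k,b_k)\}_{k=1}^p$ with the multiset of ``$-\varepsilon$'' blocks $\{(a_{k+1},b_k)\}_{k=1}^p$. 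To get this I would note that $P_{i_k,j_k}>0$ forces $\oll{P}_{a_k,b_k}\geq P_{i_k,j_k}>0$ (and likewise $\oll{P}_{a_{k+1},b_k}>0$), so the projected sequence $\oll{\mathcal C}:=(a_1,b_1,\dots,a_p,b_p,a_1)$ is a cycle in $G_{\oll{P}}$; its vertex and edge families need not be injective, so I apply \cref{lemma:split_multicycle} to decompose $\oll{\mathcal C}$ into cycles $\oll{\mathcal C}_\ell$ with injective edge families whose right-edge multisets (resp. left-edge multisets) together reconstitute those of $\oll{\mathcal C}$, and then \cref{lemma:cycle_ollP} to each $\oll{\mathcal C}_\ell$ (legitimate since $\oll{P}$ is monotone) to obtain $((a^\ell_k,b^\ell_k))_k\simeq((a^\ell_{k+1},b^\ell_k))_k$. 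Concatenating these equipotences over $\ell$ yields $\{(a_k,b_k)\}_{k=1}^p\simeq\{(a_{k+1},b_k)\}_{k=1}^p$, hence the per-block cancellation, hence $P^\pm\in\U\cap\PthetaXY$ and $P\notin\Extr(\U\cap\PthetaXY)$, completing the contraposition. The delicate part is keeping ``right'' versus ``left'' edge multisets straight through the splitting and confirming the projected cycle genuinely lives in $G_{\oll{P}}$; the degenerate case $n=1$ is immediate since then $\U\cap\PthetaXY$ and $\CPerm{\Perm_\theta(X,Y)}$ both reduce to the single matrix $[1]$.
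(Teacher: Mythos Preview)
Your proposal is correct and follows essentially the same route as the paper's proof: the easy inclusion via \cref{lemma:condition_Psigmatau_in_Ptheta} and extremality of permutation matrices, then the contrapositive via the cycle from \cref{lemma:extract_cycle_P}, the $\pm\varepsilon$ perturbation along right/left edges, and the verification of the $\PthetaXY$ constraint by projecting the cycle to $G_{\oll{P}}$, splitting with \cref{lemma:split_multicycle}, and applying \cref{lemma:cycle_ollP} to obtain the block-level equipotence. Your explicit observation that $P\notin\CPerm{\Perm_n^2}$ (needed to invoke \cref{lemma:extract_cycle_P}) via \cref{lemma:condition_Psigmatau_in_Ptheta}, and your remark that the splitting of \cref{lemma:split_multicycle} preserves the right- and left-edge multisets separately, are points the paper leaves implicit but uses in the same way.
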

\begin{proof}
    \step{1}{$\CPerm{\Perm_\theta(X, Y)} \subset \Extr(\U\cap\PthetaXY)$}

    First, for $(\sigma, \tau)\in \Perm_\theta(X, Y)$, we have $P^{\sigma,
    \tau}\in \PthetaXY$ by \cref{lemma:condition_Psigmatau_in_Ptheta}, which
    shows that $P^{\sigma, \tau} \in \U\cap\PthetaXY$. Now if $P^{\sigma, \tau}
    = \frac{1}{2}Q + \frac{1}{2}R$ for some $(Q, R) \in \U\cap\PthetaXY$, then
    for any $(i,j)\in \llbracket 1, n \rrbracket^2$, we have $P^{\sigma,
    \tau}_{i,j} \in \{0, \frac{1}{n}\}$, thus $\frac{1}{2}Q_{i,j} +
    \frac{1}{2}R_{i,j} = P^{\sigma, \tau}_{i,j}$ implies that $Q_{i,j} =
    R_{i,j}$ since $Q_{i,j}$ and $R_{i,j}$ are both in $[0, \frac{1}{n}]$ (since
    they belong to $\U$). This shows that $P^{\sigma, \tau} \in
    \Extr(\U\cap\PthetaXY)$.

    \step{2}{Writing $P \in (\U\cap\PthetaXY) \setminus \CPerm{\Perm_\theta(X,
    Y)}$ as $P=(Q+R)/2$ with $Q, R \in [0, \frac{1}{n}]^{n\times n}$}
    
    To show $\Extr(\U\cap\PthetaXY) \subset \CPerm{\Perm_\theta(X, Y)}$, we will
    show that 
    $$(\U\cap\PthetaXY) \setminus \CPerm{\Perm_\theta(X, Y)} \subset
    (\U\cap\PthetaXY) \setminus\Extr(\U\cap\PthetaXY).$$ Note that when $n=1$
    the entire Theorem is trivial, and in the following we assume $n\geq 2$. We
    take $P \in (\U\cap\PthetaXY) \setminus \CPerm{\Perm_\theta(X, Y)}$ and
    apply \cref{lemma:extract_cycle_P}, allowing us to introduce $(i_1, j_1,
    \cdots, i_p, j_p, i_{p+1}) \in \llbracket 1, n \rrbracket^{2p+1}$ such that
    $i_{p+1}=i_1$, the families $(i_k)_{k=1}^p$ and $(j_k)_{k=1}^p$ are
    injective and $\forall k \in \llbracket 1, p \rrbracket,\; P_{i_k, j_k} \in
    (0, \tfrac{1}{n}),\; P_{i_{k+1}, j_k} \in (0, \tfrac{1}{n})$. We consider
    the set of ``right edges'' $E_R := ((i_k, j_k))_{k=1}^p$ and ``left edges'':
    $E_L := ((i_{k+1}, j_k))_{k=1}^p$. By injectivity, we have $E_R \cap E_L =
    \varnothing$ and that $E_R$ and $E_L$ are themselves injective. Note that
    our cycle construction is illustrated on an example in
    \cref{fig:ex_cycle_ollP}. We take the smallest margin $\varepsilon>0$ that
    $P$ has to be in $\U$ (within the cycle):
    $$\varepsilon := \underset{k\in \llbracket 1, p \rrbracket}{\min}\{P_{i_k,
    j_k}, 1-P_{i_k, j_k}, P_{i_{k+1},j_k}, 1-P_{i_{k+1},j_k}\} \in (0,
    \tfrac{1}{n}), $$ and introduce the matrices $Q,R \in \R^{n\times n}$
    defined as, for $(i,j)\in \llbracket 1, n \rrbracket^2$:
    \begin{align*}
        Q_{i,j} &:= 
        \left\{\begin{array}{ccc} 
            P_{i,j} & \text{if} & (i,j)\notin E_R\cup E_L \\
            P_{i,j} + \varepsilon & \text{if} & (i,j)\in E_R \\
            P_{i,j} - \varepsilon & \text{if} & (i,j)\in E_L
        \end{array}\right.,\\
        R_{i,j} &:= \left\{\begin{array}{ccc}
            P_{i,j} & \text{if} & (i,j)\notin E_R \cup E_L \\
            P_{i,j} - \varepsilon & \text{if} & (i,j)\in E_R \\
            P_{i,j} + \varepsilon & \text{if} & (i,j)\in E_L 
        \end{array}\right. .
    \end{align*}
    For visualisation purposes, in the example of \cref{fig:ex_cycle_ollP}, we
    represent ``left'' edges of the cycle in blue (on these edges, we add
    $+\varepsilon$ in $Q$ and $-\varepsilon$ in $R$) and ``right'' edges in red
    (on which we do the opposite). By definition of $\varepsilon$, we have $Q, R
    \in [0,\frac{1}{n}]^{n\times n}$. By construction, we also have $P =
    \frac{1}{2}(Q+R)$.

    \step{3}{Showing that $Q, R \in \U$}

    Fix $j\in \llbracket 1, n \rrbracket,$ we show that $\sum_iQ_{i,j} =
    \frac{1}{n}$. Since $E_L$ and $E_R$ are injective and disjoint, we compute
    \begin{align*}
        \sum_{i=1}^nQ_{i,j} &= \sum_{i : (i,j)\not\in E_R\cup E_L} P_{i,j} +
        \sum_{i : (i,j)\in E_R} (P_{i,j} +\varepsilon) + \sum_{i : (i,j)\in E_L}
        (P_{i,j} -\varepsilon) \\
        &= \frac{1}{n} + \varepsilon(\# I_R(j) - \#I_L(j)), 
    \end{align*}
    where $I_R(j) := \{i\in \llbracket 1, n \rrbracket : (i,j)\in E_R\}$ and
    $I_L(j) := \{i \in \llbracket 1, n \rrbracket : (i,j)\in E_L\}$. Since $E_R$
    and $E_L$ are injective, we deduce that $I_R$ and $I_L$ are also injective.
    Take $i \in I_R(j)$ and write $(i, j) = (i_k, j_k) \in E_L$ for some $k\in
    \llbracket 1, p \rrbracket$. We notice that $(i_k, j_{k-1}) \in E_R$ where
    if $k=1$ we write $j_{k-1} := j_p$. We conclude that $\# I_R(j) = \# I_L(j)$
    and thus that $\sum_iQ_{i,j} = \frac{1}{n}$. The same reasoning shows that
    $\sum_jQ_{i,j} = \frac{1}{n}$ for all $i\in \llbracket 1, n \rrbracket$, and
    we conclude that $Q\in \U$. The same computations show that $R \in \U$ as
    well.

    \step{4}{Showing that $Q,R \in \PthetaXY$}

    We now show that $Q,R \in \PthetaXY$ using the definition
    (\cref{eqn:def_PthetaXY}). Take $(a, b) \in \llbracket 1, A \rrbracket
    \times \llbracket 1, B \rrbracket$. We have:
    \begin{align}
        \sum_{(i,j)\in I_a\times J_b}Q_{i,j} 
        &= \sum_{(i,j)\in (I_a\times J_b)\cap E_R^c \cap E_L^c}P_{i,j} 
        + \sum_{(i,j)\in (I_a\times J_b)\cap E_R}(P_{i,j} + \varepsilon)\nonumber\\
        &\quad + \sum_{(i,j)\in (I_a\times J_b)\cap E_L}(P_{i,j} - \varepsilon)
        \nonumber \\
        &= \cfrac{\# I_a\cap J_b}{n} 
        + \varepsilon\left(\# ((I_a \times J_b)\cap E_R) 
        - \# ((I_a \times J_b)\cap E_L)\right). 
        \label{eqn:sum_Q_condition_Ptheta}
    \end{align}
    Let $\oll{P}\in \R_+^{A\times B}$ be the OT matrix between
    $\sum_{a=1}^A\frac{\# I_a}{n}\delta_{s_a}$ and $\sum_{b=1}^B\frac{\#
    J_b}{n}\delta_{t_b}$. Consider the family $\oll{\mathcal{C}} := (a_1, b_1,
    \cdots, a_p, b_p, a_{p+1})$ defined by the condition $\forall k \in
    \llbracket 1, p \rrbracket,\; i_k \in I_{a_k},\; j_k \in J_{b_k}$ and
    $a_{p+1} := a_1$. Since $\mathcal{C} := (i_1, i_2, \cdots, i_p, j_p,
    i_{p+1})$ is a cycle in $G_{P}$, it follows that $\oll{\mathcal{C}}$ is a
    cycle in $G_{\oll{P}}$ since the condition $P \in \PthetaXY$ implies:
    $$\forall (a,b) \in \llbracket 1, A \rrbracket \times \llbracket 1, B
    \rrbracket,\; \sum_{(i,j)\in I_a\times J_b} P_{i,j} = \oll{P}_{a,b}, $$ thus
    if $P_{i,j}>0$ for some $(i,j)\in I_a\times J_b$ then $P_{a,b}>0$. See also
    \cref{fig:ex_cycle_ollP} for an example. We now apply
    \cref{lemma:split_multicycle} to show that $\oll{\mathcal{C}}$ is the
    ``concatenation'' of $L\geq 1$ cycles $\oll{\mathcal{C}}_\ell$ of
    $G_{\oll{P}}$ of the form:
    $$\oll{\mathcal{C}}_\ell := (a_1^\ell, b_1^\ell, \cdots, a_{p_\ell}^\ell,
    b_{p_\ell}^\ell, a_{p_\ell+1}^\ell),$$ where ``concatenation'' means that
    \cref{eqn:split_multicycle_concatenation} holds with the same notation, and
    where each $\oll{\mathcal{C}}_\ell$ is such that the edge families
    $((a_k^\ell, b_k^\ell))_{k=1}^{p_\ell}$ and $((b_k^\ell,
    a_{k+1}^\ell))_{k=1}^{p_\ell}$ are injective. For each $\ell\in \llbracket
    1, L \rrbracket$, we apply \cref{lemma:cycle_ollP}, which shows in
    particular that for any $(a,b) \in \llbracket 1, A \rrbracket \times
    \llbracket 1, B \rrbracket$:
    \begin{equation}\label{eqn:count_groups_in_Cl}
        \#\left\{k\in \llbracket 1, p_\ell \rrbracket : 
        (a_k^\ell, b_k^\ell) = (a,b)\right\} 
        = \#\left\{k\in \llbracket 1, p_\ell \rrbracket : 
        (a_{k+1}^\ell, b_k^\ell) = (a,b)\right\}.
    \end{equation}
    We understand \cref{eqn:count_groups_in_Cl} as the fact that for any left
    edge from group $a$ to group $b$ in $\mathcal{C}_\ell$, there corresponds
    exactly as many right edges from group $b$ to group $a$. This will allow us
    to show that the terms in $+\varepsilon$ and $-\varepsilon$ are in the same
    number. We now re-write the sets from the condition on $Q$
    (\cref{eqn:sum_Q_condition_Ptheta}) for a fixed $(a, b)\in \llbracket 1, A
    \rrbracket \times \llbracket 1, B \rrbracket$:
    \begin{align*}
        \#\left((I_a\times J_b)\cap E_R\right) &= 
        \left\{(i_k, j_k),\; k \in \llbracket 1, p \rrbracket,
        \; (i_k, j_k) \in I_a\times J_b \right\} \\
        &= \#\left((a_k, b_k),\; k \in \llbracket 1, p \rrbracket,\; 
        (a_k, b_k) = (a,b) \right) \\
        &= \sum_{\ell=1}^L \#\left((a_k^\ell, b_k^\ell),\; 
        k \in \llbracket 1, p_\ell \rrbracket,
        \; (a_k^\ell, b_k^\ell) = (a,b) \right) \\
        &= \sum_{\ell=1}^L \#\left((b_k^\ell, a_{k+1}^\ell),\; 
        k \in \llbracket 1, p_\ell \rrbracket,
        \; (a_{k+1}^\ell, b_k^\ell) = (a,b) \right) \\
        &= \#\left((I_a\times J_b)\cap E_L\right),
    \end{align*}
    where the first equality uses the definition of $E_R$, the second inequality
    comes from associating to each pair $(i_k, j_k)$ its group pair $(a_k, b_k)$
    and counting the group pairs \textit{with repetition}, the third equality
    the concatenation property of the cycles $\mathcal{C}_\ell$
    (\cref{eqn:split_multicycle_concatenation}), the fourth equality from
    \cref{eqn:count_groups_in_Cl} and the last inequality from the definition of
    $E_L$ (doing the same computations as for $E_R$ in reverse order).

    Combining with \cref{eqn:sum_Q_condition_Ptheta} shows that $\sum_{(i,j)\in
    I_a\times J_b} Q_{i,j} = \tfrac{\#(I_a\cap J_b)}{n}$ and thus that $Q \in
    \PthetaXY$. Likewise we show $R \in \PthetaXY$ and thus we have found $Q,R
    \in \U\cap \PthetaXY$ such that $P = \frac{1}{2}(Q+R)$, and we conclude that
    $P$ does not belong to $\Extr(\U\cap\PthetaXY)$, finishing the proof.
\end{proof}
From \cref{thm:extr_U_cap_PthetaXY} we deduce the following theorem, which is a
Monge formulation of the constrained Kantorovich problem in $\CWtheta$:

\begin{theorem}\label{thm:CW_theta_point_clouds} Let $(X, Y) \in \R^{n\times d}$
    and $\theta \in \SS^{d-1}$, we have:
    \begin{equation}\label{eqn:CW_theta_point_clouds}
        \CWtheta^2\left(\frac{1}{n}\sum_{i=1}^n \delta_{x_i}, 
        \frac{1}{n}\sum_{j=1}^n \delta_{y_j}\right) = 
        \underset{(\sigma, \tau) \in \Perm_\theta(X, Y)}{\min}\ 
        \frac{1}{n}\sum_{i=1}^n \|x_{\sigma(i)} - y_{\tau(i)}\|_2^2,
    \end{equation}
\end{theorem}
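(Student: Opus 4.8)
The plan is to feed the constrained Kantorovich formulation of \cref{prop:CWtheta_discrete_v1} into the constrained Birkhoff--von Neumann theorem \cref{thm:extr_U_cap_PthetaXY}, using the elementary fact that a linear functional on a nonempty compact convex subset of a finite-dimensional vector space attains its minimum at an extreme point.

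First I would reduce to \cref{ass:identity_sorts_projections}: choosing $\sigma_0\in\Perm_n$ sorting $(P_\theta x_i)_{i=1}^n$ and $\tau_0\in\Perm_n$ sorting $(P_\theta y_j)_{j=1}^n$, the relabelled points $X\circ\sigma_0$, $Y\circ\tau_0$ and $\theta$ satisfy \cref{ass:identity_sorts_projections}. By the first equality of \cref{lemma:CV_theta_monge_invariant_permutation} the left-hand side of \cref{eqn:CW_theta_point_clouds} is invariant under this relabelling, and by \cref{eqn:monge_permutation_invariance} together with $(X\circ\sigma_0)_{\sigma(i)} = x_{\sigma_0\circ\sigma(i)}$ so is the right-hand side; hence it suffices to prove \cref{eqn:CW_theta_point_clouds} under \cref{ass:identity_sorts_projections}. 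This reduction is not cosmetic: \cref{lemma:characterisation_Sigma_theta} makes $\Perm_\theta(X,Y)$ a group only under that assumption, and groupness is used in the last step.

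Next, writing $C_{i,j}:=\|x_i-y_j\|_2^2$, \cref{prop:CWtheta_discrete_v1} gives $\CWtheta^2(\tfrac1n\sum_i\delta_{x_i}, \tfrac1n\sum_j\delta_{y_j}) = \min_{P\in\U\cap\PthetaXY}\sum_{i,j}C_{i,j}P_{i,j}$. The set $\U\cap\PthetaXY$ is a closed bounded convex subset of $\R^{n\times n}$, and it is nonempty because $\Perm_\theta(X,Y)\neq\varnothing$ and $P^{\sigma,\tau}\in\U\cap\PthetaXY$ for $(\sigma,\tau)\in\Perm_\theta(X,Y)$ by \cref{lemma:condition_Psigmatau_in_Ptheta}; being a nonempty compact convex set in finite dimensions, it is the convex hull of its extreme points, so the affine objective $P\mapsto\sum_{i,j}C_{i,j}P_{i,j}$ attains its minimum on it at an extreme point. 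Invoking \cref{thm:extr_U_cap_PthetaXY}, $\Extr(\U\cap\PthetaXY)=\CPerm{\Perm_\theta(X,Y)}$, whence
$$\CWtheta^2\left(\tfrac1n\sum_{i}\delta_{x_i}, \tfrac1n\sum_{j}\delta_{y_j}\right) = \min_{(\sigma,\tau)\in\Perm_\theta(X,Y)}\ \sum_{i,j}C_{i,j}P^{\sigma,\tau}_{i,j}.$$

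Finally I would unwind the cost: since $P^{\sigma,\tau}_{i,j}=\tfrac1n\mathbbold{1}(\sigma(i)=\tau(j))$, reindexing the pairs $(i,j)$ with $\sigma(i)=\tau(j)$ by $k:=\sigma(i)=\tau(j)$ gives $\sum_{i,j}C_{i,j}P^{\sigma,\tau}_{i,j}=\tfrac1n\sum_{k=1}^n\|x_{\sigma^{-1}(k)}-y_{\tau^{-1}(k)}\|_2^2$; then, since under \cref{ass:identity_sorts_projections} the set $\Perm_\theta(X,Y)$ is, by \cref{lemma:characterisation_Sigma_theta}, the subgroup $\{(\sigma,\tau):\sigma(I_a)=I_a,\ \tau(J_b)=J_b\}$ of $\Perm_n^2$ and hence stable under $(\sigma,\tau)\mapsto(\sigma^{-1},\tau^{-1})$, substituting $(\sigma,\tau)$ by $(\sigma^{-1},\tau^{-1})$ in the minimum yields $\min_{(\sigma,\tau)\in\Perm_\theta(X,Y)}\tfrac1n\sum_{k}\|x_{\sigma(k)}-y_{\tau(k)}\|_2^2$, the right-hand side of \cref{eqn:CW_theta_point_clouds}. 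Since the hard work lies entirely in \cref{prop:CWtheta_discrete_v1} and \cref{thm:extr_U_cap_PthetaXY}, there is no serious obstacle remaining; the only things to watch are the two bookkeeping moves — the relabelling reduction and the inverse-permutation substitution — which are easy to skip over but genuinely needed.
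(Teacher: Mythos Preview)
Your proposal is correct and follows essentially the same route as the paper: reduce to \cref{ass:identity_sorts_projections} via \cref{lemma:CV_theta_monge_invariant_permutation}, invoke \cref{prop:CWtheta_discrete_v1}, pass to extreme points of the compact convex set $\U\cap\PthetaXY$, and identify these via \cref{thm:extr_U_cap_PthetaXY}. If anything you are more careful than the paper, which simply writes ``we conclude that \cref{eqn:CW_theta_point_clouds} holds thanks to \cref{thm:extr_U_cap_PthetaXY}'' without spelling out that $\sum_{i,j}C_{i,j}P^{\sigma,\tau}_{i,j}=\tfrac1n\sum_k\|x_{\sigma^{-1}(k)}-y_{\tau^{-1}(k)}\|_2^2$ and that the inverse-permutation substitution requires the group structure of $\Perm_\theta(X,Y)$ under \cref{ass:identity_sorts_projections}.
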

\begin{proof}
    Beginning under \cref{ass:identity_sorts_projections}, we combine
    \cref{thm:extr_U_cap_PthetaXY} with the expression of $\CWtheta^2$ from
    \cref{prop:CWtheta_discrete_v1}:
    \begin{align*}
         \CWtheta^2\left(\frac{1}{n}\sum_{i=1}^n \delta_{x_i},
        \frac{1}{n}\sum_{j=1}^n \delta_{y_j}\right) 
        &= \underset{P\in\U\cap\PthetaXY}{\min}\ 
        \sum_{i,j}\|x_{i} - y_{j}\|_2^2P_{i,j} \\
        &=\underset{P\in \Extr(\U\cap\PthetaXY)}{\min}\ \sum_{i,j}\|x_{i} -
        y_{j}\|_2^2P_{i,j},
    \end{align*}
    since the solution of a linear program over a non-empty convex compact set
    is attained at an extreme point \cite[Theorem
    2.7]{bertsimas1997introduction}, and we conclude that the expression in
    \cref{eqn:CW_theta_point_clouds} holds thanks to
    \cref{thm:extr_U_cap_PthetaXY}. For the general case without
    \cref{ass:identity_sorts_projections}, we use
    \cref{lemma:CV_theta_monge_invariant_permutation}.
\end{proof}

\section{Min-Pivot Sliced}
\label{sec:min_PS}

\subsection{Min-Pivot Sliced Discrepancy: Definition}

A specificity of the Pivot Sliced Wasserstein discrepancy is the dependence on
the axis $\theta\in\SS^{d-1}$, which can overly constrain the choice of
transport plans. In this section, we study the Min-Pivot Sliced Discrepancy
which minimises $\PStheta$ over $\theta\in\SS^{d-1}$. This object was first
introduced in \cite{mahey23fast} on the set of discrete uniform measures with
$n$ points. 
\begin{equation}\label{eqn:def_min_sliced_discrepancy}
    \minS^2(\mu_1, \mu_2) := \underset{\theta\in\SS^{d-1}}{\min}\
    \PStheta^2(\mu_1, \mu_2)
    = \underset{\substack{\theta\in\SS^{d-1} \\ 
    \omega \in \Omega_\theta(\mu_1, \mu_2)}}
    {\min}\ \int_{\R^{2d}}\|x_1-x_2\|_2^2\dd\omega(x_1, x_2),
\end{equation}
where we used the notation $\Omega_\theta(\mu_1, \mu_2)$ defined in
\cref{eqn:def_Omega_theta}, and \cref{thm:S_theta_equals_W_theta}. We show below
that the infimum is attained:
\begin{prop}\label{prop:minS_attained} Let $\mu_1, \mu_2 \in
    \mathcal{P}_2(\R^d)$. Then the minimum in
    \cref{eqn:def_min_sliced_discrepancy} is attained.
\end{prop}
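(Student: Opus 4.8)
The plan is to show that the infimum over $\theta \in \SS^{d-1}$ in \cref{eqn:def_min_sliced_discrepancy} is attained by a compactness-plus-lower-semicontinuity argument, exactly mirroring the structure used earlier to prove \cref{prop:W_nu_inf_attained} and \cref{prop:PStheta_lsc}. The sphere $\SS^{d-1}$ is compact for the Euclidean topology, so a minimising sequence $\theta_n$ admits a subsequence converging to some $\theta \in \SS^{d-1}$. The key ingredient is then that $\theta \longmapsto \PStheta^2(\mu_1, \mu_2)$ is lower semi-continuous, which is precisely what \cref{prop:PStheta_lsc} gives (taking the constant sequences $\mu_1^{(n)} := \mu_1$, $\mu_2^{(n)} := \mu_2$ and the varying projections $\theta_n$).

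More concretely, first I would set $m := \inf_{\theta \in \SS^{d-1}} \PStheta^2(\mu_1, \mu_2)$, which is finite and non-negative since each $\PStheta^2(\mu_1, \mu_2) \geq \W_2^2(\mu_1, \mu_2) \geq 0$ by \cref{prop:S_theta_semi_metric}, and is bounded above by any particular $\PStheta[\theta_0]^2(\mu_1, \mu_2) < \infty$ (finiteness because $\mu_1, \mu_2 \in \mathcal{P}_2(\R^d)$ and $\CWtheta$ is a minimum over couplings with finite second moments). Pick a sequence $\theta_n \in \SS^{d-1}$ with $\PStheta[\theta_n]^2(\mu_1, \mu_2) \longrightarrow m$. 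By compactness of $\SS^{d-1}$, extract a subsequence $\theta_{\alpha(n)} \longrightarrow \theta^* \in \SS^{d-1}$. Applying \cref{prop:PStheta_lsc} with the constant measure sequences, lower semi-continuity yields
$$\PStheta[\theta^*]^2(\mu_1, \mu_2) \leq \liminf_{n \to \infty} \PStheta[\theta_{\alpha(n)}]^2(\mu_1, \mu_2) = m.$$
Since by definition $m \leq \PStheta[\theta^*]^2(\mu_1, \mu_2)$, we get equality, so the minimum is attained at $\theta^*$.

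I do not expect any real obstacle here: the statement is essentially a corollary of \cref{prop:PStheta_lsc} together with compactness of the sphere. The only minor point to be careful about is phrasing the lower semi-continuity correctly — \cref{prop:PStheta_lsc} is stated for the map $(\theta, \mu_1, \mu_2) \longmapsto \PStheta(\mu_1, \mu_2)$ (not the square), but lower semi-continuity of a non-negative function is equivalent to lower semi-continuity of its square, so this causes no issue; alternatively one just works with $\PStheta$ rather than $\PStheta^2$ throughout and squares at the end. One should also note explicitly that $\minS^2$ is finite (hence $m < \infty$) so that the $\liminf$ manipulation is legitimate, which follows from picking any single admissible $\theta$ and any admissible coupling, e.g. using \cref{thm:S_theta_equals_W_theta} and the finiteness of second moments.
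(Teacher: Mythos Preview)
Your proof is correct and follows the same high-level strategy as the paper: take a minimising sequence $(\theta_n)$, extract a convergent subsequence by compactness of $\SS^{d-1}$, and pass to the limit using lower semi-continuity.

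The only difference worth noting is that you invoke \cref{prop:PStheta_lsc} directly for the lower semi-continuity of $\theta \longmapsto \PStheta(\mu_1,\mu_2)$, whereas the paper uses that proposition only for the continuity of the pivot $\theta \longmapsto \mu_\theta[\mu_1,\mu_2]$ and then essentially re-derives lower semi-continuity by hand: it picks optimal $3$-plans $\rho_n \in \Gamma(\mu_{\theta_n}, \mu_1, \mu_2)$, uses \cref{lemma:tightness_Gamma3} to extract a weak limit $\rho \in \Gamma(\mu_\theta, \mu_1, \mu_2)$, and applies lower semi-continuity of the cost functional $J$. Your route is the more economical one, since the work of passing through $3$-plans has already been packaged into \cref{prop:PStheta_lsc}; the paper's longer argument gains nothing additional here.
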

\begin{proof}
    Take a sequence $(\theta_n)_{n\in\N} \in \SS^{d-1}$ such that
    \[\PStheta[\theta_n](\mu_1, \mu_2)
    \xrightarrow[n\longrightarrow+\infty]{}\minS(\mu_1, \mu_2).\] By compactness
    of $\SS^{d-1}$, we can extract a converging subsequence of $(\theta_n)$: up
    to extraction we can assume that $\theta_n
    \xrightarrow[n\longrightarrow+\infty]{}\theta \in \SS^{d-1}$. Denoting
    $\mu_{\theta_n}:=\mu_{\theta_n}[\mu_1, \mu_2]$, by
    \cref{prop:W_nu_inf_attained}, for each $n\in \N$ we can choose $\rho_n \in
    \Gamma(\mu_{\theta_n}, \mu_1, \mu_2)$ optimal for $\PStheta[\theta_n](\mu_1,
    \mu_2)$. By \cref{prop:PStheta_lsc}, we have $\mu_{\theta_n}
    \xrightarrow[n\longrightarrow +\infty]{w}
    \mu_\theta.$ Using \cref{lemma:tightness_Gamma3} item 1) we obtain that the
    set of $\Gamma(\{\mu_{\theta_n}\}, \mu_1, \mu_2)$ is tight in
    $\mathcal{P}_2(\R^{3d})$, and since $(\rho_n) \in \Gamma(\{\mu_{\theta_n}\},
    \mu_1, \mu_2)^\N$, there exists an extraction $\alpha$ such that
    $\rho_{\alpha(n)} \xrightarrow[n\longrightarrow +\infty]{w}\rho \in
    \mathcal{P}_2(\R^{3d})$. Applying \cref{lemma:tightness_Gamma3} item 2)
    shows that $\rho \in \Gamma(\mu_\theta, \mu_1, \mu_2)$.    

    The cost function $J := \rho \in \mathcal{P}_2(\R^{3d}) \longmapsto
    \int_{\R^{3d}}\|x_1-x_2\|_2^2\dd \rho_{1, 2}(y, x_1, x_2)$ is lower
    semi-continuous by \cite[Lemma 1.6]{santambrogio2015optimal}, which provides
    the following inequality:
    $$\PStheta^2(\mu_1, \mu_2) \leq J(\rho)\leq \underset{n\longrightarrow
    +\infty}{\liminf}\ J(\rho_{\alpha(n)}) = \underset{n\longrightarrow
    +\infty}{\lim}\ S_{\theta_{\alpha(n)}}^2(\mu_1, \mu_2) = \minS^2(\mu_1,
    \mu_2),$$ where the first inequality holds by the property  $\rho \in
    \Gamma(\mu_\theta, \mu_1, \mu_2)$, the second inequality comes from the
    lower semi-continuity of $J$ and the first equality comes from the fact that
    $\forall n \in \N, \; J(\rho_n) = \PStheta[\theta_n]^2(\mu_1, \mu_2)$. We
    conclude that $\minS(\mu_1, \mu_2) = \PStheta(\mu_1, \mu_2)$ and thus the
    infimum is attained.
\end{proof}

From \cref{prop:minS_attained}, we conclude that the properties of $\PStheta$
stated in \cref{prop:S_theta_semi_metric} are inherited by $\minS$. In
\cref{ex:ce_triangle_minPS}, we show an example which numerically contradicts
the triangle inequality.

\begin{example}[$\minS$ does not verify the triangle
    inequality]\label{ex:ce_triangle_minPS} We consider a setting with three
    measures $\mu_1, \mu_2, \mu_3 \in \mathcal{P}(\R^2)$ with 10 points each,
    obtained with five rotations of the example from
    \cref{ex:ce_S_theta_triangle}, which we represent in
    \cref{fig:ce_triangle_minPS}. Extensive numerical approximation with
    $L:=10^5$ directions yields the following violation of the triangle
    inequality:
    \[\minS(\mu_1, \mu_3) + \minS(\mu_3, \mu_2) - \minS(\mu_1, \mu_2) \approx
    -0.612. \] While the expressions are not tractable in closed form, this
    numerical experiment strongly suggests that the triangle inequality does not
    hold for $\minS$.
\end{example}

\begin{figure}[ht]
    \centering
    \includegraphics[width=0.6\textwidth]{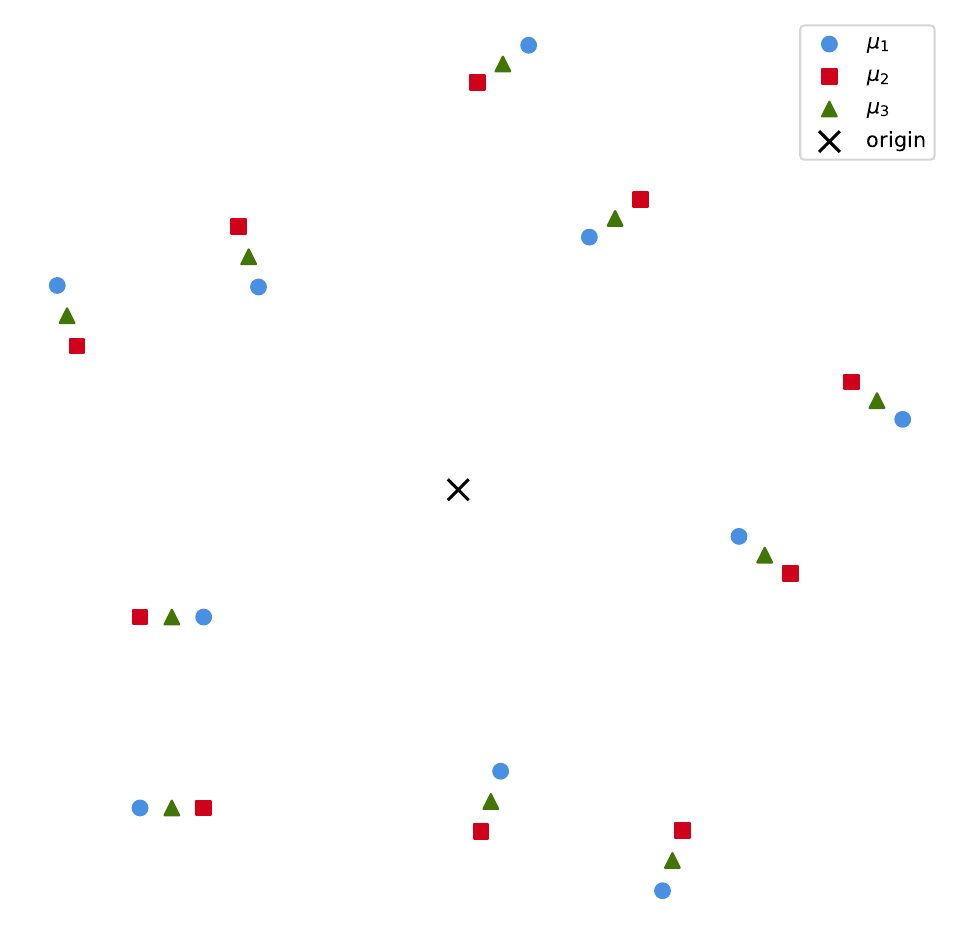}
    \caption{Counter-example from \cref{ex:ce_triangle_minPS} to the triangle
    inequality for $\minS$.}
    \label{fig:ce_triangle_minPS}
\end{figure}

\subsection{Equality with the Wasserstein Distance for Certain Discrete Measures}

In \cite[Proposition 3.2]{mahey23fast}, the authors show (proof in \cite[Section
11.1]{mahey23fast}) that the Min-Sliced Discrepancy $\minS$ equals $\W_2$ on the
set $\mathcal{P}^n(\R^d)$ of uniform discrete measures with $n$ points under a
condition on $n$ and $d$. Their proof relies on an application of
\cite{cover1967number} which requires the points to be in general position (see
\cref{def:general_position}), however the condition is not stated in
\cite{mahey23fast}. For the sake of clarity, we restate the result and provide a
detailed proof. First, we remind the notion of points of $\R^d$ in general
position in \cref{def:general_position}.

\begin{definition}\label{def:general_position} Let $x_1, \cdots, x_n \in \R^d$.
    We say that the points are in general position if for all $k\in \llbracket
    1, d \rrbracket$, there is no subset $I\subset\llbracket 1, n \rrbracket$
    with $k+2$ elements such that $\{x_i\}_{i\in I}$ is contained in a
    $k$-dimensional affine subspace of $\R^d$.
\end{definition}

\begin{prop}\label{prop:condition_minS_equals_W2} Let $\mu :=
    \tfrac{1}{n}\sum_{i=1}^n\delta_{x_i}$ and $\nu :=
    \tfrac{1}{n}\sum_{i=1}^n\delta_{y_i}$ such that the union of supports
    $(x_i)\cup(y_j)$ is in general position. If $2n\leq d+1$, then:
    \begin{align}
        &\Bigl\{(\sigma, \tau) \in \Perm_n^2 : \exists \theta \in \SS^{d-1}: 
        \forall i\in \llbracket 1, n-1 
        \rrbracket,\nonumber\\ 
        &\quad P_\theta x_{\sigma(i)}<  P_\theta x_{\sigma(i+1)},\;
        P_\theta y_{\tau(i)}<  P_\theta y_{\tau(i+1)}
        \Bigr\} = \Perm_n^2.\label{eqn:projected_permutations_all}
    \end{align}
    As a result, $\minS(\mu, \nu) = \W_2^2(\mu, \nu).$
\end{prop}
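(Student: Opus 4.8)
The plan is to reduce \cref{eqn:projected_permutations_all} to the purely linear-algebraic fact that every linear order on an affinely independent family of points is induced by some direction. The case $n=1$ is trivial (then $\Perm_1^2$ is a singleton and $\minS(\mu,\nu)=\W_2(\mu,\nu)=\|x_1-y_1\|_2$), so I would assume $n\ge 2$. The first step is to deduce from the hypotheses that the $2n$ points $x_1,\dots,x_n,y_1,\dots,y_n$ are affinely independent: applying \cref{def:general_position} with $k:=2n-2$, which lies in $\llbracket 1,d\rrbracket$ precisely because $2\le 2n\le d+1$, shows that these $2n$ points are not contained in any $(2n-2)$-dimensional affine subspace, which is exactly affine independence (and in particular they are pairwise distinct, so the union of supports really has $2n$ elements).

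The second step is the realizability lemma: if $p_1,\dots,p_m\in\R^d$ are affinely independent, then for any prescribed ordering there is $\theta\in\SS^{d-1}$ with $P_\theta p_1<\dots<P_\theta p_m$. I would prove it by noting that the difference vectors $v_i:=p_{i+1}-p_i$ for $i\in\llbracket 1,m-1\rrbracket$ are linearly independent (they differ from $p_2-p_1,\dots,p_m-p_1$ by an invertible triangular change of basis, and affine independence of the $p_i$ is exactly linear independence of the latter), so the linear map $\theta\mapsto(\langle\theta,v_1\rangle,\dots,\langle\theta,v_{m-1}\rangle)$ from $\R^d$ to $\R^{m-1}$ is surjective; choosing $\theta$ whose image is $(1,\dots,1)$ gives $P_\theta p_{i+1}-P_\theta p_i=1>0$ for all $i$, and $\theta\ne 0$, so $\theta/\|\theta\|_2$ is the desired direction.

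Combining the two steps proves \cref{eqn:projected_permutations_all}: given any $(\sigma,\tau)\in\Perm_n^2$, I apply the realizability lemma to the $2n$ affinely independent points listed as $x_{\sigma(1)},\dots,x_{\sigma(n)},y_{\tau(1)},\dots,y_{\tau(n)}$ to obtain $\theta\in\SS^{d-1}$ with $P_\theta x_{\sigma(1)}<\dots<P_\theta x_{\sigma(n)}$ and $P_\theta y_{\tau(1)}<\dots<P_\theta y_{\tau(n)}$; this $\theta$ witnesses $(\sigma,\tau)$ in the left-hand set, and the reverse inclusion is immediate, so equality with $\Perm_n^2$ holds.

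For the last claim, by \cref{thm:S_theta_equals_W_theta} and \cref{thm:CW_theta_point_clouds} we have $\PStheta^2(\mu,\nu)=\min_{(\sigma,\tau)\in\Perm_\theta(X,Y)}\tfrac1n\sum_{i=1}^n\|x_{\sigma(i)}-y_{\tau(i)}\|_2^2$, so $\minS^2(\mu,\nu)=\min_{\theta}\PStheta^2(\mu,\nu)$ is the minimum of $\tfrac1n\sum_i\|x_{\sigma(i)}-y_{\tau(i)}\|_2^2$ over $(\sigma,\tau)\in\bigcup_{\theta\in\SS^{d-1}}\Perm_\theta(X,Y)$. A direction realizing strict orderings has a unique pair of sorting permutations, which then lies in $\Perm_\theta(X,Y)$, so by \cref{eqn:projected_permutations_all} this union contains $\Perm_n^2$, hence equals it. Therefore $\minS^2(\mu,\nu)=\min_{(\sigma,\tau)\in\Perm_n^2}\tfrac1n\sum_i\|x_{\sigma(i)}-y_{\tau(i)}\|_2^2=\min_{\pi\in\Perm_n}\tfrac1n\sum_i\|x_i-y_{\pi(i)}\|_2^2$ (reparametrising $\pi:=\tau\circ\sigma^{-1}$), and the latter equals $\W_2^2(\mu,\nu)$ since optimal transport between two uniform $n$-atom measures is attained at a vertex of the transportation polytope $\U$, i.e.\ a permutation matrix (Birkhoff--von Neumann \cite{birkhoff1946three}, with \cite{bertsimas1997introduction} Theorem 2.7); as a cross-check, $\minS\ge\W_2$ also follows directly from \cref{prop:S_theta_semi_metric} and $\minS\le\W_2$ from plugging the $\W_2$-optimal permutation into the above formula. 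I do not expect a serious obstacle: the only points requiring care are the bookkeeping around \cref{def:general_position} (the choice $k=2n-2$ and the small-$n$ cases) and checking that $\bigcup_\theta\Perm_\theta(X,Y)$, although defined through non-strict sorting, is still exactly $\Perm_n^2$ — which the strict-order construction of the second step supplies.
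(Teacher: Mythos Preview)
Your proof is correct and follows the same skeleton as the paper's: first show that every pair $(\sigma,\tau)\in\Perm_n^2$ is realised by strict sorting along some direction, then invoke \cref{thm:S_theta_equals_W_theta} and \cref{thm:CW_theta_point_clouds} to identify $\minS$ with the Monge minimum over all of $\Perm_n^2$, hence with $\W_2$.

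The one substantive difference is in how the realisability step is established. The paper appeals to a result of Cover \cite{cover1967number} asserting that $2n$ points in general position in dimension $d\ge 2n-1$ admit every linear ordering via some direction, and then picks the ordering $x_{\sigma(1)},\dots,x_{\sigma(n)},y_{\tau(1)},\dots,y_{\tau(n)}$. You instead observe that under the hypothesis $2n\le d+1$ the general-position assumption, read at $k=2n-2$, is exactly affine independence of the $2n$ points, and then give a direct linear-algebra argument (surjectivity of $\theta\mapsto(\langle\theta,p_{i+1}-p_i\rangle)_i$) to produce the desired $\theta$. Your route is self-contained and makes explicit where the dimension bound $2n\le d+1$ is used; the paper's route is shorter on the page but defers the work to an external reference. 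In the regime of the proposition the two are equivalent in strength, since for $m\le d+1$ points general position and affine independence coincide.
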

\begin{proof}
    By the Theorem in \cite[Section 2]{cover1967number} and \cite[Equation (12)
    in Section 3]{cover1967number}, since $(x_1, \cdots, x_n, y_1, \cdots, y_n)$
    are in general position and $d\geq 2n-1$, we have
    \begin{equation}\label{eqn:projected_permutations_all_2}
        \left\{\alpha \in \Perm_{2n} : \exists \theta \in \SS^{d-1}:
        \forall k\in \llbracket 1, 2n-1 \rrbracket,\;
        P_\theta z_{\alpha(i)}<  P_\theta z_{\alpha(i+1)} \right\} = \Perm_{2n},
    \end{equation}
    where $(z_1, \cdots, z_{2n}) := (x_1, \cdots, x_n, y_1, \cdots, y_n)$. Take
    now $(\sigma, \tau) \in \Perm_n^2$ and define $\alpha\in \Perm_{2n}$ by:
    $$\forall i \in \llbracket 1, n \rrbracket,\; \alpha(i) := \sigma(i),\quad
    \forall j \in \llbracket 1, n \rrbracket,\; \alpha(n+j) := \tau(j).$$ By
    \cref{eqn:projected_permutations_all_2}, there exists $\theta \in \SS^{d-1}$
    such that $\forall k\in \llbracket 1, 2n-1 \rrbracket,\; P_\theta
    z_{\alpha(i)}<  P_\theta z_{\alpha(i+1)}$, showing
    \cref{eqn:projected_permutations_all}.

    Now by definition of $\Perm_\theta(X, Y)$, the RHS term of
    \cref{eqn:projected_permutations_all} is a subset of $\cup_{\theta \in
    \SS^{d-1}} \Perm_\theta(X, Y)$, which shows that $\cup_{\theta \in
    \SS^{d-1}} \Perm_\theta(X, Y) = \Perm_n^2$. Using
    \cref{thm:S_theta_equals_W_theta} and \cref{thm:CW_theta_point_clouds} we
    conclude:
    \begin{align*}
        \minS(\mu, \nu) 
        &= \underset{(\sigma, \tau) \in \cup_{\theta \in
        \SS^{d-1}}\Perm_\theta(X, Y)}{\min}\ \frac{1}{n}\sum_{i=1}^n
        \|x_{\sigma(i)}-y_{\tau(i)}\|_2^2 \\
        &= \underset{(\sigma, \tau) \in
        \Perm_n^2}{\min}\ \frac{1}{n}\sum_{i=1}^n \|x_{\sigma(i)}-y_{\tau(i)}\|_2^2\\
        &= \W_2^2(\mu, \nu),
    \end{align*}
    where the last equality comes from the Monge formulation of $\W_2$ in the
    case of uniform measures with the same number of points (see
    \cite[Proposition 2.1]{computational_ot} for instance).
\end{proof}
\section{Expected Sliced Wasserstein}\label{sec:expected_sliced}

In \cite{liu2024expected}, Liu et al. present a variant of the Sliced
Wasserstein distance, consisting in taking the transport cost of a coupling that
is an average of lifted sliced couplings. In this section, we will explain how
to define these notions for general measures of $\mathcal{P}_2(\R^d)$ instead of
discrete ones.

\subsection{Lifting Sliced Plans}

To lift a 1D transport plan onto $\R^{d}$, we will require the notion of
disintegration of measures with respect to a map reminded in
\cref{sec:nu_based_wass_disintegration}. Let $\mu_1, \mu_2 \in
\mathcal{P}_2(\R^d)$ and $\theta \in \SS^{d-1}$. Consider the disintegration of
$\mu_1$ with respect to $P_\theta := x\longmapsto x\cdot\theta$ as in
\cref{def:disintegration_P}: $\mu_1(\dd x) = (P_\theta\#\mu_1)(P_\theta\dd x)
\mu_1^{P_\theta x}(\dd x)$. The kernel $\mu_1^{P\theta x}$ is a measure on
$\R^d$ supported on the slice $\{x'\in \R^d\mid x'\cdot \theta = x\cdot\theta\}
= x +\theta^\perp$. Denoting similarly the disintegration of $\mu_2$ by
$\mu_2(\dd y) = (P_\theta\#\mu_2)(P_\theta\dd y) \mu_2^{P_\theta y}(\dd y)$, we
first notice that the disintegration of $\mu_1\otimes\mu_2$ with respect to
$(P_\theta, P_\theta)$ writes simply as a product:
\begin{equation}\label{eqn:disintegration_otimes}
    \mu_1\otimes\mu_2(\dd x, \dd y) = (P_\theta\#\mu_1)(P_\theta\dd
    x)(P_\theta\#\mu_2)(P_\theta\dd y) \mu_1^{P_\theta x}(\dd x)\mu_2^{P_\theta
    y}(\dd y),
\end{equation}
noticing that $(P_\theta, P_\theta)\#(\mu_1\otimes \mu_2) =
(P_\theta\#\mu_1)\otimes(P_\theta\#\mu_2)$. 

Take now the 1D OT plan $\pi_\theta := \pi_\theta[\mu_1, \mu_2] \in
\Pi^*(P_\theta\#\mu_1, P_\theta\#\mu_2)$, the idea behind the lift is to replace
the independent coupling $(P_\theta, P_\theta)\#(\mu_1\otimes \mu_2) =
(P_\theta\#\mu_1)\otimes(P_\theta\#\mu_2)$ in \cref{eqn:disintegration_otimes}
by $\pi_\theta$: we define the lifted plan through its disintegration as:
\begin{equation}\label{eqn:lifted_plan_symbolic}
    \gamma_\theta(\dd x, \dd y) := \pi_\theta(P_\theta\dd x, P_\theta\dd y) \mu_1^{P_\theta x}(\dd x)\mu_2^{P_\theta y}(\dd y).
\end{equation}
More formally, we can define $\gamma_\theta$ using test functions $\phi\in
\mathcal{C}_b^0(\R^{d}\times\R^{d})$:
\begin{equation}\label{eqn:lifted_plan_test_functions}
    \int_{\R^{2d}}\phi(x, y)\dd\gamma_\theta(x, y) = 
    \int_{\R^2}\left(\int_{P_\theta^{-1}(s)\times P_\theta^{-1}(t)}
    \phi(x, y)\dd\mu_1^{s}(x)\dd\mu_2^t(y)\right)\dd\pi_\theta(s, t).
\end{equation}
We illustrate the definition of the lifted plan on a simple example in
\cref{fig:independent_lifted_plan}.
\begin{figure}[H]
    \centering
    \includegraphics[width=0.7\textwidth]{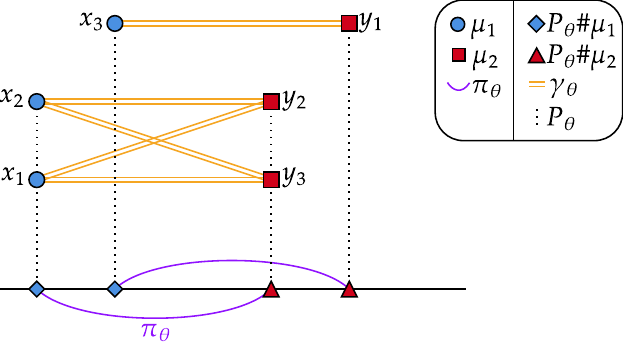}
    \caption{Example of the lifted plan $\gamma_\theta$ between two measures
    $\mu_1$ and $\mu_2$. In this case, we notice that $P_\theta x_1 = P_\theta
    x_2$ and $P_\theta y_2 = P_\theta y_3$. As a result, the optimal plan
    $\pi_\theta$ between $P_\theta\#\mu_1$ and $P_\theta\#\mu_2$ does not allow
    us to deduce an assignment between $(x_1, x_2)$ and $(y_2, y_3)$. The lifted
    coupling $\gamma_\theta$ chooses the independent coupling: $x_1$ is assigned
    uniformly to $(y_2, y_3)$ and likewise for $x_2$. As for $x_3$ and $y_1$,
    the coupling $\pi_\theta$ assigns $P_\theta x_3$ to $P_\theta y_1$ which
    imposes that $\gamma_\theta$ send $x_3$ to $y_1$.}
    \label{fig:independent_lifted_plan}
\end{figure}

In \cref{prop:elementary_props_lifted_plan} we show that the lifted plan is a
valid coupling between $\mu_1$ and $\mu_2$. We also provide an explicit
expression for discrete measures $\mu_1, \mu_2$, which coincides with the
expression in \cite[Equation 9]{liu2024expected}, which serves as their
definition of lifted plans.
\begin{prop}\label{prop:elementary_props_lifted_plan} Let $\mu_1, \mu_2 \in
    \mathcal{P}_2(\R^d)$, $\theta \in \SS^{d-1}$ and $\gamma_\theta :=
    \gamma_\theta[\mu_1, \mu_2]$ the lifted plan defined in
    \cref{eqn:lifted_plan_test_functions}. Then:
    \begin{enumerate}
        \item $\gamma_\theta \in \Pi(\mu_1, \mu_2)$.
        \item If $\mu_1 = \sum_{i=1}^na_i\delta_{x_i}$ and $\mu_2 =
        \sum_{j=1}^mb_j\delta_{y_j}$, let $\pi_\theta \in \Pi^*(P_\theta\#\mu_1,
        P_\theta\#\mu_2)$. For $(i,j) \in \llbracket 1, n \rrbracket \times
        \llbracket 1, m \rrbracket$, we define $Q_{i,j} :=
        \pi_\theta(\{(P_\theta x_i, P_\theta y_j)\})$, which allows us to see
        $\pi_\theta$ as a matrix of size $n\times m$.
        \begin{equation}\label{eqn:lifted_plan_discrete}
            \gamma_\theta = \sum_{i=1}^n\sum_{j=1}^m 
            \cfrac{a_ib_j}{A_iB_j} Q_{i,j}\delta_{(x_i, y_j)},
            \quad A_i := \sum_{i': x_{i'}\cdot\theta = x_i\cdot\theta} a_{i'},
            \; B_j := \sum_{j': y_{j'}\cdot\theta = y_j\cdot\theta} b_{j'}.
        \end{equation}
    \end{enumerate}
\end{prop}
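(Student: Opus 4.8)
The plan is to prove the two items separately, both times reducing to the disintegration of $\mu_1$ and $\mu_2$ along $P_\theta$ recalled in \cref{sec:disintegration}.

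For item 1, I would first check that \cref{eqn:lifted_plan_test_functions} genuinely defines a Borel probability measure. The clean way is to introduce the auxiliary measure $\Lambda$ on $\R^2\times\R^d\times\R^d$ given by $\Lambda(\dd s,\dd t,\dd x,\dd y):=\pi_\theta(\dd s,\dd t)\,\mu_1^{s}(\dd x)\,\mu_2^{t}(\dd y)$, which is well-defined since the disintegration kernels $s\mapsto\mu_1^{s}$, $t\mapsto\mu_2^{t}$ are measurable and the marginals of $\pi_\theta$ are $P_\theta\#\mu_1$ and $P_\theta\#\mu_2$ (so that $\mu_1^{s},\mu_2^{t}$ are defined for $\pi_\theta$-a.e.\ $(s,t)$); then $\gamma_\theta$ is the pushforward of $\Lambda$ under $(s,t,x,y)\mapsto(x,y)$, and \cref{eqn:lifted_plan_test_functions} is the change-of-variables formula for this pushforward. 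To identify the first marginal I would test against $\phi(x,y)=\psi(x)$ with $\psi\in\mathcal{C}_b^0(\R^d)$: the inner $y$-integral against $\mu_2^{t}$ equals $1$, and since the first marginal of $\pi_\theta$ is $P_\theta\#\mu_1$ the right-hand side collapses to $\int_{\R}\big(\int_{\R^d}\psi\,\dd\mu_1^{s}\big)\,\dd(P_\theta\#\mu_1)(s)$, which by the disintegration identity $\mu_1(\dd x)=(P_\theta\#\mu_1)(P_\theta\,\dd x)\,\mu_1^{P_\theta x}(\dd x)$ equals $\int\psi\,\dd\mu_1$; symmetrically for the second marginal, so $\gamma_\theta\in\Pi(\mu_1,\mu_2)$, and $\gamma_\theta\in\mathcal{P}_2(\R^{2d})$ is then automatic from $\mu_1,\mu_2\in\mathcal{P}_2(\R^d)$.

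For item 2 I would write down the discrete disintegrations explicitly: $P_\theta\#\mu_1=\sum_i a_i\delta_{x_i\cdot\theta}$, and for $s=x_i\cdot\theta$ the conditional law is $\mu_1^{s}=\tfrac{1}{A_i}\sum_{i':\,x_{i'}\cdot\theta=x_i\cdot\theta}a_{i'}\delta_{x_{i'}}$, similarly $\mu_2^{t}=\tfrac{1}{B_j}\sum_{j':\,y_{j'}\cdot\theta=y_j\cdot\theta}b_{j'}\delta_{y_{j'}}$ for $t=y_j\cdot\theta$ (with $A_{i'}=A_i$ whenever $x_{i'}\cdot\theta=x_i\cdot\theta$, so these are unambiguous). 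Viewing $\pi_\theta$ as a measure on $\R^2$ carried by the points $(x_i\cdot\theta,y_j\cdot\theta)$ with $\pi_\theta(\{(x_i\cdot\theta,y_j\cdot\theta)\})=Q_{i,j}$, I would substitute these into \cref{eqn:lifted_plan_test_functions}: summing over the distinct projected pairs $(s,t)\in\supp\pi_\theta$ and then over the corresponding groups $\{i':x_{i'}\cdot\theta=s\}\times\{j':y_{j'}\cdot\theta=t\}$ is the same as summing over all $(i,j)$ (pairs with $Q_{i,j}=0$ contribute nothing), and one reads off $\int\phi\,\dd\gamma_\theta=\sum_{i,j}\tfrac{a_ib_j}{A_iB_j}Q_{i,j}\,\phi(x_i,y_j)$, which is \cref{eqn:lifted_plan_discrete}; a short marginalisation in $j$ recovers $a_i$, consistently with item 1.

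I expect the only genuine subtlety to be the well-definedness/measurability bookkeeping behind \cref{eqn:lifted_plan_test_functions} in item 1 — that the formula specifies an honest Radon probability measure and not a mere functional — but this is exactly the disintegration theory recalled in \cref{sec:disintegration}, hence routine. The discrete identity in item 2 is then pure bookkeeping once the conditional laws $\mu_1^{s},\mu_2^{t}$ have been identified.
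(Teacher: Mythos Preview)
Your proposal is correct and follows essentially the same route as the paper: for item 1 you test against functions of $x$ alone, use that $\mu_2^{t}$ is a probability, collapse the $t$-integral via the marginal constraint on $\pi_\theta$, and finish with the disintegration identity for $\mu_1$; for item 2 you write out the discrete disintegration kernels and substitute. The only difference is that you add the auxiliary measure $\Lambda$ to justify that \cref{eqn:lifted_plan_test_functions} defines a bona fide measure, which the paper leaves implicit.
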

\begin{proof}
    For 1. we verify the property using a test function $\phi \in
    \mathcal{C}_b^0(\R^d)$ with \cref{eqn:lifted_plan_test_functions}:
    \begin{align*}
        \int_{\R^{2d}}\phi(x)\dd\gamma_\theta(x, y) &= 
        \int_{\R^2}\left(\int_{P_\theta^{-1}(s)\times P_\theta^{-1}(t)}
        \phi(x)\dd\mu_1^{s}(x)\dd\mu_2^t(y)\right)\dd\pi_\theta(s, t) \\
        &= \int_{\R^2}\left(\int_{P_\theta^{-1}(s)}\phi(x)\dd\mu_1^{s}(x)\right)
        \underbrace{\left(\int_{P_\theta^{-1}(t)}\dd\mu_2^{t}(y)\right)}_{=1}
        \dd\pi_\theta(s, t) \\
        &= \int_\R \left(\int_{P_\theta^{-1}(s)}\phi(x)\dd\mu_1^{s}(x)\right) 
        \dd(P_\theta\#\mu_1)(s) \\
        &= \int_{\R^d}\phi(x)\dd\mu_1(x),
    \end{align*}
    where we use the fact that the first marginal of $\pi_\theta$ is
    $P_\theta\#\mu_1$, and then use disintegration of $\mu_1$ with respect to
    $P_\theta$. The same method shows that the second marginal of
    $\gamma_\theta$ is $\mu_2$, concluding $\gamma_\theta \in \Pi(\mu_1,
    \mu_2)$.

    For 2. we begin by writing explicitly the disintegration of $\mu_1$ with
    respect to $P_\theta$. For $\mu_1 = \sum_{i=1}^na_i\delta_{x_i}$, we have
    $P_\theta\#\mu = \sum_i a_i \delta_{P_\theta x_i}$, and for $s=P_\theta x_i
    \in \supp(P_\theta\#\mu)$, we have $\mu_1^{s} = A_i^{-1}\sum_{i': x_i'\cdot
    \theta = s} a_{i'}\delta_{x_{i'}}$. We establish
    \cref{eqn:lifted_plan_discrete} by testing on $\phi \in
    \mathcal{C}_b^0(\R^{2d})$. The support of $\pi_\theta$ is (at most) the
    family of pairs $((x_i\cdot\theta, y_j\cdot\theta))_{i,j}$. We choose $I
    \subset \llbracket 1, n \rrbracket \times \llbracket 1, m \rrbracket$ such
    that the support of $\pi_\theta$ is the injective family
    $\left((x_i\cdot\theta, y_j\cdot\theta)\right)_{(i,j) \in I}$. We then have:
        \begin{align*}
            \int_{\R^{2d}}\phi(x, y)\dd\gamma_\theta(x, y) &= 
            \int_{\R^2}\left(\int_{P_\theta^{-1}(s)\times P_\theta^{-1}(t)}
            \phi(x, y)\dd\mu_1^{s}(x)\dd\mu_2^t(y)\right)\dd\pi_\theta(s, t) \\
            &= \sum_{(i,j) \in I} \Biggl(\sum_{\substack{
            i': P_\theta x_{i'} = P_\theta x_i \\ 
            j': P_\theta y_{j'} = P_\theta y_j}}\phi(x_{i'}, y_{j'})
            \cfrac{a_{i'}b_{j'}}{A_iB_j}\Biggr) 
            \pi_\theta(\{(P_\theta x_i, P_\theta y_j)\}) \\
            &= \sum_{i=1}^n\sum_{j=1}^m \phi(x_i, y_j)\cfrac{a_i b_j}{A_i B_j}
            Q_{i,j},
        \end{align*}
    where we use the fact that for $(i,j)\in I$ and $(i', j')$ such that
    $P_\theta x_{i'} = P_\theta x_i$ and $P_\theta y_{j'} = P_\theta y_j$, it
    holds that $A_i = A_{i'},\; B_j = B_{j'}$ and $Q_{i,j} = Q_{i',j'}$.
\end{proof}
The discrete expression in \cref{eqn:lifted_plan_discrete} shows that the
definition of listed plans in \cref{eqn:lifted_plan_test_functions} is a
generalisation of the plan lift from \cite[Equation 9]{liu2024expected}. We now
study the transport cost associated to the lifted plan $\gamma_\theta$:
\begin{definition}\label{def:independent_lifted_cost} For $\theta\in \SS^{d-1}$
    and $\mu_1, \mu_2 \in \mathcal{P}_2(\R^d)$. With $\gamma_\theta[\mu_1,
    \mu_2]$ the lifted plan defined in \cref{eqn:lifted_plan_test_functions}, we
    define the lifted cost as:
    $$\LStheta^2(\mu_1, \mu_2) :=
    \int_{\R^{2d}}\|x_1-x_2\|_2^2\dd\gamma_\theta[\mu_1, \mu_2](x_1, x_2). $$
\end{definition}
We will see that $\LStheta$ defines a discrepancy on $\mathcal{P}_2(\R^d)$ that
is almost a distance.
\begin{prop}\label{prop:lifted_cost_almost_distance} Fix $\theta\in \SS^{d-1}$.
    The quantity $\LStheta$ is non-negative, symmetric, verifies the triangle
    inequality, and if $\mu_1,\mu_2 \in \mathcal{P}_2(\R^d)$ verify
    $\LStheta(\mu_1, \mu_2) = 0$ then $\mu_1=\mu_2$. Furthermore, we have the
    inequality $\LStheta \geq \W_2$.
\end{prop}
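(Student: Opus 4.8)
The plan is to treat the four assertions separately, in increasing order of difficulty; only the triangle inequality is non-trivial. First, non-negativity of $\LStheta^2$ is immediate from \cref{def:independent_lifted_cost}, since the integrand $\|x_1-x_2\|_2^2$ is non-negative. For the comparison $\LStheta\geq\W_2$ and for separation I would use that the lifted plan is an admissible coupling: by \cref{prop:elementary_props_lifted_plan} item 1, $\gamma_\theta[\mu_1,\mu_2]\in\Pi(\mu_1,\mu_2)$, so $\LStheta^2(\mu_1,\mu_2)=\int_{\R^{2d}}\|x_1-x_2\|_2^2\dd\gamma_\theta[\mu_1,\mu_2]\geq\W_2^2(\mu_1,\mu_2)$ by definition of $\W_2$ as an infimum over couplings. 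Separation follows at once: $\LStheta(\mu_1,\mu_2)=0$ forces $\W_2(\mu_1,\mu_2)=0$, hence $\mu_1=\mu_2$.

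For symmetry I would observe that $\pi_\theta[\mu_2,\mu_1]\in\Pi^*(P_\theta\#\mu_2,P_\theta\#\mu_1)$ is, by \cite{santambrogio2015optimal} Theorem 2.9, the comonotone coupling, i.e. the image of $\pi_\theta[\mu_1,\mu_2]$ under the coordinate swap $(s,t)\mapsto(t,s)$. Feeding this into the defining identity \cref{eqn:lifted_plan_test_functions} shows that $\gamma_\theta[\mu_2,\mu_1]$ is the pushforward of $\gamma_\theta[\mu_1,\mu_2]$ by $(x,y)\mapsto(y,x)$; since $\|y-x\|_2^2=\|x-y\|_2^2$, this gives $\LStheta(\mu_2,\mu_1)=\LStheta(\mu_1,\mu_2)$.

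The heart of the matter is the triangle inequality, for which I would build a single coupling of $\mu_1,\mu_2,\mu_3$ that simultaneously realises all three lifted plans. Writing $\nu_i:=P_\theta\#\mu_i$ and $\mu_i(\dd x)=\nu_i(P_\theta\dd x)\mu_i^{P_\theta x}(\dd x)$ for the disintegration as in \cref{eqn:lifted_plan_symbolic}, I would take one $U\sim\Leb_{[0,1]}$, set $S_i:=F_{\nu_i}^{[-1]}(U)$, and then, conditionally on $U$, sample $X_1,X_2,X_3$ independently with respective laws $\mu_1^{S_1},\mu_2^{S_2},\mu_3^{S_3}$. The point is that for every pair $i\neq j$ the pair $(S_i,S_j)$ has law $(F_{\nu_i}^{[-1]},F_{\nu_j}^{[-1]})\#\Leb_{[0,1]}$, which is exactly the 1D optimal plan $\pi_\theta[\mu_i,\mu_j]$ (it is the unique element of $\Pi^*(\nu_i,\nu_j)$ by \cite{santambrogio2015optimal} Theorem 2.9; see also \cref{prop:semi_1D_ot} and \cref{lemma:S_theta_well_def}). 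A short Fubini computation, using the conditional independence of the $X_k$ given $U$, then matches the law of $(X_i,X_j)$ with the right-hand side of \cref{eqn:lifted_plan_test_functions}, so $(X_i,X_j)\sim\gamma_\theta[\mu_i,\mu_j]$ and hence $\LStheta(\mu_i,\mu_j)=\|X_i-X_j\|_{L^2}$, which is finite since $\mu_i\in\mathcal{P}_2(\R^d)$. Minkowski's inequality applied to $X_1-X_3=(X_1-X_2)+(X_2-X_3)$ then yields $\LStheta(\mu_1,\mu_3)\leq\LStheta(\mu_1,\mu_2)+\LStheta(\mu_2,\mu_3)$.

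I expect the only genuinely delicate step to be making this simultaneous construction rigorous: one must define the joint law on $[0,1]\times(\R^d)^3$ as $\int_0^1\big(\mu_1^{F_{\nu_1}^{[-1]}(u)}\otimes\mu_2^{F_{\nu_2}^{[-1]}(u)}\otimes\mu_3^{F_{\nu_3}^{[-1]}(u)}\big)\,\dd u$, which requires the (standard) measurability of the disintegration kernels $u\mapsto\mu_i^{F_{\nu_i}^{[-1]}(u)}$ recalled in the appendix, and then check carefully that each bi-marginal coincides with \cref{eqn:lifted_plan_test_functions}. Everything else is routine bookkeeping. Note that the statement does not claim $\LStheta(\mu,\mu)=0$ — indeed it can be positive when the fibers $\mu^s$ are non-degenerate — which is precisely why $\LStheta$ is only "almost" a distance.
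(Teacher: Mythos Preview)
Your proposal is correct and follows essentially the same route as the paper. The paper also derives $\LStheta\geq\W_2$ from $\gamma_\theta\in\Pi(\mu_1,\mu_2)$ and deduces separation from it; for the triangle inequality it builds the same simultaneous 3-coupling via the quantile 3-plan $\eta_\theta=(F_{\nu_1}^{[-1]},F_{\nu_2}^{[-1]},F_{\nu_3}^{[-1]})\#\Leb_{[0,1]}$, lifts it fibrewise by the independent product $\mu_1^{s_1}\otimes\mu_2^{s_2}\otimes\mu_3^{s_3}$, checks that each bi-marginal is $\gamma_\theta[\mu_i,\mu_j]$, and applies Minkowski in $L^2(\rho_\theta)$ --- your probabilistic phrasing with $U\sim\Leb_{[0,1]}$ and conditionally independent $X_1,X_2,X_3$ is exactly this construction in random-variable language.
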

\begin{proof}
    Non-negativity and symmetry are immediate. For $\mu_1,
    \mu_2\in\mathcal{P}_2(\R^d)$, by \cref{prop:elementary_props_lifted_plan},
    we have $\gamma_\theta[\mu_1, \mu_2] \in \Pi(\mu_1, \mu_2)$, hence
    $\LStheta(\mu_1, \mu_2)\geq\W_2(\mu_1, \mu_2)$. Suppose now that
    $\mu_1,\mu_2 \in \mathcal{P}_2(\R^d)$ are such that $\LStheta(\mu_1, \mu_2)
    = 0$, then $\W_2(\mu_1, \mu_2) =0$ and therefore $\mu_1 = \mu_2$.

    We now show the triangle inequality: let $\mu_1, \mu_2, \mu_3\in
    \mathcal{P}_2(\R^d)$. We consider the sliced 3-plan $\eta_\theta$ defined
    by:
    $$\eta_\theta := \left(F_{P_\theta\#\mu_1}^{[-1]},
    F_{P_\theta\#\mu_2}^{[-1]}, F_{P_\theta\#\mu_3}^{[-1]}\right) \#\Leb_{[0,
    1]}. $$ For $i< j\in \{1, 2, 3\}$, introduce $\pi_\theta^{(i,j)}$ the unique
    optimal transport plan between $P_\theta\#\mu_i$ and $P_\theta\#\mu_j$. By
    \cite[Theorem 2.9]{santambrogio2015optimal}, we see that
    $[\eta_\theta]_{i,j}=\pi_\theta^{(i,j)}$. We now lift the sliced plan
    $\eta_\theta$ in the same manner as in
    \cref{eqn:lifted_plan_test_functions}, defining a plan $\rho_\theta \in
    \mathcal{P}_2(\R^{3d})$ by disintegration:
    $$\rho_\theta(\dd x_1, \dd x_2, \dd x_3) := \eta_\theta(P_\theta\dd x_1,
    P_\theta \dd x_2, P_\theta \dd x_3)\mu_1^{P_\theta x_1}(\dd x_1)
    \mu_2^{P_\theta x_2}(\dd x_2)\mu_3^{P_\theta x_3}(\dd x_3).$$ By computing
    the expectation against test functions, $\forall i<j\in \{1, 2, 3\},\;
    [\rho_\theta]_{i,j} = \gamma_\theta[\mu_i, \mu_j]$. We now use the classical
    gluing method (as in \cite[Lemma 5.5]{santambrogio2015optimal}) to show the
    triangle inequality, introducing the functions $\phi_i := (x_1, x_2, x_3)
    \longmapsto x_i$ for $i\in \{1, 2, 3\}$:
    \begin{align*}
        \LStheta(\mu_1, \mu_3)
        &= \sqrt{\int_{\R^{2d}}\|x_1-x_3\|_2^2\dd
        \gamma_\theta[\mu_1, \mu_3](x_1, x_3)} \\
        &= \sqrt{\int_{\R^{3d}}\|x_1-x_3\|_2^2
        \dd\rho_\theta(x_1, x_2, x_3)} \\
        &= \|\phi_1 - \phi_3\|_{L^2(\rho_\theta)} \\
        &\leq \|\phi_1 - \phi_2\|_{L^2(\rho_\theta)} 
        + \|\phi_2 - \phi_3\|_{L^2(\rho_\theta)} \\
        &= \sqrt{\int_{\R^{2d}}\|x_1-x_2\|_2^2
        \dd\gamma_\theta[\mu_1, \mu_2](x_1, x_2)}\\
        &\quad+\sqrt{\int_{\R^{2d}}\|x_2-x_3\|_2^2
        \dd\gamma_\theta[\mu_2, \mu_3](x_1, x_2)} \\
        &= \LStheta(\mu_1, \mu_2) + \LStheta(\mu_2, \mu_3).
    \end{align*}
    \vskip -20pt
\end{proof}

The discrepancy $\LStheta$ is not a distance on $\mathcal{P}_2(\R^d)$: in
\cref{ex:EStheta_positive_self_distance}, we introduce a particular case in
dimension two where $\LStheta(\mu, \mu)>0$. 
\begin{example}[$\LStheta(\mu, \mu)$ can be
    non-zero]\label{ex:EStheta_positive_self_distance} Take $\theta := (1, 0)$
    and $\mu := \frac{1}{2}(\delta_{x_0} + \delta_{x_1}),\; x_0 := (0, 0),\; x_1
    := (0, 1)$. We have $P_\theta\#\mu = \delta_0$ and thus $\gamma_\theta[\mu,
    \mu] = \mu\otimes\mu$. The lifted cost is then:
    $$\LStheta^2(\mu, \mu) = \cfrac{1}{4}\left(\|x_0 - x_0\|_2^2 + \|x_0 -
    x_1\|_2^2 + \|x_1 - x_0\|_2^2 + \|x_1 - x_1\|_2^2\right) = \cfrac{1}{2}>0.
    $$ 
\end{example}
For probability measures $\mu$ with countable support, for almost-every 
$\theta\in \SS^{d-1}$, there is no ambiguity in the projections, and thus 
$\LStheta(\mu, \mu)=0$, as shown in \cref{prop:ES_theta_discrete_as_distance}. 
To state the result, we introduce the following notation for the set of 
probability measures with countable\footnote{by ``countable", we mean a set 
that is either finite or equipotent to $\N$.} support:
\begin{equation}\label{eqn:countably_discrete}
    \mathcal{P}_{\mathrm{DC}}(\R^d) := 
    \left\{\mu = \sum_{x\in X} a_x\delta_x :
    X\subset\R^d\text{ countable},\; (a_x)_{x\in X} \in (0, 1]^d,
    \; \sum_{x\in X} a_x = 1\right\}.
\end{equation}
\begin{prop}\label{prop:ES_theta_discrete_as_distance}
    Consider $\mu\in \mathcal{P}_{\mathrm{DC}}(\R^d)$, then for almost-every 
    $\theta\in \SS^{d-1}$, we have $\LStheta(\mu, \mu)=0$.
\end{prop}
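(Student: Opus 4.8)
The plan is to reduce the claim to the single-direction statement that if the projected measure $P_\theta\#\mu$ has no atoms then $\gamma_\theta[\mu,\mu]$ equals the diagonal coupling $(I,I)\#\mu$, whence $\LStheta^2(\mu,\mu)=0$. So the first step is to fix $\mu = \sum_{x\in X} a_x\delta_x \in \mathcal{P}_{\mathrm{DC}}(\R^d)$ with $X$ countable, and to exhibit a full-measure set of directions for which $P_\theta$ is injective on $X$. For any two distinct points $x,x'\in X$, the set of $\theta\in\SS^{d-1}$ with $P_\theta x = P_\theta x'$ is $\{\theta : \theta\cdot(x-x')=0\} = (x-x')^\perp\cap\SS^{d-1}$, a great subsphere of dimension $d-2$, hence of $\mathcal{U}(\SS^{d-1})$-measure zero. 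Since $X$ is countable, $X\times X$ is countable, so the union over all distinct pairs $(x,x')$ of these null sets is still null; thus for almost-every $\theta\in\SS^{d-1}$, the family $(P_\theta x)_{x\in X}$ is injective, i.e.\ $P_\theta$ separates the atoms of $\mu$.

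Next I would fix such a ``good'' direction $\theta$ and compute $\gamma_\theta[\mu,\mu]$ explicitly. Applying \cref{prop:elementary_props_lifted_plan} item 2 with $\mu_1=\mu_2=\mu$ (so $a_i = b_i$, $x_i = y_i$), injectivity of $(P_\theta x_i)_i$ gives $A_i = a_i$ and $B_j = a_j$ for every $i,j$, and the one-dimensional optimal plan $\pi_\theta\in\Pi^*(P_\theta\#\mu, P_\theta\#\mu)$ between a discrete measure and itself (with distinct atoms $P_\theta x_i$) is the identity plan, i.e.\ $Q_{i,j} = a_i\,\mathbbold{1}(i=j)$. Substituting into \cref{eqn:lifted_plan_discrete} yields
\[
\gamma_\theta[\mu,\mu] = \sum_{i}\sum_{j} \frac{a_i a_j}{a_i a_j}\, a_i\,\mathbbold{1}(i=j)\,\delta_{(x_i,x_j)} = \sum_i a_i\,\delta_{(x_i,x_i)},
\]
which is the diagonal coupling supported on $\{(x,x)\}$. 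Therefore $\LStheta^2(\mu,\mu) = \int \|x_1-x_2\|_2^2\,\dd\gamma_\theta[\mu,\mu](x_1,x_2) = \sum_i a_i \|x_i - x_i\|_2^2 = 0$, so $\LStheta(\mu,\mu)=0$. Combined with the first step, this holds for almost-every $\theta\in\SS^{d-1}$, which is the claim.

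I do not expect a serious obstacle here; the only point requiring a little care is the case where $X$ is countably infinite, since \cref{prop:elementary_props_lifted_plan} item 2 is stated for finite sums. One option is to check directly from the disintegration definition \cref{eqn:lifted_plan_test_functions} that the same computation goes through: when $(P_\theta x)_{x\in X}$ is injective, each disintegration kernel $\mu^{P_\theta x_i}$ equals $\delta_{x_i}$, the self-plan $\pi_\theta$ is $\sum_i a_i\delta_{(P_\theta x_i, P_\theta x_i)}$, and plugging $\phi$ into \cref{eqn:lifted_plan_test_functions} collapses the integral to $\sum_i a_i\,\phi(x_i,x_i)$, so $\gamma_\theta[\mu,\mu]$ is again the diagonal coupling. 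Alternatively one can note that convergence of the series $\sum a_i = 1$ makes the finite-support computation pass to the limit. Either way the conclusion $\LStheta(\mu,\mu)=0$ for a.e.\ $\theta$ follows.
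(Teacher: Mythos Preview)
Your proposal is correct and follows essentially the same approach as the paper: first show that for almost every $\theta$ the projections $(P_\theta x)_{x\in X}$ are injective (countable union of great subspheres has measure zero), then compute that for such $\theta$ the lifted plan $\gamma_\theta[\mu,\mu]$ is the diagonal coupling $\sum_x a_x\delta_{(x,x)}$, giving zero cost. The only cosmetic difference is that the paper works directly from the disintegration definition \cref{eqn:lifted_plan_test_functions} (your ``first option'' for handling countably infinite $X$) rather than detouring through the finite formula \cref{eqn:lifted_plan_discrete}; this sidesteps the need for a limiting argument.
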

\begin{proof}
    Denoting $\bbsigma_u$ the uniform measure on the unit sphere $\SS^{d-1}$, we
    have by countable additivity $\P_{\theta\sim\bbsigma_u}(\exists x\neq y \in
    X^2 : P_\theta x = P_\theta y) \leq \sum_{x\neq y\in
    X^2}\bbsigma_u((x-y)^\perp) = 0$. We now fix $\Theta\subset \SS^{d-1}$ the
    set $\Theta := \{\theta \in \SS^{d-1} : \forall x \neq y \in X^2,\; \theta
    \not\in (x-y)^\perp \}$, we have shown that $\bbsigma_u(\Theta) = 1$. For
    $\theta \in \Theta$, the family $(P_\theta x)_{x\in X}$ is injective, and
    the disintegration kernel $\mu$ with respect to $P_\theta$ at $P_\theta x$
    is simply $\delta_{Q_{\theta^\perp} x}$, therefore the lifted plan
    $\gamma_\theta[\mu, \mu]$ is $\sum_{x\in X}a_x\delta_{(x, x)}$. It
    follows from the definition that $\LStheta(\mu, \mu)=0$ for any $\theta \in
    \Theta$, concluding the proof.
\end{proof}

\subsection{Averaging Lifted Plans}

Let $\mu_1, \mu_2 \in \mathcal{P}_2(\R^d)$ and $\theta \in \SS^{d-1}$. We have
constructed a lifted plan $\gamma_\theta \in \Pi(\mu_1, \mu_2)$ (see
\cref{eqn:lifted_plan_test_functions,prop:elementary_props_lifted_plan}). We now
define the expected lifted plan as the ``average'' of lifted plans over all
directions $\theta \in \SS^{d-1}$ through a probability measure $\bbsigma \in
\mathcal{P}(\SS^{d-1})$. We define $\oll{\gamma}[\mu_1, \mu_2, \bbsigma]$ by 
duality on test functions $\phi\in \mathcal{C}_b^0(\R^d\times\R^d)$:
\begin{equation}\label{eqn:def_expected_lifted_plan}
    \int_{\R^{2d}}\phi(x, y)\dd\oll{\gamma}[\mu_1, \mu_2, \bbsigma](x, y) := 
    \int_{\SS^{d-1}}\int_{\R^{2d}}\phi(x, y)
    \dd\gamma_\theta[\mu_1, \mu_2](x, y)\dd\bbsigma(\theta).
\end{equation}
Having defined the expected lifted plan, we can now define the expected sliced
discrepancy:
\begin{definition}\label{def:expected_sliced} Let $\mu_1, \mu_2 \in
    \mathcal{P}_2(\R^d)$ and $\bbsigma \in \mathcal{P}(\SS^{d-1})$. The expected
    sliced discrepancy between $\mu_1$ and $\mu_2$ is defined as:
    \begin{align*}
        \ES_{\bbsigma}^2(\mu_1, \mu_2) &:= \int_{\R^{2d}}\|x-y\|_2^2
        \dd\oll{\gamma}[\mu_1, \mu_2, \bbsigma](x, y) \\
        &= \int_{\SS^{d-1}}\int_{\R^{2d}}\|x-y\|_2^2 
        \dd\gamma_\theta[\mu_1, \mu_2](x, y)\dd\bbsigma(\theta) \\
        &= \int_{\SS^{d-1}}\LStheta^2(\mu_1, \mu_2)\dd\bbsigma(\theta).
    \end{align*}
    where $\oll{\gamma}[\mu_1, \mu_2, \bbsigma]$ is the expected lifted plan
    between $\mu_1$ and $\mu_2$ for the measure $\bbsigma$ on $\SS^{d-1}$,
    defined in \cref{eqn:def_expected_lifted_plan}, and $\gamma_\theta[\mu_1,
    \mu_2]$ is the lifted plan defined in \cref{eqn:lifted_plan_test_functions}.
\end{definition}
The properties of $\LStheta$ are passed on to $\ES_\bbsigma$ by integration.
\begin{corollary}\label{cor:expected_sliced_almost_distance} For any probability
    measure $\bbsigma$ on $\SS^{d-1}$, the quantity $\ES_\bbsigma$ is
    non-negative, symmetric, verifies the triangle inequality, and if
    $\mu_1,\mu_2 \in \mathcal{P}_2(\R^d)$ verify $\ES_\bbsigma(\mu_1, \mu_2) =
    0$ then $\mu_1=\mu_2$. Furthermore, we have the inequality $\ES_\bbsigma
    \geq \W_2$.
\end{corollary}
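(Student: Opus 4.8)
The plan is to transfer every listed property of $\LStheta$ from \cref{prop:lifted_cost_almost_distance} to $\ES_\bbsigma$ by integrating against $\bbsigma$, using the identity $\ES_\bbsigma^2(\mu_1,\mu_2)=\int_{\SS^{d-1}}\LStheta^2(\mu_1,\mu_2)\dd\bbsigma(\theta)$ recorded in \cref{def:expected_sliced}. Non-negativity and symmetry are then immediate: for each fixed $\theta$ the integrand $\LStheta^2(\mu_1,\mu_2)$ is non-negative and symmetric in $(\mu_1,\mu_2)$ by \cref{prop:lifted_cost_almost_distance}, and both properties survive integration against the (positive) measure $\bbsigma$.

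For the comparison with the Wasserstein distance, I would integrate the pointwise inequality $\LStheta(\mu_1,\mu_2)\geq\W_2(\mu_1,\mu_2)$ supplied by \cref{prop:lifted_cost_almost_distance}: since $\W_2^2(\mu_1,\mu_2)$ does not depend on $\theta$ and $\bbsigma$ is a probability measure,
$$\ES_\bbsigma^2(\mu_1,\mu_2)=\int_{\SS^{d-1}}\LStheta^2(\mu_1,\mu_2)\dd\bbsigma(\theta)\geq\int_{\SS^{d-1}}\W_2^2(\mu_1,\mu_2)\dd\bbsigma(\theta)=\W_2^2(\mu_1,\mu_2).$$
The separation property then follows at once: if $\ES_\bbsigma(\mu_1,\mu_2)=0$ then $\W_2(\mu_1,\mu_2)=0$, hence $\mu_1=\mu_2$. (Alternatively, one could observe that $\ES_\bbsigma^2=\int\LStheta^2\dd\bbsigma=0$ forces $\LStheta(\mu_1,\mu_2)=0$ for $\bbsigma$-a.e.\ $\theta$, in particular for at least one $\theta$, and invoke the separation statement of \cref{prop:lifted_cost_almost_distance}.)

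It remains to establish the triangle inequality. Fix $\mu_1,\mu_2,\mu_3\in\mathcal{P}_2(\R^d)$. By \cref{prop:lifted_cost_almost_distance}, for every $\theta\in\SS^{d-1}$ one has $\LStheta(\mu_1,\mu_3)\leq\LStheta(\mu_1,\mu_2)+\LStheta(\mu_2,\mu_3)$. Viewing the maps $\theta\mapsto\LStheta(\mu_i,\mu_j)$ as elements of $L^2(\SS^{d-1},\bbsigma)$, whose norms are precisely $\ES_\bbsigma(\mu_i,\mu_j)$, I would take $L^2(\bbsigma)$-norms on both sides of this pointwise inequality and apply the Minkowski inequality:
$$\ES_\bbsigma(\mu_1,\mu_3)=\|\LStheta(\mu_1,\mu_3)\|_{L^2(\bbsigma)}\leq\|\LStheta(\mu_1,\mu_2)+\LStheta(\mu_2,\mu_3)\|_{L^2(\bbsigma)}\leq\ES_\bbsigma(\mu_1,\mu_2)+\ES_\bbsigma(\mu_2,\mu_3).$$

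I do not expect a genuine obstacle: this is a routine ``integrate the pointwise facts'' argument. The only subtlety worth flagging is that the triangle inequality must be combined through the $L^2(\bbsigma)$ norm via Minkowski's inequality rather than naively under the integral sign, precisely because $\ES_\bbsigma$ is defined as the square root of an averaged squared cost; the same computation can equivalently be carried out by expanding $\int_{\SS^{d-1}}\big(\LStheta(\mu_1,\mu_2)+\LStheta(\mu_2,\mu_3)\big)^2\dd\bbsigma(\theta)$ and using Cauchy--Schwarz on the cross term, but the $L^2$ triangle inequality keeps it cleanest.
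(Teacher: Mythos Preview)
Your proposal is correct and matches the paper's proof essentially line for line: the paper also derives non-negativity, symmetry and $\ES_\bbsigma\geq\W_2$ directly from \cref{prop:lifted_cost_almost_distance}, and obtains the triangle inequality by setting $f_{i,j}(\theta):=\LStheta(\mu_i,\mu_j)$ and combining the pointwise inequality $f_{1,3}\leq f_{1,2}+f_{2,3}$ with the $L^2(\bbsigma)$ triangle (Minkowski) inequality. Your remark that Minkowski is the right tool here is exactly the point the paper's computation relies on.
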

\begin{proof}
    Non-negativity, symmetry and the property $\ES_\bbsigma \geq \W_2$ are 
    immediate by applying the definition and 
    \cref{prop:lifted_cost_almost_distance}. For the triangle inequality, let 
    $\mu_1, \mu_2, \mu_3 \in \mathcal{P}_2(\R^d)$ and for 
    $i<j \in \{1, 2, 3\}$, introduce $f_{i,j} := \theta \longmapsto 
    \LStheta(\mu_i, \mu_j).$ By \cref{prop:lifted_cost_almost_distance} we have 
    $0\leq f_{1,3} \leq f_{1, 2} + f_{2, 3}$. We write:
    \begin{align*}
        \ES_\bbsigma(\mu_1, \mu_3) &= \|f_{1,3}\|_{L^2(\bbsigma)} \\
        &\leq \|f_{1,2} + f_{2,3}\|_{L^2(\bbsigma)} \\
        &\leq \|f_{1,2}\|_{L^2(\bbsigma)} + \|f_{2,3}\|_{L^2(\bbsigma)}\\
        &= \ES_\bbsigma(\mu_1, \mu_2) + \ES_\bbsigma(\mu_2, \mu_3).
    \end{align*}
    \vskip -10pt
\end{proof}
The discrepancy $\ES_\bbsigma$ is not a distance on $\mathcal{P}_2(\R^d)$.
First, if $d=2$ and $\bbsigma = \delta_{(1, 0)}$, $\ES_\bbsigma = \LStheta$ and
the counter-example from \cref{ex:EStheta_positive_self_distance} earlier with 
$\mu := \tfrac{1}{2}\delta_{(0, 0)} + \tfrac{1}{2}\delta_{(0, 1)}$ yields 
$\ES_\bbsigma(\mu, \mu)>0$.

Even for probability measures $\bbsigma$ that are absolutely continuous with 
respect to the uniform measure on $\SS^{d-1}$, we can find examples where
$\ES_\bbsigma(\mu, \mu)>0$, as presented in 
\cref{ex:ES_positive_self_distance}.

\begin{example}[Case where $\ES_\bbsigma(\mu, \mu)>0$ for any $\bbsigma$]
    \label{ex:ES_positive_self_distance}
    Take $\bbsigma$ any probability measure on
    $\SS^{1}$ and $\mu := \mathcal{U}(B_{\R^2}(0, 1))$ the uniform measure on 
    the Euclidean unit ball of $\R^2$. We have for any $\theta \in \SS^1,\; 
    P_\theta\mu = \nu$, where $\nu(\dd t) = \frac{2\sqrt{1-t^2}}{\pi}\mathbbold
    {1}_{[-1, 1]}(t) \dd t$. The disintegration of $\mu$ with respect to 
    $P_\theta$ is covariant with respect to $\theta$, and the disintegration 
    kernel at $t=P_\theta x$ is $\mu^t=\mathcal{U}\left(\left\{t\theta + 
    v\theta_\perp,\; v \in [-\sqrt{1-t^2}, \sqrt{1-t^2}]\right\}\right)$, where 
    we have fixed $\theta_\perp$ a unit orthogonal vector to $\theta$. The 
    disintegration kernel $\mu^t$ is the uniform measure on the ball slice of 
    $B_{\R^2}(0, 1) \cap (t\theta + \theta^\perp)$ (with $\theta^\perp := \{x 
    \in \R^d : \theta \cdot x = 0\}$), and can simply be understood as the 
    uniform measure on the segment $[-\sqrt{1-t^2},\sqrt{1-t^2}]$, cast into 
    $\R^2$. The optimal transport plan between $\nu$ and itself is $\pi_\theta 
    := (I, I)\#\nu$, and it follows that the lifted plan between $\mu$ and 
    itself is (denoting $t := P_\theta x_1$ for legibility):
    \begin{align*}
        \gamma_\theta(\dd x_1, \dd x_2) &= \delta_{P_\theta x_1 = P_\theta x_2}
        (\dd P_\theta x_1, \dd P_\theta x_2)\ \nu(\dd P_\theta x_1)\\
        &\otimes \mathcal{U}\left(\left\{
        (t\theta + v_1\theta_\perp, t\theta + v_2\theta_\perp),
        \; (v_1,v_2) \in [-\sqrt{1-t^2},\sqrt{1-t^2}]^2\right\}\right) 
        (\dd x_1, \dd x_2).
    \end{align*}
    We provide a visualisation of the disintegration $\mu^t$ and the coupling
    $\gamma_\theta$ in \cref{fig:ce_expected_sliced_ball}.

    By symmetry $\LStheta(\mu, \mu)$ does not depend on $\theta$, we compute it 
    for $\theta:=(1, 0)$:
    \begin{align*}
        \LStheta^2(\mu, \mu) &= \int_{\R^3}\Biggr[
        \|(u, v_1) - (u, v_2)\|_2^2 \mathbbold{1}_{[-1, 1]}(u)
        \cfrac{2\sqrt{1-u^2}}{\pi} 
        \mathbbold{1}_{[-\sqrt{1-u^2}, \sqrt{1-u^2}]^2}(v_1, v_2) \\
        &\qquad\qquad \times\left(\cfrac{1}{2\sqrt{1-u^2}}\right)^2\Biggl]\dd v_1 \dd v_2\dd u  \\
        &= \int_{u=-1}^{u=1}\cfrac{\sqrt{1-u^2}}{2\pi}
        \int_{v_1=-\sqrt{1-u^2}}^{v_1=\sqrt{1-u^2}}
        \int_{v_2=-\sqrt{1-u^2}}^{v_2=\sqrt{1-u^2}} (v_1-v_2)^2 
        \dd v_1 \dd v_2 \dd u \\
        &= \cfrac{5\pi}{12} > 0.
    \end{align*}
    We conclude that $\ES_\bbsigma(\mu, \mu) > 0$ (for any $\bbsigma$), and 
    thus $\ES_\bbsigma$ is not a distance on $\mathcal{P}_2(\R^d)$.
\end{example}

\begin{figure}[ht]
    \centering
    \includegraphics[width=0.7\textwidth]{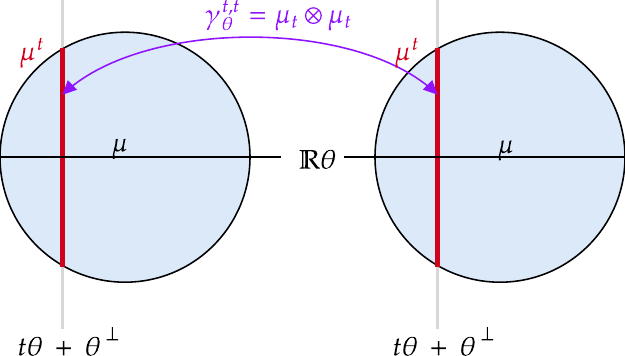}
    \caption{Illustration of the lifted plan $\gamma_\theta$ from 
    \cref{ex:ES_positive_self_distance} between $\mu$ the
    uniform measure on the unit Euclidean ball of $\R^2$, and itself. The plan
    is defined by disintegration: the coupling between $P_\theta\#\mu$ and
    $P_\theta\#\mu$ is simply $(I, I)$, the coupling induced by the identity
    map. As for the orthogonal part, the disintegration kernel of $\mu$ at
    $t\theta$ is $\mu^t$, the uniform measure on the ball slice $B_{\R^2}(0, 1)
    \cap (t\theta + \theta^\perp)$, represented as a thick red vertical line. 
    The lifted plan couples the disintegration
    kernel $\mu^t$ with itself with the independent coupling: writing
    $\gamma_\theta^{t,t}$ as the disintegration kernel of $\gamma_\theta$ at
    $(t, t) \in [-1, 1]^2$, we have $\gamma_\theta^{(t, t)} = 
    \mu_t\otimes\mu_t$, which corresponds to the uniform measure on the square 
    $\{(t\theta + v_1\theta_\perp, t\theta + v_2\theta_\perp),\; (v_1,v_2) \in 
    [-\sqrt{1-t^2}, \sqrt{1-t^2}]^2\}\subset \R^4$.}
    \label{fig:ce_expected_sliced_ball}
\end{figure}

Using \cref{prop:ES_theta_discrete_as_distance}, we can show that the expected
sliced distance is a distance on the set of ``countably discrete'' probability
measures defined in \cref{eqn:countably_discrete}.
\begin{corollary}\label{cor:expected_sliced_distance_countably_discrete} For any
    $\bbsigma$ a probability measure on $\SS^{d-1}$ that is absolutely
    continuous with respect to $\bbsigma_u$, the quantity $\ES_\bbsigma$ is a
    distance on $\mathcal{P}_{\mathrm{DC}}(\R^d)$.   
\end{corollary}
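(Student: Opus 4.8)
The plan is to observe that \cref{cor:expected_sliced_almost_distance} already furnishes every axiom of a distance except the vanishing of $\ES_\bbsigma$ on the diagonal: we have non-negativity, symmetry, the triangle inequality, and the implication $\ES_\bbsigma(\mu_1, \mu_2) = 0 \implies \mu_1 = \mu_2$ (which comes from $\ES_\bbsigma \geq \W_2$ together with the fact that $\W_2$ separates points). Hence the corollary reduces to checking that $\ES_\bbsigma(\mu, \mu) = 0$ for every $\mu \in \mathcal{P}_{\mathrm{DC}}(\R^d)$.

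For this I would invoke \cref{prop:ES_theta_discrete_as_distance}: for $\mu \in \mathcal{P}_{\mathrm{DC}}(\R^d)$, the set $N := \{\theta \in \SS^{d-1} : \LStheta(\mu, \mu) > 0\}$ is $\bbsigma_u$-negligible. Since $\bbsigma \ll \bbsigma_u$ by hypothesis, $N$ is also $\bbsigma$-negligible, so $\LStheta(\mu, \mu) = 0$ holds for $\bbsigma$-almost-every $\theta$. Integrating the identity $\ES_\bbsigma^2(\mu, \mu) = \int_{\SS^{d-1}} \LStheta^2(\mu, \mu)\,\dd\bbsigma(\theta)$ from \cref{def:expected_sliced} then yields $\ES_\bbsigma(\mu, \mu) = 0$. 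Combined with the previous paragraph, this shows $\ES_\bbsigma$ is a distance on $\mathcal{P}_{\mathrm{DC}}(\R^d)$.

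The argument has no genuine obstacle: it is a direct bookkeeping combination of \cref{cor:expected_sliced_almost_distance} and \cref{prop:ES_theta_discrete_as_distance}. The only point requiring mild care is that the a.e. separation property $\LStheta(\mu, \mu) = 0$ is available only on the restricted class $\mathcal{P}_{\mathrm{DC}}(\R^d)$ — as \cref{ex:EStheta_positive_self_distance} and \cref{ex:ES_positive_self_distance} show it fails on larger classes, even for $\bbsigma$ absolutely continuous with respect to $\bbsigma_u$ — which is precisely why the statement is confined to that class and why absolute continuity of $\bbsigma$ with respect to $\bbsigma_u$ cannot be dropped.
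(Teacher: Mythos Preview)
Your proposal is correct and follows essentially the same approach as the paper: reduce via \cref{cor:expected_sliced_almost_distance} to showing $\ES_\bbsigma(\mu,\mu)=0$, then use \cref{prop:ES_theta_discrete_as_distance} together with $\bbsigma\ll\bbsigma_u$ to get $\LStheta(\mu,\mu)=0$ for $\bbsigma$-almost-every $\theta$ and integrate. The additional commentary on why the restriction to $\mathcal{P}_{\mathrm{DC}}(\R^d)$ and the absolute continuity hypothesis are needed is accurate but not required for the proof itself.
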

\begin{proof}
    Thanks to \cref{cor:expected_sliced_almost_distance}, the only axiom to
    verify to show that $\ES_\bbsigma$ is a distance on
    $\mathcal{P}_{\mathrm{DC}}(\R^d)$ is to show that $\forall \mu \in
    \mathcal{P}_{\mathrm{DC}}(\R^d),\; \ES_\bbsigma(\mu, \mu)= 0$. We now fix
    $\mu \in \mathcal{P}_{\mathrm{DC}}(\R^d)$. Since $\bbsigma \ll \bbsigma_u$,
    we have by \cref{prop:ES_theta_discrete_as_distance} that for
    $\bbsigma$-almost-every $\theta\in \SS^{d-1},\; \LStheta(\mu, \mu)=0$. We
    conclude $\ES_\bbsigma^2(\mu, \mu) = \int_{\SS^{d-1}}\LStheta^2(\mu,
    \mu)\dd\bbsigma(\theta) = 0$.
\end{proof}

\begin{remark}\label{rmk:centred_ES}
    Given that $\ES_\bbsigma(\mu, \mu)$ can be non-zero for some $\mu \in 
    \mathcal{P}_2(\R^d)$, a natural idea is to introduce a centred (or 
    ``de-biased'') version of $\ES_\bbsigma$ by subtracting the self-similarity 
    terms. For $\mu_1, \mu_2 \in \mathcal{P}_2(\R^d)$ and 
    $\bbsigma \in \mathcal{P}(\SS^{d-1})$, we define the
    centred expected sliced discrepancy as:
    $$\widetilde{\ES}^2_{\bbsigma}(\mu_1, \mu_2) 
    := \ES_{\bbsigma}^2(\mu_1, \mu_2) - \frac{1}{2}\ES_{\bbsigma}^2(\mu_1,
    \mu_1) - \frac{1}{2}\ES_{\bbsigma}^2(\mu_2, \mu_2).$$ By definition, for any
    $\mu \in \mathcal{P}_2(\R^d)$, we have $\widetilde{\ES}_\bbsigma(\mu, \mu) =
    0$. By symmetry of $\ES_\bbsigma$, we also have symmetry of
    $\widetilde{\ES}_\bbsigma$. Due to
    \cref{cor:expected_sliced_distance_countably_discrete}, this centering is
    only of interest for probability measures with non-countable support (most
    commonly absolutely continuous measures).
\end{remark}

\section{Numerics}
\label{sec:numerics}

In this section, we evaluate the efficiency and practicability of the
sliced-based transport plans, namely min-Pivot Sliced Wasserstein ($\minS$) and
expected Sliced Wasserstein ($\ES$), in both synthetic and real-world scenarios.
We begin by presenting quantitative and qualitative results on toy datasets,
evaluating their ability to generate meaningful transport plans and costs across
various settings. We continue with a colour transfer task, which is simple to
assess qualitatively yet can be computationally challenging in classic OT due to
the large sample size ($n \geq 500^2$). We finish with a more complex task that
involves large-scale datasets where a transport plan is required, namely point
cloud registration. For these experiments, we employ the POT
toolbox~\cite{flamary2021pot}. Note that we report experimental results only in
the context of distributions with the same number of samples, but the 
results can be easily extended to the case of a different number of samples. All
experiments were run on a CPU on a MacBook Pro with an M1 chip.  

\subsection{Evaluation of the Transport Losses and Plans}

\subsubsection{Gradient Flows} \label{sec:GF}

We begin by analysing sliced-based transport plans in the context of the
minimisation problem $\min_{\mu} \mathcal{F}^{\nu}(\mu) ,$ where
\(\mathcal{F}^{\nu}\) denotes the objective functional to be optimised, \(\nu\)
is a discrete target distribution, and \(\mu\) is a discrete source
distribution. One way to solve this problem is to construct a gradient flow
$\partial_t \mu_t = -\nabla \mathcal{F}^{\nu}(\mu_t)$ and to define an explicit
Euler scheme $\mu_{t+1}= \mu_t - h \nabla \mathcal{F}^{\nu}(\mu_t)$, with $h$ a
step size or learning rate.
This procedure yields a flow $(\mu_t)_{t}$ that decreases the functional
$\mathcal{F}^{\nu}(\mu)$ over time $0\leq t \leq 1$ \cite{bonnotte}. We consider
several functionals here: the Wasserstein distance $\W_2^2$, the Sliced
Wasserstein distance $\SW_2^2$, $\minS^2$, and $\ES^2$. For $\minS$ and $\ES$,
at each step $t$ we draw randomly $L$ directions $\theta_\ell \in \SS^{d-1}$ and
compute $\minS^2 \approx \min_\ell \PStheta[\theta_\ell]^2$ and $\ES \approx
\frac{1}{L} \sum_{\ell = 1}^L \LSthetaL^2$. Moreover, for $\minS$, we use an
optimisation scheme described in \cite{chapel2025differentiable} to obtain an
approximation $\hat{\theta}^\star$ of an optimal direction $\theta^\star \in
\argmin_{\theta \in \mathbb{S}^{d-1}}\PStheta^2$. In what follows, it is denoted
$\PStheta[\hat{\theta}^\star]^2$.

Following the experimental setting of~\cite{chapel2025differentiable}, we consider several target distributions of $n=50$ samples, shown in the first and third columns of \cref{fig:fig_gf}: a Gaussian distribution (in 2 and 500
dimensions), a spiral, two moons, a circle, and eight Gaussians of different
means. The source distribution is chosen to be a uniform distribution. We use
Adam as an optimisation scheme, with a learning rate of 0.02 for all methods,
and consider $L = 50$ directions for the sliced approaches. We report the
2-Wasserstein distance between $\mu_t$ and $\nu$ at each iteration of the
optimisation procedure and repeat each experiment 10 times. 

One can observe that the Expected Sliced discrepancy does not converge in any
setting. This finding is consistent with that of \cite[Section
3.4]{liu2024expected}. In contrast, all other methods enable convergence to the
target distribution, i.e. $\mu_t \to \nu$ as $t \to 1$, when working in two
dimensions. When considering a 500-dimensional Gaussian distribution, only
Wasserstein and $\PStheta[\hat{\theta}^\star]$ achieve convergence: with a fixed
number of samples $n$, we suspect that the required number of directions to
obtain a good approximation must grow exponentially with the dimension, making
$\minS$ (with $L=50$ directions), $\ES$ and Sliced Wasserstein inadequate for
this context. Using optimisation techniques in $\minS$ provides a single
meaningful direction $\hat{\theta}^\star$, even when $n$ is small compared to
the dimension. One can notice that Wasserstein and
$\PStheta[\hat{\theta}^\star]$ have very similar behaviours, which is backed by
\cref{prop:condition_minS_equals_W2} which states that
$\PStheta[\hat{\theta}^\star]$ is equal to the 2-Wasserstein distance when
$\hat{\theta}^\star$ is an optimal direction and when $d \geq 2n - 1$. This
encourages the use of the minimisation method proposed in
\cite{chapel2025differentiable}, which outperforms the search over $L$ random
projections.

\begin{figure}[H]
    \centering
    \includegraphics[width=1\textwidth]{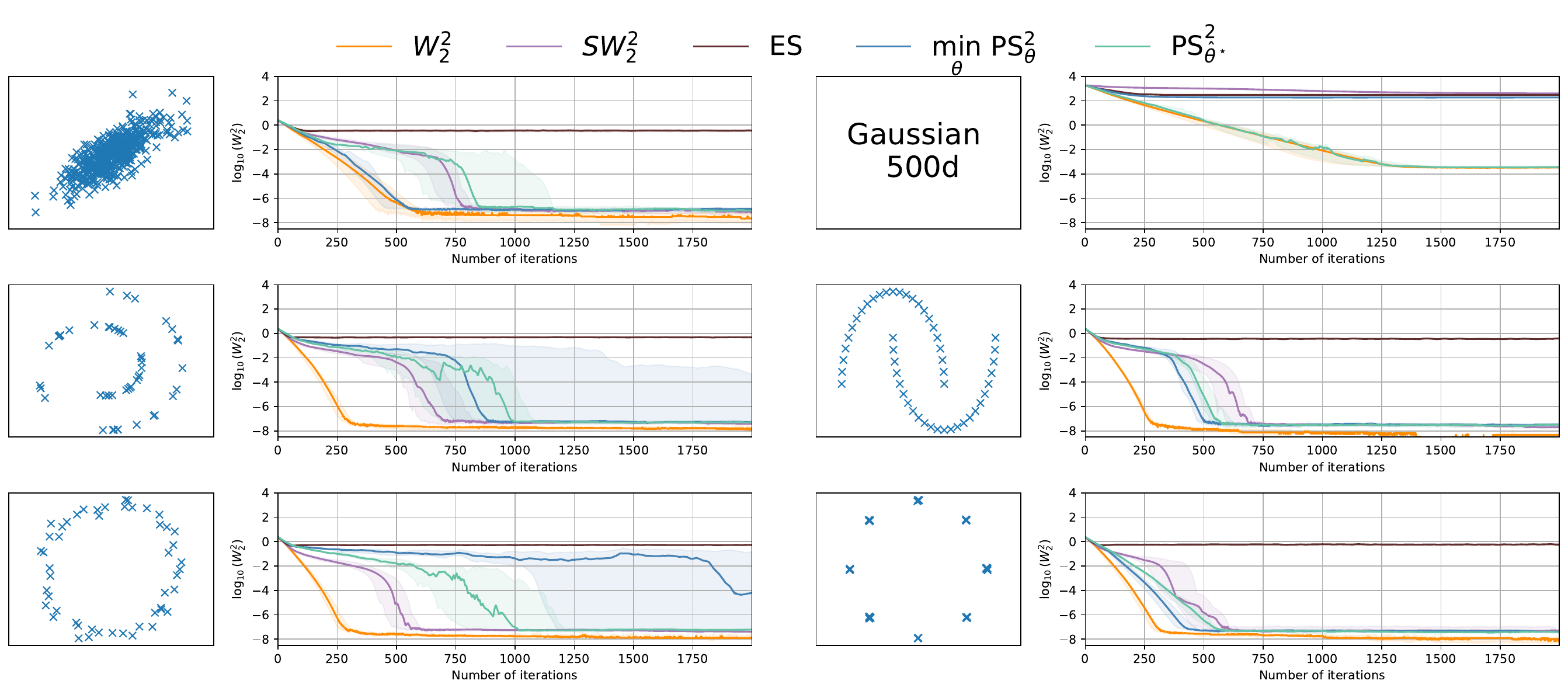}
    \caption{Log 2-Wasserstein distance measured between a source and different
    target distributions as a function of number of iterations. Plain lines
    represent the median over 10 iterations while shaded regions indicate 0.25
    and 0.75 quantiles.}
    \label{fig:fig_gf}
\end{figure}

\subsubsection{Comparison of Transport Plans and Discrepancies}
We now provide a quantitative assessment of the transport plans that can be
estimated from sliced-based methods. 

\paragraph{Qualitative assessment of the transport plans.}
We illustrate some transport plans in several two-dimensional settings. The
first one corresponds to transporting samples of a source Gaussian distribution
to samples of a target Gaussian distribution with different parameters. The
second one considers two distributions sampled on circles of the same centre and
different radii, with $n=24$ samples. The last one considers a more challenging
and non-linear setting, in which the source distribution is composed of 8
Gaussians of several means and the target is composed of two moons. 

\cref{fig:illus_plans_2d} presents the plans obtained with 2-Wasserstein,
Expected Sliced and min-Pivot Sliced, together with the associated discrepancy.
We choose $L=50$ directions and fix $n=10$ samples for the first scenario and
$n=24$ otherwise. One can notice that, in the simple case of 2 Gaussians as
source and target distributions (first line), the transport cost is close to the
2-Wasserstein one. Min-Pivot Sliced provides a plan that is close to the OT one;
Expected Sliced provides a highly non-deterministic coupling, associating each
source point to numerous targets. When it comes to non-linear settings (third
and fifth lines), one can notice that the sliced estimated costs deviate from
their OT counterpart: as $\minS$ and $\ES$ rely on plans obtained by projecting
on a line then lifted to the original space, and because none of these
projections capture the true matching, the approximation is quite poor, with
spurious matchings between the two parts of the moon. Dedicated variants of
Sliced Wasserstein have been proposed in this non-linear setting, for instance
\emph{generalised} versions in which the data are projected onto a non linear
surface, e.g. \cite{kolouri2019generalized}, and \emph{augmented} ones
\cite{chen2020augmented} that first embed the data into a higher dimensional
space in which a linear surface better captures the distances. These variants
are out of the scope of this paper, but note that a non-linear variant of
$\minS$ has been proposed in~\cite{chapel2025differentiable}. 

\paragraph{Comparing plans obtained by flows}
To avoid relying on one single direction and to better take into account the non-linearities
 in the distributions, we propose here to build on \emph{flows}, for
which different directions can be chosen at each iteration. The second, fourth
and sixth lines of \cref{fig:illus_plans_2d} present trajectories obtained when
considering such flows, with an SGD optimiser and a fixed learning rate equal to
2 (as recommended by \cite[under Equation 44]{bonneel2015sliced}, we take a
learning rate equal to the dimension). If the flow has converged after 200 steps
(that is to say, when the Wasserstein distance between two consecutive steps is
less than $10^{-6}$), we infer a transport plan as the map linking the source
and the target samples reached by the flow. This strategy also allows considering
Sliced Wasserstein to obtain a plan, as proposed in \cite[Section
3.3]{rabin2012wasserstein}. One can notice that, as expected, 2-Wasserstein
flows plan recovers the transport plan and that Sliced Wasserstein based plan is
close to the actual one. As observed in \cref{sec:GF}, even in the simple case
of 2 Gaussians, Expected Sliced does not converge. When considering $\minS$,
flow-based transport allows enhancing the approximation of the plan, avoiding
spurious couplings between the two moons. One further notices that this strategy
comes with an extra computational cost as several iterations for computing the
flow are needed to obtain the approximation. We present this method to highlight
the benefits of stochastic algorithms when using sliced-based methods.

\begin{figure}[H]
    \centering
    \includegraphics[width=0.8\textwidth]{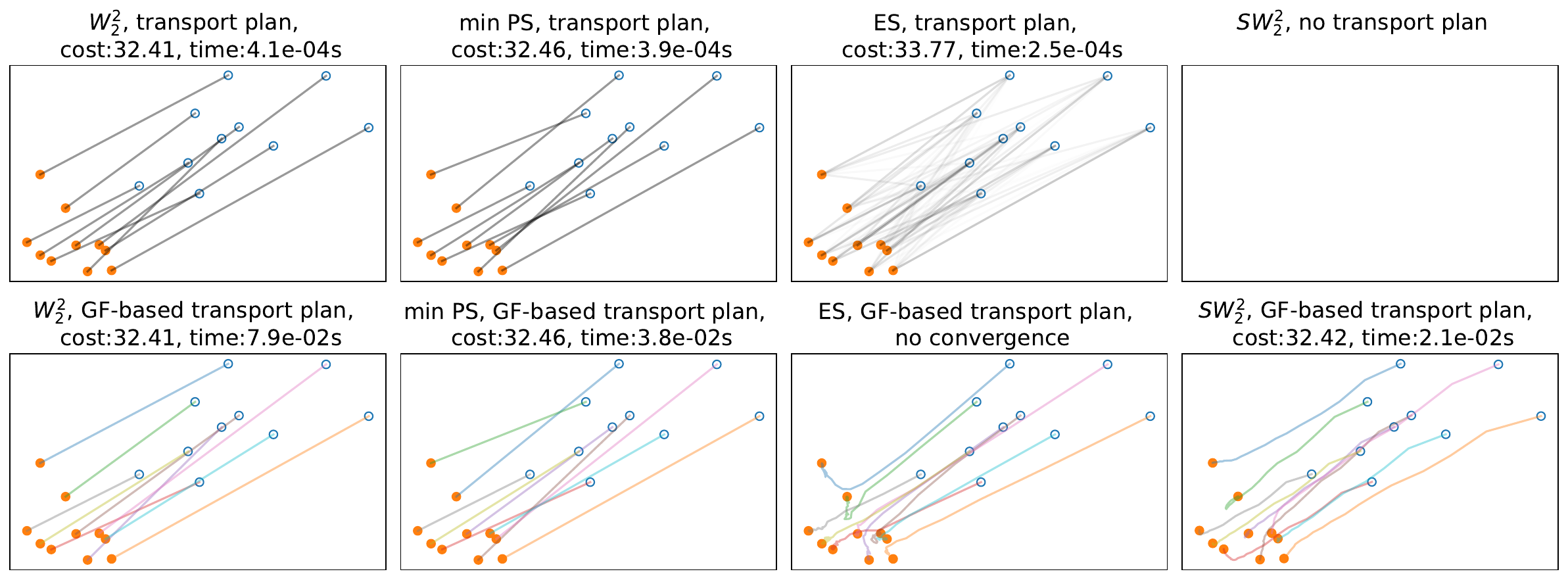}\\
    \includegraphics[width=0.8\textwidth]{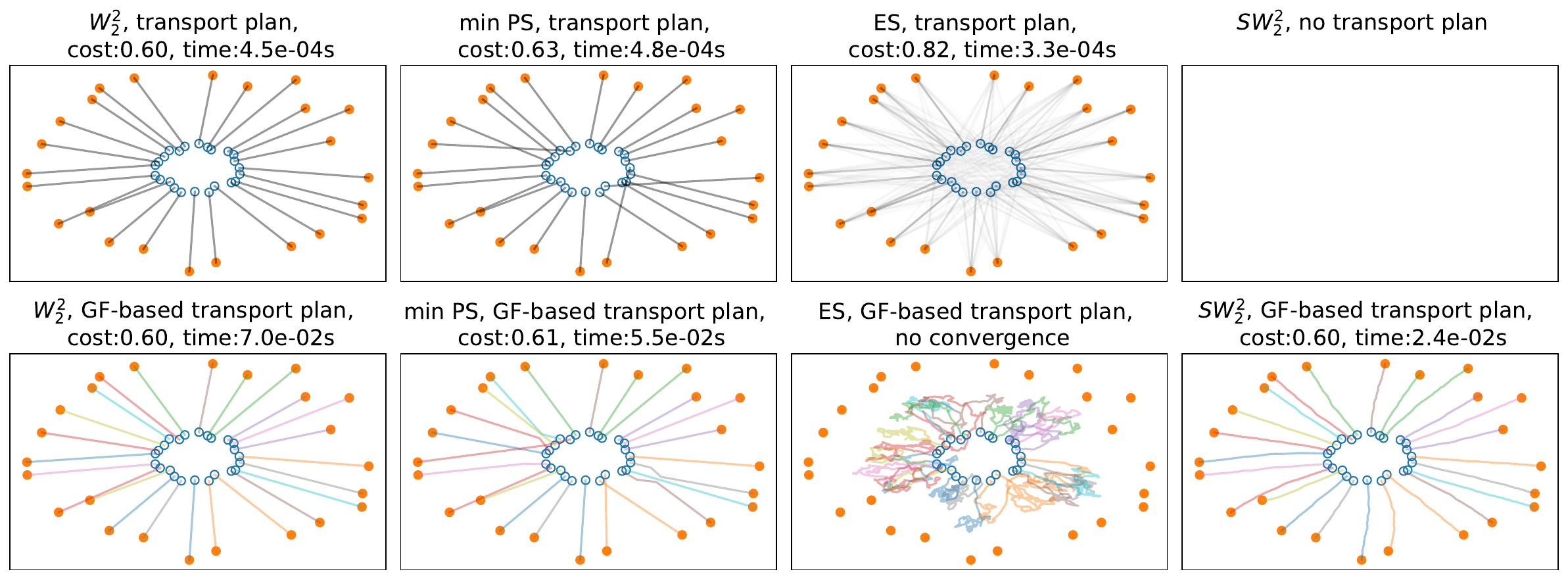}\\
    \includegraphics[width=0.8\textwidth]{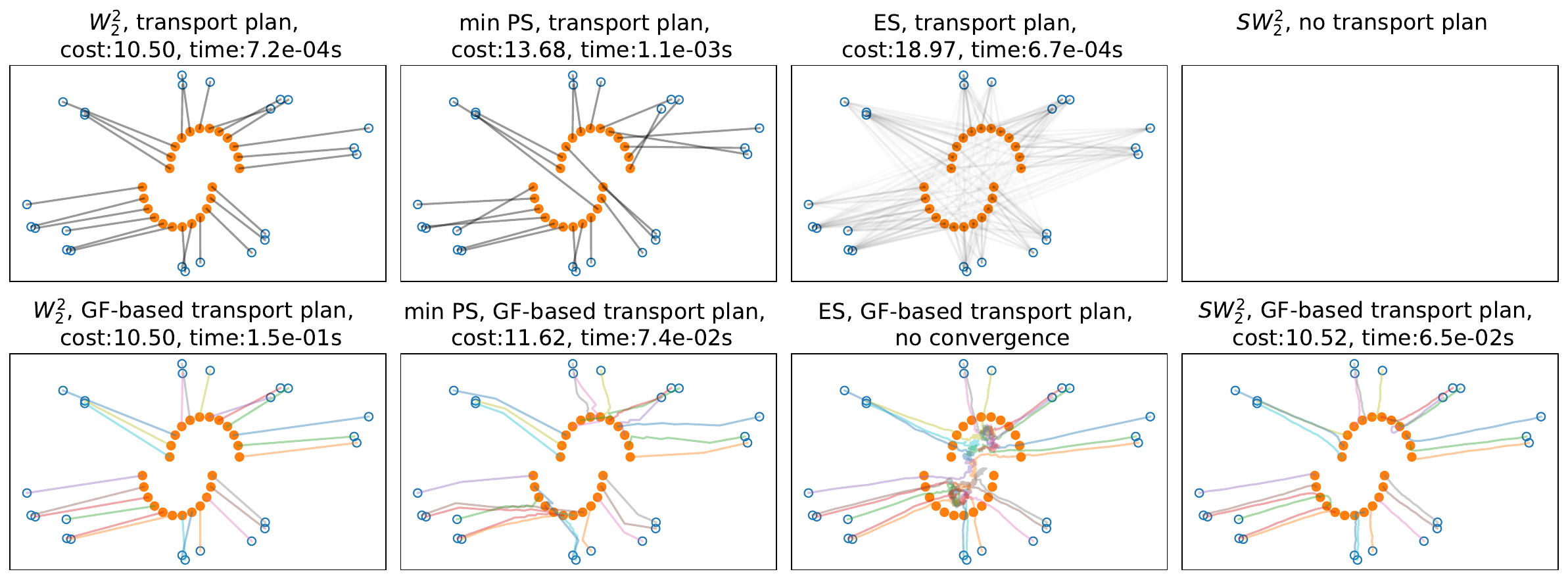}\\
    \caption{Comparison of the plans obtained by sliced plans methods and
    2-Wasserstein between a source (blue samples) and a target (orange samples)
    distributions. First, third and fifth lines: transport plans obtained by
    solving Wasserstein, min-Pivot Sliced and Expected Sliced. Second, fourth
    and sixth lines: trajectories obtained by solving a gradient flow for
    Wasserstein, min-Pivot Sliced, Expected Sliced and Sliced Wasserstein. In
    that case, the associated cost is computed by mapping the source sample to
    the target sample that is reached by the flow.}
    \label{fig:illus_plans_2d}
\end{figure}

\paragraph{Timings}
We report some timings for the different methods in order to assess their
computational efficiency. We consider the same settings as the first scenario of
the previous section (two Gaussians as a source and target distribution). For
the Sliced Wasserstein flow, we perform 10 steps, with an extra complexity
linear with the number of steps. We vary the number of samples from $n = 10$ to
$n = 10^7$, and present the results in \cref{fig:timings}. One can notice that
sliced-based methods are significantly faster to compute when $n$ grows. Note
that Wasserstein cannot be computed for $n\geq 10^5$ due to memory issues, as
it requires storing the full cost matrix $C\in \mathbb{R}^{n \times n}$ of size
$n^2$; there is no need to store $C$ for sliced-based methods, which require a
memory of $2n$ for $\minS$ and at most $2Ln$ for $\ES$. The time complexities
of all flow variants are proportional to the number of flow steps, and we notice
that all sliced methods have comparable complexities in $\O(Lnd + Ln\log(n))$,
which is substantially advantageous compared to the $\O(n^3 \log n)$ complexity
of standard OT.

\begin{figure}[H]
    \centering
    \includegraphics[width=0.5\textwidth]{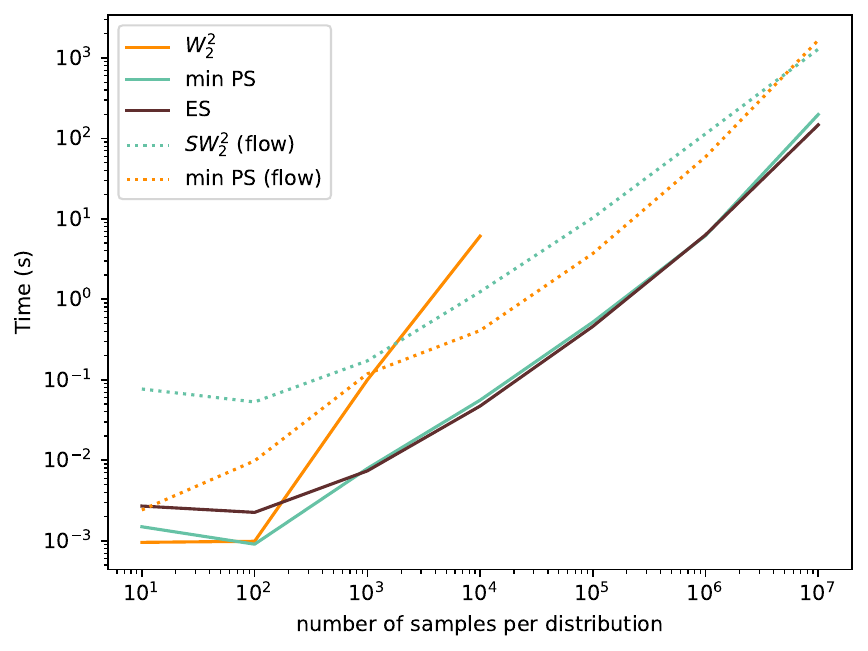}
    \caption{Running time comparisons of different methods for varying number 
    of samples $n$.}
    \label{fig:timings}
\end{figure}

\subsection{Illustration on Colour Transfer}

Colour transfer consists of transferring the colour distribution of a source
image onto a target image while preserving the structure of the source. We see
an RGB image $I \in \R^{w\times h\times 3}$ as the uniform measure of its pixels
in the RGB space $\mu_I := \tfrac{1}{wh}\sum_{i=1}^w\sum_{j=1}^h
\delta_{I_{i,j,\cdot}} \in \mathcal{P}(\R^3)$. Given a source image $I$ and a
target image $J$ of the same size, our objective is to match (in a certain sense)
each pixel $(i, j)$ of $I$ to a pixel $(i', j')$ of $J$. We consider three
different approaches: first, we compute a permutation that is (approximately)
optimal for the $\minS$ discrepancy, approximated by searching over $L=50$
directions. Using this permutation, we replace each pixel of $I$ with its
corresponding pixel in $J$. Second, we approximate the Expected Sliced plan by
averaging over $L=50$ directions. Since this does not yield a permutation but
only a transport plan $\oll{\gamma}$, we use the barycentric projection (i.e.
, conditional expectation) of $\oll{\gamma}$, which provides only an approximate
matching to $\mu_J$. Finally, we compare these methods with the Sliced
Wasserstein (SW) flow proposed in \cite{rabin2012wasserstein}, which operates 10
steps of Stochastic Gradient Descent with a learning rate of 1 on $X \longmapsto
\SW_2^2(\mu_X, \mu_J)$ initialised at $X_0 := I$ and samples a batch of 3
orthonormal directions at each step. Note that while the final iteration is
expected to verify $\mu_X \approx \mu_J$, it may not be the case in practice
depending on the hyperparameter choices. We report our results on three
different image pairs in
\cref{fig:ocean_sunset_to_ocean_day,fig:mountain_to_bridge,fig:A1_to_C1}. 

\begin{figure}[H]
    \centering
    \includegraphics[width=1\textwidth]{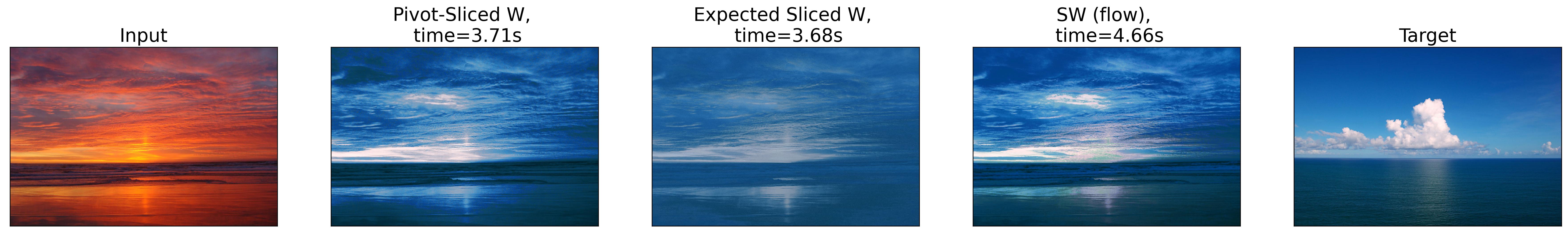}
    \caption{Colour transfer example on images of size $1000\times 669$.}
    \label{fig:ocean_sunset_to_ocean_day}
\end{figure}

\begin{figure}[H]
    \centering
    \includegraphics[width=1\textwidth]{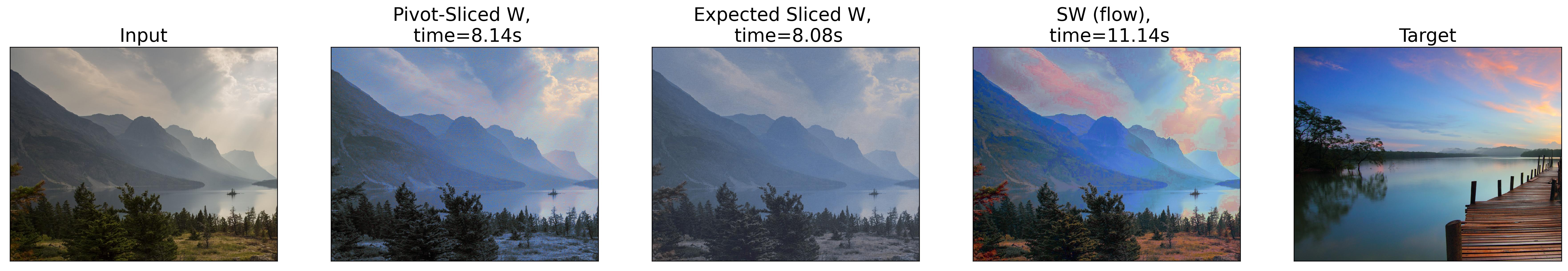}
    \caption{Colour transfer example on images of size $1280\times 1024$.}
    \label{fig:mountain_to_bridge}
\end{figure}

\begin{figure}[H]
    \centering
    \includegraphics[width=1\textwidth]{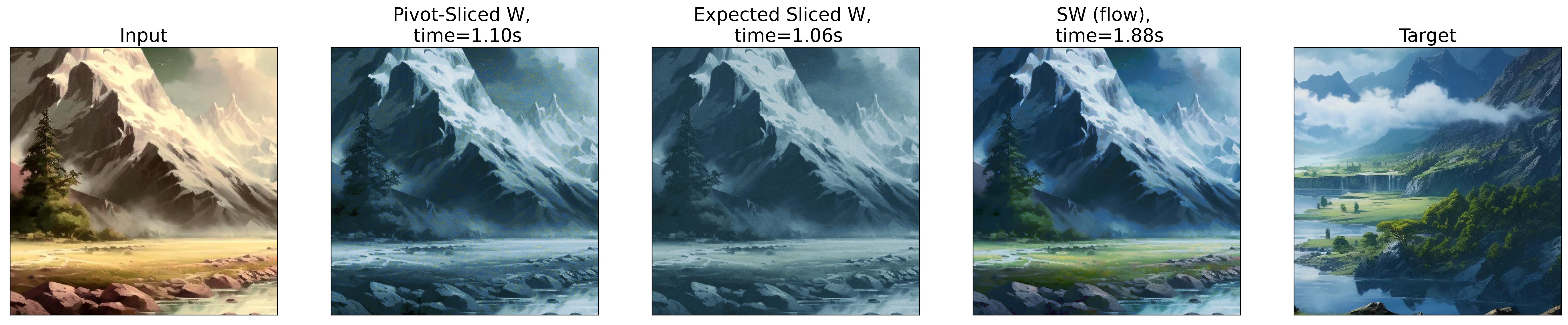}
    \caption{Colour transfer example on images of size $500\times 500$.}
    \label{fig:A1_to_C1}
\end{figure}

In \cref{fig:ocean_sunset_to_ocean_day}, the source and target images are
relatively monochrome, which makes the colour transfer task easier. We observe
that the Pivot-Sliced and SW methods are comparable, while the Expected Sliced
results in duller colours. Contrastingly, in \cref{fig:mountain_to_bridge}, the
colour palettes are more diverse, and Pivot-Sliced yields a visually worse result
than SW, while SW matches the colour distributions less faithfully, with some
artefacts in the sky. As for Expected Sliced, the results are again duller and
quite different from the target colour distribution. Finally, in
\cref{fig:A1_to_C1}, only the SW method produces visually consistent results;
the matchings provided by $\minS$ and $\ES$ fail to preserve sufficient spatial
structure, particularly in the green colours. Overall, while the plan
associated to $\minS$ can suffice in practice, it appears that iterative methods
such as the SW flow are better suited for this task. Our experiments suggest
that the barycentric projection of the Expected Sliced plan does not provide a
sound transportation. 

\FloatBarrier

\subsection{Experiments on a Shape Registration Task}
We now consider a shape registration task, with a rigid transformation that
involves a translation and a rotation. Most approaches to solve this problem are
concerned with finding the right correspondences between the points. For
instance, the Iterative Closest Point (ICP) algorithm~\cite{besl1992method}
relies on nearest neighbour correspondences, considering the Euclidean distance
between points. Optimal transport is now a workhorse for this task, as it
provides a principled way to find correspondences between two point clouds;
see~\cite{bonneel2023survey} for a review of OT-based methods for point cloud
registration. We here evaluate the performance of the sliced-based methods,
namely $\minS$ and $\ES$, in this context. We compare them to the 2-Wasserstein
distance, which is a standard benchmark for point cloud registration, and also
to Sliced Wasserstein, using a gradient flow as described in \cref{sec:GF} to
get an approximated transport plan. Note that Expected Sliced does not provide a
one-to-one correspondence, but they can be inferred from the blurred transport
plan~\cite{solomon2015convolutional}. 

We consider two point clouds of 3D shapes, which are subsampled from the
\emph{bunny} and \emph{armadillo} shapes of the \emph{open3d}
library~\cite{Zhou2018}. We first subsample both shapes with $n=500$ points, and
then we apply 10 different rigid transformations to the source shape to obtain the
target shape. We then run the ICP algorithm, with several alignment methods: the
nearest neighbour correspondence, Wasserstein, Sliced Wasserstein, min-Pivot
Sliced and Expected Sliced, to realign the two shapes. \cref{fig:results_ICP}
presents the two shapes, subsampled with $n=2000$ points for visualisation
purposes. The second line presents the Wasserstein distance between the
(registered) source and target point clouds along the iterations. One can notice
that  Min-Pivot Sliced yields the best registration among all methods: this
conclusion was also reached by~\cite{mahey23fast} who conjecture that it allows
exiting local minima of the ICP algorithm by finding an approximated matching.

We also consider the case where the shapes are not subsampled, which is a more
computationally challenging setup, especially for the armadillo shape. One can
draw similar conclusions, with $\minS$ yielding the best registration, with
little variation around the different repetitions of the experiment. 

\begin{figure}[H]
    \centering
        \begin{tabular}{lr}
    \includegraphics[width=0.25\textwidth]{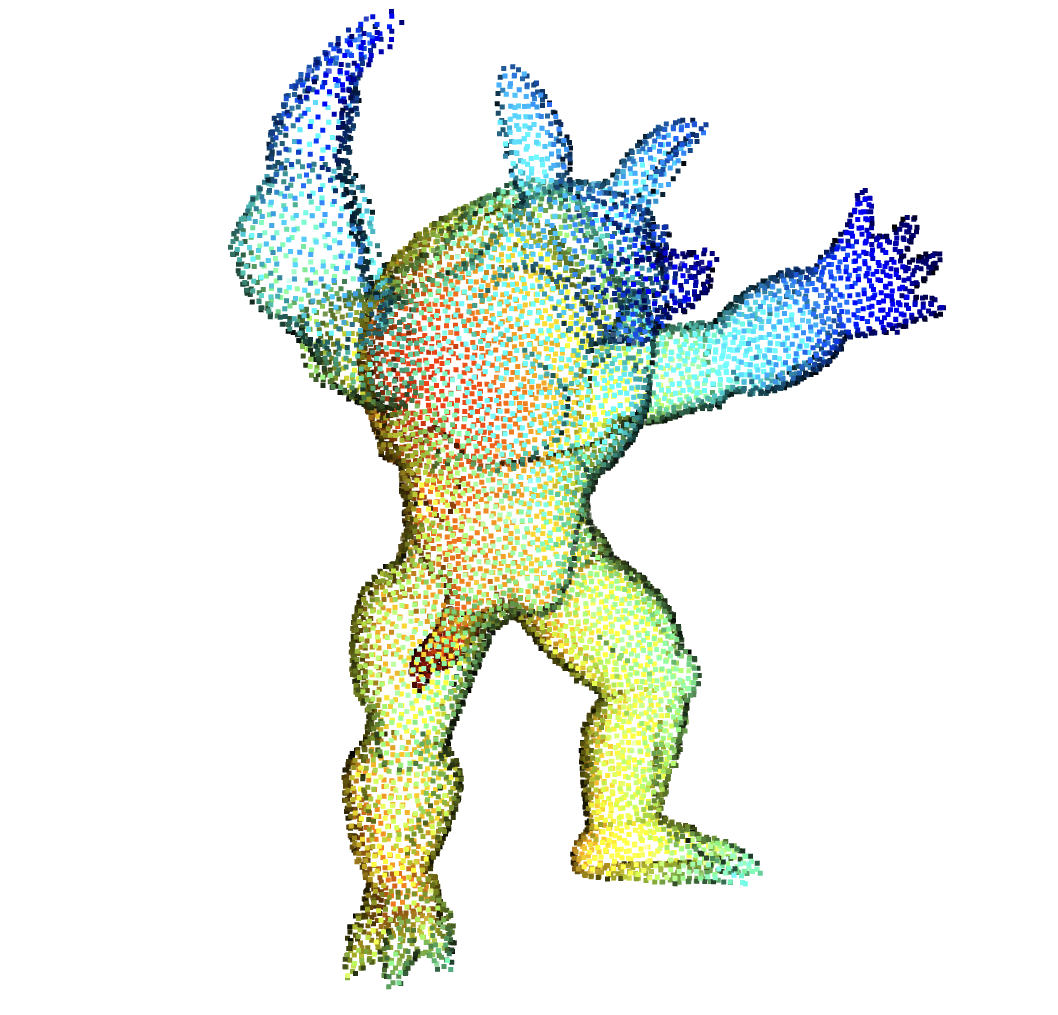} ~~~~~ &
    ~~~~~~~~~ \includegraphics[width=0.25\textwidth]{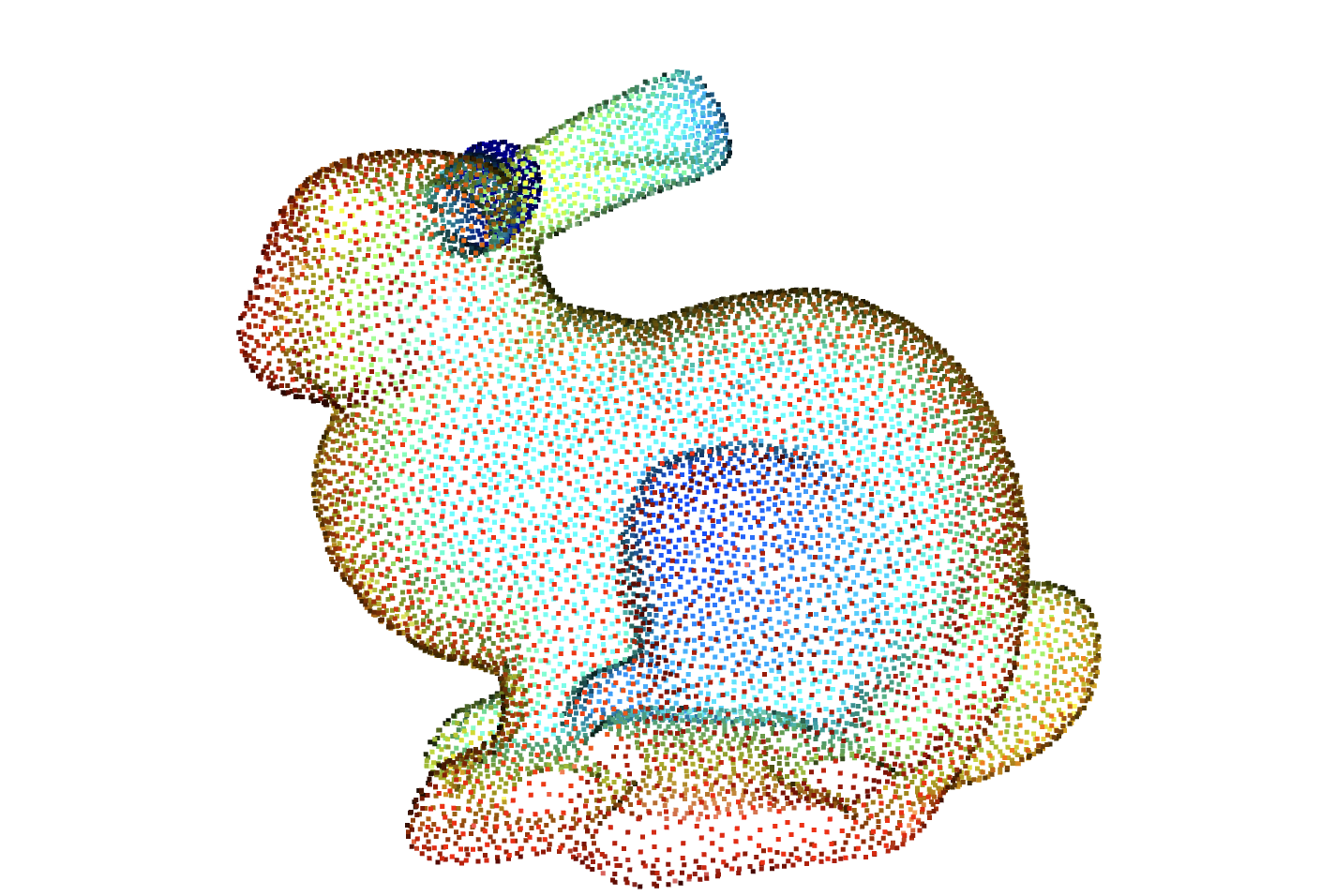}
    \end{tabular}\\
    \includegraphics[width=0.75\textwidth]{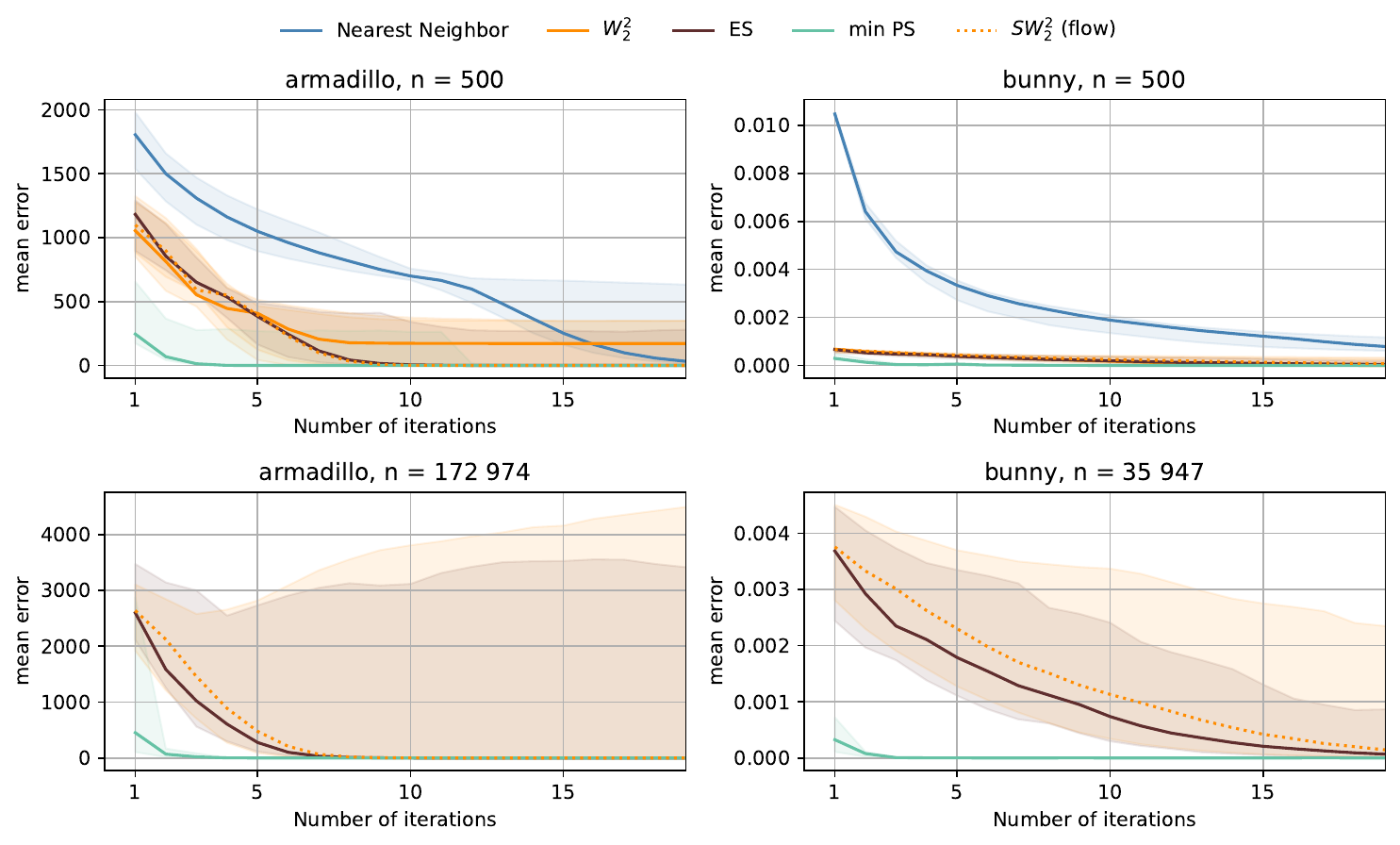} 

    \caption{Evolution of the loss along the iterations of the ICP algorithm.
    The loss is computed as the mean square distance between each target sample
    and the registered source. The first column corresponds to the results for
    the armadillo shape, while the second column corresponds to the bunny
    shape.}
    \label{fig:results_ICP}
\end{figure}

\subsection*{Acknowledgements}

We would like to thank Nathaël Gozlan and Agnès Desolneux for their insights on
technical aspects of \cref{sec:preliminary_nu_based_wass}. We thank two
anonymous reviewers for their comments and suggestions that helped us improve
this work, and specifically for suggesting \cref{rmk:centred_ES}.

This research was funded in part by the Agence nationale de la
recherche (ANR), Grant ANR-23-CE40-0017 and by the
France 2030 program, with the reference ANR-23-PEIA-0004.

\clearpage
\printbibliography

\appendix
\section{Appendix}

\subsection{Ambiguity in SWGG from
\texorpdfstring{\cite{mahey23fast}}{mahey23fast}}\label{sec:pathological_cases_swgg}

Let $\mu_1 = \tfrac{1}{n}\sum_i \delta_{x_i},\; \mu_2 = \tfrac{1}{n}\sum_i
\delta_{y_i},$ and $\theta \in \SS^{d-1}$. Consider $\sigma_\theta$ a
permutation which sorts $(\theta^\top x_i)_{i=1}^n$ and $\tau_\theta$ sorting
$(\theta^\top y_i)_{i=1}^n$. The Sliced Wasserstein Generalised Geodesic
distance \cite[Equation 8]{mahey23fast} is defined as
\begin{equation}\label{eqn:swgg_discrete2}
    \SWGG_2^2(\mu_1, \mu_2, \theta) := \cfrac{1}{n}\Sum{i=1}{n}\|x_{\sigma_\theta(i)} - y_{\tau_\theta(i)}\|_2^2.
\end{equation}
We illustrate the coupling induced by $\SWGG_2^2(\mu_1, \mu_2, \theta)$ in
\cref{fig:ex_swgg}:
\begin{figure}[H]
    \centering
    \includegraphics[width=0.5\textwidth]{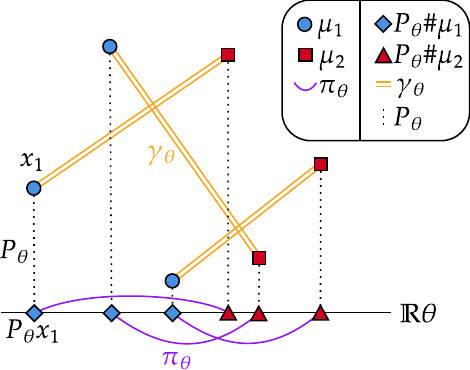}
    \caption{Coupling $\gamma_\theta \in \Pi(\mu_1, \mu_2)$ induced by
    $\SWGG_2^2(\mu_1, \mu_2, \theta)$ for $d=2$, $n=3$, $\theta = (1, 0)$. The
    support of the measure $\mu_1$ is represented by blue circles, and the
    support of $\mu_2$ with red squares. The projected measures
    $P_\theta\#\mu_1$ and $P_\theta\#\mu_2$ are represented by the blue diamonds
    and triangles respectively. The optimal coupling between $P_\theta\#\mu_1$
    and $P_\theta\#\mu_2$ is drawn with purple curves, and the associated
    coupling $\gamma_\theta$ between $\mu_1$ and $\mu_2$ is represented by the
    orange double lines. In this example, the projections of the points of the
    support of $\mu_1$ are distinct (as for $\mu_2$), thus the coupling
    $\pi_\theta$ determines uniquely the coupling $\gamma_\theta$, there is no
    ambiguity.}
    \label{fig:ex_swgg}
\end{figure}
Unfortunately, the RHS quantity in \cref{eqn:swgg_discrete2} depends on the
choice of the permutations, rendering the quantity ill-defined, as showcased in
\cref{ex:swgg_ambiguity}. 

\begin{example}[Ambiguity in $\SWGG$]\label{ex:swgg_ambiguity} Consider $d=2$,
    $n=2$, the points $x_1=(0, 1 ),\; x_2=(0, 0),\; y_1=(0, 0),\; y_2=(0, 1)$,
    the line $\theta = (1, 0)$ and the measures $\mu_1 =
    \tfrac{1}{2}(\delta_{x_1} + \delta_{x_2})$, $\mu_2 =
    \tfrac{1}{2}(\delta_{y_1} + \delta_{y_2})$. We have $\mu_1=\mu_2$, and
    $\theta^\top u = 0$ for all points $u\in \{x_1, x_2, y_1, y_2\}$, hence any
    choice of permutations $(\sigma_\theta, \tau_\theta)$ sorts the respective
    points $(\theta^\top x_i)$ and $(\theta^\top y_i)$. Choosing
    $(\sigma_\theta, \tau_\theta) = (I, I)$, we obtain 
    $$\SWGG_2^2(\mu_1, \mu_2, \theta) =
    \tfrac{1}{2}(\|x_1-y_1\|_2^2+\|x_2-y_2\|_2^2) = 1,$$ which in particular in
    non-zero, which shows that $\SWGG_2(\cdot, \cdot, \theta)$ is not a
    distance. Another possible choice $(\sigma_\theta, \tau_\theta) = ((2, 1),
    (2, 1))$ yields a value of 0.
\end{example}

One could consider the following ``fix'' to the permutation choice issue:
\begin{equation}\label{eqn:swgg_fix}
    \SWGGfix_2^2(\mu_1, \mu_2, \theta) := 
    \underset{(\sigma_\theta, \tau_\theta) \in \mathfrak{S}_\theta(X, Y)}{\min}
    \ \cfrac{1}{n}\Sum{i=1}{n}\|x_{\sigma_\theta(i)} - y_{\tau_\theta(i)}\|_2^2,
\end{equation}
where $\mathfrak{S}_\theta(X, Y)$ is the set of pairs of permutations
$(\sigma_\theta, \tau_\theta)$ that sort $(\theta^\top x_i)_{i=1}^n$ and
$(\theta^\top y_i)_{i=1}^n$ respectively. We illustrate this idea in
\cref{fig:ex_swgg_ambiguity}.
\begin{figure}[H]
    \centering
    \includegraphics[width=0.7\textwidth]{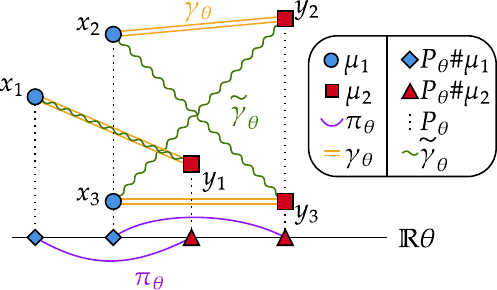}
    \caption{In this example, the projections sometimes coincide, and the
    optimal coupling $\pi_\theta$ between $P_\theta\#\mu_1$ and
    $P_\theta\#\mu_2$ does not determine the coupling between $(x_2, x_3)$ and
    $(y_2, y_3)$. In terms of permutations, there are two possibilities:
    $\gamma_\theta := \tfrac{1}{3}(\delta_{x_1\otimes y_1} + \delta_{x_2\otimes
    y_2} + \delta_{x_3\otimes y_3})$ displayed with orange double lines, and
    $\widetilde{\gamma}_\theta := \tfrac{1}{3}(\delta_{x_1\otimes y_1} +
    \delta_{x_2\otimes y_3} + \delta_{x_3\otimes y_2})$ represented by green
    squiggly lines. Here, the cost of $\gamma_\theta$ is lower, so we would
    choose it.}
    \label{fig:ex_swgg_ambiguity}
\end{figure}

\subsection{Midpoints are Geodesic
Middles}\label{sec:midpoints_are_geodesic_middles}

In the following, we remind a well-known simple result about geodesic spaces,
which we apply to show that Wasserstein means are middles of Wasserstein
geodesics (see \cref{prop:wass_mean_geodesic}). We consider a geodesic space
$(\X, d)$, which is to say that $d$ is a distance on $\X$ such that for any
$(x_1, x_2)\in \X^2$ there exists a curve $\gamma: [0, 1] \longrightarrow \X$
with $\gamma(0)=x_1$ and $\gamma(1)=x_2$ such that $d(\gamma(t), \gamma(s)) =
|t-s|d(x_1, d_2)$. Such a curve is called a geodesic between $x_1$ and $x_2$.
\begin{lemma}\label{lemma:midpoints} Let $(\X, d)$ be a geodesic space, let
    $x_1, x_2 \in \X$ and consider the set $\M(x_1, x_2)$ of Midpoints:
    \begin{equation}\label{eqn:midpoints}
        \M(x_1, x_2) = \underset{y\in \X}{\argmin}\ d(x_1, y)^2 + d(y, x_2)^2.
    \end{equation}
    This set is in fact exactly the set of middles of geodesics:
    \begin{equation}\label{eqn:midpoints_are_middles_of_geodesics}
        \M(x_1, x_2) = \left\lbrace \gamma(\tfrac{1}{2})\ |\ \gamma\ 
        \text{is\ a \ geodesic\ between\ } x_1\ \text{and}\ x_2\right\rbrace.
    \end{equation}
\end{lemma}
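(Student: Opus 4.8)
The plan is to prove both inclusions in \cref{eqn:midpoints_are_middles_of_geodesics} directly, using only the definition of a geodesic and the triangle inequality.

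For the inclusion ``$\supseteq$'', I would take a geodesic $\gamma$ between $x_1$ and $x_2$ and set $y := \gamma(\tfrac{1}{2})$. By the defining property of a geodesic, $d(x_1, y) = \tfrac{1}{2}d(x_1, x_2)$ and $d(y, x_2) = \tfrac{1}{2}d(x_1, x_2)$, so $d(x_1, y)^2 + d(y, x_2)^2 = \tfrac{1}{2}d(x_1, x_2)^2$. On the other hand, for \emph{any} $z \in \X$, the triangle inequality gives $d(x_1, z) + d(z, x_2) \geq d(x_1, x_2)$, and minimising $a^2 + b^2$ subject to $a + b \geq d(x_1, x_2)$ and $a, b \geq 0$ (an elementary one-variable calculation, the minimum being at $a = b = \tfrac{1}{2}d(x_1,x_2)$) yields $d(x_1, z)^2 + d(z, x_2)^2 \geq \tfrac{1}{2}d(x_1, x_2)^2$. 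Hence $y$ attains the minimum in \cref{eqn:midpoints}, i.e. $y \in \M(x_1, x_2)$.

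For the converse inclusion ``$\subseteq$'', I would take $y \in \M(x_1, x_2)$. By the first part, the minimal value of the objective is $\tfrac{1}{2}d(x_1, x_2)^2$, so $d(x_1, y)^2 + d(y, x_2)^2 = \tfrac{1}{2}d(x_1, x_2)^2$. Combined with the constraint $d(x_1, y) + d(y, x_2) \geq d(x_1, x_2)$ and the fact that the quadratic minimisation has a \emph{unique} minimiser on the line $a + b = d(x_1,x_2)$, this forces $d(x_1, y) = d(y, x_2) = \tfrac{1}{2}d(x_1, x_2)$ (one should check the constraint is tight here; if $d(x_1,y) + d(y,x_2) > d(x_1,x_2)$ the value would be strictly larger, contradiction). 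Now I would build a geodesic through $y$ by concatenation: pick a geodesic $\gamma_1$ from $x_1$ to $y$ and a geodesic $\gamma_2$ from $y$ to $x_2$ (these exist since $\X$ is geodesic), reparametrise each to the halves $[0, \tfrac{1}{2}]$ and $[\tfrac{1}{2}, 1]$, and define $\gamma$ by gluing them at $y$. One then verifies that $\gamma$ is a genuine geodesic on $[0,1]$: the constant-speed property $d(\gamma(t), \gamma(s)) = |t - s| d(x_1, x_2)$ holds within each half by construction, and for $t \leq \tfrac{1}{2} \leq s$ one gets $d(\gamma(t),\gamma(s)) \leq d(\gamma(t), y) + d(y, \gamma(s)) = (\tfrac{1}{2} - t)d(x_1,x_2) + (s - \tfrac{1}{2})d(x_1,x_2) = (s-t)d(x_1,x_2)$, while the reverse inequality follows because a shorter path would contradict $d(x_1, x_2) = d(\gamma(0), \gamma(1))$ by the triangle inequality along $\gamma$. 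Since $\gamma(\tfrac{1}{2}) = y$, this shows $y$ is a geodesic middle.

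The main obstacle, such as it is, is the concatenation argument in the converse direction: one must be careful that gluing two geodesics produces a constant-speed geodesic on the whole interval, which relies precisely on the equality $d(x_1, y) + d(y, x_2) = d(x_1, x_2)$ established just before. Everything else is the elementary remark that among nonnegative pairs $(a,b)$ with $a + b \geq c$ fixed, $a^2 + b^2$ is minimised uniquely at $a = b = c/2$. I would state that remark once and reuse it in both directions.
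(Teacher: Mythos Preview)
Your proposal is correct and follows essentially the same approach as the paper: both directions use the elementary quadratic inequality (the paper phrases it via convexity of $t\mapsto t^2$, you via minimising $a^2+b^2$ under $a+b\geq c$), and the converse inclusion is obtained by concatenating two geodesics through $y$ and verifying the constant-speed property via the two triangle inequalities you mention. The only cosmetic difference is that the paper writes out the reverse inequality $d(\gamma(t),\gamma(s))\geq (s-t)d(x_1,x_2)$ explicitly from $d(x_1,x_2)\leq d(x_1,\gamma(t))+d(\gamma(t),\gamma(s))+d(\gamma(s),x_2)$, which is exactly what your last sentence sketches.
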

\begin{proof}
    Denote by $\M'(x_1, x_2)$ the RHS of
    \cref{eqn:midpoints_are_middles_of_geodesics}, first we show $\M'(x_1, x_2)
    \subset \M(x_1, x_2)$ and compute the optimal value of \cref{eqn:midpoints}.
    Let $\gamma$ a constant-speed geodesic between $x_1$ and $x_2$, we have
    $$d(x_1, \gamma(\tfrac{1}{2}))^2 + d(\gamma(\tfrac{1}{2}), x_2)^2 =
    d(\gamma(0), \gamma(\tfrac{1}{2}))^2 + d(\gamma(\tfrac{1}{2}), \gamma(1))^2
    = d(x_1, x_2)^2/2.$$ Now take any $y\in \X$, we have (by convexity of $t
    \longmapsto t^2$, then by the triangle inequality for $d$)
    \begin{align}
        d(x_1, y)^2 + d(y, x_2)^2 &= 2(d(x_1, y)^2/2 + d(y, x_2)^2/2) \\
        &\geq 2(d(x_1, y)/2 + d(y, x_2)/2)^2 
        \label{eqn:inequality_midpoints_cvx}\\ 
        &\geq d(x_1, x_2)^2/2. \label{eqn:inequality_midpoints_triangle}
    \end{align}
    This shows that any such $\gamma(\tfrac{1}{2})$ is solution of the
    optimisation problem which defines $\M(x_1, x_2)$, and thus $\M'(x_1, x_2)
    \subset \M(x_1, x_2)$. The value of the minimisation problem from
    \cref{eqn:midpoints} is $d(x_1, x_2)^2/2$.

    Let $y^* \in \M(x_1, x_2)$, we now show that $d(x_1, y^*)=d(y^*, x_2)=d(x_1,
    x_2)/2$. Since $y^*$ is optimal and that the optimal value is $d(x_1,
    x_2)^2$, the inequalities \cref{eqn:inequality_midpoints_cvx} and
    \cref{eqn:inequality_midpoints_triangle} are equalities for $y:=y^*$. First,
    \cref{eqn:inequality_midpoints_cvx} yields $d(x_1, y^*)=d(y^*, x_2)$, then
    \cref{eqn:inequality_midpoints_triangle} yields $d(x_1, y^*) =d(x_1,
    x_2)/2$. 

    We now show that $\M(x_1, x_2) \subset \M'(x_1, x_2)$: let $y^* \in \M(x_1,
    x_2)$, consider $\gamma_1$ a geodesic from $x_1$ to $y^*$, and $\gamma_2$ a
    geodesic from $y^*$ to $x_2$. We introduce the curve
    $$\gamma : \app{[0, 1]}{\X}{t}{\left\lbrace\begin{array}{cc} \gamma_1(2t) &
        \text{if}\ t\in [0, \tfrac{1}{2}]; \\
        \gamma_2(2t-1) & \text{if}\ t\in [\tfrac{1}{2}, 1]. \\
    \end{array} \right.} $$ Our objective is to show that $\gamma$ is a geodesic
    from $x_1$ to $x_2$ (since $\gamma(\tfrac{1}{2})=y^*$, this will show that
    $y^*\in \M'(x_1, x_2)$). By construction $\gamma(0)=x_1$, $\gamma(1)=x_2$.
    Let $(t, s)\in [0, 1]^2$ with $t \leq s$, we want to prove $d(\gamma(t),
    \gamma(s))=|s-t|d(x_1, x_2)$.

    Firstly, we consider the case $(t,s)\in [0, \tfrac{1}{2}]^2$. In this case,
    $$d(\gamma(t), \gamma(s)) = d(\gamma_1(2t), \gamma_1(2s))=(2s-2t)d(x_1, y^*)
    = (s-t)d(x_1, x_2),$$ where we used $d(x_1, y^*) = d(x_1, x_2)/2$, which we
    proved earlier for any optimal $y^*$. The case $(t,s)\in [\tfrac{1}{2},
    1]^2$ can be treated similarly.

    Secondly, we assume $t\in [0, \tfrac{1}{2}]$ and $s \in [\tfrac{1}{2}, 1]$.
    We first prove $d(\gamma(t), \gamma(s)) \leq (s-t)d(x_1, x_2)$ using the
    triangle inequality and $d(x_i, y^*) = d(x_1, x_2)/2$ for $i\in\{1,2\}$:
    \begin{align*}
        d(\gamma(t), \gamma(s)) &\leq d(\gamma(t), y^*) + d(y^*, \gamma(s)) \\
        &= d(\gamma_1(2t), \gamma_1(1)) + d(\gamma_2(0), \gamma_2(2s-1)) \\
        &= (1-2t)d(x_1, y^*) + (2s-1)d(y^*, x_2) \\
        &= (s-t)d(x_1, x_2).
    \end{align*}
    For the converse inequality $d(\gamma(t), \gamma(s)) \geq (s-t)d(x_1, x_2)$,
    we apply the triangle inequality:
    $$d(x_1, x_2) \leq d(x_1, \gamma(t)) + d(\gamma(t), \gamma(s)) +
    d(\gamma(s), x_2), $$ which yields:
    \begin{align*}
        d(\gamma(t), \gamma(s)) &\geq 
        d(x_1, x_2) - d(\gamma_1(0), \gamma_1(2t)) - d(\gamma_2(2s-1), x_2) \\
        &= (1-t-(1-s))d(x_1, x_2) = (s-t)d(x_1, x_2).
    \end{align*}
    The case $s\in [0, \tfrac{1}{2}]$ and $t \in [\tfrac{1}{2}, 1]$ is done
    symmetrically and thus $d(\gamma(t), \gamma(s)) = |s-t|d(x_1,x_2)$, which
    shows that $y^*\in \M'(x_1, x_2)$. We conclude that $\M'(x_1, x_2) = \M(x_1,
    x_2)$.
\end{proof}

\subsection{Reminders on Disintegration of Measures}\label{sec:disintegration}

In \cref{def:disintegration_P}, we recall the definition of disintegration of
measures with respect to a map (taken from \cite[Theorem
5.3.1]{ambrosio2005gradient}). By slight abuse of notation, we will write
$P^{-1}(y) := P^{-1}(\{y\})$ for a map $P: \X\longrightarrow \Y$ that need not
be injective and $y\in \Y$.

\begin{definition}\label{def:disintegration_P} Consider a Borel map $P:
    \X\longrightarrow \Y$ between Polish spaces $\X, \Y$ and $\mu \in
    \mathcal{P}(\X)$. There exists a $P\#\mu$-almost-everywhere unique Borel
    family $(\mu^y)_{y\in \Y}\subset \mathcal{P}(\X)$ of measures verifying
    $\mu^y(\X\setminus P^{-1}(y)) = 0$, and verifying the following identity
    against test functions $\phi \in \mathcal{C}_b^0(\X)$:
    \begin{equation}\label{eqn:disintegration_P_test_function}
        \int_{\X}\phi(x)\dd\mu(x) = 
        \int_\Y\left(\int_{P^{-1}(y)}\phi(x)\dd\mu^y(x)\right)\dd(P\#\mu)(y).
    \end{equation}
    We will write \cref{eqn:disintegration_P_test_function} symbolically as:
    \begin{equation}\label{eqn:disintegration_P_symbolic}
        \mu(\dd x)= (P\#\mu)\left(P(\dd x)\right)\ \mu^{P(x)}(\dd x).
    \end{equation}
\end{definition}

For example, in the case $\X = \R^d\times\R^d$ and $P(y, x) = y$, the
disintegration corresponds to the disintegration with respect to the first
marginal $\nu$ of a coupling $\gamma \in \Pi(\nu, \mu)$. In this case, each
measure $\gamma^y$ is a measure of $\mathcal{P}(\R^{2d})$ concentrated on the
slice $\{y\} \times \R^d$, which is routinely identified as a measure on $\R^d$
in literature. This disintegration is written symbolically as $\gamma(\dd y, \dd
x) = \nu(\dd y)\gamma^y(\dd x)$.

\subsection{Proof of the Disintegration Formula for \texorpdfstring{$\nu$}{nu}-based Wasserstein}\label{sec:proof_W_nu_disintegration}

In this section, we provide a proof to \cref{thm:W_nu_disintegration}, and use
the notation from the statement. Let $\rho \in \Gamma(\nu, \mu_1, \mu_2)$ (see
\cref{eqn:def_Gamma3}), we have
\begin{align}\label{eqn:W_nu_disintegration_proof_rho_lb}
    \int_{\R^{3d}}\|x_1-x_2\|_2^2\dd\rho(y, x_1, x_2) &= 
    \int_{\R^d}\left(\int_{\R^{2d}}\|x_1-x_2\|_2^2\dd\rho^y(x_1, x_2)\right)
    \dd\nu(y)\nonumber \\
    &\geq \int_{\R^d}\W_2^2(P_1\#\rho^y, P_2\#\rho^y)\dd\nu(y),
\end{align}
where we wrote the disintegration $\rho(\dd y, \dd x_1, \dd x_2) = \nu(\dd
y)\rho^y(\dd x_1, \dd x_2)$. Note that by \cite[Lemma
12.4.7]{ambrosio2005gradient}, the map $y \longmapsto \W_2^2(P_1\#\rho^y,
P_2\#\rho^y)$ is Borel.

Now since $\rho \in \Gamma(\nu, \mu_1, \mu_2)$, we can write $P_{1, 2}\#\rho
=: \gamma_1 \in \Pi^*(\nu, \mu_1)$ and $P_{1, 3}\#\rho =: \gamma_2 \in
\Pi^*(\nu, \mu_2)$. It follows that for $\nu$-almost every $y\in \R^d$, we
have for $i\in \{1, 2\}$ that $P_i\#\rho^y = \gamma_i^y$, where we
disintegrated $\gamma_i(\dd y, \dd x) = \nu(\dd y) \gamma_i^y(\dd x)$ (for
example by \cite[Lemma 5.3.2]{ambrosio2005gradient}). Taking the infimum on
$\rho$ on both sides yields
\begin{equation}\label{eqn:W_nu_disintegration_proof_lb}
    \W_\nu^2(\mu_1, \mu_2) \geq \underset{\gamma_i \in \Pi^*(\nu, \mu_i),
    \: i\in \{1,2\}}{\inf}\int_{\R^d}\W_2^2(\gamma_1^y,
    \gamma_2^y)\dd\nu(y).
\end{equation}
Fixing $\gamma_i \in \Pi^*(\nu, \mu_i)$ for $i\in \{1, 2\}$, we now construct a
3-plan $\rho \in \Gamma(\nu, \mu_1, \mu_2)$ which attains the lower bound in
\cref{eqn:W_nu_disintegration_proof_rho_lb}. Consider the disintegrations
$\gamma_i(\dd y, \dd x) = \nu(\dd y) \gamma_i^y(\dd x)$ for $i\in \{1, 2\}$. The
two families $(\gamma_i^y)_{y\in \R^d}$ are Borel in $\mathcal{P}_2(\R^d)$,
hence by \cite[Lemma 12.4.7]{ambrosio2005gradient}, there exists a Borel family
$(\rho^y)_{y\in \R^d}$ in $\mathcal{P}_2(\R^{2d})$ such that for all $y\in
\R^d$, $\rho^y \in \Pi^*(\gamma_1^y, \gamma_2^y)$. Setting $\rho(\dd y, \dd x_1,
\dd x_2) := \nu(\dd y)\rho^y(\dd x_1, \dd x_2)$ yields the desired 3-plan, since
for $\nu$-almost every $y\in \R^d$, $\rho^y$ is an optimal transport plan
between $\gamma_1^y$ and $\gamma_2^y$. We have shown that
\begin{equation}\label{eqn:W_nu_disintegration_proof_ub}
    \forall \gamma_i \in \Pi^*(\nu, \mu_i),\; i\in \{1, 2\},\; 
    \W_\nu^2(\mu_1, \mu_2) \leq 
    \int_{\R^d}\W_2^2(\gamma_1^y, \gamma_2^y)\dd\nu(y),
\end{equation}
which shows the equality in \cref{eqn:W_nu_disintegration}.

We finish by showing that the infimum in \cref{eqn:W_nu_disintegration} is
indeed attained. Note that having the weak convergence of plans
$(\gamma_n)\in \Pi(\nu, \mu_1)$ does not yield the $\nu$-almost-everywhere
convergence of the disintegrations $\gamma_n^y$ in general. Thankfully, we
can leverage the existence of a solution of the original formulation from
\cref{eqn:nu_based_Wass} by \cref{prop:W_nu_inf_attained}. Using the fact
that the two problems have the same value, we can take a solution $\rho$ of
\cref{eqn:nu_based_Wass} and construct a solution of
\cref{eqn:W_nu_disintegration} by disintegration.
\subsection{Result Dependency Graph}
\definecolor{impBot_default}{HTML}{808080}
\definecolor{impBot_arrow}{HTML}{DC5800}
\definecolor{impBot_theorem}{HTML}{2196F3}
\definecolor{impBot_definition}{HTML}{FFEB3B}
\definecolor{impBot_assumption}{HTML}{FF3B3B}
\definecolor{impBot_proposition}{HTML}{2196F3}
\definecolor{impBot_prop}{HTML}{2196F3}
\definecolor{impBot_corollary}{HTML}{2196F3}
\definecolor{impBot_lemma}{HTML}{2196F3}

\tikzset{
  default/.style={
    rectangle,
    rounded corners,
    draw=impBot_default,
    fill=impBot_default!10!white,
    align=center,
    inner sep=2pt,
    outer sep=0pt,
    node font=\fontsize{8pt}{8pt}\selectfont,
  },
  arrow/.style={
    thick,
    draw=impBot_arrow,
    shorten >=2pt,
    shorten <=2pt,
    line width=1pt
  },
}

\tikzset{
  theorem/.style={
    default,
    draw=impBot_theorem,
    double=impBot_theorem!50!white,
    fill=impBot_theorem!10!white,
  },
  definition/.style={
    default,
    rounded corners=0pt,
    draw=impBot_definition,
    fill=impBot_definition!10!white,
  },
  assumption/.style={
    default,
    rounded corners=0pt,
    draw=impBot_assumption,
    fill=impBot_assumption!10!white,
  },
  proposition/.style={
    default,
    draw=impBot_proposition,
    fill=impBot_proposition!10!white,
  },
  prop/.style={
    default,
    draw=impBot_prop,
    fill=impBot_prop!10!white,
  },
  corollary/.style={
    default,
    draw=impBot_corollary,
    fill=impBot_corollary!10!white,
  },
  lemma/.style={
    default,
    densely dashed,
    draw=impBot_lemma,
    fill=impBot_lemma!10!white,
  },
}
\begin{center}\begin{tikzpicture}[xscale=2.5, yscale=0.6]
  \node[lemma] (lemmaCOLONtightness_Gamma3) at (0.0,0.0) {\Cref{lemma:tightness_Gamma3}};

  \node[theorem] (thmCOLONW_nu_disintegration) at (0.0,-4.0) {\Cref{thm:W_nu_disintegration}};

  \node[definition] (defCOLONS_theta) at (0.0,-5.0) {\Cref{def:S_theta}};

  \node[prop] (propCOLONsemi_1D_ot) at (0.0,-7.0) {\Cref{prop:semi_1D_ot}};

  \node[lemma] (lemmaCOLON1d_3plan_bimarginals) at (0.0,-14.0) {\Cref{lemma:1d_3plan_bimarginals}};

  \node[lemma] (lemmaCOLONCV_theta_monge_invariant_permutation) at (0.0,-17.0) {\Cref{lemma:CV_theta_monge_invariant_permutation}};

  \node[assumption] (assCOLONidentity_sorts_projections) at (0.0,-18.0) {\Cref{ass:identity_sorts_projections}};

  \node[definition] (defCOLONextreme_point) at (0.0,-21.0) {\Cref{def:extreme_point}};

  \node[definition] (defCOLONbipartite_P) at (0.0,-23.0) {\Cref{def:bipartite_P}};

  \node[lemma] (lemmaCOLONextract_cycle_P) at (0.0,-24.0) {\Cref{lemma:extract_cycle_P}};

  \node[lemma] (lemmaCOLONsplit_multicycle) at (0.0,-25.0) {\Cref{lemma:split_multicycle}};

  \node[lemma] (lemmaCOLONcycle_ollP) at (0.0,-26.0) {\Cref{lemma:cycle_ollP}};

  \node[definition] (defCOLONgeneral_position) at (0.0,-30.0) {\Cref{def:general_position}};

  \node[prop] (propCOLONelementary_props_lifted_plan) at (0.0,-32.0) {\Cref{prop:elementary_props_lifted_plan}};

  \node[definition] (defCOLONindependent_lifted_cost) at (0.0,-33.0) {\Cref{def:independent_lifted_cost}};

  \node[prop] (propCOLONES_theta_discrete_as_distance) at (0.0,-35.0) {\Cref{prop:ES_theta_discrete_as_distance}};

  \node[definition] (defCOLONexpected_sliced) at (0.0,-36.0) {\Cref{def:expected_sliced}};

  \node[lemma] (lemmaCOLONmidpoints) at (0.0,-39.0) {\Cref{lemma:midpoints}};

  \node[definition] (defCOLONdisintegration_P) at (0.0,-40.0) {\Cref{def:disintegration_P}};

  \node[prop] (propCOLONW_nu_inf_attained) at (1.0,-1.0) {\Cref{prop:W_nu_inf_attained}};

  \node[prop] (propCOLONwass_mean_geodesic) at (1.0,-3.0) {\Cref{prop:wass_mean_geodesic}};

  \node[corollary] (corCOLONS_theta_SWGG_as) at (1.0,-16.0) {\Cref{cor:S_theta_SWGG_as}};

  \node[lemma] (lemmaCOLONcharacterisation_Sigma_theta) at (1.0,-19.0) {\Cref{lemma:characterisation_Sigma_theta}};

  \node[prop] (propCOLONlifted_cost_almost_distance) at (1.0,-34.0) {\Cref{prop:lifted_cost_almost_distance}};

  \node[prop] (propCOLONW_nu_lsc) at (2.0,-2.0) {\Cref{prop:W_nu_lsc}};

  \node[lemma] (lemmaCOLONS_theta_well_def) at (2.0,-6.0) {\Cref{lemma:S_theta_well_def}};

  \node[prop] (propCOLONS_theta_semi_metric) at (2.0,-8.0) {\Cref{prop:S_theta_semi_metric}};

  \node[lemma] (lemmaCOLONplan_middle) at (2.0,-10.0) {\Cref{lemma:plan_middle}};

  \node[prop] (propCOLONCWtheta_discrete_v1) at (2.0,-20.0) {\Cref{prop:CWtheta_discrete_v1}};

  \node[lemma] (lemmaCOLONcondition_Psigmatau_in_Ptheta) at (2.0,-22.0) {\Cref{lemma:condition_Psigmatau_in_Ptheta}};

  \node[corollary] (corCOLONexpected_sliced_almost_distance) at (2.0,-37.0) {\Cref{cor:expected_sliced_almost_distance}};

  \node[prop] (propCOLONPStheta_lsc) at (3.0,-9.0) {\Cref{prop:PStheta_lsc}};

  \node[lemma] (lemmaCOLONplan_projected_middle) at (3.0,-11.0) {\Cref{lemma:plan_projected_middle}};

  \node[theorem] (thmCOLONextr_U_cap_PthetaXY) at (3.0,-27.0) {\Cref{thm:extr_U_cap_PthetaXY}};

  \node[corollary] (corCOLONexpected_sliced_distance_countably_discrete) at (3.0,-38.0) {\Cref{cor:expected_sliced_distance_countably_discrete}};

  \node[prop] (propCOLONS_theta_smaller_lift) at (4.0,-12.0) {\Cref{prop:S_theta_smaller_lift}};

  \node[theorem] (thmCOLONCW_theta_point_clouds) at (4.0,-28.0) {\Cref{thm:CW_theta_point_clouds}};

  \node[prop] (propCOLONminS_attained) at (4.0,-29.0) {\Cref{prop:minS_attained}};

  \node[theorem] (thmCOLONS_theta_equals_W_theta) at (5.0,-13.0) {\Cref{thm:S_theta_equals_W_theta}};

  \node[prop] (propCOLONS_theta_metric_atomless_projections) at (6.0,-15.0) {\Cref{prop:S_theta_metric_atomless_projections}};

  \node[prop] (propCOLONcondition_minS_equals_W2) at (6.0,-31.0) {\Cref{prop:condition_minS_equals_W2}};

  \begin{pgfonlayer}{background}
  \begin{scope}[transparency group, opacity=0.5773502691896258]
    \draw[arrow, lemma] (lemmaCOLONtightness_Gamma3.east) -- (propCOLONminS_attained.west);
    \draw[arrow, lemma] (lemmaCOLONtightness_Gamma3.east) -- (propCOLONW_nu_inf_attained.west);
    \draw[arrow, lemma] (lemmaCOLONtightness_Gamma3.east) -- (propCOLONW_nu_lsc.west);
  \end{scope}
  \begin{scope}[transparency group, opacity=0.5773502691896258]
    \draw[arrow, prop] (propCOLONW_nu_inf_attained.east) -- (propCOLONminS_attained.west);
    \draw[arrow, prop] (propCOLONW_nu_inf_attained.east) -- (propCOLONS_theta_semi_metric.west);
    \draw[arrow, prop] (propCOLONW_nu_inf_attained.east) -- (propCOLONW_nu_lsc.west);
  \end{scope}
  \begin{scope}[transparency group, opacity=1.0]
    \draw[arrow, prop] (propCOLONW_nu_lsc.east) -- (propCOLONPStheta_lsc.west);
  \end{scope}
  \begin{scope}[transparency group, opacity=0.7071067811865475]
    \draw[arrow, prop] (propCOLONwass_mean_geodesic.east) -- (lemmaCOLONplan_middle.west);
    \draw[arrow, prop] (propCOLONwass_mean_geodesic.east) -- (lemmaCOLONS_theta_well_def.west);
  \end{scope}
  \begin{scope}[transparency group, opacity=1.0]
    \draw[arrow, theorem] (thmCOLONW_nu_disintegration.east) -- (corCOLONS_theta_SWGG_as.west);
  \end{scope}
  \begin{scope}[transparency group, opacity=1.0]
    \draw[arrow, definition] (defCOLONS_theta.east) -- (corCOLONS_theta_SWGG_as.west);
  \end{scope}
  \begin{scope}[transparency group, opacity=0.5773502691896258]
    \draw[arrow, prop] (propCOLONsemi_1D_ot.east) -- (lemmaCOLONplan_projected_middle.west);
    \draw[arrow, prop] (propCOLONsemi_1D_ot.east) -- (corCOLONS_theta_SWGG_as.west);
    \draw[arrow, prop] (propCOLONsemi_1D_ot.east) -- (thmCOLONS_theta_equals_W_theta.west);
  \end{scope}
  \begin{scope}[transparency group, opacity=1.0]
    \draw[arrow, prop] (propCOLONS_theta_semi_metric.east) -- (propCOLONS_theta_metric_atomless_projections.west);
  \end{scope}
  \begin{scope}[transparency group, opacity=1.0]
    \draw[arrow, prop] (propCOLONPStheta_lsc.east) -- (propCOLONminS_attained.west);
  \end{scope}
  \begin{scope}[transparency group, opacity=1.0]
    \draw[arrow, lemma] (lemmaCOLONplan_middle.east) -- (lemmaCOLONplan_projected_middle.west);
  \end{scope}
  \begin{scope}[transparency group, opacity=1.0]
    \draw[arrow, lemma] (lemmaCOLONplan_projected_middle.east) -- (propCOLONS_theta_smaller_lift.west);
  \end{scope}
  \begin{scope}[transparency group, opacity=1.0]
    \draw[arrow, prop] (propCOLONS_theta_smaller_lift.east) -- (thmCOLONS_theta_equals_W_theta.west);
  \end{scope}
  \begin{scope}[transparency group, opacity=0.7071067811865475]
    \draw[arrow, theorem] (thmCOLONS_theta_equals_W_theta.east) -- (propCOLONcondition_minS_equals_W2.west);
    \draw[arrow, theorem] (thmCOLONS_theta_equals_W_theta.east) -- (propCOLONS_theta_metric_atomless_projections.west);
  \end{scope}
  \begin{scope}[transparency group, opacity=1.0]
    \draw[arrow, lemma] (lemmaCOLON1d_3plan_bimarginals.east) -- (propCOLONS_theta_metric_atomless_projections.west);
  \end{scope}
  \begin{scope}[transparency group, opacity=1.0]
    \draw[arrow, lemma] (lemmaCOLONCV_theta_monge_invariant_permutation.east) -- (thmCOLONCW_theta_point_clouds.west);
  \end{scope}
  \begin{scope}[transparency group, opacity=0.4472135954999579]
    \draw[arrow, assumption] (assCOLONidentity_sorts_projections.east) -- (thmCOLONextr_U_cap_PthetaXY.west);
    \draw[arrow, assumption] (assCOLONidentity_sorts_projections.east) -- (lemmaCOLONcondition_Psigmatau_in_Ptheta.west);
    \draw[arrow, assumption] (assCOLONidentity_sorts_projections.east) -- (propCOLONCWtheta_discrete_v1.west);
    \draw[arrow, assumption] (assCOLONidentity_sorts_projections.east) -- (thmCOLONCW_theta_point_clouds.west);
    \draw[arrow, assumption] (assCOLONidentity_sorts_projections.east) -- (lemmaCOLONcharacterisation_Sigma_theta.west);
  \end{scope}
  \begin{scope}[transparency group, opacity=0.7071067811865475]
    \draw[arrow, lemma] (lemmaCOLONcharacterisation_Sigma_theta.east) -- (propCOLONCWtheta_discrete_v1.west);
    \draw[arrow, lemma] (lemmaCOLONcharacterisation_Sigma_theta.east) -- (lemmaCOLONcondition_Psigmatau_in_Ptheta.west);
  \end{scope}
  \begin{scope}[transparency group, opacity=1.0]
    \draw[arrow, prop] (propCOLONCWtheta_discrete_v1.east) -- (thmCOLONCW_theta_point_clouds.west);
  \end{scope}
  \begin{scope}[transparency group, opacity=1.0]
    \draw[arrow, lemma] (lemmaCOLONcondition_Psigmatau_in_Ptheta.east) -- (thmCOLONextr_U_cap_PthetaXY.west);
  \end{scope}
  \begin{scope}[transparency group, opacity=1.0]
    \draw[arrow, lemma] (lemmaCOLONextract_cycle_P.east) -- (thmCOLONextr_U_cap_PthetaXY.west);
  \end{scope}
  \begin{scope}[transparency group, opacity=1.0]
    \draw[arrow, lemma] (lemmaCOLONsplit_multicycle.east) -- (thmCOLONextr_U_cap_PthetaXY.west);
  \end{scope}
  \begin{scope}[transparency group, opacity=1.0]
    \draw[arrow, lemma] (lemmaCOLONcycle_ollP.east) -- (thmCOLONextr_U_cap_PthetaXY.west);
  \end{scope}
  \begin{scope}[transparency group, opacity=1.0]
    \draw[arrow, theorem] (thmCOLONextr_U_cap_PthetaXY.east) -- (thmCOLONCW_theta_point_clouds.west);
  \end{scope}
  \begin{scope}[transparency group, opacity=1.0]
    \draw[arrow, theorem] (thmCOLONCW_theta_point_clouds.east) -- (propCOLONcondition_minS_equals_W2.west);
  \end{scope}
  \begin{scope}[transparency group, opacity=1.0]
    \draw[arrow, prop] (propCOLONelementary_props_lifted_plan.east) -- (propCOLONlifted_cost_almost_distance.west);
  \end{scope}
  \begin{scope}[transparency group, opacity=1.0]
    \draw[arrow, prop] (propCOLONlifted_cost_almost_distance.east) -- (corCOLONexpected_sliced_almost_distance.west);
  \end{scope}
  \begin{scope}[transparency group, opacity=1.0]
    \draw[arrow, prop] (propCOLONES_theta_discrete_as_distance.east) -- (corCOLONexpected_sliced_distance_countably_discrete.west);
  \end{scope}
  \begin{scope}[transparency group, opacity=1.0]
    \draw[arrow, corollary] (corCOLONexpected_sliced_almost_distance.east) -- (corCOLONexpected_sliced_distance_countably_discrete.west);
  \end{scope}
  \begin{scope}[transparency group, opacity=0.7071067811865475]
    \draw[arrow, lemma] (lemmaCOLONmidpoints.east) -- (propCOLONwass_mean_geodesic.west);
    \draw[arrow, lemma] (lemmaCOLONmidpoints.east) -- (propCOLONPStheta_lsc.west);
  \end{scope}
\end{pgfonlayer}
\end{tikzpicture}\end{center}
\end{document}